\newcommand{\IN}{\mathbb N}
\newcommand{\IR}{\mathbb R}
\newcommand{\w}{\omega}
\newcommand{\IQ}{\mathbb Q}
\newcommand{\IZ}{\mathbb Z}
\newcommand{\defeq}{\coloneqq}
\newcommand{\updownarrows}{\;{\uparrow}\!{\downarrow}\;}
\newcommand{\dist}{\mathsf{d}}
\newcommand{\Sf}{{\mathsf S}}
\newcommand{\Ra}{\Rightarrow}
\newcommand{\dom}{\mathsf{dom}}
\newcommand{\e}{\varepsilon}
\newcommand{\lcd}{\delta}
\newcommand{\ld}{\delta}
\newcommand{\F}{\mathcal F}
\newcommand{\U}{\mathcal U}
\newcommand{\rng}{\mathsf{rng}}
\newtheorem{theorem}{Theorem}[section]
\newtheorem{corollary}[theorem]{Corollary}
\newtheorem{example}[theorem]{Example}
\newtheorem{lemma}[theorem]{Lemma}
\newtheorem{claim}[theorem]{Claim}
\newtheorem{question}[theorem]{Question}
\newtheorem{proposition}[theorem]{Proposition}
\newtheorem{problem}[theorem]{Problem}
\theoremstyle{definition}
\newtheorem{definition}[theorem]{Definition}
\newtheorem{remark}[theorem]{Remark}
\title{Banakh spaces and their geometry}
\author{Taras Banakh}
\address{Ivan Franko National University of Lviv (Ukraine), and Jan Kochanowski University in Kielce (Poland)}
\email{t.o.banakh@gmail.com}
\subjclass{30L05; 46B85; 51F20; 54C35; 54E50}
\begin{document}
\begin{abstract}
Following Will Brian, we define a metric space $X$ to be {\em Banakh} if all nonempty spheres of positive radius $r$ in $X$ have cardinality $2$ and diameter $2r$. Standard examples of Banakh spaces are subgroups of the real line. In this paper we study the geometry of Banakh spaces, characterize Banakh spaces which are isometric to subgroups of the  real line, and also construct  Banakh spaces $X$ which do not embed into the real line and have a prescribed distance set $\dist[X^2]$.
 \end{abstract}
 \maketitle

\tableofcontents
\newpage

\section{Introduction}

This paper is motivated by the problem of metric characterization of the real line, posed by the author at {\tt MathOverflow} \cite{TB}. He suggested that a metric space $(X,\dist)$ is isometric to the real line if and only if for every $c\in X$ and every positive real number $r$, the sphere $$\Sf(c;r)\defeq\{x\in X:\dist(x,c)=r\}$$ contains exactly two points and has diameter $2r$. Will Brian proved in \cite{WB} that this characterization is indeed true for complete metric spaces and suggested to give the name ``Banakh space'' to any metric space $X$ whose  nonempty spheres consist of two points at maximal possible distance. 
\smallskip

\begin{definition} A metric space $(X,\dist)$ is defined to be a {\em Banakh space} if for every point $c\in X$ and real number $r\in\dist[X^2]$ there exist points $x,y\in X$ such that $\Sf(c;r)=\{x,y\}$ and $\dist(x,y)=2r$.
%
\end{definition}

Standard examples of Banakh spaces are subgroups of the real line. In this paper we study geometric properties of Banakh spaces,
characterize Banakh spaces which are isometric to subgroups of the  real line, and also construct  Banakh spaces $X$ which do not embed into the real line and have a prescribed distance set $\dist[X^2]$.

\section{Preliminaries} 
For a subset $X$ of the real line, let $X_+\defeq\{x\in X:x\ge 0\}$ and $\pm X=X\cup(-X)$. In particular, $\IR_+$ equals to the closed half-line $[0,\infty)$, and $\pm(\IR_+)=\IR$. The real line $\IR$ is considered as an Abelian metric group, endowed with the operation of addition of real numbers.

We denote by $\IZ$ and $\IQ$ the sets of integer and rational numbers, respectively. The set $\IZ_+$ coincides with the set $\w$ of finite ordinals. Every ordinal is identified with the set of smaller ordinals. 

For two subsets $A,B$ of the real line, let
$$AB\defeq\{a\cdot b:a\in A,\;b\in B\},\quad
A+B\defeq\{a+b:a\in A,\;b\in B\},\quad
A-B\defeq\{a-b:a\in A,\;b\in B\}.
$$The sets $A{\cdot}B,A+B,A-B$ are subsets of the real line. We shall often write $AB$ instead of $A{\cdot}B$. If the set $B$ is a singleton $\{b\}$, then we write $Ab$ instead of $A{\cdot}\{b\}$. 

For a metric space $X$ by $\dist_X$ (or just $\dist$ if $X$ is clear from the context) we denote the metric of $X$. 
For a point $x$ of a metric space $X$ and a set $D$ of real number, the set 
$$\Sf(x;D)\defeq\{y\in X:\dist(x,y)\in D\}=\bigcup_{r\in D}\Sf(x;r)$$
is called the {\em $D$-sphere} around the point $x$.

A function $f:X\to Y$ between two metric spaces $X,Y$ is called 
\begin{itemize}
\item  an {\em isometric embedding} if $\dist_Y(f(x),f(y))=\dist_X(x,y)$ for every points $x,y\in X$;
\item an {\em isometry} if $f$ is a surjective isometric embedding.
\end{itemize}
Two metric spaces $X,Y$ are {\em isometric} if there exists an isometry $f:X\to Y$ between $X$ and $Y$.

\section{Banakh subspaces of the real line}

For warming up, in this section we characterize Banakh subspaces of the real line.

\begin{theorem}\label{t:real-Banakh} For a subspace $X$ of the real line, the following conditions are equivalent.
\begin{enumerate}
\item The metric space $X$ is Banakh.
\item For every $x,y,z\in X$ we have $\{x+y-z,x-y+z\}\subseteq X$;
\item For every $c\in X$ the set $X-c$ is a subgroup of the real line.
\end{enumerate}
\end{theorem}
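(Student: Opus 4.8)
The plan is to prove the cycle of implications $(3)\Rightarrow(2)\Rightarrow(1)\Rightarrow(3)$, since $(3)\Rightarrow(2)$ is essentially immediate and $(2)\Rightarrow(1)$ is a short verification, leaving $(1)\Rightarrow(3)$ as the substantive part. For $(3)\Rightarrow(2)$: fix $x,y,z\in X$; since $X-z$ is a subgroup of $\IR$ containing $x-z$ and $y-z$, it also contains $(x-z)-(y-z)=x-y$ and $(x-z)+(y-z)$; translating back by $z$ and rearranging gives $x-y+z\in X$ and, by a symmetric argument with the roles of the translation point adjusted (or directly, using that $X-y$ is a subgroup), $x+y-z\in X$. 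For $(2)\Rightarrow(1)$: given $c\in X$ and $r\in\dist[X^2]$ with $\Sf(c;r)\neq\emptyset$, pick $x\in X$ with $|x-c|=r$; then by (2) the reflection $2c-x=c+c-x$ lies in $X$ and also has distance $r$ from $c$, and the two points $x$ and $2c-x$ are at distance $2r$; it remains to check $\Sf(c;r)\subseteq\{x,2c-x\}$, which is automatic because in $\IR$ the sphere of radius $r>0$ about $c$ is exactly $\{c-r,c+r\}$, and we must rule out $r=0$ (a $0$-sphere is $\{c\}$, a single point, so the case $r=0$ needs either $0\notin\dist[X^2]$ for nontrivial $X$ or a convention; more carefully, $r=0$ occurs only when $X$ is a single point, and then the condition is vacuous or trivially checked).

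The heart of the matter is $(1)\Rightarrow(3)$. Fix $c\in X$; I want to show $G\defeq X-c$ is a subgroup of $\IR$. Clearly $0\in G$. The key geometric input from the Banakh property is a \emph{reflection principle}: for any $a\in X$ and any $b\in X$, the point obtained by reflecting $b$ across $a$, namely $2a-b$, lies in $X$. Indeed, set $r\defeq|b-a|\in\dist[X^2]$; if $r=0$ then $2a-b=a\in X$; if $r>0$ then $\Sf(a;r)$ has exactly two points at distance $2r$, and since $b\in\Sf(a;r)$, the other point $b'$ of $\Sf(a;r)$ satisfies $|b-b'|=2r=2|b-a|$, which on the real line forces $b'=2a-b$. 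Hence $2a-b\in X$. Now I translate everything to $G$: for $g=x-c\in G$, taking $a=c$, $b=x$ gives $2c-x\in X$, i.e. $-g\in G$, so $G$ is symmetric about $0$. For closure under addition, given $g=x-c$ and $h=y-c$ in $G$, I reflect $c$ across $x$ (taking $a=x$, $b=c$) to get $2x-c\in X$, i.e. $2g$; more usefully, reflect $y$ across $x$? That gives $2x-y=2g-h+c$... let me instead chain two reflections: reflecting $c$ across the point $y\in X$ yields $2y-c\in X$; then reflecting $2y-c$ across $x\in X$ yields $2x-(2y-c)=2x-2y+c\in X$, giving $2(g-h)\in G$. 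Combining with symmetry and the known structure, one shows $G$ is closed under $g\mapsto 2g$ and under $g,h\mapsto g-h$ up to the factor $2$; the clean way is to note that $2x-y\in X$ whenever $x,y\in X$ (reflect $y$ across $x$), hence $(2x-y)-c = 2g-h$... wait, that is not in $G$ unless... Actually reflecting $y$ across $x$: $a=x,b=y$, distance $|y-x|$, so $2x-y\in X$; translating, $2g-h\in G$ for all $g,h\in G$. Setting $h=0$ (using $c\in X$, i.e. $0\in G$) gives $2g\in G$; setting $g=0$ gives $-h\in G$ again; and then $g+h = 2g - (g-h)$... hmm, I need $g-h\in G$, which is $2g' - h$ with $2g'=g$, not obviously available.

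So the real obstacle is upgrading "$2g-h\in G$ for all $g,h\in G$" to "$g-h\in G$", i.e. removing the factor of $2$. The resolution is to iterate and use that the Banakh (sphere) condition also controls \emph{which} point is on a sphere: more precisely, I will show directly that $g+h\in G$ for $g,h\in G$ by a single reflection applied to the right center. Given $x,y\in X$, I claim $x+y-c\in X$: consider the midpoint $m\defeq\frac{x+c}{2}$ — but $m$ need not lie in $X$. Instead, apply the reflection principle with $a$ chosen so that the reflection of some point of $X$ lands on $x+y-c$: we want $2a-b = x+y-c$ with $a,b\in X$; take $b=c$, then $a=\frac{x+y}{2}$, not necessarily in $X$. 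Take $b = 2c-y$ (which is in $X$ by symmetry) and $a = ?$: $2a-(2c-y)=x+y-c \Rightarrow 2a = x+2c-c = x+c$, again a midpoint. The genuine fix, and the step I expect to be the crux, is to observe that from $2g-h\in G$ for all $g,h$ one gets (taking $h\mapsto 2k-h'$ forms) that $G$ is closed under the operation $g,h\mapsto 2g-h$, hence $G$ is a coset-closed set containing $0$, and a nonempty subset of $\IR$ containing $0$ and closed under $(g,h)\mapsto 2g-h$ is exactly a subgroup: indeed $0\in G$ and $2g-h\in G$ give, with $g$ replaced by $0$, $-h\in G$; then $g+h = 2\cdot g - (-h)\in G$. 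That last line is clean and closes the argument. I will present $(1)\Rightarrow(3)$ in exactly this order: establish the reflection principle from the sphere condition, deduce $0\in X-c$ and closure under $(g,h)\mapsto 2g-h$, then invoke the elementary fact that such a set is a subgroup.
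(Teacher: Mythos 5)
Your proof of $(1)\Ra(3)$ has a genuine gap at its final, load-bearing step. The lemma you invoke --- that a subset $G\subseteq\IR$ containing $0$ and closed under $(g,h)\mapsto 2g-h$ must be a subgroup --- is false, and the one-line justification contains an algebra error: $2g-(-h)=2g+h$, not $g+h$. For a counterexample to the lemma, let $H=\IZ+\IZ\sqrt2$ and let $G=2H\cup(1+2H)\cup(\sqrt2+2H)$. Then $0\in G$, and since $2g-h\equiv -h\equiv h\pmod{2H}$ while $G$ is a union of cosets of $2H$, the set $G$ is closed under $(g,h)\mapsto 2g-h$; yet $1+\sqrt2\notin G$, so $G$ is not a subgroup. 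The same example shows that your ``reflection principle'' ($2a-b\in X$ for all $a,b\in X$) genuinely captures less than the Banakh property: the reflection principle only exploits the sphere condition for radii $r$ that are already realized as a distance \emph{from the chosen center} $a$, whereas the definition demands that $\Sf(c;r)$ be a doubleton for \emph{every} $c\in X$ and every $r\in\dist[X^2]$. That stronger statement is exactly what closes the argument: for $g,h\in X-c$ with $g\ne h$, the number $r\defeq|g-h|=\dist(g+c,h+c)$ lies in $\dist[X^2]$, so $\Sf(c;r)$ must be a two-point set, which on the real line forces $\{c-r,c+r\}\subseteq X$ and hence $\pm(g-h)\in X-c$. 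This yields $(X-c)-(X-c)\subseteq X-c$ directly, with no need for the reflection-closure lemma; it is also precisely how the paper argues (routed through condition (2): the sphere $\Sf(x;|y-z|)$ equals $\{x+y-z,\,x-y+z\}$).

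Two smaller points. In $(2)\Ra(1)$ you condition on $\Sf(c;r)\ne\emptyset$, but the definition of a Banakh space requires every sphere of radius $r\in\dist[X^2]$ to be nonempty; you must derive nonemptiness, which (2) gives at once: writing $r=|y-z|$ with $y,z\in X$, condition (2) applied to $c,y,z$ puts $c+r$ and $c-r$ in $X$. Also, your aside that ``$r=0$ occurs only when $X$ is a single point'' is wrong --- $0\in\dist[X^2]$ for every nonempty $X$ --- though the case is harmless, since $\Sf(c;0)=\{c\}=\{x,y\}$ with $x=y=c$ and $\dist(x,y)=0=2r$. The rest of your cycle ($(3)\Ra(2)$ via the subgroup $X-z$, and the reflection computation itself) is correct.
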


\begin{proof} $(1)\Ra (2)$ Assume that $X$ is Banakh. Given any points $x,y,z\in X$, we need to show that $\{x+y-z,x-y+z\}\subseteq X$. This inclusion holds if $y=z$. So, we assume that $y\ne z$, which implies that the real number $r\defeq |y-z|\in\dist[X^2]$ is strictly positive. Since the metric space $X$ is Banakh, the sphere $\Sf(x;r)=X\cap\{x-r,x+r\}$ contains exactly two points, which implies that $\{x+y-z,x-y+z\}=\{x+|y-z|,x-|y-z\}=\{x-r,x+r\}=\Sf(x;r)\subseteq X$.
\smallskip

$(2)\Ra(3)$ Assume that $\{x+y-z,x-y+z\}\subseteq X$ for every $x,y,z\in X$. Fix any $c\in X$. To show that $X-c$ is a subgroup of the real line, it suffices to check that $(X-c)-(X-c)\subseteq X-c$. Given any points $x,y\in X-c$, we have  $\{x+c,y+c\}\subseteq X$ and hence $x-y=(x+c)-(y+c)+c-c\in X-c$.
\smallskip

$(3)\Ra(1)$ Assume that for every $c\in X$, the set $X-c$ is a subgroup of the real line. To prove that $X$ is Banakh, take any point $c\in X$ and real number $r\in\dist[X^2]=\{|x-y|:x,y\in X\}$. Since $X-c$ is a subgroup of the real line, $\{|x-y|,-|x-y|\}=\{x-y,y-x\}=\{(x-c)-(y-c),(y-x)-(x-c\}\subseteq X-c$ and hence $\{c+r,c-r\}=\{c+|x-y|,c-|x-y|\}\subseteq X$. It follows that the sphere $\Sf(c;r)$ in $X$ coincides with the doubleton $\{c+r,c-r\}$ and has diameter $2r$, witnessing that $X$ is a Banakh space.
\end{proof}

\section{Uniqueness of points in Banakh spaces}

In this section we prove that a point of a Banakh space is uniquely determined by its distances from two distinct points of the Banakh space.
 
\begin{theorem}\label{t:GPS} Two points $x,y$ of a Banakh space $X$ are equal if and only if $\dist(a,x)=\dist(a,y)$ and $\dist(b,x)=\dist(b,y)$ for some distinct points $a,b\in X$.
\end{theorem}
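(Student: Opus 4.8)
The ``only if'' part is trivial: when $x=y$ the two displayed equalities hold for every choice of $a,b$, and distinct points $a,b\in X$ exist unless $X$ is a one-point space (the degenerate case one tacitly excludes; a one-point space is indeed Banakh). So the substance of the theorem is the ``if'' part, which I would prove by contradiction.

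Assume $a\ne b$, set $r\defeq\dist(a,x)=\dist(a,y)$ and $s\defeq\dist(b,x)=\dist(b,y)$, and suppose $x\ne y$. If $r=0$ then $x=a=y$, a contradiction, so $r>0$; likewise $s>0$. Since $x$ and $y$ are two distinct points of the sphere $\Sf(a;r)$ and $X$ is Banakh, that sphere must be exactly $\{x,y\}$ and $\dist(x,y)=2r$; the same argument applied to $\Sf(b;s)$ gives $\dist(x,y)=2s$, whence $s=r$. Now invoke the Banakh property at the center $x$: the distinct points $a,b$ both lie on $\Sf(x;r)$, so $\Sf(x;r)=\{a,b\}$ and $\dist(a,b)=2r$.

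At this point all six pairwise distances among $x,y,a,b$ are known ($\dist(x,y)=\dist(a,b)=2r$, the remaining four equal to $r$) and they satisfy the triangle inequality with room to spare, so no contradiction is yet in sight; the decisive step is to leave this ``square'' for a fifth point. Using the Banakh property at the center $a$ with radius $2r=\dist(a,b)$, let $b'$ be the point with $\Sf(a;2r)=\{b,b'\}$ and $\dist(b,b')=4r$. Two triangle inequalities --- a lower estimate through $b$ and an upper estimate through $a$ --- pin down
$$3r=\dist(b,b')-\dist(x,b)\le\dist(x,b')\le\dist(x,a)+\dist(a,b')=3r,$$
so $\dist(x,b')=3r$, and symmetrically $\dist(y,b')=3r$. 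Thus $x$ and $y$ are two distinct points of $\Sf(b';3r)$, so the Banakh property at $b'$ forces $\dist(x,y)=6r$, contradicting $\dist(x,y)=2r$ since $r>0$.

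The one genuinely non-routine point is the realization that the configuration $\{x,y,a,b\}$ is perfectly consistent on its own, so the Banakh hypothesis must be applied at a center lying outside it --- namely the reflection $b'$ of $b$ across $a$ (the symmetric choice, reflecting $a$ across $b$, works equally well). Once that point is introduced, the rest is routine manipulation of the triangle inequality together with repeated appeals to the defining property of Banakh spaces (all radii used, namely $r$, $2r$, $3r$, automatically belong to $\dist[X^2]$, being realized distances).
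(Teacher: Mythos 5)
Your proof is correct and follows essentially the same route as the paper's: establish that all four distances among $x,y,a,b$ equal $r$ with $\dist(x,y)=\dist(a,b)=2r$, introduce the reflection of one of $a,b$ across the other via the Banakh property, pin down its distance $3r$ to both $x$ and $y$ by two triangle inequalities, and derive the contradiction $\dist(x,y)=6r$. The only difference is the cosmetic one you already note (the paper reflects $a$ across $b$, you reflect $b$ across $a$).
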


\begin{proof} The ``only if'' part is trivial. To prove the ``if' part, assume that $x\ne y$ but $\dist(a,x)=\dist(a,y)$ and $\dist(b,x)=\dist(b,y)$ for some distinct points $a,b\in X$. The Banakh property of $X$ implies that $\dist(x,y)=2\dist(a,x)=2\dist(b,x)$ and hence all distances $\dist(a,x)$, $\dist(b,x)$, $\dist(a,y)$, $\dist(b,y)$ are equal to some positive real number $r$. Since $X$ is a Banakh space, the equality $\dist(a,x)=r=\dist(b,x)$ implies $\dist(a,b)=\dist(a,x)+\dist(x,b)=2r$. Since $X$ is a Banakh space, the sphere $\Sf(b;2r)\ni a$  contains a point $c\in X$ such that $\dist(a,c)=4r$. The triangle inequality implies
$$3r=\dist(a,c)-\dist(a,x)\le \dist(x,c)\le\dist(x,b)+\dist(b,c)=r+2r=3r$$and hence $\dist(x,c)=3r$. By analogy we can prove that $\dist(y,c)=3r$. The Banakh property implies $\dist(x,y)=\dist(x,c)+\dist(c,y)=6r$, which contradicts the triangle inequality $\dist(x,y)\le \dist(x,a)+\dist(a,y)=2r$. 
\end{proof}

\section{Discrete lines in Banakh spaces}

In this section we shall prove that any points $x,y$ of a Banakh space $X$ are contained in a unique subset $Z\subseteq X$ which is isometric to the discrete line $\IZ{\cdot}\dist(x,y)$, see Theorem~\ref{t:main1}. This fact will be derived from the following lemma.

\begin{lemma}\label{l:1} Let $a,b$ be two points of a Banakh space $X$ such that $\dist(a,b)=1$. For every $n\in\IN$ there exists a unique sequence of points $(x_k)_{k=-n}^n$ in $X$ such that $x_0=a$, $x_1=b$ and $\dist(x_i,x_j)=|i-j|$ for all $i,j\in\{-n,\dots,n\}$.
\end{lemma}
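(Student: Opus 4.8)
The plan is to prove the statement by induction on $n$. For the base case $n=1$ I take $x_0=a$, $x_1=b$, and I need a point $x_{-1}$ with $\dist(x_{-1},x_0)=1$ and $\dist(x_{-1},x_1)=2$; applying the Banakh property to the sphere $\Sf(a;1)\ni b$ yields exactly two points at distance $2$ from each other, namely $b$ and one other point, which must be $x_{-1}$. Uniqueness of $x_{-1}$ follows from Theorem~\ref{t:GPS}, since $x_{-1}$ is determined by its distances to the two distinct points $x_0=a$ and $x_1=b$; alternatively it is forced by the fact that $\Sf(a;1)$ has only two elements.

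For the inductive step, suppose the sequence $(x_k)_{k=-n}^n$ exists and is unique. I want to produce $x_{n+1}$ and $x_{-(n+1)}$. To get $x_{n+1}$: apply the Banakh property to the sphere $\Sf(x_n;1)$, which contains $x_{n-1}$; it has exactly two points, $x_{n-1}$ and some point $x_{n+1}$, with $\dist(x_{n-1},x_{n+1})=2$. The work is to verify $\dist(x_{n+1},x_j)=|n+1-j|$ for \emph{all} $j\in\{-n,\dots,n\}$, not just $j=n-1,n$. I would do this by a downward induction on $j$ using the triangle inequality squeezed from both sides, exactly as in the proof of Theorem~\ref{t:GPS}: from $\dist(x_{n+1},x_n)=1$, $\dist(x_n,x_j)=n-j$ one side gives $\dist(x_{n+1},x_j)\le n+1-j$; for the reverse, use that $\dist(x_{n+1},x_{n-1})=2$ together with $\dist(x_{n-1},x_j)=n-1-j$ and the already-established $\dist(x_{n+1},x_{j+1})=n-j$ to push the lower bound up to $n+1-j$. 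The point $x_{-(n+1)}$ is handled symmetrically using $\Sf(x_{-n};1)\ni x_{-(n+1)+1}$. Uniqueness of the extended sequence is immediate: $x_{n+1}\in\Sf(x_n;1)=\{x_{n-1},x_{n+1}\}$ and $\dist(x_{n+1},x_{n-1})=2\ne 0$ forces $x_{n+1}$ to be the second element, and likewise for $x_{-(n+1)}$; or again invoke Theorem~\ref{t:GPS}.

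I expect the main obstacle to be the bookkeeping in the downward induction that propagates the distance equalities $\dist(x_{n+1},x_j)=n+1-j$ across all old indices $j$. The naive triangle inequality only pins down distances to the two nearest neighbours $x_n$ and $x_{n-1}$; to reach indices far to the left (including the negative ones) one must chain the estimates carefully, and it is here that the full strength of the Banakh property — both cardinality \emph{and} diameter $2r$ of spheres — is used, through the repeated ``squeeze'' $m-1\le\dist(x_{n+1},x_j)\le m+1$ combined with an exact value at a neighbouring index collapsing the interval. Once the template from Theorem~\ref{t:GPS} is in hand this is routine but needs to be written with care so that the induction hypothesis on $(x_k)_{k=-n}^n$ is genuinely sufficient (in particular, that $\dist(x_i,x_j)=|i-j|$ holds for \emph{all} pairs in the old range, which is exactly what is being carried along).
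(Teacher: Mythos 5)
Your overall skeleton --- induction on $n$, base case via the sphere $\Sf(a;1)$, extension by one index on each side, uniqueness from Theorem~\ref{t:GPS} --- is the same as the paper's, and the base case and the uniqueness part are fine. However, the inductive step has two genuine gaps, and they sit exactly where the content of the lemma lives.

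First, the lower bound $\dist(x_{n+1},x_j)\ge n+1-j$ cannot be obtained by the triangle-inequality squeeze you describe. At the moment you attack index $j$ in the downward induction, every known distance from $x_{n+1}$ to an old point $x_k$ with $k\ge j+1$ equals $n+1-k\le n-j$, so the best reverse-triangle bound is $\dist(x_{n+1},x_j)\ge \dist(x_{n+1},x_{j+1})-\dist(x_{j+1},x_j)=n-j-1$, and the pair you propose to use gives only $\dist(x_{n+1},x_j)\ge \dist(x_j,x_{n-1})-\dist(x_{n-1},x_{n+1})=n-j-3$. No chaining of triangle inequalities closes the resulting window of width $2$. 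The paper closes it by invoking the Banakh property again at each step of the downward induction: the already-constructed point $x_{2(j+1)-(n+1)}$ lies on the same sphere $\Sf(x_{j+1};n-j)$ as $x_{n+1}$, is shown to be distinct from it by a short estimate, and hence is at distance $2(n-j)$ from it; this long distance is what supplies the missing lower bound. (The paper actually runs this reflection on an auxiliary point $x_{n+2}\in\Sf(x_n;2)$ and then squeezes $x_{n+1}$ between $x_n$ and $x_{n+2}$; proving $\dist(x_{n+1},x_{n+2})=1$, i.e.\ that the two independently chosen points line up, is itself a nontrivial claim.)

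Second, ``handled symmetrically'' hides all the distances across $x_0$. The reflection device only works for $j\ge 0$, since for $j<0$ the reflected index $2(j+1)-(n+1)$ drops below $-n$; so you still owe $\dist(x_{n+1},x_j)$ for negative $j$, and above all $\dist(x_{-n-1},x_{n+1})$. Nothing in your argument rules out $x_{-n-1}=x_{n+1}$, i.e.\ the configuration closing up into a cycle of length $2(n+1)$: both new points satisfy every constraint you have imposed, so neither your uniqueness remark nor Theorem~\ref{t:GPS} applies. The paper excludes this by a separate argument (its Claims 6--8): taking the second point $c$ of the sphere $\Sf(x_0;n+1)$, computing $\dist(c,x_{n})=\dist(c,x_{-n})=2n+1$, and contradicting $\dist(x_{-n},x_n)=2n$; it then uses the long anchor $\dist(x_{-n-2},x_{n+2})=2(n+2)$ to squeeze all distances $\dist(x_i,x_j)$ with $i<0<j$. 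Without these two ingredients the induction does not close.
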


\begin{proof} The uniqueness of the sequence $(x_k)_{k=-n}^n$ follows from Theorem~\ref{t:GPS}. The existence will be proved by induction on $n$. To start the induction, let $x_0\defeq a$, $x_1\defeq b$ and apply the Banakh property to find a point $x_{-1}\in X$ such that $\dist(x_{-1},x_0)=1=\dist(x_0,x_1)$ and $\dist(x_{-1},x_1)=2$. It is clear that the sequence $(x_k)_{k=-1}^1$ has the required property.

Now assume that for some $n\in \IN$ there exists a sequence $(x_k)_{k=-n}^n$ such that $x_0=a$, $x_1=b$ and $\dist(x_i,x_j)=|i-j|$ for all $i,j\in\{-n,\dots,n\}$.
In particular, $\dist(x_0,x_n)=n=\dist(x_0,x_{-n})$. 

By the Banakh property, there exists a point $x_{n+1}\in X$ such that $\dist(x_{n+1},x_n)=1$ and $\dist(x_{n+1},x_{n-1})=2$.
Also by the Banakh property, there exists a point $x_{n+2}\in X$ such that $\dist(x_n,x_{n+2})=2$ and $\dist(x_{n-2},x_{n+2})=4$.

\begin{claim}\label{cl:1} $\dist(x_{n+1},x_{n+2})=1$.
\end{claim}

\begin{proof} By the Banakh property, there exists a point $x_{n+2}'\in X$ such that $\dist(x_{n+1},x_{n+2}')=1$ and $\dist(x_n,x'_{n+2})=2$. Assuming that $x_{n+2}\ne x_{n+2}'$ and taking into account that $\dist(x_n,x_{n+2})=2=\dist(x_n,x_{n+2}')$, we can apply the Banakh property and conclude that $\dist(x_{n+2},x_{n+2}')=4$. Taking into account that $\dist(x_{n-2},x_{n+2})=4$ and $\dist(x_{n+2},x_{n+2}')\le \dist(x_{n+2},x_n)+\dist(x_n,x_{n+2}')=2+2<8$, we conclude that  $x_{n+2}'=x_{n-2}$ and hence $\dist(x_{n+2},x'_{n+2})=\dist(x_{n+2},x_{n-2})=4$. The triangle inequality implies that $\dist(x_{n+2},x_{n+1})\le \dist(x_{n+2},x_n)+\dist(x_n,x_{n+1})=3=\dist(x_{n+2},x_{n+2}')-\dist(x_{n+2}',x_{n+1})\le \dist(x_{n+2},x_{n+1})$ and hence $\dist(x_{n+2},x_{n+1})=3$. By analogy we can prove that $\dist(x_{n+2},x_{n-1})=3$. Since $x_{n+1}\ne x_{n-1}$, the Banakh property ensures that $\dist(x_{n-1},x_{n+1})=6$, which contradicts the choice of $x_{n+1}$. This contradiction shows that $x_{n+2}'=x_{n+2}$ and hence $\dist(x_{n+1},x_{n+2})=1$. 
\end{proof}

\begin{claim}\label{cl:2} $\dist(x_{n-1},x_{n+2})=3$.
\end{claim}

\begin{proof} The triangle inequality implies that $$\dist(x_{n-1},x_{n+2})\le \dist(x_{n-1},x_n)+\dist(x_n,x_{n+2})=3=\dist(x_{n+2},x_{n-2})-\dist(x_{n-2},x_{n-1})\le \dist(x_{n+2},x_{n-1})$$ and hence $\dist(x_{n-1},x_{n+2})=3$. 
\end{proof}

\begin{claim}\label{cl:3} For every $i\in\{0,\dots,n+2\}$ we have $\dist(x_i,x_{n+2})=n+2-i$.
\end{claim}

\begin{proof} This claim will be proved by downward induction on $i$. For $i\in\{n-2,n-1,n+1,n+2\}$ the equality $\dist(x_{n+2},x_i)=(n+2)-i$ follows from the choice of $x_{n+2}$ or from Claims~\ref{cl:1}, \ref{cl:2}. Assume that for some $i\in\{n-2,\dots,1\}$ we know that $\dist(x_i,x_{n+2})=n+2-i$. Since $2i-(n+2)\ge -n$ the point $x_{2i-(n+2)}$ is well-defined and $\dist(x_i,x_{2i-(n+2)})=n+2-i$ by the property of the sequence $(x_k)_{k=-n}^n$. Also $\dist(x_{2i-(n+2)},x_{n+2})\ge \dist(x_{2i-(n+2)},x_n)-\dist(x_{n},x_{n+2})=(n-2i+(n+2))-2=2(n-i)>0$. Then $\dist(x_{2i-(n+2)},x_{n+2})=2\dist(x_i,x_{n+2})=2\dist(x_i,x_{2i-(n+2)})=2(n+2-i)$, by the Banakh property of $X$.  
The triangle inequality implies
\begin{multline*}\dist(x_{i-1},x_{n+2})\le \dist(x_{i-1},x_i)+\dist(x_{n+2},x_i)\\=1+(n+2-i)
=(2n+4-2i)-(i-1-(2i-n-2))\\=\dist(x_{2i-(n+2)},x_{n+2})-\dist(x_{i-1},x_{2i-(n+2)})\le \dist(x_{i-1},x_{n+2})
\end{multline*}
 and hence $\dist(x_{n+2},x_{i-1})=(n+2)-(i-1)$. This completes the proof of the inductive step, and also the proof of the equality $\dist(x_i,x_{n+2})=(n+2)-i$ for all $i\in\{n+2,\dots,0\}$.
\end{proof}

\begin{claim}\label{cl:4} For every $i\in\{0,\dots,n+1\}$ we have $\dist(x_i,x_{n+1})=n+1-i$.
\end{claim}

\begin{proof}  For $i=n$ the equality $\dist(x_i,x_{n+1})=1$ follows from the choice of $x_{n+1}$. So, assume that $i\le n$. Then $\dist(x_i,x_n)=n-i$ by the choice of the sequence $(x_k)_{k=-n}^n$ and $\dist(x_i,x_{n+2})=n+2-i$ by Claim~\ref{cl:3}. Applying the triangle inequality, we obtain
$$\dist(x_i,x_{n+1})\le \dist(x_i,x_{n})+\dist(x_n,x_{n+1})=n-i+1=\dist(x_i,x_{n+2})-\dist(x_{n+2},x_{n+1})\le \dist(x_i,x_{n+1})$$and hence $\dist(x_i,x_{n+1})=n+1-i$.
\end{proof}

By analogy with Claims~\ref{cl:1}--\ref{cl:4}, we can prove

\begin{claim}\label{cl:5} There exist points points $x_{-n-2}$ and $x_{-n-1}$ such that for every $i\in\{-n-2,-n-1\}$ and $j\in\{-n-2,-n-1,\dots,0\}$ the equality $\dist(x_i,x_j)=|i-j|$ holds. 
\end{claim}

\begin{claim}\label{cl:6} $\dist(x_{-n-1},x_{n+1})=2(n+1)$.
\end{claim}

\begin{proof} It follows from $\dist(x_{-n-1},x_0)=n+1=\dist(x_0,x_{n+1})$ and the Banakh property of $X$ that either $\dist(x_{-n-1},x_{n+1})=2(n+1)$ or $x_{n+1}=x_{-n-1}$. Let us show that the equality $x_{-n-1}=x_{n+1}$ is not possible. Otherwise, by the Banakh property, there exists a  point $c\in X$ such that $\dist(c,x_0)=n+1$ and $\dist(c,x_{n+1})=2(n+1)$.
The triangle inequality implies that $$\dist(c,x_{n})\le \dist(c,x_0)+\dist(x_0,x_{n})=n+1+n=\dist(c,x_{n+1})-\dist(x_{n+1},x_{n})\le \dist(c,x_{n})$$ and hence $\dist(c,x_{n})=2n+1$. By analogy we can prove that $\dist(c,x_{-n})=2n+1$. 
Since $x_{-n}\ne x_n$, the Banakh property implies $\dist(x_{-n},x_n)=2(2n+1)$ which contradicts the equality  $\dist(x_{-n},x_n)=2n$ holding by the inductive assumption. This contradiction shows that $x_{-n-1}\ne x_{n+1}$ and hence $\dist(x_{-n-1},x_{n+1})=2(n+1)$, by the Banakh property.
\end{proof}

\begin{claim} $\dist(x_{-n-2},x_{n+2})=2(n+2)$.
\end{claim}

\begin{proof} It follows from $\dist(x_{-n-2},x_0)=n+2=\dist(x_0,x_{n+2})$ and the Banakh property that either $\dist(x_{-n-2},x_{n+2})=2(n+2)$ or $x_{n+2}=x_{-n-2}$. Let us show that the equality $x_{-n-2}=x_{n+2}$ is not possible. Otherwise, by the Banakh property, there exists a  point $c\in X$ such that $\dist(c,x_0)=n+2$ and $\dist(c,x_{n+2})=2(n+2)$.
The triangle inequality implies that $$\dist(c,x_{n+1})\le \dist(c,x_0)+\dist(x_0,x_{n+1})=n+2+n+1=\dist(c,x_{n+2})-\dist(x_{n+2},x_{n+1})\le \dist(c,x_{n+1})$$ and hence $\dist(c,x_{n})=2n+3$. By analogy we can prove that $\dist(c,x_{-n-1})=2n+3$. 
By Claim~\ref{cl:6}, $x_{-n-1}\ne x_{n+1}$. Now the Banakh property of $X$ ensures that $\dist(x_{-n-1},x_{n+1})=2(2n+3)$ which contradicts Claim~\ref{cl:6}. This contradiction shows that $x_{-n-2}\ne x_{n+2}$ and hence $\dist(x_{-n-2},x_{n+2})=2(n+2)$, by the Banakh property of $X$.
\end{proof}

\begin{claim} For every $i,j\in\{-n-2,\dots,n+2\}$ we have $\dist(x_i,x_j)=|i-j|$.
\end{claim}

\begin{proof} We lose no generality assuming that $i<j$. If $0\le i$ or $j\le 0$, then the equality $\dist(x_i,x_j)=|i-j|$ follows from Claim~\ref{cl:3}, \ref{cl:4} and \ref{cl:5}. So, we assume that $i<0<j$. Claims~\ref{cl:3}, \ref{cl:4}, \ref{cl:5} and the triangle inequality imply
\begin{multline*}
\dist(x_i,x_j)\le \dist(x_i,x_0)+\dist(x_0,x_j)\\=-i+j=2(n+2)-(n+2-j)-(i-(-n-2))\\
=\dist(x_{-n-2},x_{n+2})-\dist(x_j,x_{n+2})-\dist(x_{-n-2},x_i)\le \dist(x_i,x_j)
\end{multline*}
and hence $\dist(x_i,x_j)=j-i=|i-j|$.
\end{proof}

Now we see that the sequence $(x_i)_{i=-n-1}^{n+1}$ has the required properties, which completes the inductive step, and also completes the proof of Lemma~\ref{l:1}. 
\end{proof}

The following theorem is the principal result of this section.

\begin{theorem}\label{t:main1} For any points $a,b$ of a Banakh space $X$ and the real number $r\defeq \dist(a,b)$, there exists a unique isometric embedding $\ell_{ab}:\IZ r\to X$ such that $\ell_{ab}(0)=a$ and $\ell_{ab}(r)=b$.
\end{theorem}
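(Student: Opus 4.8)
The plan is to reduce everything to Lemma~\ref{l:1} by a rescaling trick, to assemble the finite sequences it produces into a single map defined on all of $\IZ r$, and then to deduce uniqueness directly from Theorem~\ref{t:GPS}.

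First I would dispose of the trivial case $r=0$: then $a=b$, the set $\IZ r=\{0\}$, and the only candidate map is $0\mapsto a$, which is vacuously an isometric embedding and obviously unique. So from now on assume $r>0$, whence $a\ne b$.

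Next, observe that rescaling the metric preserves the Banakh property: the space $(X,\tfrac1r\dist)$ is again Banakh (spheres, diameters and the distance set $\dist[X^2]$ all get multiplied by $\tfrac1r$), and in this space $\dist(a,b)/r=1$. Applying Lemma~\ref{l:1} to $(X,\tfrac1r\dist)$, for every $n\in\IN$ we obtain a unique sequence $(x^{(n)}_k)_{k=-n}^n$ in $X$ with $x^{(n)}_0=a$, $x^{(n)}_1=b$ and $\dist(x^{(n)}_i,x^{(n)}_j)=|i-j|\,r$ for all $i,j\in\{-n,\dots,n\}$. The \emph{key} coherence point is that, for $m\le n$, the truncation $(x^{(n)}_k)_{k=-m}^m$ satisfies the defining conditions of the $m$-th sequence, so by the uniqueness clause of Lemma~\ref{l:1} it must coincide with $(x^{(m)}_k)_{k=-m}^m$. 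Hence the value $x_k\defeq x^{(n)}_k$ (for any $n\ge|k|$) is well defined for every $k\in\IZ$, and $\dist(x_i,x_j)=|i-j|\,r$ for all $i,j\in\IZ$. Since $r>0$, the assignment $k\mapsto kr$ is a bijection $\IZ\to\IZ r$, so setting $\ell_{ab}(kr)\defeq x_k$ yields a well-defined map $\ell_{ab}\colon\IZ r\to X$ with $\ell_{ab}(0)=a$, $\ell_{ab}(r)=b$ and $\dist(\ell_{ab}(mr),\ell_{ab}(nr))=|m-n|\,r=|mr-nr|$; that is, $\ell_{ab}$ is an isometric embedding.

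For uniqueness, suppose $f,g\colon\IZ r\to X$ are isometric embeddings with $f(0)=g(0)=a$ and $f(r)=g(r)=b$. For each $n\in\IZ$ both $f(nr)$ and $g(nr)$ lie at distance $|n|\,r$ from $a$ and at distance $|n-1|\,r$ from $b$; since $a\ne b$, Theorem~\ref{t:GPS} forces $f(nr)=g(nr)$, and therefore $f=g$. I expect no serious obstacle here, as the combinatorial heavy lifting is already contained in Lemma~\ref{l:1}; the only points that need a little care are the normalization to the case $\dist(a,b)=1$ and the coherence of the finite sequences, both of which rest on the uniqueness part of that lemma.
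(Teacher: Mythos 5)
Your proposal is correct and follows essentially the same route as the paper: rescale the metric by $\tfrac1r$ to reduce to $\dist(a,b)=1$, invoke Lemma~\ref{l:1}, and derive uniqueness from Theorem~\ref{t:GPS}. The only difference is that you make explicit the coherence/gluing step (using the uniqueness clause of Lemma~\ref{l:1} to show the finite sequences agree on overlaps), which the paper's proof passes over silently when it speaks of ``a sequence $(x_k)_{k\in\IZ}$''; spelling this out is a small but welcome improvement.
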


\begin{proof} If $r=0$, then the constant function $\ell_{ab}:r\IZ\to\{a\}\subseteq X$ is a required unique isometry with $\ell_{ab}(0)=a$ and $\ell_{ab}(r)=b$. 

So, assume that $r\ne0$ and consider the metric $\rho:X\times X\to\IR$ defined by $\rho(x,y)\defeq\frac{\dist(x,y)}{r}$. It is easy to see that the Banakh property of the metric space $(X,\dist)$ implies the Banakh property of the metric space $(X,\rho)$. Since $\rho(a,b)=1$, we can apply Lemma~\ref{l:1} and find a
 sequence of points $(x_k)_{k\in \IZ}$ in $X$ such that $x_0=a$, $x_1=b$ and $\dist(x_i,x_j)=\rho(x_i,x_j){\cdot}r=|i-j|{\cdot}r$ for all $i,j\in\IZ$. Then the function $\ell_{ab}:r\IZ\to X$, $\ell_{ab}:rn\mapsto x_n$, is an isometric embedding with $\ell_{ab}(0)=a$ and $\ell_{ab}(r)=b$. The uniqueness of $\ell_{ab}$ follows from Theorem~\ref{t:GPS}.
\end{proof}
  
\section{$G$-spheres in Banakh spaces, isometric to subgroups of $\IQ$} 

Let us recall that for a subset $G$ of the real line and a point $c$ of a metric space $X$, the set $$\Sf(c;G)\defeq\{x\in X:\dist(x,c)\in G\}$$is called the {\em $G$-sphere}  around the point $c$.
  
Theorem~\ref{t:main1} implies the following description of $\IZ r$-spheres in Banakh spaces.

\begin{theorem}\label{t:Sf-rZ} For any point $a\in X$ of a Banakh space $X$ and any $r\in\dist[X^2]$, the exists an isometry $\ell:\IZ r\to\Sf(a;\IZ r)$ such that $\ell(0)=a$.
\end{theorem}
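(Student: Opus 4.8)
The plan is to take $\ell$ to be the discrete line $\ell_{ab}\colon\IZ r\to X$ supplied by Theorem~\ref{t:main1} for a suitably chosen point $b\in X$ with $\dist(a,b)=r$, and then to verify that the range of $\ell_{ab}$ is exactly the $\IZ r$-sphere $\Sf(a;\IZ r)$. Once this is done we are finished: every isometric embedding is injective, so $\ell_{ab}$, viewed as a map onto its range, is a surjective isometric embedding, i.e.\ an isometry onto $\Sf(a;\IZ r)$, and it satisfies $\ell_{ab}(0)=a$ by construction. So the real task reduces to a double inclusion of sets.

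First I would dispose of the degenerate case $r=0$, where $\Sf(a;\IZ r)=\Sf(a;\{0\})=\{a\}$ and the constant map works. For $r>0$: since $r\in\dist[X^2]$, the Banakh property applied to the point $a$ and the radius $r$ gives $\Sf(a;r)=\{b,b'\}$ for some $b,b'\in X$; fix $b$ to be one of them, so $\dist(a,b)=r$, and set $\ell\defeq\ell_{ab}$. The inclusion $\rng(\ell)\subseteq\Sf(a;\IZ r)$ is immediate, since for $n\in\IZ$ we have $\dist(a,\ell(rn))=\dist(\ell(0),\ell(rn))=|n|r\in\IZ r$. For the reverse inclusion I would take $z\in\Sf(a;\IZ r)$, write $\dist(a,z)=nr$ with $n\in\w$ (a nonnegative integer), the case $n=0$ giving $z=a=\ell(0)$. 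For $n\ge 1$, apply the Banakh property to $a$ and the radius $nr\in\dist[X^2]$: it yields a two-point set $\Sf(a;nr)=\{u,v\}$ with $\dist(u,v)=2nr$. Since $\ell(rn)$ and $\ell(-rn)$ both lie in $\Sf(a;nr)$ and $\dist(\ell(rn),\ell(-rn))=2nr>0$, the doubleton $\{\ell(rn),\ell(-rn)\}$ is a two-element subset of the two-element set $\{u,v\}$, hence equals it; in particular $z\in\{\ell(rn),\ell(-rn)\}\subseteq\rng(\ell)$, and the proof is complete.

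I do not expect a genuine obstacle here: Theorem~\ref{t:main1} is doing almost all of the work, and the only extra ingredient is the observation that the Banakh hypothesis forbids a sphere $\Sf(a;nr)$ from containing more than two points, which then forces those two points to be the values $\ell(\pm rn)$ already lying on the discrete line through $a$ and $b$. The sole minor care-points are the trivial case $r=0$ and checking $\ell(rn)\ne\ell(-rn)$ for $n\ge 1$ (so that the two-element comparison is legitimate), both of which are immediate.
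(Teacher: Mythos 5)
Your proposal is correct and follows essentially the same route as the paper's proof: choose $b\in\Sf(a;r)$, invoke Theorem~\ref{t:main1} to get $\ell_{ab}$, and use the Banakh property to identify each two-point sphere $\Sf(a;nr)$ with $\{\ell_{ab}(nr),\ell_{ab}(-nr)\}$, so that the range of $\ell_{ab}$ is exactly $\Sf(a;\IZ r)$. The only cosmetic difference is that you treat the case $r=0$ explicitly, which the paper leaves implicit.
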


\begin{proof} Since $X$ is a Banakh space, there exists a point $b\in\Sf(a;r)$. By Theorem~\ref{t:main1}, there exists an isometric embedding $\ell_{a,b}:\IZ r\to X$ such that $\ell_{ab}(0)=a$ and $\ell_{ab}(r)=b$. For every positive integer number $n$ we have $\{\ell_{ab}(nr),\ell_{ab}(-nr)\}\subseteq \Sf(a;nr)$. Since $X$ is a Banakh space, the sphere $\Sf(a;nr)$ contains exactly two points and hence is equal to $\{\ell_{ab}(nr),\ell_{ab}(-nr)\}$. Then $$\ell_{ab}[\IZ r]=\bigcup_{n\in\IZ_+}\{\ell_{ab}(nr),\ell_{ab}(-nr)\}=\bigcup_{n\in\IZ_+}\Sf(a;nr)=\Sf(a;\IZ_+r)=\Sf(a;\IZ r)$$and hence $\ell\defeq\ell_{ab}$ is a required isometry between $\IZ r$ and $\Sf(a;\IZ r)$.\end{proof}

Next, we characterize $G$-spheres in Banakh spaces, which are isometric to subghroups $G$ of the group $\IQ$ of rational numbers.

\begin{theorem}\label{t:main2} For a subgroup $G$ of the group $\IQ$ and a point $c$ of a Banakh space $X$, the $G$-sphere $\Sf(c;G)$ is isometric to $G$ if and only if $G_+\subseteq\dist[X^2]$.
\end{theorem}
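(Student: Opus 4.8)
The plan is to prove the two implications separately. The forward implication is immediate: if $f\colon G\to\Sf(c;G)$ is an isometry, then, since $G$ is a subgroup of $\IR$, we have $\dist[G^2]=\{\,|g-h|:g,h\in G\,\}=G_+$, and as $f$ preserves distances and $\Sf(c;G)\subseteq X$, every $g\in G_+$ is realized as a distance between two points of $X$; hence $G_+\subseteq\dist[X^2]$.

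For the converse, assume $G_+\subseteq\dist[X^2]$. The case $G=\{0\}$ is trivial, since then $\Sf(c;G)=\{c\}$ is isometric to $G=\{0\}$, so assume $G\neq\{0\}$. The first step is to present $G$ as an increasing union of cyclic subgroups: enumerate $G=\{g_n:n\in\w\}$ with $g_0\neq0$ and put $G_n\defeq\langle g_0,\dots,g_n\rangle$. Being finitely generated (a finitely generated subgroup of $\IQ$ is contained in $\frac1N\IZ$ for some $N\in\IN$, hence cyclic), each $G_n$ is a cyclic subgroup of $\IQ$, say $G_n=\IZ q_n$ with $q_n>0$; and $\IZ q_n=G_n\subseteq G_{n+1}=\IZ q_{n+1}$ for all $n$, while $G=\bigcup_{n\in\w}\IZ q_n$. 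Since $q_n\in G_+\subseteq\dist[X^2]$, Theorem~\ref{t:Sf-rZ} applies to each $\IZ q_n$.

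The heart of the proof is to construct, by induction on $n$, isometries $\ell_n\colon\IZ q_n\to\Sf(c;\IZ q_n)$ with $\ell_n(0)=c$ that are \emph{coherent}, i.e.\ $\ell_{n+1}\restriction\IZ q_n=\ell_n$. Given $\ell_n$, Theorem~\ref{t:Sf-rZ} produces \emph{some} isometry $\ell\colon\IZ q_{n+1}\to\Sf(c;\IZ q_{n+1})$ with $\ell(0)=c$. I would first check that $\ell$ maps the subgroup $\IZ q_n$ \emph{onto} $\Sf(c;\IZ q_n)$: indeed $\dist(c,\ell(kq_n))=|k|q_n\in\IZ q_n$ gives $\ell[\IZ q_n]\subseteq\Sf(c;\IZ q_n)$, while for $m>0$ the sphere $\Sf(c;mq_n)$ has exactly two points by the Banakh property (as $mq_n\in\dist[X^2]$), which must then be the distinct points $\ell(mq_n)$ and $\ell(-mq_n)$, so $\Sf(c;\IZ q_n)=\bigcup_{m\in\w}\Sf(c;mq_n)\subseteq\ell[\IZ q_n]$. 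Thus $\ell\restriction\IZ q_n$ and $\ell_n$ are both isometries of $\IZ q_n$ onto $\Sf(c;\IZ q_n)$ sending $0$ to $c$, and their values at $q_n$ both lie in the two-point sphere $\Sf(c;q_n)$; if they coincide at $q_n$, the uniqueness part of Theorem~\ref{t:main1} (or Theorem~\ref{t:GPS}) gives $\ell\restriction\IZ q_n=\ell_n$, and otherwise the same uniqueness gives $\ell\restriction\IZ q_n=\ell_n\circ({-}\mathrm{id})$, where ${-}\mathrm{id}$ is the reflection $x\mapsto-x$ of $\IZ q_n$. Accordingly set $\ell_{n+1}\defeq\ell$ in the first case and $\ell_{n+1}\defeq\ell\circ({-}\mathrm{id})$ in the second; either way $\ell_{n+1}\restriction\IZ q_n=\ell_n$.

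Finally the coherent family glues to a single map $\ell\defeq\bigcup_{n\in\w}\ell_n\colon G\to X$. For $g,h\in G$, choosing $n$ with $g,h\in\IZ q_n$ shows $\dist(\ell(g),\ell(h))=|g-h|$, so $\ell$ is an isometric embedding; and $\ell[G]=\bigcup_n\ell_n[\IZ q_n]=\bigcup_n\Sf(c;\IZ q_n)=\Sf(c;\bigcup_n\IZ q_n)=\Sf(c;G)$, so $\ell$ is an isometry between $G$ and $\Sf(c;G)$. The one genuinely nontrivial point — the main obstacle — is the coherence of the $\ell_n$: Theorem~\ref{t:Sf-rZ} provides each cyclic-sphere isometry only up to the reflection $x\mapsto-x$, so the induction must actively correct this sign at every step, and the observation that makes the correction possible is that the restriction of a sphere-isometry $\IZ q_{n+1}\to\Sf(c;\IZ q_{n+1})$ to the subgroup $\IZ q_n$ is still onto $\Sf(c;\IZ q_n)$, which is exactly what lets the uniqueness theorems pin down that restriction up to sign.
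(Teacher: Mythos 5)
Your proof is correct and follows essentially the same route as the paper: write $G$ as an increasing union of cyclic subgroups $\IZ q_n$, invoke Theorem~\ref{t:Sf-rZ} on each, and resolve the sign ambiguity so that the sphere-isometries cohere, then glue. The only (cosmetic) difference is that the paper normalizes all the $\ell_n$ at once against a single anchor point $x_0\in\Sf(c;q_0)$ and then verifies coherence, whereas you correct the sign inductively against $\ell_n(q_n)$; both reductions rest on the same uniqueness statement from Theorem~\ref{t:main1}.
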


\begin{proof} If the $G$-sphere $\Sf(c;G)$ is isometric to $G$, then $G_+=\dist[G^2]=\dist[\Sf(c;G)^2]\subseteq\dist[X^2]$. 

Now assuming that $G_+\subseteq\dist[X^2]$, we shall prove that the $G$-sphere $\Sf(c;G)$ is isometric to the group $G$. If $G=\{0\}$, then $\Sf(c;G)=\{c\}$ is isometric to the trivial group $G=\{0\}$. So, we assume that the group $G$ is not trivial. Since the group $G\subseteq\IQ$ is countable, there exists a sequence $(G_n)_{n\in\w}$ of nontrivial finitely generated subgroups $G_n\subseteq G_{n+1}$ of $\IQ$ such that $G=\bigcup_{n\in\w}G_n$. Since $\IQ=\bigcup_{n\in\w}\IZ\frac1{n!}$, every group $G_n$ is cyclic, being a  subgroup of a suitable cyclic subgroup $\IZ\frac1{m!}$ of $\IQ$. Then $G_n=\IZ r_n$ for a unique positive rational number $r_n\in G_+\subseteq\dist[X^2]$.  

Since $r_0\in G_+\subseteq\dist[X^2]$ and the metric space $X$ is Banakh, there exist a point $x_0\in \Sf(x;r_0)$. By Theorem~\ref{t:Sf-rZ}, for every $n\in\w$, there exists an isometry $\ell_n:\IZ r_n\to \Sf(c;\IZ r_n)$ such that $\ell_n(0)=c$. Taking into account that $\ell_n$ is an isometry and $r_0\in \IZ r_0=G_0\subseteq G_n=\IZ r_n$, we conclude that $x_0\in \Sf(c;r_0)=\{\ell_n(r_0),\ell_n(-r_0)\}$. If $x_0=\ell_n(r_0)$, then let $\ell'_n\defeq\ell_n$. If $x_0=\ell_n(-r_0)$, then define the isometry $\ell'_n:\IZ r_n\to\Sf(c;\IZ r_n)$ by the formula $\ell'_n(x)=\ell_n(-x)$ for $x\in \IZ r_n$. In both cases, $\ell'_n:\IZ r_n\to\Sf(c;r_n)$ is an isometry such that $\ell'_n(r_0)=x_0$. 

We claim that $\ell'_n=\ell'_{n+1}{\restriction}_{\IZ r_n}$ for each $n\in\w$. Indeed, $\ell'_n:\IZ r_n\to\Sf(c;\IZ r_n)$ and $\ell'_{n+1}:\IZ r_{n+1}\to\Sf(x;\IZ r_{n+1})$ are two isometries with $\ell'_n(0)=c=\ell'_{n+1}(0)$ and $\ell'_n(r_0)=x_0=\ell'_{n+1}(r_0)$.
Taking into account that the $r_n$-sphere $\Sf(c;\IZ r_n)$ is isometric to $\IZ r_n$,  we conclude that $\Sf(c;r_n)\cap\Sf(x_0;r_0- r_n)$ is a singleton consisting of a unique point $s_n\defeq \ell'_n(r_n)=\ell'_{n+1}(r_n)$. Now the uniqueness of the isometry $\ell_{cs_n}$ implies that $\ell'_n=\ell_{cs_n}=\ell'_{n+1}{\restriction}_{\IZ r_n}$.

Then $\ell\defeq\bigcup_{n\in\w}\ell'_n$ is an isometry of the group $G=\bigcup_{n\in\w}G_n=\bigcup_{n\in\w}\IZ r_n$ onto the $G$-sphere $\Sf(c;G)=\bigcup_{n\in\w}\Sf(c;G_n)$.
\end{proof}

\section{Banakh spaces, isometric to subgroups of the real line}

In this section we apply Banakh spaces to characterizing metric spaces that are isometric to certain subgroups of the real line. For some other characterizations of subsets of the real line, see \cite{BBKR}.  

\begin{theorem}\label{t:main3} A metric space $X$ is isometric to a subgroup $G\subseteq\IQ$ if and only if $X$ is a Banakh space with $\dist[X^2]=G_+$.
\end{theorem}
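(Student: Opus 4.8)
The plan is to obtain this as an easy consequence of Theorem~\ref{t:main2} (describing $G$-spheres isometric to subgroups of $\IQ$) together with the characterization of Banakh subspaces of the real line in Theorem~\ref{t:real-Banakh}.

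For the ``only if'' part, I would start from an isometry of $X$ onto a subgroup $G\subseteq\IQ$. Since for every $c\in G$ the translate $G-c=G$ is a subgroup of the real line, Theorem~\ref{t:real-Banakh}$(3)\Ra(1)$ tells us that $G$, and hence the isometric copy $X$, is a Banakh space. It remains to compute the distance set: $\dist[X^2]=\dist[G^2]=\{|g-h|:g,h\in G\}$, and because $G$ is a subgroup of $\IR$ this set is exactly $G_+$ (every element of $G_+$ is $|g-0|$, and every $|g-h|$ lies in $G$ and is nonnegative).

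For the ``if'' part, assume $X$ is a Banakh space with $\dist[X^2]=G_+$ for some subgroup $G\subseteq\IQ$. Fix any point $c\in X$. The key observation is that $X$ coincides with the $G$-sphere $\Sf(c;G)=\{x\in X:\dist(c,x)\in G\}$: for any $x\in X$ we have $\dist(c,x)\in\dist[X^2]=G_+\subseteq G$, so $x\in\Sf(c;G)$, and the reverse inclusion is trivial. Since $G_+\subseteq\dist[X^2]$, Theorem~\ref{t:main2} now yields an isometry of the group $G$ onto $\Sf(c;G)$, i.e.\ onto $X$, which is exactly what we want.

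I do not anticipate a genuine obstacle here, as all the substantive work has been absorbed into Theorems~\ref{t:main2} and~\ref{t:real-Banakh}. The only points needing (routine) care are the trivial facts that a subgroup of $\IR$ is a Banakh space whose distance set equals its positive part, and that in a Banakh space with $\dist[X^2]=G_+\subseteq G$ the $G$-sphere about any point is the whole space; once these are noted, the equivalence follows immediately.
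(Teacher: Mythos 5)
Your proposal is correct and takes essentially the same route as the paper: the ``if'' direction observes that $\Sf(c;G)=X$ because $\dist[X^2]=G_+\subseteq G$ and then invokes Theorem~\ref{t:main2}, which is exactly the paper's argument. The ``only if'' direction is simply declared trivial in the paper; your justification of it via Theorem~\ref{t:real-Banakh} and the computation $\dist[G^2]=G_+$ is a harmless elaboration.
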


\begin{proof} The ``only if'' part of this characterization is trivial. To prove the ``if'' part, assume that $X$ is a Banakh space with $\dist[X^2]=G_+$. It follows from $0\in G_+=\dist[X^2]$ that the Banakh space $X$ is not empty and hence contains some point $x\in X$. By Theorem~\ref{t:main2}, there exists an isometry $f:\Sf(x;G)\to G$. It follows from $\dist[X^2]=G_+$ that $\Sf(x;G)=X$ and hence $f$ is an isometry between the Banakh space $X$ and the group $G$.
\end{proof} 

The following metric characterization of the real line generalizes a characterization of Will Brian \cite{WB} who proved that a metric space $X$ is isometric to the real line if and only if $X$ is a complete Banakh space with $\dist[X^2]=\IR_+$. 

\begin{theorem}\label{t:WB} A metric space $X$ is isometric to the real line if and only if $X$ is a complete Banakh space such that $G_+\subseteq\dist[X^2]$ for some non-cyclic  subgroup $G$ of $\IQ$.
\end{theorem}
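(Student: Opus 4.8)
The plan is to prove the nontrivial implication by manufacturing an isometric copy of $\IR$ inside $X$ as the closure of a suitable $G$-sphere, and then arguing that this copy must be all of $X$. The reverse implication is immediate: the real line is complete, is a Banakh space by Theorem~\ref{t:real-Banakh} (since $\IR-c=\IR$ is a subgroup for every $c\in\IR$), satisfies $\dist[\IR^2]=\IR_+$, and $\IR_+\supseteq\IQ_+$ with $\IQ$ a non-cyclic subgroup of $\IQ$; completeness, the Banakh property, and the distance set are all preserved by isometries.

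So assume $X$ is a complete Banakh space and $G\subseteq\IQ$ is a non-cyclic subgroup with $G_+\subseteq\dist[X^2]$. Since $0\in G_+\subseteq\dist[X^2]$, the space $X$ is nonempty; fix $c\in X$. The first step is to invoke Theorem~\ref{t:main2}: because $G_+\subseteq\dist[X^2]$, the $G$-sphere $\Sf(c;G)$ is isometric to the metric group $G$, and $c\in\Sf(c;G)$ as $0\in G$. The second step records that a non-cyclic subgroup of $\IQ$ is dense in $\IR$: it is nontrivial and not of the form $\IZ\alpha$, so $\inf\{g\in G:g>0\}=0$, which forces density. Consequently the metric completion of $G$ — and hence of its isometric copy $\Sf(c;G)$ — is $\IR$.

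The third step uses completeness of $X$: the closure $Y\defeq\overline{\Sf(c;G)}$ taken in $X$ is a complete metric space in which the isometric copy $\Sf(c;G)$ of $G$ is dense, so by uniqueness of metric completions $Y$ is isometric to $\IR$; note $c\in Y$. The final step, which is where the Banakh hypothesis does the remaining work, is to show $Y=X$. Let $x\in X$ be arbitrary and put $r\defeq\dist(x,c)$. If $r=0$ then $x=c\in Y$. If $r>0$, then $r\in\dist[X^2]$, so the Banakh property makes $\Sf(c;r)$ a two-point set; on the other hand, since $Y$ is isometric to $\IR$ and contains $c$, the set $\{y\in Y:\dist(y,c)=r\}$ already has two points and is contained in $\Sf(c;r)$, so the two sets coincide. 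Hence $x\in\Sf(c;r)\subseteq Y$, and we conclude that $X=Y$ is isometric to $\IR$.

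I expect no serious obstacle here: the content is concentrated in Theorem~\ref{t:main2}, and the only genuinely new observation is that the two-point spheres of a Banakh space are automatically filled already by the embedded line $Y$, which is exactly what lets the cardinality count close the argument (so Brian's theorem need not be cited). The one place to be careful is the passage from ``non-cyclic'' to ``dense in $\IR$'' and thence to ``completion $=\IR$'', since this is precisely the hypothesis that rules out the discrete Banakh spaces $\IZ r$ and so must be used in an essential way; everything else is routine.
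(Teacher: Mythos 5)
Your proof is correct and follows essentially the same route as the paper's: apply Theorem~\ref{t:main2} to get $\Sf(c;G)$ isometric to $G$, use that a non-cyclic subgroup of $\IQ$ is dense in $\IR$ so the closure $\overline{\Sf(c;G)}$ in the complete space $X$ is isometric to $\IR$, and then conclude $X=\overline{\Sf(c;G)}$ because each two-point sphere $\Sf(c;r)$ is already filled by the embedded line. The only cosmetic difference is that you invoke uniqueness of metric completions where the paper cites the extension theorem for uniformly continuous maps from Engelking.
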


\begin{proof} The ``only if'' part of this characterization is trivial. To prove the ``if'' part, assume that $X$ is a complete Banakh space such that $G_+\subseteq\dist[X^2]$ for some non-cyclic subgroup $G$ of $\IQ$. Since $0\in G_+\subseteq\dist[X^2]$, the Banakh space $X$ is not empty and hence $X$ contains some point $c\in X$. By Theorem~\ref{t:main2}, there exists an isometry $f:\Sf(c;G)\to G$ such that $f(c)=0$. Let $\overline{\Sf(c;G)}$ be the closure of the $G$-sphere $\Sf(c;G)$ in the complete metric space $X$. Being non-cyclic, the subgroup $G$ of $\IQ$ is dense in $\IR$.
By \cite[4.3.18]{Eng}, the isometry $f$ extends to an isometry $\bar f:\overline{\Sf(c;G)}\to \overline{G}=\IR$. It remains to show that $X=\overline{\Sf(c;G)}$. Since $X$ is a Banakh space, for any point $x\in X\setminus\{c\}$, the sphere $\Sf(c;r)$ of radius $r\defeq\dist(x,c)$ contains exactly two points. On the other hand, since $\bar f:\overline{\Sf(c;G)}\to\IR$ is an isometry with $\bar f(c)=f(c)=0$, $\Sf(c;r)\supseteq\{\bar f^{-1}(r),\bar f^{-1}(-r)\}$ and hence $x\in \Sf(c;r)=\{\bar f^{-1}(r),\bar f^{-1}(-r)\}\subseteq\overline{\Sf(c;G)}$. Therefore, $X=\overline{\Sf(c;G)}$ and $\bar f$ is an isometry between the Banakh space $X$ and the real line.
\end{proof}   

\begin{remark} Theorems~\ref{t:H1} and \ref{t:H2} show that both conditions (the completeness and $G_+\subseteq\dist[X^2]$) in Theorem~\ref{t:WB} are essential.
\end{remark}
 
\section{Orientation in Banakh spaces}

In this section we introduce the notion of orientation in a Banakh space and study its properties. This notion is a crucial tool for studying the structure of $\IQ r$-spheres in Banakh spaces. 

By Theorem~\ref{t:main1}, for any points $a,b$ of a Banakh space $X$, there exists a unique isometric embedding $\ell_{ab}:\IZ{\cdot}\dist(a,b)\to X$ such that $\ell_{ab}(0)=a$ and $\ell_{ab}(\dist(a,b))=b$. 

\begin{definition} Given points $o,x,y$ in a Banakh space $X$, we write  $ox\upuparrows oy$ (resp. $ox\updownarrows oy$) if $\ell_{ox}(m\dist(o,x))=\ell_{oy}(n\dist(o,y))$ for some numbers $n,m\in\IZ$ with $nm>0$ (resp. $nm<0)$.
\end{definition}

\begin{lemma}\label{l:upup} Let $o,x,y$ be points of a Banakh space $X$. 
\begin{enumerate}
\item If  $ox\upuparrows oy$, then $\ell_{ox}(n\dist(o,x))=\ell_{oy}(m\dist(o,y))$  for any nonzero numbers $n,m\in\IZ$ with $n\dist(o,x)=m\dist(o,y)$.
\item If  $ox\updownarrows oy$, then $\ell_{ox}(n\dist(o,x))=\ell_{oy}(m\dist(o,y))$  for any nonzero numbers $n,m\in\IZ$ with $n\dist(o,x)=-m\dist(o,y)$.
\end{enumerate}
\end{lemma}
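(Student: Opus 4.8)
The plan is to prove both items simultaneously, since they are mirror images of each other; I will write the argument for item (1) and indicate the sign changes needed for item (2). The idea is to reduce the general claim to the definition by working inside the ``common refinement'' $\IZ\cdot s$ of the two discrete lines $\IZ\dist(o,x)$ and $\IZ\dist(o,y)$, where $s$ is chosen so that both $\dist(o,x)$ and $\dist(o,y)$ are integer multiples of $s$. Concretely, let $p\defeq\dist(o,x)$ and $q\defeq\dist(o,y)$. By hypothesis $ox\upuparrows oy$, so there are integers $k,\ell$ with $k\ell>0$ and $\ell_{ox}(kp)=\ell_{oy}(\ell q)$; in particular $kp=\ell q$, so $p/q=\ell/k$ is rational, and we may write $p=as$, $q=bs$ for a common value $s$ and integers $a,b$ with $ab>0$ (choosing $s$ via a common denominator). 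Now take any nonzero $n,m\in\IZ$ with $np=mq$; this forces $na=mb$, hence $nak\cdot s = \ell_{?}$-type bookkeeping shows $n,m$ have the same sign as $a,b$ among themselves, and the quantity $t\defeq np=mq$ is a fixed element of the real line.

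The key step is then to identify, for $z\in\{x,y\}$, the point $\ell_{oz}(t)$ intrinsically, independently of which multiple of $\dist(o,z)$ realizes $t$. For this I would invoke Theorem~\ref{t:main1}: the set $\ell_{ox}[\IZ p]$ is (the image of) the unique discrete line through $o$ and $x$, and more relevantly, for the point $w\defeq\ell_{ox}(kp)=\ell_{oy}(\ell q)$ the composition $\ell_{ow}$ is well-defined, and one checks $\ell_{ox}(jkp)=\ell_{ow}(j\dist(o,w))=\ell_{oy}(j\ell q)$ for every $j\in\IZ$ by uniqueness of isometric embeddings fixing $o$ and $w$ (Theorem~\ref{t:GPS} pins down each point from its distances to $o$ and $w$). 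This already gives the conclusion along the arithmetic progression $\{jk p : j\in\IZ\}=\{j\ell q:j\in\IZ\}$. To upgrade to all $n,m$ with $np=mq$, I pass to the finer line: let $d=\gcd$-type common value so that $p=as$, $q=bs$ with $\ell_{ox}[\IZ p]\subseteq \ell_{ov}[\IZ s]$ for a suitable $v=\ell_{ox}(s)$ (using that $\Sf(o;s)$, which exists because $s=np-m'q\in\dist[X^2]$ for appropriate bookkeeping — or more safely, because $s$ divides $p\in\dist[X^2]$ and Theorem~\ref{t:Sf-rZ} or \ref{t:main2} produces the scaled line). Then both $\ell_{ox}$ and $\ell_{oy}$ factor through $\ell_{ov}$: precisely $\ell_{ox}(np)=\ell_{ov}(nas)$ and $\ell_{oy}(mq)=\ell_{ov}(mbs)$, and $np=mq$ gives $nas=mbs$, so $na=mb$, so $\ell_{ov}(nas)=\ell_{ov}(mbs)$, whence $\ell_{ox}(np)=\ell_{oy}(mq)$ as desired.

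The hygiene needed is checking the sign compatibility: I must be sure that the orientation $\upuparrows$ transfers correctly so that $na$ and $mb$ really are equal (not just equal in absolute value) — this is where the hypothesis $k\ell>0$ is used, to fix the orientation of $\ell_{ov}$ relative to both $\ell_{ox}$ and $\ell_{oy}$ consistently, and correspondingly in item (2) the hypothesis $k\ell<0$ forces $\ell_{oy}$ to run in the opposite direction, yielding $\ell_{ox}(np)=\ell_{oy}(mq)$ exactly when $np=-mq$. I expect the main obstacle to be precisely this careful factorization through the common refinement $\IZ s$: one must justify that $v=\ell_{ox}(s)$ exists (equivalently $s\in\dist[X^2]$) and that $\ell_{ox}$ and $\ell_{oy}$ are the unique isometric embeddings extending $\ell_{ov}$ appropriately, which again rests on the GPS-type uniqueness of Theorem~\ref{t:GPS} and the uniqueness clause of Theorem~\ref{t:main1}. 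The rest is routine arithmetic with signs.
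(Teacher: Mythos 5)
Your first step (propagating the witness along the progression $\{(jk,j\ell):j\in\IZ\}$ via the uniqueness of $\ell_{ow}$ for $w\defeq\ell_{ox}(kp)$) is sound, but it only covers pairs $(n,m)$ in the sublattice generated by the given witness $(k,\ell)$, and the definition of $\upuparrows$ supplies only \emph{some} witness, not a primitive one; reducing to a primitive witness is essentially the statement being proved. So everything rests on your ``common refinement'' step, and that step has a genuine gap: you need a point $v$ with $\dist(o,v)=s$ where $s$ is a common divisor of $p=\dist(o,x)$ and $q=\dist(o,y)$, and such a point need not exist in a Banakh space. Neither of your justifications works. The set $\dist[X^2]$ is not closed under subtraction, so ``$s=np-m'q\in\dist[X^2]$'' is unfounded; and Theorem~\ref{t:Sf-rZ} only produces the $\IZ r$-line for $r$ already in $\dist[X^2]$, while Theorem~\ref{t:main2} assumes $G_+\subseteq\dist[X^2]$, which is exactly the divisibility you would need to prove. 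In fact the paper makes a point of the failure of such divisibility: on each rational ray the distance set is only a (floppy) monoid, and monoids like $\w\setminus\{1\}$ show that a space can contain distances $2$ and $3$ without containing their gcd $1$, so $\Sf(o;s)$ can be empty. (There is also a smaller slip: $\ell_{ox}$ is defined only on $\IZ p$, so ``$v=\ell_{ox}(s)$'' is not meaningful for $s<p$.)

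The fix is to avoid leaving the two given lines altogether, which is what the paper does: since $|n|p=|m|q$, the Banakh two-point-sphere property forces $\ell_{ox}(np)\in\{\ell_{oy}(mq),\ell_{oy}(-mq)\}$; assuming the wrong alternative, compute the distance from $\ell_{ox}(np)$ to the witness point $\ell_{ox}(pp')=\ell_{oy}(qq')$ in two ways (once inside $\ell_{ox}[\IZ p]$, once inside $\ell_{oy}[\IZ q]$) and derive $|q'+m|=|q'-m|$, hence $q'=0$ or $m=0$, a contradiction. If you want to keep your two-stage structure, the same two-way distance computation applied at the common \emph{multiple} $T=ap=bq$ (which does lie on both lines) rather than a common divisor would also close the gap, but that is then the paper's argument in disguise.
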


\begin{proof} Assume that $ox\upuparrows oy$ or $ox\updownarrows oy$, and let $$s\defeq\begin{cases}\;\;\,1&\mbox{if $ox\upuparrows oy$};\\
-1&\mbox{if $ox\updownarrows oy$}.
\end{cases}
$$

Since $ox\upuparrows oy$ or $ox\updownarrows oy$, there exist numbers $p,q\in\IZ$ such that $spq>0$ and $\ell_{ox}(p\dist(o,x))=\ell_{oy}(q\dist(o,y))$. Then $|p|\dist(o,x)=\dist\big(o,\ell_{ox}(p\dist(o,x))\big)=\dist\big(o,\ell_{oy}(q\dist(o,y))\big)=|q|\dist(o,y)$. Now take any nonzero integer numbers $n,m$ such that $n\dist(o,x)=sm \dist(o,y)$. 

If $\dist(o,x)=0$, then $\dist(o,y)=\frac {|p|}{|q|}\dist(o,x)=0$ and hence $$\ell_{ox}(m\dist(o,x))=\ell_{0,x}(0)=o=\ell_{oy}(0)=\ell_{oy}(n\dist(o,y)).$$
So, we assume that $\dist(o,x)\ne0$.

Since $\ell_{ox}$ and $\ell_{oy}$ are isometric embeddings with $\ell_{ox}(0)=o=\ell_{oy}(0)$, the Banakh property ensures that $\ell_{ox}(n\dist(o,x))\in\{\ell_{oy}(m\dist(o,y)),\ell_{oy}(-m\dist(0,y))\}$. Assuming that $\ell_{ox}(n\dist(o,x))=\ell_{oy}(-m\dist(0,y))$, we conclude that
\begin{multline}\label{eq:mult1}
|p-n|\dist(o,x)=\dist\big(\ell_{ox}(p\dist(0,x)),\ell_{ox}(n\dist(o,x))\big)=\dist\big(\ell_{oy}(q\dist(0,y)),\ell_{oy}(-m\dist(0,y))\big)\\=|q+m|\dist(o,y)=|q+m|s\frac {n}m\dist(o,x)=|q+m|s\frac {p}q\dist(o,x).
\end{multline}
If $q=-m$, then $|p-n|=|q+m|s\frac{n}m=0$ and hence $p=n$, which implies a contradiction: $s=\frac pq=\frac{n}{-m}=-s$. This contradiction shows that $|q+m|\ne0$. The equality (\ref{eq:mult1}) implies  
$$\frac{|p-n|}{|q+m|}=s\frac {n}m=s\frac {p}q=c$$ for a unique positive real number $c$. It follows that $n=smc$ and $p=sqc$. Then  $$|q+m|c=|p-n|=|q-m|c\quad\mbox{and}\quad |q+m|=|m-q|.$$ Resolving this equality, we obtain $q+m=m-q$ or $q+m=q-m$. Consequently, $q=0$ or $m=0$, which contradicts the choice of $q$ or $m$. This contradiction shows that $\ell_{ox}(n\dist(o,x))\ne\ell_{oy}(-m\dist(0,y))$ and hence  $\ell_{ox}(n\dist(o,x))=\ell_{oy}(m\dist(0,y))$.
\end{proof}

\begin{proposition}\label{p:updown} For any points $o,x,y,z$ of a Banakh space $X$, the following conditions hold:
\begin{enumerate}
\item $ox\upuparrows ox$.
\item If $ox\upuparrows oy$, then $oy\upuparrows ox$.
\item If $ox\updownarrows oy$, then $oy\updownarrows ox$.
\item If $ox\upuparrows oy$ and $oy\upuparrows oz$, then $ox\upuparrows oz$.
\item If $ox\updownarrows oy$ and $oy\updownarrows oz$, then $ox\upuparrows oz$.
\item If $ox\upuparrows oy$ and $oy\updownarrows oz$, then $ox\updownarrows oz$.
\end{enumerate}
\end{proposition}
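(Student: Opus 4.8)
The plan is to reduce all six items to the single technical fact already established in Lemma~\ref{l:upup}, namely that once $ox\upuparrows oy$ (resp. $ox\updownarrows oy$) holds, the identification $\ell_{ox}(n\dist(o,x))=\ell_{oy}(m\dist(o,y))$ holds for \emph{every} matching pair of nonzero integers, not just one. The degenerate cases where one of $\dist(o,x),\dist(o,y),\dist(o,z)$ vanishes should be dispatched first: if, say, $x=o$, then $\ell_{ox}$ is constant with value $o$, and $ox\upuparrows oy$ is equivalent to $\ell_{oy}(n\dist(o,y))=o$ for some $n\ne 0$, which forces $\dist(o,y)=0$ as well; so in the degenerate situation all the points involved collapse to $o$ and every assertion is trivially true (with the convention that $nm>0$ is witnessed by $n=m=1$). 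From now on I assume all relevant distances are positive.

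Item (1) is immediate: take $n=m=1$, so $\ell_{ox}(1\cdot\dist(o,x))=\ell_{ox}(\dist(o,x))=x$, and $nm=1>0$. Items (2) and (3) are the symmetry of the relation: if $\ell_{ox}(m\dist(o,x))=\ell_{oy}(n\dist(o,y))$ with $nm>0$ (resp. $nm<0$), then reading the equality backwards gives $\ell_{oy}(n\dist(o,y))=\ell_{ox}(m\dist(o,x))$ with $nm>0$ (resp. $nm<0$), which is exactly $oy\upuparrows ox$ (resp. $oy\updownarrows ox$); nothing beyond unwinding the definition is needed.

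The content is in the transitivity items (4)--(6), and they all run the same way, so I would write one argument with a sign parameter. Put $s_1=+1$ if $ox\upuparrows oy$ and $s_1=-1$ if $ox\updownarrows oy$, and similarly $s_2$ for the relation between $oy$ and $oz$; the conclusion will be the relation between $ox$ and $oz$ with sign $s_1s_2$, which covers (4) ($s_1=s_2=+1$), (5) ($s_1=s_2=-1$) and (6) ($s_1=+1,s_2=-1$) simultaneously. Write $r_x=\dist(o,x)$, $r_y=\dist(o,y)$, $r_z=\dist(o,z)$, all positive. From the hypotheses and the first paragraph of the proof of Lemma~\ref{l:upup} we know $r_x,r_y,r_z$ are pairwise commensurable; choose a positive real $u$ and positive integers $a,b,c$ with $r_x=au$, $r_y=bu$, $r_z=cu$. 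Now apply Lemma~\ref{l:upup}(1) or (2) to the pair $(x,y)$ with the matching integers $n=b$, $m=s_1a$ (so that $n r_x = b a u$ and $s_1 m r_y = a b u$ agree, and $nm$ has sign $s_1$), obtaining $\ell_{ox}(b r_x)=\ell_{oy}(s_1 a r_y)$. Likewise apply it to $(y,z)$ with the integers chosen so that $\ell_{oy}(s_1 a r_y)$ is matched to $\ell_{oz}(s_1 s_2 a r_z)$ — concretely, multiply the $y$-index $s_1 a\cdot(c/\gcd)$-style representatives through so that the common point $\ell_{oy}(\cdot)$ of the two applications literally coincides, which is legitimate precisely because Lemma~\ref{l:upup} gives the identification for \emph{all} matching integer pairs. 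Composing the two equalities yields $\ell_{ox}(N r_x)=\ell_{oz}(M r_z)$ for suitable nonzero integers $N,M$ whose product has sign $s_1 s_2$, which is the desired relation $ox\upuparrows oz$ when $s_1s_2=+1$ and $ox\updownarrows oz$ when $s_1s_2=-1$.

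The main obstacle is purely bookkeeping: making sure the intermediate point on the $oy$-line used in the first application is \emph{exactly} the same point as the one used in the second application, so that the two equalities chain. This is where Lemma~\ref{l:upup} is indispensable — it lets me freely rescale the integer indices (as long as $n r_x = \pm m r_y$ is preserved) to force the middle terms to match, rather than being stuck with whatever single pair of integers the definition of $\upuparrows$ happened to hand me. Once that alignment is arranged, tracking the sign of the product of indices through the composition is routine, and the commensurability of the $r$'s (inherited from Lemma~\ref{l:upup}'s proof) guarantees that a common scale $u$ and integers $a,b,c$ exist in the first place.
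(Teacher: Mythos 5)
Your proposal is correct and follows essentially the same route as the paper: items (1)--(3) by unwinding the definition, and the transitivity items by using Lemma~\ref{l:upup} to rescale the integer indices so that the intermediate point on the $oy$-line coincides in both applications, then chaining the two equalities and reading off the sign of the product of indices. The paper achieves the alignment by multiplying the two witness pairs $(n,m)$ and $(p,q)$ into $(pn,pm)$ and $(pm,qm)$, which is just a notational variant of your common-scale bookkeeping with $u$, $a$, $b$, $c$.
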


\begin{proof} 1,2,3. The first three statements follow immediately from the definition of the relations $\upuparrows$ and $\updownarrows$.
\smallskip

4.  Assuming that $ox\upuparrows oy$ and $oy\upuparrows oz$, we can find integer numbers $n,m,p,q\in\IZ$ such that $\ell_{ox}(n\dist(o,x))=\ell_{oy}(m\dist(o,y))$, $\ell_{oy}(p\dist(o,y))=\ell_{oz}(q\dist(o,z))$ and  $nm>0$, $pq>0$. Since $\ell_{ox}$ and $\ell_{oy}$ are isometries with $\ell_{ox}(0)=o=\ell_{oy}(0)$, we obtain 
$$|n|\dist(o,x)=\dist(\ell_{ox}(n\dist(o,x)),o)=\dist(\ell_{oy}(m\dist(o,y)),o)=|m|\dist(o,x).$$ Since $mn>0$, the equality $|n|\dist(o,x)=|m|\dist(o,y)$ implies $n\dist(o,x)=m\dist(o,y)$. By analogy we can show that $\ell_{oy}(p\dist(o,y))=\ell_{oz}(q\dist(o,z))$ and $pq>0$ imply $p\dist(o,y)=q\dist(o,z)$. It follows from $nm>0$ and $pq>0$ that $pnqm>0$.  The equalities $n\dist(o,x)=m\dist(o,y)$ and $p\dist(o,y)=q\dist(o,z)$ imply $pn\dist(o,x)=pm\dist(o,y)=qm\dist(oz)$. Lemma~\ref{l:upup}(1) ensures that 
$$\ell_{ox}(pn\dist(0,x))=\ell_{oy}(pm\dist(o,y))=\ell_{oz}(qm\dist(oz))$$ and hence $ox\upuparrows oz$ because $pnqm>0$.
\smallskip

5.  Assuming that $ox\updownarrows oy$ and $oy\updownarrows oz$, we can find integer numbers $n,m,p,q\in\IZ$ such that $\ell_{ox}(n\dist(o,x))=\ell_{oy}(m\dist(o,y))$, $\ell_{oy}(p\dist(o,y))=\ell_{oz}(q\dist(o,z))$ and  $nm<0$, $pq<0$. Since $\ell_{ox}$ and $\ell_{oy}$ are isometries with $\ell_{ox}(0)=o=\ell_{oy}(0)$, we obtain 
$$|n|\dist(o,x)=\dist(\ell_{ox}(n\dist(o,x)),o)=\dist(\ell(m\dist(o,y)),o)=|m|\dist(o,x).$$ Since $mn<0$, the equality $|n|\dist(o,x)=|m|\dist(o,y)$ implies $n\dist(o,x)=-m\dist(o,y)$. By analogy we can show that $\ell_{oy}(p\dist(o,y))=\ell_{oz}(q\dist(o,z))$ and $pq<0$ imply $p\dist(o,y)=-q\dist(o,z)$. It follows from $nm<0$ and $pq<0$ that $pnqm>0$.  The equalities $n\dist(o,x)=-m\dist(o,y)$ and $-p\dist(o,y)=q\dist(o,z)$ imply $pn\dist(o,x)=-pm\dist(o,y)=qm\dist(oz)$. Lemma~\ref{l:upup}(2) ensures that 
$$\ell_{ox}(pn\dist(0,x))=\ell_{oy}(pm\dist(o,y))=\ell_{oz}(qm\dist(oz))$$ and hence $ox\upuparrows oz$ because $pnqm>0$.
\smallskip 
 
6.  Assuming that $ox\upuparrows oy$ and $oy\updownarrows oz$, we can find integer numbers $n,m,p,q\in\IZ$ such that $\ell_{ox}(n\dist(o,x))=\ell_{oy}(m\dist(o,y))$, $\ell_{oy}(p\dist(o,y))=\ell_{oz}(q\dist(o,z))$ and  $nm>0$, $pq<0$. Since $\ell_{ox}$ and $\ell_{oy}$ are isometries with $\ell_{ox}(0)=o=\ell_{oy}(0)$, we obtain 
$$|n|\dist(o,x)=\dist(\ell_{ox}(n\dist(o,x)),o)=\dist(\ell(m\dist(o,y)),o)=|m|\dist(o,x).$$ Since $mn>0$, the equality $|n|\dist(o,x)=|m|\dist(o,y)$ implies $n\dist(o,x)=m\dist(o,y)$. By analogy we can show that $\ell_{oy}(p\dist(o,y))=\ell_{oz}(q\dist(o,z))$ and $pq<0$ imply $p\dist(o,y)=-q\dist(o,z)$. It follows from $nm>0$ and $pq<0$ that $pnqm<0$.  The equalities $n\dist(o,x)=m\dist(o,y)$ and $p\dist(o,y)=-q\dist(o,z)$ imply $pn\dist(o,x)=pm\dist(o,y)=-qm\dist(oz)$. Lemma~\ref{l:upup} ensures that 
$$\ell_{ox}(pn\dist(0,x))=\ell_{oy}(pm\dist(o,y))=\ell_{oz}(qm\dist(oz))$$ and hence $ox\updownarrows oz$ because $pnqm<0$.
\end{proof}


\begin{lemma}\label{l:inv} For any points $a,b$ of a Banakh space $X$ we have
$$\ell_{ab}(n\dist(a,b))=\ell_{ba}(-(n-1)\dist(a,b))$$for every $n\in\IZ$.
\end{lemma}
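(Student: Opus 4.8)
The statement to prove is Lemma~\ref{l:inv}: for any points $a,b$ of a Banakh space $X$ and every $n\in\IZ$,
$$\ell_{ab}(n\dist(a,b))=\ell_{ba}(-(n-1)\dist(a,b)).$$
The plan is to reduce everything to the uniqueness statement of Theorem~\ref{t:main1}. Write $r\defeq\dist(a,b)$. If $r=0$ the claim is trivial since both maps are constant at $a=b$, so assume $r>0$. Define a map $g:\IZ r\to X$ by $g(nr)\defeq\ell_{ba}(-(n-1)r)$ for $n\in\IZ$. The key point is that $n\mapsto -(n-1)=1-n$ is a bijection of $\IZ$ onto $\IZ$, and it is affine with slope $-1$, hence the composition $\ell_{ba}\circ(\text{this reindexing})$ is again an isometric embedding of $\IZ r$ into $X$: indeed $\dist(g(ir),g(jr))=\dist\big(\ell_{ba}((1-i)r),\ell_{ba}((1-j)r)\big)=|(1-i)-(1-j)|\,r=|i-j|\,r$ because $\ell_{ba}$ is an isometric embedding. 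So $g$ is an isometric embedding of $\IZ r$ into $X$.

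Next I would check the two base values that pin down such an embedding via Theorem~\ref{t:main1}. We have $g(0)=\ell_{ba}(-(0-1)r)=\ell_{ba}(r)=\ell_{ba}(\dist(b,a))=a$ by the defining property of $\ell_{ba}$, and $g(r)=g(1\cdot r)=\ell_{ba}(-(1-1)r)=\ell_{ba}(0)=b$. Thus $g:\IZ r\to X$ is an isometric embedding with $g(0)=a$ and $g(r)=b$. By the uniqueness clause of Theorem~\ref{t:main1}, $g=\ell_{ab}$, which is exactly the asserted identity $\ell_{ab}(nr)=\ell_{ba}(-(n-1)r)$ for all $n\in\IZ$.

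The only thing requiring a moment's care is the verification that the reindexed map is genuinely an isometric embedding \emph{of the metric space $\IZ r$}, i.e.\ that the domain is matched correctly: the reparametrization $nr\mapsto(1-n)r$ is an isometry of $\IZ r$ onto itself (it is $t\mapsto r-t$), so $g$ is the composite of this self-isometry with $\ell_{ba}$, hence an isometric embedding; there is no obstacle here beyond bookkeeping. No appeal to the Banakh property is needed directly—it is already baked into the existence and uniqueness in Theorem~\ref{t:main1}. I expect no genuine obstacle; the whole lemma is a one-line consequence of uniqueness once the correct reindexing isometry is identified, and the proof is essentially as short as this plan.
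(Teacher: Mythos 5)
Your proof is correct and is essentially identical to the paper's argument: both identify the reindexing as the self-isometry $t\mapsto r-t$ of $\IZ r$, observe that $\ell_{ba}$ composed with it is an isometric embedding sending $0\mapsto a$ and $r\mapsto b$, and conclude by the uniqueness clause of Theorem~\ref{t:main1}. The separate treatment of $r=0$ is harmless but not needed, since the same composition argument works when $\IZ r=\{0\}$.
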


\begin{proof} Let $r\defeq\dist(a,b)$ and observe that the map $f:r\IZ\to r\IZ$, $f:x\mapsto r-x$, is an isometry and the composition $\ell_{ba}\circ f:r\IZ\to X$ is an isometric embedding with $\ell_{ba}\circ f(0)=\ell_{ba}(r)=a=\ell_{ab}(0)$ and 
$\ell_{ba}\circ f(r)=\ell_{ba}(0)=b=\ell_{ab}(r)$. Now the uniqueness of an isometric embedding $\ell_{ab}:r\IZ\to X$ implies that $\ell_{ab}=\ell_{ba}\circ f$. Then for every $n\in\IZ$ we have
$$\ell_{ab}(n\dist(a,b))=\ell_{ab}(nr)=\ell_{ba}\circ f(nr)=\ell_{ba}(r-nr)=\ell_{ba}(-(n-1)r)=\ell_{ba}(-(n-1)\dist(a,b)).$$
\end{proof}

The following theorem is the principal result of this section. 
 
\begin{theorem}\label{t:or}  For any points $o,x,y$ of a Banakh space $X$ the following conditions hold.
\begin{enumerate}
\item If $ox\upuparrows oy$ or $ox\updownarrows oy$, then $\dist(o,x)=\frac mn\dist(o,y)$ for some $m,n\in\IN$;
\item If $\dist(o,x)=\frac mn\dist(o,y)$ for some $m,n\in\IN$, then $ox\upuparrows  oy$ or $ox\updownarrows oy$.
\item If $ox\upuparrows oy$, $yo\upuparrows yx$, and $\dist(x,y)\in \IQ{\cdot}\dist(o,x)=\IQ{\cdot}\dist(o,y)$, then $xy\updownarrows xo$.
\item If $ox\updownarrows oy$, then $\dist(x,y)=\dist(x,o)+\dist(o,y)$.
\item If $ox\upuparrows oy$, then either $x=o=y$ or $\dist(x,y)<\dist(x,o)+\dist(o,y)$.
\item If $ox\upuparrows oy$ and $\dist(o,x)=\dist(o,y)$, then $x=y$.
\end{enumerate}
\end{theorem}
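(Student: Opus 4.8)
The plan is to prove the assertions in the order (1), (2), (4), (6), (5), (3); the first five are short and feed into the last. I use throughout the basic properties of the embeddings $\ell_{ab}$ of Theorem~\ref{t:main1}: $\ell_{ab}(0)=a$, $\ell_{ab}(\dist(a,b))=b$, and $\dist\big(a,\ell_{ab}(k\dist(a,b))\big)=|k|\dist(a,b)$. \emph{For (1)}: from a witness $\ell_{ox}(p\dist(o,x))=\ell_{oy}(q\dist(o,y))$ with $p,q\ne 0$, applying $\dist(o,\cdot)$ gives $|p|\dist(o,x)=|q|\dist(o,y)$. \emph{For (2)}: put $r\defeq\dist(o,x)$, $s\defeq\dist(o,y)$, so $nr=ms$; if $r=s=0$ invoke Proposition~\ref{p:updown}(1), and otherwise $r,s>0$ and $\ell_{oy}(ms)$ is a point of the two-point sphere $\Sf(o;nr)=\{\ell_{ox}(nr),\ell_{ox}(-nr)\}$, which forces $ox\upuparrows oy$ or $ox\updownarrows oy$. \emph{For (4)}: one may assume $r\defeq\dist(o,x)>0$ and $s\defeq\dist(o,y)>0$ (otherwise $ox\updownarrows oy$ forces $x=o=y$), and after rescaling the metric so that $r,s\in\IN$ (this preserves $\updownarrows$ and the claimed identity) one applies Lemma~\ref{l:upup}(2) with a common multiple $N$ of $r,s$ to obtain $w\defeq\ell_{ox}(N)=\ell_{oy}(-N)$ with $\dist(w,x)=N-r$, $\dist(w,y)=N+s$; then $\dist(x,y)\ge\dist(w,y)-\dist(w,x)=r+s\ge\dist(x,y)$. \emph{For (6)}: if $\dist(o,x)=\dist(o,y)=0$ then $x=o=y$, and otherwise $\dist(o,x)=\dist(o,y)>0$ and Lemma~\ref{l:upup}(1) with exponents $n=m=1$ gives $x=\ell_{ox}(\dist(o,x))=\ell_{oy}(\dist(o,y))=y$.

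\emph{Statement (5).} Assume $ox\upuparrows oy$ and not $x=o=y$; then $r\defeq\dist(o,x)>0$ and $s\defeq\dist(o,y)>0$, since $ox\upuparrows oy$ makes $\dist(o,x)=0$ and $\dist(o,y)=0$ equivalent. Set $x'\defeq\ell_{ox}(-r)$; by the uniqueness in Theorem~\ref{t:main1}, $\ell_{ox'}(kr)=\ell_{ox}(-kr)$, which shows $ox'\updownarrows ox$, hence $ox\updownarrows ox'$ by Proposition~\ref{p:updown}(3). Combining this with $oy\upuparrows ox$ (Proposition~\ref{p:updown}(2)) via Proposition~\ref{p:updown}(6) yields $ox'\updownarrows oy$, so $\dist(x',y)=\dist(x',o)+\dist(o,y)=r+s$ by (4). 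If now $\dist(x,y)=r+s$ as well, then $x$ and $x'$ are two distinct points of $\Sf(y;r+s)$; by the Banakh property $\Sf(y;r+s)=\{x,x'\}$ and $\dist(x,x')=2(r+s)$, contradicting $\dist(x,x')=2r$ (recall $s>0$). Since $\dist(x,y)\le\dist(x,o)+\dist(o,y)$ always, we conclude $\dist(x,y)<\dist(x,o)+\dist(o,y)$.

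\emph{Statement (3).} If $\dist(o,x)=0$ the hypotheses force $\dist(o,y)=\dist(x,y)=0$, so $o=x=y$ and there is nothing to prove. Otherwise $r\defeq\dist(o,x)$, $s\defeq\dist(o,y)$, $t\defeq\dist(x,y)$ are positive and, by (1) and the hypothesis, pairwise rational multiples of one another, so after rescaling I may assume $r,s,t\in\IN$. By (2) applied to $x,o,y$ (using $r/t\in\IQ$), either $xo\updownarrows xy$, which is the desired conclusion, or $xo\upuparrows xy$; suppose the latter, so that $ox\upuparrows oy$, $yo\upuparrows yx$ and $xo\upuparrows xy$ all hold. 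The idea is to carry $\{o,x,y\}$ isometrically into $\IR$: let $G\defeq\IZ r+\IZ s+\IZ t=\IZ\cdot\gcd(r,s,t)$, a cyclic subgroup of $\IQ$; granting $G_+\subseteq\dist[X^2]$, Theorem~\ref{t:main2} gives an isometry $f\colon\Sf(o;G)\to G$ with $f(o)=0$, and since $r,s\in G$ we have $x,y\in\Sf(o;G)$, so $\{o,x,y\}$ embeds isometrically into $\IR$ with $o\mapsto 0$. Replacing $f$ by $-f$ if necessary we may take $f(x)=r>0$; then $ox\upuparrows oy$ gives $f(y)=s>0$, $yo\upuparrows yx$ gives that $-s$ and $r-s$ have the same sign, hence $0<r<s$, hence $f(y)-f(x)=s-r$ and $-r=f(o)-f(x)$ have opposite signs, i.e.\ $xo\updownarrows xy$ -- a contradiction. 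Therefore $xy\updownarrows xo$.

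The main obstacle in (3) is justifying $G_+\subseteq\dist[X^2]$, equivalently (by Theorem~\ref{t:Sf-rZ}) that $g\defeq\gcd(r,s,t)\in\dist[X^2]$. When one of $r,s,t$ divides another this is immediate from Theorem~\ref{t:Sf-rZ}: two of the three points then lie on a single discrete line through the third, on which $g$ appears as a realized distance (and the ``all-$\upuparrows$'' configuration is already visibly impossible there). In general I expect to induct on $\max\{r,s,t\}$ in the style of the Euclidean algorithm: using an appropriate $\upuparrows$-relation, the antipode $\ell_{ab}(-\dist(a,b))$ of one of the points, Proposition~\ref{p:updown} and statements (4)--(5), one produces two points realizing the difference of the two larger distances and carrying the same type of configuration with strictly smaller parameters, eventually reaching a pair at distance $g$. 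The delicate point -- and exactly where the hypothesis $\dist(x,y)\in\IQ\dist(o,x)$ is needed -- is the sub-claim that two points $u,v$ with $zu\upuparrows zv$, $\dist(z,u)>\dist(z,v)$ and $\dist(u,v)\in\IQ\dist(z,u)$ satisfy $\dist(u,v)=\dist(z,u)-\dist(z,v)$: the inequality ``$\ge$'' is free from the triangle inequality through the far common point $\ell_{zu}(N)=\ell_{zv}(N)$, and establishing the matching upper bound is the crux.
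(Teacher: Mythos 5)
Your arguments for parts (1), (2), (4), (5) and (6) are correct and run essentially along the paper's own lines: (1) and (2) by applying $\dist(o,\cdot)$ to a witnessing coincidence and by locating $\ell_{ox}(n\dist(o,x))$ on the two-point sphere $\Sf(o;m\dist(o,y))$; (4) by producing a far common point via Lemma~\ref{l:upup}(2) and squeezing with the triangle inequality; (5) by introducing the antipode $x'\defeq\ell_{ox}(-\dist(o,x))$, applying (4) to $ox'\updownarrows oy$, and using the Banakh property on $\Sf(y;r+s)$. Your (6) via Lemma~\ref{l:upup}(1) with $n=m=1$ is even a little more direct than the paper's derivation of (6) from (5).

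Part (3), however, has a genuine gap, and the route you chose does not appear repairable. To invoke Theorem~\ref{t:main2} you need $G_+\subseteq\dist[X^2]$ for $G=\IZ\cdot\gcd(r,s,t)$, i.e.\ you must realize $\gcd(r,s,t)$ as an actual distance in $X$. Nothing in the axioms guarantees this: Theorem~\ref{t:SC} only makes $\dist[X^2]\cap\IQ r$ a \emph{monoid}, and (as Theorem~\ref{t:3} and the example $\w\setminus\{1\}$ later in the paper make explicit) such monoids need not be half-groups, so differences and gcd's of realized distances are in general \emph{not} realized. Your fallback, a Euclidean-algorithm induction whose inductive step is exactly the subtraction $\dist(u,v)=\dist(z,u)-\dist(z,v)$, is precisely the half-group property in disguise; you correctly identify its upper bound as ``the crux'' and leave it open, and proving it by citing Proposition~\ref{p:TE} or the machinery of Theorem~\ref{t:Spm} would be circular, since both are derived from Theorem~\ref{t:or}(3). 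The paper's proof of (3) sidesteps realizing any new distance: assuming $xy\upuparrows xo$ for contradiction, it chooses $k,m,n\in\IN$ with $\dist(o,x)=\frac km\dist(x,y)$ and $\dist(o,y)=\frac nm\dist(x,y)$, inflates them so that $(2mn-1)\dist(o,x)>\dist(x,y)+\dist(o,y)$, and then uses Lemma~\ref{l:upup} together with Lemma~\ref{l:inv} to identify $\ell_{ox}(mn\dist(o,x))$ with points on the $xy$- and $yo$-lines; a short triangle-inequality chain then bounds $(2mn-1)\dist(o,x)$ by $\dist(x,y)+\dist(o,y)$, contradicting the inflation. You should replace your argument for (3) by something of this ``long segment'' type rather than trying to embed $\{o,x,y\}$ into the real line.
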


\begin{proof} 1. If $ox\upuparrows oy$ or $ox\updownarrows oy$, then $\ell_{ox}(n\dist(o,x))=\ell_{oy}(m\dist(o,y))$ for some nonzero numbers $n,m\in\IZ$. Then $|n|\dist(o,x)=\dist\big(\ell_{ox}(0),\ell_{ox}(n\dist(0,x))\big)=\dist\big(\ell_{oy}(0),\ell_{oy}(m\dist(o,y))\big)=|m|\dist(o,y)$ and hence $\dist(o,x)=\frac{|m|}{|n|}\dist(o,y)$.
\smallskip

2. Assume that $\dist(o,x)=\frac mn\dist(o,y)$ for some numbers $m,n\in\IN$. Then 
\begin{multline*}
\dist(o,\ell_{ox}(n\dist(o,x))=\dist\big(\ell_{ox}(0),\ell_{ox}(n\dist(o,x))\big)=n\dist(o,x)\\
=m\dist(o,y)=\dist\big(o,\ell_{oy}(m\dist(0,y))\big)=
\dist\big(o,\ell_{oy}(-m\dist(0,y))\big).
\end{multline*} Since $\dist\big(\ell_{oy}(m\dist(o,y)),\ell_{oy}(-m\dist(o,y))\big)=2m\dist(o,y)>0$, the Banakh property ensures that $$\ell_{ox}(n\dist(0,x))\in \Sf(o,m\dist(o,y))=\{\ell_{oy}(m\dist(o,y)),\ell_{oy}(-m\dist(o,y))\},$$ witnessing that $ox\upuparrows oy$ or $ox\updownarrows oy$.
\smallskip

3. Assume that $ox\upuparrows oy$, $yo\upuparrows yx$ and $\dist(x,y)\in \IQ{\cdot}\dist(o,x)=\IQ{\cdot}\dist(o,y)$.
If $x=y$, then the relations $yo\upuparrows yx$ and $ox\upuparrows oy$ imply $y=o=x$. Then $\ell_{xo}(\dist(x,0))=o=\ell_{xy}(-\dist(x,y))$ and hence $xo\updownarrows xy$.  So, we assume that $x\ne y$. In this case, our assumption  $\dist(x,y)\in \IQ{\cdot}\dist(o,x)=\IQ{\cdot}\dist(o,y)$ implies $\dist(o,x)\ne 0\ne\dist(o,y)$. 

By (already proved) Theorem~\ref{t:or}(2), 
$xy\upuparrows xo$ or $xy\updownarrows xo$. It remains to prove that the case $xy\upuparrows xo$ is impossible. To derive a contradiction, assume that $xy\upuparrows xo$. 

Since $0<\dist(x,y)\in \IQ{\cdot}\dist(o,x)=\IQ{\cdot}\dist(o,y)$, there exist numbers $k,m,n\in\IN$ such that $\dist(o,x)=\frac{k}{m}\dist(x,y)$, $\dist(o,y)=\frac{n}{m}\dist(x,y)$ and hence $\dist(x,o)=\frac kn \dist(o,y)$. Replacing the numbers $k,m,n$ by $pk,pm,pn$ for a sufficiently large number $p\in\IN$ we can additionally assume that $(2mn-1)\dist(o,x)>\dist(x,y)+\dist(o,y)$. 

By Lemma~\ref{l:upup}(2), the relations  $xy\upuparrows xo$, $yo\upuparrows yx$, $ox\upuparrows oy$, and the equalities $nm\dist(x,o)=nk\dist(x,y)$, $-km\dist(y,o)=-kn\dist(y,x)$, $mn\dist(o,x)=mk\dist(o,y)$, imply
$$
\begin{gathered}
\ell_{xo}(nm\dist(x,o))=\ell_{xy}(nk\dist(x,y)),\\
\ell_{yo}(-km\dist(y,o))=\ell_{yx}(-kn\dist(x,y)),\\
\ell_{ox}(mn\dist(o,x))=\ell_{oy}(mk\dist(o,y)).\\ 
\end{gathered}
$$
Applying Lemma~\ref{l:inv}, we obtain
$$
\begin{aligned}
(2mn-1)\dist(o,x)&=\dist\big(\ell_{ox}(-(mn-1)\dist(o,x)),\ell_{ox}(mn\dist(o,x))\big)\\
&=\dist\big(\ell_{xo}(mn\dist(o,x)),\ell_{oy}(mk\dist(o,y))\big)\\
&=\dist\big(\ell_{xy}(nk\dist(x,y)),\ell_{oy}(mk\dist(o,y))\big)\\
&=\dist\big(\ell_{yx}(-(kn-1)\dist(x,y)),\ell_{yo}(-(mk-1)\dist(o,y))\big)\\
&\le \dist\big(\ell_{yx}(-kn\dist(x,y)),\ell_{yo}(-mk\dist(o,y))\big)+\dist(x,y)+\dist(o,y)\\
&=0+\dist(x,y)+\dist(o,y),
\end{aligned}
$$
which contradicts the choice of the numbers $m,n$.
\smallskip

4. Assume that $ox\updownarrows oy$ and hence $\ell_{ox}(n\dist(o,x))=\ell_{oy}(-m\dist(o,y))$ for some $n,m\in\IN$, by Lemma~\ref{l:upup}(2). 
Then $n\dist(o,x)=\dist\big(\ell_{ox}(0),\ell_{ox}(n\dist(o,x))\big)=\dist\big(\ell_{oy}(0),\ell_{oy}(-m\dist(o,y))\big)=m\dist(o,y)$ and hence
$$
\begin{aligned}
&n\dist(o,x)+m\dist(o,y)=2m\dist(o,y)=\dist\big(\ell_{oy}(-m\dist(o,y)),\ell_{oy}(m\dist(o,y))\big)\\
&=\dist\big(\ell_{ox}(n\dist(o,x)),\ell_{oy}(m\dist(o,y))\big)\\
&\le \dist\big(\ell_{ox}(n\dist(o,x)),\ell_{ox}(\dist(o,x))\big)+
\dist\big(\ell_{ox}(\dist(o,x)),\ell_{oy}(\dist(o,y))\big)+\dist\big(\ell_{oy}(\dist(o,y)),\ell_{oy}(m\dist(o,y))\big)\\
&=(n-1)\dist(o,x)+\dist(x,y)+(m-1)\dist(o,y)
\end{aligned}
$$
 and hence $\dist(o,x)+\dist(o,y)\le \dist(x,y)\le \dist(x,o)+\dist(o,y)$ and finally, $\dist(x,y)=\dist(o,x)+\dist(o,y)$.
\smallskip

5. Assume that $ox\upuparrows oy$ and hence $\ell_{ox}(n\dist(o,x))=\ell_{oy}(m\dist(o,y))$ for some integer numbers $n,m$ with $nm>0$. If $x=o$, then $\ell_{oy}(m\dist(o,y))=\dist_{ox}(n\dist(o,x))=\dist_{ox}(0)=o$ and hence $m\dist(o,y)=0$ and $o=y$. By analogy we can prove that $o=y$ implies $o=x$. So, assume that $x\ne o\ne y$. To derive a contradiction, assume that $\dist(x,y)=\dist(x,o)+\dist(o,y)$. Let $y'\defeq\ell_{oy}(-\dist(o,y))$ and observe that $oy\updownarrows oy'$. By Proposition~\ref{p:updown}(6), $ox\upuparrows oy\updownarrows oy'$ implies $ox\updownarrows oy'$. By Theorem~\ref{t:or}(4), $\dist(x,y')=\dist(x,o)+\dist(o,y')=\dist(x,o)+\dist(o,y)=\dist(x,y)$. The Banakh property of $X$ ensures that $2\dist(o,y)=\dist(y,y')=2\dist(x,y)=2\dist(x,o)+2\dist(o,y)$ and hence $\dist(x,o)=0$, which contradicts our assumption. This contradiction shows that $\dist(x,y)\ne \dist(x,o)+\dist(o,y)$ and hence $\dist(x,y)<\dist(x,o)+\dist(o,y)$, by the triangle inequality. 
\smallskip

6. Assume that $ox\upuparrows oy$ and $\dist(o,x)=\dist(o,y)$. By the preceding statement, $\dist(x,y)<\dist(x,o)+\dist(o,y)=2\dist(o,x)$ and hence $x=y$, by the  Banakh property of $X$.
\end{proof}

\begin{proposition}\label{p:TE} Let $x,y,z$ be points of a Banakh space $X$. If $\{\dist(x,y),\dist(x,z),\dist(y,z)\}\subseteq r\IQ$ for some real number $r$, then
$$\dist(x,y)=\dist(x,z)+\dist(z,y)\;\vee\;\dist(x,z)=\dist(x,y)+\dist(y,z)\;\vee\;\dist(y,z)=\dist(y,x)+\dist(x,z)$$
and hence the subspace $\{x,y,z\}$ of $X$ is isometric to a subspace of the real line.
\end{proposition}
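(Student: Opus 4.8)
The plan is to fix one vertex as the origin, exploit the orientation trichotomy of Theorem~\ref{t:or}, and peel off the vertices one at a time. First I would dispose of the degenerate possibilities: if two of $x,y,z$ coincide then one of the three displayed equalities holds trivially, with a zero term (e.g.\ $y=z$ gives $\dist(x,y)=\dist(x,z)+\dist(z,y)$ with $\dist(z,y)=0$), and the resulting subspace is a one- or two-point subset of $\IR$. So from now on assume $x,y,z$ are pairwise distinct; then $\dist(x,y),\dist(x,z),\dist(y,z)$ are \emph{positive} elements of $r\IQ$, hence every ratio of two of these distances equals some $m/n$ with $m,n\in\IN$, which is exactly what is needed to apply Theorem~\ref{t:or}(2) with any of the three vertices as origin.

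Apply Theorem~\ref{t:or}(2) with origin $x$: either $xy\upuparrows xz$ or $xy\updownarrows xz$. In the second case Theorem~\ref{t:or}(4), applied with origin $x$, gives $\dist(y,z)=\dist(y,x)+\dist(x,z)$ and we are done; so assume $xy\upuparrows xz$. Now apply Theorem~\ref{t:or}(2) with origin $z$: either $zx\upuparrows zy$ or $zx\updownarrows zy$; in the second case Theorem~\ref{t:or}(4), applied with origin $z$, gives $\dist(x,y)=\dist(x,z)+\dist(z,y)$ and we are done; so assume in addition $zx\upuparrows zy$.

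It remains to handle the ``coherently oriented'' case $xy\upuparrows xz$ and $zx\upuparrows zy$. Here I would invoke Theorem~\ref{t:or}(3) under the substitution $o\mapsto x$, (its) $x\mapsto y$, (its) $y\mapsto z$: the hypothesis $ox\upuparrows oy$ becomes $xy\upuparrows xz$, the hypothesis $yo\upuparrows yx$ becomes $zx\upuparrows zy$, and the rationality hypothesis $\dist(x,y)\in\IQ{\cdot}\dist(o,x)=\IQ{\cdot}\dist(o,y)$ becomes $\dist(y,z)\in\IQ\dist(x,y)=\IQ\dist(x,z)$, which holds because all three distances are nonzero multiples of $r$. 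The conclusion of Theorem~\ref{t:or}(3) is then $yz\updownarrows yx$, and Theorem~\ref{t:or}(4), applied with origin $y$, converts this into $\dist(x,z)=\dist(x,y)+\dist(y,z)$. Thus in every case one of the three equalities holds; sending the ``middle'' vertex to $0$ and the other two to their signed distances from it exhibits the desired isometric embedding of $\{x,y,z\}$ into $\IR$.

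The whole argument is a short finite case split, so I do not expect a genuine obstacle. The one place demanding care is the last case: getting the substitution into Theorem~\ref{t:or}(3) right (which pair of rays plays the role of $ox,oy$ and which the role of $yo,yx$) and checking its rationality hypothesis. A minor but real point is that the trichotomy of Theorem~\ref{t:or}(2) and the hypothesis of Theorem~\ref{t:or}(3) presuppose the relevant distances are positive, which is precisely why the degenerate cases must be removed at the outset.
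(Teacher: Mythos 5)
Your proof is correct and follows essentially the same route as the paper: after removing the degenerate cases, the paper likewise combines Theorem~\ref{t:or}(2,3) to conclude that $(xy\updownarrows xz)\vee(yx\updownarrows yz)\vee(zx\updownarrows zy)$ and then applies Theorem~\ref{t:or}(4) to each disjunct; your case split merely makes explicit the combination of (2) and (3) that the paper states in one line. Your substitution into Theorem~\ref{t:or}(3) and the verification of its rationality hypothesis are both correct.
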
 

\begin{proof} The conclusion of the proposition is true if $0\in\{\dist(x,y),\dist(x,z),\dist(y,z)\}$. So, we assume that the points $x,y,z$ are pairwise distinct. In this case Theorem~\ref{t:or}(2,3) implies that $(xy\updownarrows xz)\vee(yx\updownarrows yz)\vee(zx\updownarrows zy)$. Applying Theorem~\ref{t:or}(4), we obtain that $$\dist(x,y)=\dist(x,z)+\dist(z,y)\;\vee\;\dist(x,z)=\dist(x,y)+\dist(y,z)\;\vee\;\dist(y,z)=\dist(y,x)+\dist(x,z),$$
which implies that the subspace $\{x,y,z\}$ of $X$ is isometric to a subspace of the real line.
\end{proof}

\begin{proposition}\label{p:sum=>arrows} Let $x,y,z$ be points of a Banakh space $X$ such that $\{\dist(x,y),\dist(y,z)\}\subseteq\IQ r\setminus\{0\}$ for some real number $r$. If $\dist(x,z)=\dist(x,y)+\dist(y,z)$, then $$ yx\updownarrows yz,\quad
xy\upuparrows xz, \quad\mbox{and}\quad zy\upuparrows zx.$$
\end{proposition}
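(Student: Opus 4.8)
The plan is to obtain all of the orientation relations ``for free'' from the dichotomy in Theorem~\ref{t:or}(2), and then to eliminate the unwanted alternative in each of the three cases using the metric refinements of Theorem~\ref{t:or}(4) and (5).

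First I would observe that $\dist(x,z)=\dist(x,y)+\dist(y,z)$ is a sum of two nonzero elements of $\IQ r$, hence is itself a nonzero element of $\IQ r$. Consequently $x,y,z$ are pairwise distinct and all three pairwise distances are mutually commensurable positive reals; in particular the hypothesis ``$\dist(o,u)=\frac mn\dist(o,v)$ for some $m,n\in\IN$'' of Theorem~\ref{t:or}(2) holds when the origin $o$ is taken to be $y$ (with $\{u,v\}=\{x,z\}$), or $x$ (with $\{u,v\}=\{y,z\}$), or $z$ (with $\{u,v\}=\{y,x\}$). Theorem~\ref{t:or}(2) then yields the three dichotomies: $yx\upuparrows yz$ or $yx\updownarrows yz$; $xy\upuparrows xz$ or $xy\updownarrows xz$; and $zy\upuparrows zx$ or $zy\updownarrows zx$.

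Next I would rule out the wrong branch of each dichotomy. If $yx\upuparrows yz$, then Theorem~\ref{t:or}(5) applied with origin $y$ gives either $x=y=z$ or $\dist(x,z)<\dist(x,y)+\dist(y,z)$; the former is impossible since $x\ne y$, and the latter contradicts the hypothesis, so $yx\updownarrows yz$. If $xy\updownarrows xz$, then Theorem~\ref{t:or}(4) applied with origin $x$ gives $\dist(y,z)=\dist(y,x)+\dist(x,z)$; substituting $\dist(x,z)=\dist(x,y)+\dist(y,z)$ forces $\dist(x,y)=0$, a contradiction, so $xy\upuparrows xz$. Symmetrically, if $zy\updownarrows zx$, then Theorem~\ref{t:or}(4) applied with origin $z$ gives $\dist(y,x)=\dist(y,z)+\dist(z,x)$; substituting the hypothesis and using symmetry of $\dist$ forces $\dist(y,z)=0$, a contradiction, so $zy\upuparrows zx$.

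Since the whole argument is just three applications of the already-established Theorem~\ref{t:or}, I do not anticipate a genuine obstacle. The only point requiring care is the bookkeeping: keeping straight which of $x,y,z$ plays the role of the origin $o$ and which plays the role of the endpoints in each clause of Theorem~\ref{t:or}, and remembering that $\dist$ is symmetric when one compares $\dist(y,x)$ with $\dist(x,y)$.
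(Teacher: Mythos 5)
Your proof is correct and follows essentially the same route as the paper: invoke the dichotomy of Theorem~\ref{t:or}(2) for each choice of origin, kill the $\upuparrows$ branch at $y$ with Theorem~\ref{t:or}(5), and kill the $\updownarrows$ branches at $x$ and $z$ with Theorem~\ref{t:or}(4). The only difference is cosmetic: the paper handles the $z$-based case with a ``by analogy'' remark where you write it out.
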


\begin{proof} It follows that $\dist(x,z)=\dist(x,y)+\dist(y,z)\in \IQ r\setminus\{0\}$. Theorem~\ref{t:or}(2,5) ensures that $xy\updownarrows xz$. By Theorem~\ref{t:or}(2), either $xy\upuparrows xz$ of $xy\updownarrows xz$. In the latter case, Theorem~\ref{t:or}(4) implies that a contradiction $$\dist(y,z)=\dist(y,x)+\dist(x,z)=\dist(y,x)+\dist(x,y)+\dist(y,z)>\dist(y,z),$$ showing that $xy\upuparrows xz$. By analogy we can prove that $zx\upuparrows zy$.
\end{proof}

\section{Constructing segments in Banakh spaces}\label{s:SC}

In this section we shall prove several results on construction of segments of given length in Banakh spaces.

The following theorem shows that Banakh spaces satisfy a ``rational'' version of the Axiom of Segment Construction, well-known in Foundations of Geometry \cite{Bolyai,BS,Greenberg,Hartshorne,Hilbert,SST}

\begin{theorem}\label{t:SC} For any points $x,y$ of a Banakh space $X$ and any real number $r\in \dist[X^2]\cap \IQ{\cdot}\dist(x,y)$, there exists a unique point $z\in X$ such that $\dist(y,z)=r$ and $\dist(x,z)=\dist(x,y)+\dist(y,z)$.
\end{theorem}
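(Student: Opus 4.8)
The plan is to first dispose of the degenerate cases, then prove uniqueness via Theorem~\ref{t:GPS}, and finally produce $z$ by selecting the ``correct'' one of the two points of the sphere $\Sf(y;r)$ with the help of the orientation calculus from Theorem~\ref{t:or}.

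If $\dist(x,y)=0$, then $\IQ{\cdot}\dist(x,y)=\{0\}$ forces $r=0$, and $z\defeq x=y$ works; if $r=0$, then $z\defeq y$ works. For uniqueness in general, suppose $z,z'$ both satisfy $\dist(y,z)=r=\dist(y,z')$ and $\dist(x,z)=\dist(x,y)+r=\dist(x,z')$. If $\dist(x,y)>0$, then $x\ne y$, and since $z,z'$ have equal distances to each of the two distinct points $x,y$, Theorem~\ref{t:GPS} gives $z=z'$; if $\dist(x,y)=0$, the relations already force $z=y=z'$. So it remains to prove existence in the case $d\defeq\dist(x,y)>0$ and $r>0$.

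For existence I would argue as follows. Since $r\in\dist[X^2]$ and $X$ is Banakh, the sphere $\Sf(y;r)$ is a two-point set $\{z_1,z_2\}$ with $\dist(z_1,z_2)=2r$. Unwinding the definition of $\ell_{yz_1}$, one gets $z_2=\ell_{yz_1}(-r)=\ell_{yz_2}(r)$, whence $yz_1\updownarrows yz_2$. The crux of the proof is to show that at least one of $z_1,z_2$ --- call it $z$ --- satisfies $yz\updownarrows yx$. Granting this, Theorem~\ref{t:or}(4) applied with the base point $y$ yields $\dist(x,z)=\dist(z,y)+\dist(y,x)=r+d$, and since $\dist(y,z)=r$ by construction, $z$ is the required point.

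The selection step is the only real obstacle. From $r\in\IQ{\cdot}d$ with $r,d>0$ we may write $r=\tfrac mn d$ for some $m,n\in\IN$, so Theorem~\ref{t:or}(2), applied with base point $y$, gives for each $i\in\{1,2\}$ that $yz_i\upuparrows yx$ or $yz_i\updownarrows yx$. If neither $z_1$ nor $z_2$ satisfied $yz_i\updownarrows yx$, then $yz_1\upuparrows yx$ and $yz_2\upuparrows yx$; Proposition~\ref{p:updown}(2,4) would then give $yz_1\upuparrows yz_2$, whence Lemma~\ref{l:upup}(1) (taking both integer multipliers equal to $1$) would force $z_1=\ell_{yz_1}(r)=\ell_{yz_2}(r)=z_2$, a contradiction. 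Hence some $z_i$ satisfies $yz_i\updownarrows yx$, completing the construction. A symmetric argument using Proposition~\ref{p:updown}(3,5) shows the remaining point satisfies $yz_i\upuparrows yx$, but that is not needed. Note that the hypothesis $r\in\dist[X^2]$ is used only to make $\Sf(y;r)$ nonempty, while $r\in\IQ{\cdot}\dist(x,y)$ is used precisely to invoke Theorem~\ref{t:or}(2) in the selection step.
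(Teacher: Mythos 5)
Your proposal is correct and follows essentially the same route as the paper: reduce to the case $r>0$, $\dist(x,y)>0$, take the two antipodal points of $\Sf(y;r)$ (which satisfy $yz_1\updownarrows yz_2$), use Theorem~\ref{t:or}(2) to get the dichotomy, select the point with $yz\updownarrows yx$, and conclude via Theorem~\ref{t:or}(4), with uniqueness from Theorem~\ref{t:GPS}. The only cosmetic difference is in the selection step, where the paper invokes Proposition~\ref{p:updown}(6) directly while you rule out the bad case by a contradiction through Lemma~\ref{l:upup}(1); both are valid.
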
 

\begin{proof} If $r=0$, then $z=y$ is a unique point of $X$ such that $\dist(y,z)=r=0$ and $\dist(x,z)=\dist(x,y)+0=\dist(x,y)+\dist(y,z)$.

So, we assume that $r>0$. In this case $r\in \IQ{\cdot}\dist(x,y)$ implies $\dist(x,y)>0$. Since $X$ is a Banakh space, there exist points $u,v\in X$ such that $\{u,v\}=\Sf(y;r)$ and $\dist(u,v)=2r$. It follows from $\ell_{yu}(r)=u=\ell_{yv}(-r)$ that $yu\updownarrows yv$. By Theorem~\ref{t:or}(2), $yx\upuparrows yu$ or $yx\updownarrows yu$. 

If $yx\updownarrows yu$, then $\dist(x,u)=\dist(x,y)+\dist(y,u)$ by Theorem~\ref{t:or}(4), and we can put $z\defeq u$.

If $yx\upuparrows yu$, then $yx\updownarrows yv$, by Propositions~\ref{p:updown}(6). Theorem~\ref{t:or}(4) ensures $\dist(x,v)=\dist(x,y)+\dist(y,v)$, and we can put $z\defeq v$.

The uniqueness of the point $z$ follows from Theorem~\ref{t:GPS}.
\end{proof}

Theorem~\ref{t:SC} implies a ``rational'' version of another property of Segment Construction, well-known in Geometry. 

\begin{corollary}\label{c:SC} For any points $x,z$ of a Banakh space $X$ and any real numbers $a,b\in \dist[X^2]\cap\IQ{\cdot}\dist(x,z)$ with $a+b=\dist(x,z)$, there exists a unique point $y\in X$ such that $\dist(x,y)=a$ and $\dist(y,z)=b$.
\end{corollary}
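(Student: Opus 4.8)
The plan is to build $y$ by two applications of Theorem~\ref{t:SC} and then to pin down the value $\dist(y,z)$ by a third application of Theorem~\ref{t:SC}, combined with the orientation calculus and the uniqueness principle Theorem~\ref{t:GPS}. First I would dispose of the degenerate cases: if $\dist(x,z)=0$ then $a=b=0$ and $y\defeq x$ is the unique solution; if $\dist(x,z)>0$ but $a=0$ (resp.\ $b=0$), then $y\defeq x$ (resp.\ $y\defeq z$) works and is forced by Theorem~\ref{t:GPS}. So assume $d\defeq\dist(x,z)>0$ and $a,b>0$; since $a,b\in\IQ d\setminus\{0\}$ we have $\IQ a=\IQ b=\IQ d$, which makes all the commensurability hypotheses below automatic.

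For the construction, I would first apply Theorem~\ref{t:SC} to the points $z,x$ and the radius $a\in\dist[X^2]\cap\IQ\dist(z,x)$, obtaining a unique point $u\in X$ with $\dist(x,u)=a$ and $\dist(z,u)=\dist(z,x)+\dist(x,u)=d+a$ (intuitively, $u$ is the point ``at parameter $-a$'' on the line through $x$ and $z$). Then I would apply Theorem~\ref{t:SC} to the points $u,x$ and the radius $a\in\dist[X^2]\cap\IQ\dist(u,x)$, obtaining a unique point $y\in X$ with $\dist(x,y)=a$ and $\dist(u,y)=\dist(u,x)+\dist(x,y)=2a$; this $y$ is the candidate. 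By construction $\dist(x,y)=a$, so the whole difficulty is to verify $\dist(y,z)=b$.

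This verification is the step I expect to be the main obstacle, because the ``subtraction'' $\dist(y,z)=d-a$ is not directly readable off the data: Theorem~\ref{t:SC} and the orientation relations only ever certify betweenness in the ``far endpoint'' form $\dist(\text{far endpoint})=\text{sum}$. To get around this I would produce an auxiliary point: apply Theorem~\ref{t:SC} to $x,y$ and the radius $b\in\dist[X^2]\cap\IQ\dist(x,y)$ to get a unique $z'\in X$ with $\dist(y,z')=b$ and $\dist(x,z')=\dist(x,y)+\dist(y,z')=a+b=d$. It then suffices to prove $z'=z$, since then $\dist(y,z)=\dist(y,z')=b$. By Theorem~\ref{t:GPS} it is enough to check that $z$ and $z'$ have equal distances to the two distinct points $x$ and $u$. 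For $x$ this is immediate: $\dist(x,z')=d=\dist(x,z)$. For $u$ I would run the orientation calculus: from $\dist(u,y)=\dist(u,x)+\dist(x,y)$ and Proposition~\ref{p:sum=>arrows} one gets $xu\updownarrows xy$; from $\dist(x,z')=\dist(x,y)+\dist(y,z')$ and the same proposition one gets $xy\upuparrows xz'$; and then Proposition~\ref{p:updown}(2,3,6) yields $xz'\updownarrows xu$, whence $\dist(z',u)=\dist(z',x)+\dist(x,u)=d+a$ by Theorem~\ref{t:or}(4). Since $\dist(z,u)=d+a$ too and $x\neq u$ (as $\dist(x,u)=a>0$), Theorem~\ref{t:GPS} gives $z'=z$, and hence $\dist(y,z)=b$, completing the existence part.

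Finally, uniqueness of $y$ is immediate from Theorem~\ref{t:GPS}: if $y_1,y_2$ both satisfy $\dist(x,y_i)=a$ and $\dist(z,y_i)=b$, then they have equal distances to the distinct points $x$ and $z$ (distinct because $\dist(x,z)=a+b>0$ in the nondegenerate case, and $y$ is trivially unique otherwise), so $y_1=y_2$. The delicate point throughout is precisely the conversion of the ``betweenness'' information generated by Theorem~\ref{t:SC} (always in the shape ``the distance to the \emph{remote} endpoint is a sum'') into the value $\dist(y,z)=b$ asked for in the corollary; the detour through the auxiliary point $z'$ together with Theorem~\ref{t:GPS} is what forces this missing equality.
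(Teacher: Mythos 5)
Your proof is correct. The skeleton coincides with the paper's: the paper also produces an auxiliary point behind $x$ (its $y'$, with $\dist(x,y')=a$ and $\dist(z,y')=2a+b=d+a$, which is exactly your $u$), then takes $y$ to be the second point of the sphere $\Sf(x;a)$, then builds the same auxiliary point $z'$ via Theorem~\ref{t:SC} and concludes by showing $z'=z$. Where you genuinely diverge is in the machinery used for the two non-trivial verifications. For selecting $y$, the paper invokes the raw Banakh property of the sphere $\Sf(x;a)$ (two points at mutual distance $2a$), whereas you apply Theorem~\ref{t:SC} a second time to the pair $u,x$; these yield the same point. For the crucial identification $z'=z$, the paper stays entirely inside triangle-inequality territory: it first shows the strict inequality $\dist(y,z)<2a+b$ (if equality held, $y$ and $y'$ would be antipodes on $\Sf(z;2a+b)$, forcing $\dist(y,y')=2(2a+b)$ instead of $2a$), and then derives $\dist(z,z')\le\dist(z,y)+\dist(y,z')<2(a+b)$, contradicting the diameter $2(a+b)$ that the Banakh property would impose if $z\ne z'$. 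You instead route through the orientation calculus (Proposition~\ref{p:sum=>arrows}, Proposition~\ref{p:updown}(6), Theorem~\ref{t:or}(4)) to compute $\dist(z',u)=d+a=\dist(z,u)$ and then apply Theorem~\ref{t:GPS} with the base points $x$ and $u$. Both are sound; the paper's argument is more elementary (it needs nothing from Sections 7--8 beyond Theorem~\ref{t:SC} itself), while yours is arguably more systematic, since once the orientation formalism is available the equality $\dist(z',u)=\dist(z,u)$ falls out mechanically and Theorem~\ref{t:GPS} does the rest without any strict-inequality bookkeeping. All hypotheses you need (commensurability $\IQ a=\IQ b=\IQ\dist(x,z)$ in the nondegenerate case, nonvanishing of the relevant distances) are verified, and the uniqueness argument via Theorem~\ref{t:GPS} matches the paper's.
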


\begin{proof} We lose no generality assuming that $a\le b$. If $a=0$, then $b=\dist(x,z)-a=\dist(x,z)$ and $y\defeq x$ is a unique point of $X$ such that $\dist(x,y)=a$ and $\dist(y,z)=b$. So, we assume that $0<a\le b$. By Theorem~\ref{t:SC}, there exists a point $y'\in X$ such that $\dist(y',x)=a$ and $\dist(y',z)=\dist(y',x)+\dist(x,z)=2a+b$. Since $X$ is a Banakh space, there exists a unique point $y\in X$ such that $\dist(x,y)=a$ and $\dist(y',y)=2a$. By the triangle inequality, $\dist(y,z)\le \dist(y,x)+\dist(x,z)=2b+b$. 

\begin{picture}(200,45)(-100,0)
\put(50,20){\circle*{3}}
\put(47,10){$x$}
\put(150,20){\circle*{3}}
\put(147,10){$z$}
\put(148,26){$z'$}
\put(20,20){\circle*{3}}
\put(17,10){$y'$}
\put(80,20){\circle*{3}}
\put(77,10){$y$}

\end{picture}

We claim that $\dist(y,z)<2a+b=\dist(y',z)$. In the opposite case, the Banakh property ensures that $2a=\dist(y,y')=2(2a+b)$, which is a contradiction showing that $\dist(y,z)<2a+b$. By Theorem~\ref{t:SC}, there exists a unique point $z'\in X$ such that $\dist(y,z')=b$ and $\dist(x,z')=\dist(x,y)+\dist(y,z')=a+b=\dist(x,z)$. Assuming that $z'\ne z$ and applying the Banakh property of $X$, we conclude that $\dist(z,z')=2(a+b)$. On the other hand, the triangle inequality ensures that $2(a+b)=\dist(z,z')\le\dist(z,y)+\dist(y,z')<(2a+b)+b=2(a+b)$, which is a contradiction showing that $z'=z$ and hence $y$ is a point with $\dist(x,y)=a$ and $\dist(y,z)=\dist(y,z')=b$. The uniqueness $y$ follows from Theorem~\ref{t:GPS}.
\end{proof}

We shall also need the following proposition on constructing a segement of a given length in a given direction.

\begin{proposition}\label{p:SC} For every points $x,y$ of a Banakh space $X$ and every positive real number $r\in\dist[X^2]\cap \IQ{\cdot}\dist(x,y)$, there exists a unique point $z\in \Sf(x;r)$ such that $xz\upuparrows xy$.
\end{proposition}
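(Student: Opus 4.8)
The plan is to handle the degenerate case $x=y$ separately: if $x=y$, then $r\in\IQ\cdot\dist(x,y)=\{0\}$ forces $r=0$, contradicting $r>0$, so in fact $x\ne y$ and $\dist(x,y)>0$. Now I would invoke Theorem~\ref{t:SC} with the roles of $x$ and $y$ reversed: since $r\in\dist[X^2]\cap\IQ\cdot\dist(x,y)$, there is a (unique) point $z\in X$ with $\dist(x,z)=r$ and $\dist(y,z)=\dist(y,x)+\dist(x,z)$. Wait — that gives a segment built beyond $x$ starting from $y$, which is the wrong direction. Instead I would apply Theorem~\ref{t:SC} in its stated form to obtain a point $w\in X$ with $\dist(x,w)=\dist(x,y)$ (i.e. take the segment-construction from $y$ through $x$), but cleaner is to work directly: I want $z\in\Sf(x;r)$ with $xz\upuparrows xy$, which is exactly the ``same-direction'' version of segment construction.

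The cleanest route: since $r\in\dist[X^2]$ and $X$ is Banakh, the sphere $\Sf(x;r)$ consists of exactly two points $u,v$ with $\dist(u,v)=2r$, and $\ell_{xu}(r)=u=\ell_{xv}(-r)$ shows $xu\updownarrows xv$. Since $0<r\in\IQ\cdot\dist(x,y)$ we have $\dist(x,y)\ne 0$ and $\dist(x,u)=r\in\IQ\cdot\dist(x,y)$, so by Theorem~\ref{t:or}(2) either $xu\upuparrows xy$ or $xu\updownarrows xy$. In the first case set $z\defeq u$; in the second case, Proposition~\ref{p:updown}(6) applied to $xu\updownarrows xy$... actually I need $xy\upuparrows$ or $\updownarrows$ with $xv$: from $xu\updownarrows xv$ and $xu\updownarrows xy$, Proposition~\ref{p:updown}(5) gives $xy\upuparrows xv$, i.e. $xv\upuparrows xy$ by symmetry (Proposition~\ref{p:updown}(2)); so set $z\defeq v$. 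In either case $z\in\Sf(x;r)$ and $xz\upuparrows xy$, proving existence.

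For uniqueness, suppose $z,z'\in\Sf(x;r)$ both satisfy $xz\upuparrows xy$ and $xz'\upuparrows xy$. By Proposition~\ref{p:updown}(2) and (4), $xz\upuparrows xz'$. Since also $\dist(x,z)=r=\dist(x,z')$, Theorem~\ref{t:or}(6) yields $z=z'$. This finishes the proof.

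The main obstacle is purely bookkeeping: making sure the transitivity/symmetry clauses of Proposition~\ref{p:updown} are applied in the correct combination to pass from the dichotomy ``$xu\upuparrows xy$ or $xu\updownarrows xy$'' (together with $xu\updownarrows xv$) to a single point on the correct side, and checking that the hypothesis ``$r\in\IQ\cdot\dist(x,y)$'' is genuinely what is needed to invoke Theorem~\ref{t:or}(2) (which requires $\dist(x,u)=\frac mn\dist(x,y)$ for naturals $m,n$ — this holds precisely because $r\in\IQ\cdot\dist(x,y)$ and $r>0$). No hard estimate is involved; everything reduces to the algebra of the orientation relation already developed.

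\begin{proof} If $x=y$, then $\IQ{\cdot}\dist(x,y)=\{0\}$, which contradicts the assumption that $r>0$ belongs to $\IQ{\cdot}\dist(x,y)$. Hence $x\ne y$ and $\dist(x,y)>0$.

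Since $r\in\dist[X^2]$ and $X$ is a Banakh space, the sphere $\Sf(x;r)$ consists of exactly two points $u,v$ with $\dist(u,v)=2r$. From $\ell_{xu}(r)=u=\ell_{xv}(-r)$ we get $xu\updownarrows xv$. Since $0<r=\dist(x,u)\in\IQ{\cdot}\dist(x,y)$, we may write $\dist(x,u)=\frac mn\dist(x,y)$ for some $m,n\in\IN$, so by Theorem~\ref{t:or}(2) either $xu\upuparrows xy$ or $xu\updownarrows xy$. In the first case put $z\defeq u$; then $z\in\Sf(x;r)$ and $xz\upuparrows xy$. In the second case, from $xu\updownarrows xv$ and $xu\updownarrows xy$ Proposition~\ref{p:updown}(5) gives $xy\upuparrows xv$, hence $xv\upuparrows xy$ by Proposition~\ref{p:updown}(2); put $z\defeq v$, so again $z\in\Sf(x;r)$ and $xz\upuparrows xy$. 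This proves the existence of $z$.

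To prove uniqueness, suppose $z,z'\in\Sf(x;r)$ satisfy $xz\upuparrows xy$ and $xz'\upuparrows xy$. By Proposition~\ref{p:updown}(2), $xy\upuparrows xz'$, and then Proposition~\ref{p:updown}(4) applied to $xz\upuparrows xy$ and $xy\upuparrows xz'$ yields $xz\upuparrows xz'$. Since $\dist(x,z)=r=\dist(x,z')$, Theorem~\ref{t:or}(6) gives $z=z'$.
\end{proof}
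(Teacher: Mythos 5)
Your proof is correct and follows essentially the same route as the paper's: the dichotomy from Theorem~\ref{t:or}(2) applied to a point of the two-point sphere $\Sf(x;r)$, resolved via the orientation calculus of Proposition~\ref{p:updown}. The only cosmetic differences are that you obtain $xu\updownarrows xv$ directly from $\ell_{xu}(r)=u=\ell_{xv}(-r)$ (as in the paper's proof of Theorem~\ref{t:SC}) instead of running the contradiction through Theorem~\ref{t:or}(5), and your uniqueness step cites Theorem~\ref{t:or}(6), which merely packages the paper's combination of Theorem~\ref{t:or}(5) with the Banakh property.
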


\begin{proof} It follows from $0<r\in\IQ{\cdot}\dist(x,y)$ that $x\ne y$. First we find a point 
$z\in\Sf(x;r)$ with $xz\upuparrows xy$ and then prove its uniqueness. To derive a contradiction, assume that no point $z\in \Sf(x;r)$ has $xz\upuparrows xy$. Applying Theorem~\ref{t:or}(2), we conclude that $xz\updownarrows xy$ for every $z\in \Sf(x;r)$. Since $X$ is a Banakh space, the sphere $\Sf(x;r)$ contains two points $u,v$ with $\dist(u,v)=2r$. Taking into account that $xu\updownarrows xy\updownarrows xv$ and applying Proposition~\ref{p:updown}(5), we conclude that $xu\upuparrows xv$. Applying Theorem~\ref{t:or}(5), we obtain a contradiction $2r=\dist(u,v)<\dist(u,o)+\dist(o,v)=2r$, showing that $xz\upuparrows xy$ for some $z\in \Sf(x;r)$.

To see that the point $z$ is unique, assume that $u\in\Sf(x;r)$ is another point such that $xu\upuparrows xy$. Then $xu\upuparrows xy\upuparrows xz$ implies $xu\upuparrows xz$, by Proposition~\ref{p:updown}(5). Theorem~\ref{t:or}(5) ensures that $\dist(u,z)<\dist(o,u)+\dist(o,z)=2r$ and hence $u=z$ by the Banakh property of $X$.
\end{proof}

\section{Monoids and half-groups in the real line}

In this section we study subsets of the closed half-line possessing certain algebraic properties. The obtained results will be applied for studying algebraic properties of the set of distances $\dist[X^2]$ of a Banakh space $X$.

For a subset $M$ of the real line and a positive real number $p$, consider the sets 
$$
\begin{gathered}
M+M\defeq\{x+y:x,y\in M\},\quad M-M=\{x-y:x,y\in M\},\\
\pm M\defeq M\cup(-M),\quad\tfrac1pM\defeq\{x\in \IR:px\in M\}
\end{gathered}
$$of the real line.

\begin{definition} A subset $M$ of the real line is called
\begin{itemize}
\item a {\em monoid} if $0\in M=M+M$;
\item a {\em group} if $0\in M=M-M$;
\item a {\em half-group} if $\pm M$ is a group;
\item {\em $p$-divisible in a subset} $S\subseteq \IR$ for a prime number $p$ if $S\cap\frac1p M\subseteq M$.
\end{itemize}
\end{definition}

The following lemma was suggested to the author by Pavlo Dzikovskyi.

\begin{lemma}\label{l:Dzik1} A monoid $M\subseteq\IZ_+$ is a half-group if $M$ is $p$-divisible in $\IZ_+$  for some prime number $p$.
\end{lemma}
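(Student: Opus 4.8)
The plan is to prove the contrapositive-free statement directly: assume $M\subseteq\IZ_+$ is a monoid that is $p$-divisible in $\IZ_+$ for some prime $p$, and show $\pm M$ is a group, i.e. $(\pm M)-(\pm M)\subseteq\pm M$. Equivalently, since $\pm M$ is symmetric, it suffices to show that for all $a,b\in M$ we have $a-b\in\pm M$, which by symmetry reduces to: whenever $a,b\in M$ with $a\ge b$, then $a-b\in M$. So the whole content is the ``cancellation'' statement: $b\in M$ and $a\in M$ with $b\mid$-free, $a\ge b$ implies $a-b\in M$. Note $M$ being a monoid already gives $a+b\in M$, so $M$ is an additive submonoid of $(\IZ_+,+)$.

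First I would reduce to a normalized situation. An additive submonoid $M$ of $\IZ_+$ containing a nonzero element has a well-defined positive generator-type structure: let $d\defeq\gcd(M\setminus\{0\})$ (if $M=\{0\}$ the claim is trivial since $\{0\}$ is already a group). Replacing $M$ by $\tfrac1dM$ — which is again a monoid in $\IZ_+$, is again $p$-divisible in $\IZ_+$ (dividing by $p$ commutes with dividing by $d$ since we stay inside $\IZ_+$ and $d$ is coprime or shares factors with $p$ only in a controlled way; this needs a small check, and may force replacing $p$ by the same $p$ or arguing $p\nmid d$ after normalization) — we may assume $\gcd(M\setminus\{0\})=1$. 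By the Chicken McNugget / numerical semigroup theory, such an $M$ contains all sufficiently large integers: there is a Frobenius bound $N$ with $[N,\infty)\cap\IZ_+\subseteq M$. The heart of the matter is then to upgrade ``cofinite'' to ``all of $\IZ_+$'', i.e. to show the finitely many ``gaps'' of $M$ below $N$ are in fact not gaps — equivalently $M=\IZ_+$. This is where $p$-divisibility must do the work: I would argue that if $n\notin M$ for some $n$, then by iterating $p$-divisibility downward from a large multiple $p^k n\in M$ (which holds for $k$ large since $p^k n\ge N$), we get $p^{k-1}n\in M$, \dots, $n\in M$, a contradiction. Here the point is that $p^j n\in\IZ_+$ always, so the defining inclusion $\IZ_+\cap\tfrac1pM\subseteq M$ applies at each step to conclude $p^{j}n\in M\Rightarrow p^{j-1}n\in M$.

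Carrying this out: fix any $n\in\IN$; I claim $n\in M$. Since $\gcd(M\setminus\{0\})=1$ there are elements of $M$ eventually filling $[N,\infty)$; in particular $p^k n\in M$ for all $k$ with $p^k n\ge N$. Take such a $k$ and apply $p$-divisibility $k$ times: from $p^k n\in M$ and $p^{k-1}n\in\IZ_+\cap\tfrac1pM$ we get $p^{k-1}n\in M$; repeating, $n\in M$. Hence $M\supseteq\IN\cup\{0\}=\IZ_+$, so $M=\IZ_+$, which is visibly a half-group (indeed $\pm M=\IZ$ is a group). Undoing the normalization gives $M=\IZ_+d$ in the original scaling, again a half-group since $\pm(\IZ_+d)=\IZ d$ is a group.

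The main obstacle I anticipate is the normalization step — verifying that passing from $M$ to $\tfrac1dM$ (or otherwise reducing to $\gcd 1$) preserves $p$-divisibility in $\IZ_+$, especially handling the interaction between $d$ and the prime $p$. If $p\mid d$ this is delicate; one may instead avoid normalization entirely and argue: for any $n\in M\setminus\{0\}$ and any $m\in M$ with $m\ge $ (Frobenius-type bound relative to $d$), the congruence class of $m$ modulo $d$ equals that of some sum of generators, and a downward $p$-divisibility chase from a large multiple $p^k$ (of whatever element we want to cancel into) still lands back inside $M$. Concretely, to show $a-b\in\pm M$ for $a,b\in M$, pick $k$ with $p^k(a-b)$ (if positive) large enough to lie in $M$ by cofiniteness within $d\IZ_+$, then descend. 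I expect the clean writeup to dispense with $d$ by working modulo $d$ throughout, at the cost of slightly heavier bookkeeping; the $p$-adic descent is the robust core and should survive either route.
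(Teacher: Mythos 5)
Your proof is correct, but it takes a genuinely different route from the paper. The paper runs a self-contained Euclidean-type descent inside $M$: starting from $a<b$ in $M$ it builds pairs $(a_k,b_k)\subseteq M$ using the operations $\delta_p$ (strip off all factors of $p$, which stays in $M$ by $p$-divisibility) and $\div_p$ (inversion mod $p$, used to make $y_k+(-y_k\div_p x_k)x_k$ divisible by $p$), with $a_k+b_k$ strictly decreasing until $a_k=b_k=\delta_p(\gcd(a,b))\in M$; since this element divides $b-a>0$ and $M$ is additively closed, $b-a\in M$. You instead normalize by $d=\gcd(M\setminus\{0\})$, invoke the standard cofiniteness of numerical semigroups with gcd $1$, and run a $p$-adic descent from $p^kn\in M$ down to $n\in M$, concluding the stronger (but, for submonoids of $\IZ_+$, equivalent) statement $M=d\IZ_+$. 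Both arguments are valid; yours is shorter and gives the explicit classification $M=d\IZ_+$ at the cost of importing the Frobenius/Chicken McNugget fact, while the paper's is purely arithmetic and exhibits a concrete element $\delta_p(a,b)$ of $M$ dividing $b-a$. One remark: the normalization step you flag as the main obstacle is in fact unconditional and needs no case analysis on whether $p\mid d$ --- if $x\in\IZ_+$ and $px\in\tfrac1d M$, then $dx\in\IZ_+$ and $p(dx)\in M$, so $dx\in M$ by the $p$-divisibility of $M$ in $\IZ_+$, i.e.\ $x\in\tfrac1d M$; so the heavier ``work modulo $d$'' fallback you sketch is unnecessary.
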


\begin{proof} Assume for some prime number $p$, the monoid $M$ is $p$-divisible in $\IZ_+$. To show that the monoid $M$ is a half-group, it suffices to show that for every numbers $a<b$ in $M$ their difference $b-a$ belongs to $M$. 

For a nonzero integer number $x$, let 
$$\ld_p(x)\defeq\max\{n\in\IZ\setminus p\IZ:x\in n\IZ\}$$ be the greatest integer number that divides $x$ but is not divisible by $p$. 

For two numbers $x,y\in\IN$ let 
$$\lcd(x,y)\defeq\max\{n\in\IZ: \{x,y\}\subseteq n\IZ\}$$ be the greatest common divisor of $x,y$, and $$\lcd_p(x,y)\defeq\ld_p(\lcd(x,y))$$ be the greatest integer number that divides $x$ and $y$, but is not divisible by $p$. 

The following property of the function $\delta_p(x,y)$ is immediate.

\begin{claim}\label{cl:delta-p} For any positive integer numbers $x,y$ we have $\delta_p(x,y)=\delta_p(x,x+y)$.
\end{claim}

Given two integer numbers $x,y$, we write $x\equiv y\mod p$ iff $x-y\in p\IZ$. 
Since the quotient ring $\IZ/p\IZ=\{x+p\IZ:x\in\IZ\}$ is a field, for any numbers $x,y\in \IZ\setminus p\IZ$, there exists a unique positive integer number $k<p$  such that $ky\equiv x\mod p$. This unique number $k$ will be denoted by $x\div_p y$. So, $x\equiv (x\div_p y)y\mod p$.  

Consider the sequences of positive integer numbers $(a_k)_{k\in\w}$, $(b_k)_{k\in\w}$, $(x_k)_{k\in\w}$, $(y_k)_{k\in\w}$, defined by the recursive formulas:
$$
\begin{aligned}
&a_0=\delta_p(a),\quad b_0=\delta_p(b);\\
&x_k=\min\{a_k,b_k\},\quad y_k=\max\{a_k,b_k\};\\
&a_{k+1}=x_k,\quad b_{k+1}=\delta_p(y_k+(-y_k\div_p x_k) x_k).
\end{aligned}
$$

\begin{claim}\label{cl:Dzik1} For every $k\in\w$ the following conditions hold:
\begin{enumerate}
\item $\{a_k,b_k,x_k,y_k\}\subseteq M$;
\item $\delta_p(a_k,b_k)=\delta_p(a,b)$;
\item If $a_k\ne b_k$, then $a_{k+1}+b_{k+1}<a_k+b_k$.
\end{enumerate}
\end{claim}

\begin{proof} 1. The $p$-divisibility of $M$ in $\IZ_+$ implies that $\{\delta_p(x):x\in M\}\subseteq M$. Then $\{a_0,b_0,x_0,y_0\}=\{a_0,b_0\}=\{\delta_p(a),\delta_p(b)\}\subseteq M$. Assume that for some $k\in\w$ we know that $\{a_k,b_k,x_k,y_k\}\subseteq M$. Then $a_{k+1}=x_k\in M$ and $b_{k+1}=\delta_p(y_k+(-y_k\div_px_k)x_k)\in M$ because $M$ is a $p$-divisible monoid in $\IZ_+$. Finally,
$\{a_{k+1},b_{k+1},x_{k+1},y_{k+1}\}=\{a_{k+1},b_{k+1}\}\subseteq M$. 
\smallskip

2. Observe that $\delta_p(a_0,b_0)=\delta_p(\delta_p(a),\delta_p(b))=\delta_p(a,b)$. Assume that for some $k\in\w$ we know that $\delta_p(a_k,b_k)=\delta_p(a,b)$. Then 
$$\delta_p(x_k,y_k)=\delta_p(\min\{a_k,b_k\},\max\{a_k,b_k\})=\delta_p(a_k,b_k)=\delta(a,b)$$ and
$$\delta_p(a_{k+1},b_{k+1})=\delta_p(x_k,\delta_p(y_k+(-y_k\div_p x_k)x_k))=
\delta_p(x_k,y_k+(-y_k\div_p x_k)x_k)=
\delta_p(x_k,y_k)=\delta_p(a,b),$$
by Claim~\ref{cl:delta-p}.
\smallskip

3. If $a_k\ne b_k$ for some $k\in\w$, then $x_k=\min\{a_k,b_k\}<\max\{a_k,b_k\}=y_k$ and 
$$y_k+(-y_k\div_p x_k)x_k\le y_k+(p-1)x_k<y_k+(p-1)y_k=py_k.$$
Since the number $y_k+(-y_k\div_p x_k)x_k$ is divisible by $p$, $$b_{k+1}=\delta_p\big(y_k+(-y_k\div_p x_k)x_k\big)\le \frac{y_k+(-y_k\div x_k)x_k}p<\frac{py_k}p=y_k.$$ 
Then $a_{k+1}+b_{k+1}=x_k+b_{k+1}<x_k+y_k=a_k+b_k$.
\end{proof}

Since $(a_k+b_k)_{k\in\w}$ is a sequence of positive integer numbers, Claim~\ref{cl:Dzik1}(3) implies that there exists a number $k\in\w$ such that $a_k=b_k$ and hence $\delta_p(a,b)=\delta_p(a_k,b_k)=\delta_p(a_k)=a_k\in M$. 

Taking into account that $M$ is a monoid and the difference $b-a$ is divisible by $\delta_p(a,b)\in M$, we conclude that $b-a\in M$.
\end{proof}

\begin{theorem}\label{t:Dzik} For a submonoid $M$ of the monoid $\IQ_+$ the following conditions are equivalent:
\begin{enumerate}
\item $M$ is a half-group;
\item $M$ is $p$-divisible in $M-M$ for every prime number $p$;
\item $M$ is $p$-divisible in $M-M$ for some prime number $p$.
\end{enumerate}
\end{theorem}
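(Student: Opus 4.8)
The plan is to prove the cycle of implications $(1)\Ra(2)\Ra(3)\Ra(1)$, where the only substantial work is in $(3)\Ra(1)$, which will be reduced to the already-proved Lemma~\ref{l:Dzik1} by clearing denominators.

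\medskip
\textbf{Implication $(1)\Ra(2)$.} Assume $M$ is a half-group, so $G\defeq\pm M$ is a subgroup of the real line and $M-M\subseteq G$. Fix a prime $p$ and take $x\in(M-M)\cap\frac1pM$; I must show $x\in M$. Write $x=\frac1p m$ with $m\in M$. Since $x\in M-M\subseteq G=\pm M$, either $x\in M$ (and we are done) or $-x\in M$. In the latter case $x=m+(-x)\cdot(p-1)=m-(p-1)x$; but $m=px$ so $m-(p-1)x=x$, which only re-expresses $x$ and does not immediately land in $M$. A cleaner route: from $x\in M-M$ write $x=a-b$ with $a,b\in M$; then $px=m\in M$ and $pb\in M$ (as $M$ is a monoid, $pb=b+\dots+b$), so $pa=px+pb=m+pb\in M$. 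Now the point is that for a submonoid $M$ of $\IQ_+$ that is a half-group, $M$ is ``saturated'' in the sense that if $na\in M$ for $a\in M-M$, $a\ge 0$, $n\in\IN$, then $a\in M$: indeed $a\in\pm M=G$ gives $a=\pm a'$ with $a'\in M$, and $a\ge 0$ together with $a\ne 0$ forces $a\in M$ directly, while $a<0$ is impossible. Applying this with $n=p$ to $x=\frac1pm\ge0$ gives $x\in M$. This establishes $p$-divisibility of $M$ in $M-M$ for every prime $p$.

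\medskip
\textbf{Implication $(2)\Ra(3)$} is trivial provided at least one prime exists, which it does.

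\medskip
\textbf{Implication $(3)\Ra(1)$.} This is the heart of the matter. Assume $M$ is a submonoid of $\IQ_+$ that is $p$-divisible in $M-M$ for some prime $p$; I must show $\pm M$ is a group, equivalently (as in the proof of Lemma~\ref{l:Dzik1}) that $b-a\in M$ for all $a<b$ in $M$. Fix such $a,b$. Since $a,b\in\IQ_+$, choose $N\in\IN$ with $Na,Nb\in\IZ_+$; replacing $N$ by a suitable multiple I may further arrange that $p\nmid N$ if needed, though this is not essential. Consider the monoid $M_N\defeq NM\cap\IZ_+$, or more simply the submonoid $M'\defeq\{Nx:x\in M\}\subseteq\IZ_+$ generated inside $\IZ_+$. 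The main obstacle is that $p$-divisibility of $M$ in $M-M$ does not automatically transfer to $p$-divisibility of $M'$ in $\IZ_+$, which is what Lemma~\ref{l:Dzik1} requires. The fix is to observe that $b-a\in M$ iff $N(b-a)\in M'$, and to run the Euclidean-type algorithm of Lemma~\ref{l:Dzik1} directly inside $M$ with the weaker hypothesis: re-examine that proof and note that every element produced by the recursion $a_{k+1}=x_k$, $b_{k+1}=\delta_p(y_k+(-y_k\div_p x_k)x_k)$ lies in $M-M$ (since differences and integer combinations of elements of $M$ stay in $M-M$, and $M-M$ is a group on which $\delta_p$ and division by $p$ are controlled by the $p$-divisibility hypothesis in $M-M$), hence each such element lies in $M$ by $p$-divisibility in $M-M$. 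The termination argument (Claim~\ref{cl:Dzik1}(3)) is purely about the integers $Na_k+Nb_k$ strictly decreasing, so it survives scaling by $N$. Thus the algorithm terminates with $a_k=b_k=\delta_p(a,b)\in M$ (interpreting $\delta_p$ on the scaled integers and dividing back by $N$), and since $b-a$ is an integer multiple of $\delta_p(a,b)$ after scaling, $b-a\in M$ because $M$ is a monoid. Therefore $M$ is a half-group.

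\medskip
I expect the genuine difficulty to be the bookkeeping in $(3)\Ra(1)$: making precise the claim that the recursion of Lemma~\ref{l:Dzik1} can be carried out using only closure of $M-M$ under subtraction and integer combinations together with $p$-divisibility of $M$ \emph{in $M-M$} (rather than in $\IZ_+$), and handling the scaling to $\IZ_+$ cleanly so that $\delta_p$ and $\div_p$ make sense. The most honest write-up is probably to state and prove a scaled analogue of Lemma~\ref{l:Dzik1} — ``a submonoid $M\subseteq\IQ_+$ that is $p$-divisible in $M-M$ for some prime $p$ is a half-group'' — by reduction to the integral case after clearing a common denominator, checking along the way that $\frac1N(M'-M')=M-M$ so that $p$-divisibility in $M-M$ becomes $p$-divisibility of $M'$ in $M'-M'$; but $M'\subseteq\IZ_+$ and $M'-M'\cap\IZ_+\subseteq$ is an ideal-like set, and in fact $M'-M'$ is a subgroup of $\IZ$, say $d\IZ$, so $p$-divisibility of $M'$ in $d\IZ_+$ suffices, and Lemma~\ref{l:Dzik1} applied to $M'$ inside $d\IZ_+\cong\IZ_+$ finishes the job.
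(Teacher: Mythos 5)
Your cycle $(1)\Ra(2)\Ra(3)\Ra(1)$ is the right architecture, and $(1)\Ra(2)$ is correct once stripped of the detour: the whole content is that $\frac1pM\subseteq\IQ_+$ and $(M-M)\cap\IQ_+=(\pm M)\cap\IQ_+=M$ for a half-group, so the computation $pa=px+pb$ contributes nothing. The problem is in $(3)\Ra(1)$. You clear denominators by choosing $N$ with $Na,Nb\in\IZ_+$ and then work with $M'=\{Nx:x\in M\}$, asserting near the end that $M'\subseteq\IZ_+$ and that $M'-M'$ is a subgroup of $\IZ$ of the form $d\IZ$. Neither assertion follows: $N$ was chosen only to make the two elements $a,b$ integral, and a submonoid of $\IQ_+$ (for instance $\IQ_+$ itself, or the nonnegative dyadic rationals) has unbounded denominators, so no single $N$ makes all of $NM$ integral, and $NM-NM$ is then a non-cyclic subgroup of $\IQ$ on which $\delta_p$ and $\div_p$ are meaningless. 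Your alternative of re-running the recursion of Lemma~\ref{l:Dzik1} ``inside $M$'' hits the same wall: that recursion only makes sense inside a fixed cyclic group, and ``$M-M$ is a group on which $\delta_p$ and division by $p$ are controlled'' is not a statement you can make for a dense subgroup of $\IQ$.

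The repair is to localize before scaling: since $a,b\in\IQ$, the group $a\IZ+b\IZ$ is cyclic, say equal to $r\IZ$, and one works with the submonoid $M\cap r\IZ$ of $r\IZ_+\cong\IZ_+$, which contains $a$, $b$ and, if all goes well, $b-a$. The one point that genuinely uses the hypothesis ``$p$-divisible in $M-M$'' (rather than in $\IZ_+$) is that this $p$-divisibility restricts correctly: if $x\in r\IZ$ and $px\in M\cap r\IZ$, then $x\in r\IZ=a\IZ+b\IZ\subseteq(M\cap r\IZ)-(M\cap r\IZ)\subseteq M-M$, because $a\IZ_++b\IZ_+\subseteq M\cap r\IZ$ by the monoid property; hence $x\in M$ and so $x\in M\cap r\IZ$. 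After this, Lemma~\ref{l:Dzik1} applies verbatim to $\tfrac1r(M\cap r\IZ)\subseteq\IZ_+$ and yields $b-a\in M\cap r\IZ\subseteq M$. This localization to $a\IZ+b\IZ$ is the missing idea; it is exactly the route the paper takes.
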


\begin{proof} $(1)\Ra(2)$ If $M$ is a half-group, then $M\cup(-M)$ is a group, equal to $M-M$. Then for every prime number $p$ we have $(M-M)\cap\frac1p M\subseteq (M-M)\cap \frac1p\IQ_+=(M\cup(-M))\cap\IQ_+=M\cap \IQ_+\subseteq M$, which witnesses that $M$ is $p$-divisible in $M-M$.
\smallskip

The implication $(2)\Ra(3)$ is trivial.
\smallskip

$(3)\Ra(1)$. Assume that the monoid $M$ is $p$-divisible in $M-M$ for some prime number $p$. To show that $M$ is a half-group, it suffices to show that $b-a\in M$ for any numbers $a<b$ in $M\subseteq\IQ_+$. Since the numbers $a,b$ are rational, the group $a\IZ+b\IZ\subseteq \IQ$ is cyclic and hence $a\IZ+b\IZ=r\IZ$ for some rational number $r$. The $p$-divisibility of $M$ in $M-M$ implies the $p$-divisibility of $M\cap r\IZ$ in $r\IZ=a\IZ+b\IZ=a\IZ-b\IZ\subseteq (M\cap r\IZ)-(M\cap r\IZ)$. By Lemma~\ref{l:Dzik1}, the monoid $M\cap r\IZ$ is a half-group and hence $b-a\in M\cap r\IZ\subseteq M$.
\end{proof}

\section{Central symmetry in Banakh spaces}

In this section we characterize $\IQ$-spheres in Banakh spaces, which are isometric to subgroups of the group $\IQ$. The characterization involves the notion of a centrally symmetric space.

\begin{definition} A metric space $X$ is {\em centrally symmetric} if for any distinct points $c,x\in X$ there exists an isometry $f:X\to X$ such that $f(c)=c$ and $f(x)\ne x$. 
\end{definition}

\begin{example} Every subgroup $G$ of the real line is centrally symmetric. Indeed, for any distinct points $a,c\in X$, the isometry $f:X\to X$, $f:x\mapsto 2c-x$, has the required properies: $f(c)=c$ and $f(a)\ne a$.
\end{example}

The following important theorem was suggested to the author by Pavlo Dzikovskyi.
 
\begin{theorem}[Dzikovskyi]\label{t:Dzik2} If for some subgroup $G$ of $\IQ$, and some point $x$ of a Banakh space $X$, the metric space $\Sf(x;G)$ is centrally symmetric, then the monoid $\dist[X^2]\cap G$ is a half-group in $\IR$.
\end{theorem}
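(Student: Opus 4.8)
The plan is to reduce the half-group property of $M \defeq \dist[X^2] \cap G$ to the $p$-divisibility criterion of Theorem~\ref{t:Dzik} (more precisely, to Lemma~\ref{l:Dzik1} applied inside suitable cyclic subgroups). Since $G \subseteq \IQ$, for any two distances $a, b \in M$ the group $a\IZ + b\IZ$ is cyclic, say $r\IZ$, and it suffices to show that $M \cap r\IZ$ is a half-group; by Lemma~\ref{l:Dzik1} this follows once we know $M \cap r\IZ$ is $p$-divisible in $\IZ_+$ (after rescaling by $r$) for some prime $p$. So the real content is: for every $d \in M$ with $\frac{1}{p}d \in \dist[X^2]$ (and $\frac{1}{p}d \in G$), we must show $\frac{1}{p}d \in \dist[X^2]$ as well — wait, that is automatic — the point is rather to show $\frac1p d \in M$, i.e. that $\frac1p d$ is again a distance realized in $X$; but $\frac1p d \in G_+$ and if $\frac1p d \in \dist[X^2]$ we are done. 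Thus the genuine task is to prove that whenever $d \in \dist[X^2] \cap G$ is realized in the $G$-sphere, the value $\frac1p d$ is realized too, using central symmetry. Actually the cleanest route: show directly that for $a, b \in M = \dist[X^2] \cap G$ with $a < b$ we have $b - a \in \dist[X^2]$, which combined with $b-a \in G_+$ gives $b - a \in M$.

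First I would fix a point $x \in X$ and recall from Theorem~\ref{t:main2} that $\Sf(x;G)$ is isometric to $G$ provided $G_+ \subseteq \dist[X^2]$; but here we only know $\dist[X^2] \cap G = M$, so I would instead work inside $H \defeq M - M$, the group generated by $M$, which is a subgroup of $G \subseteq \IQ$ with $H_+ = M - M \cap \IQ_+ \supseteq M$; in fact one checks $H_+ \subseteq \dist[X^2]$ is not automatic, so I would be careful and work with the cyclic subgroups $r\IZ$ generated by pairs $a,b \in M$ as above, where $r\IZ \cap \dist[X^2] \supseteq \{0, r', \dots\}$ contains $a$ and $b$. Within such a cyclic group, $\Sf(x; r\IZ \cap \dist[X^2])$ need not be all of $\Sf(x; r\IZ)$, but the relevant points are: since $a \in \dist[X^2]$, there is a point at distance $a$ from $x$; since $b \in \dist[X^2]$, a point at distance $b$; and I want a point at distance $b - a$ from some reference. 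The tool is Theorem~\ref{t:SC} (Segment Construction): if $b - a \in \dist[X^2] \cap \IQ\cdot\dist(x,y)$ for suitable $x, y$, I can construct it — but that is circular since I am trying to prove $b-a \in \dist[X^2]$.

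Here is where central symmetry enters, and this is the main obstacle. The idea, which I expect is Dzikovskyi's, is: take $p$ a prime, and suppose $d \in M$ with $\frac1p d \in G$. Use Theorem~\ref{t:Sf-rZ} to get an isometric copy $\ell \colon \IZ \frac1p d \to \Sf(x; \IZ \frac1p d)$ — but wait, this requires $\frac1p d \in \dist[X^2]$, which is exactly what is in question. So instead: let $s$ be a generator of the cyclic group containing $a, b$, intersected with the \emph{known} distance set. The right statement to prove is that $\dist[X^2] \cap G$ is $p$-divisible in $M - M$ for some $p$; then Theorem~\ref{t:Dzik} finishes it. To get $p$-divisibility, suppose $g \in (M-M) \cap \frac1p M$, so $pg \in M = \dist[X^2] \cap G$ and $g \in M - M \subseteq G$; I must show $g \in M$, i.e. $g \in \dist[X^2]$ (as $g \in G$ and $g \ge 0$ can be arranged). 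Realize $pg$ as $\dist(x, z)$ for some $z$; apply Theorem~\ref{t:Sf-rZ} to the cyclic group $\dist(x,z)\IZ$ — no, $g$ need not be an integer multiple of $\dist(x,z)$; rather $\dist(x,z) = pg$ so $g = \frac1p \dist(x,z)$. Now use central symmetry of $\Sf(x; G)$: there is an isometry $f$ of $\Sf(x;G)$ fixing $x$ and moving $z$. By Theorem~\ref{t:GPS}, an isometry fixing $x$ and one other point is the identity on the discrete line through them, so $f$ acts on the line $\ell_{xz}[\dist(x,z)\IZ]$ — but that line is just $\{x, z, \dots\}$ with integer steps of size $pg$, and $f$ either fixes it pointwise or reflects it; since $f(z) \ne z$, $f$ reflects, so $f(z) = \ell_{xz}(-\dist(x,z))$, giving $\dist(z, f(z)) = 2pg$. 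This does not immediately produce a point at distance $g$.

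The correct use of central symmetry must be more subtle: I expect one argues that $\Sf(x; G)$ being centrally symmetric, together with the Segment Construction results, forces the orbit structure under isometries to be rich enough that $\frac1p d$ appears. Concretely, I would: (i) reduce to showing $M \cap r\IZ$ is $p$-divisible in $\IZ_+$ (after rescaling) inside the cyclic group $r\IZ = a\IZ + b\IZ$ for each pair $a, b \in M$ and each prime $p$; (ii) note $M \cap r\IZ = \frac1n \dist[X^2] \cap (\text{stuff})$ where $r = \frac{s}{n}$ with $s$ the generator of $\dist[X^2] \cap r\IZ$ as a monoid — hmm; (iii) the key lemma: if $c \in \dist[X^2] \cap G$ and $\frac{c}{p} \in G$, then $\frac{c}{p} \in \dist[X^2]$, proved by: realize $c = \dist(x, z)$; by central symmetry pick isometry $f$ of $\Sf(x;G)$ with $f(x) = x$, $f(z) \ne z$; by Theorem~\ref{t:GPS} applied to the midpoint-type points, $f$ restricted to $\Sf(x; \IZ \cdot \frac{c}{p'})$ for a suitable divisor must be a reflection, and then the point $w = \ell_{xz}(\frac{c}{p})$ — which exists in $\Sf(x;G)$ if $\frac cp \in \dist[X^2]$... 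I keep circling. The genuinely hard part, and what I would spend the most effort on, is identifying the precise mechanism by which a \emph{single} nontrivial isometry fixing $x$ yields \emph{new} distances (namely $p$-th parts) rather than merely reflecting existing ones: I suspect it uses that $f$ permutes $\Sf(x; c)$ — a two-point set — hence swaps its two points $z, z'$ with $\dist(z,z') = 2c$, and then $f$ restricted to the segment from $z$ to $z'$ (which passes through $x$) must be the reflection through $x$; iterating with a point $u$ at distance $c/k$ would pin down $c/k$, so the construction of $u$ must come from applying central symmetry at a \emph{different} center or combining with Proposition~\ref{p:SC} and Corollary~\ref{c:SC}. I would structure the final proof as: reduce to cyclic subgroups; apply Lemma~\ref{l:Dzik1}; for the $p$-divisibility hypothesis, take $d$ with $d/p$ in the group, realize $d$ as a distance, invoke central symmetry to get a reflecting isometry $f$ of $\Sf(x;G)$, use Theorem~\ref{t:GPS} to identify $f$ on all discrete lines through $x$ as reflection through $x$, and finally combine this with the Segment Construction (Theorem~\ref{t:SC}, Corollary~\ref{c:SC}, Proposition~\ref{p:SC}) to exhibit a point at distance $d/p$ from $x$ inside $\Sf(x;G)$ — the last combination being the crux I would need to work out carefully.
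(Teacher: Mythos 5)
Your reduction is the right one: by Theorem~\ref{t:Dzik} it suffices to show that $M\defeq\dist[X^2]\cap G$ is $p$-divisible in $M-M$ for a single prime, and the paper does exactly this with $p=2$, so the whole content is the implication ``$a,b\in M$ and $2(b-a)\in M$ imply $b-a\in M$''. But the step you defer as ``the crux I would need to work out carefully'' \emph{is} the proof, and the mechanism you keep reaching for --- an isometry of $\Sf(x;G)$ fixing the center $x$ and reflecting the discrete lines through $x$ --- cannot work in principle: such a map preserves every distance to $x$, so it can never certify that a new value lies in $\dist[X^2]$. You correctly suspect that the symmetry must be applied ``at a different center'', but you never produce the configuration, and without it there is no argument.

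The paper's construction is the following. Assume $0<a<b$ with $a,b,2(b-a)\in M$. Using Theorem~\ref{t:SC} repeatedly, build a chain $x,u,y,v,z$ on one discrete line with $\dist(x,u)=a$, $\dist(x,y)=b$, $\dist(x,v)=a+2(b-a)=2b-a$ and $\dist(x,z)=2b$, where $y$ is the point of $\Sf(x;b)$ chosen with $xy\upuparrows xz$, so that $\dist(y,z)=b=\dist(y,x)$. Now apply central symmetry of the space $\Sf(x;G)$ to the pair $(y,x)$: there is an isometry $f$ of $\Sf(x;G)$ with $f(y)=y$ and $f(x)\ne x$. Since $\{x,f(x)\}\subseteq\Sf(y;b)=\{x,z\}$, we get $f(x)=z$. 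An orientation argument (Claims~\ref{cl:xu||xz}--\ref{cl:u'v} of the paper, via Theorem~\ref{t:or} and Proposition~\ref{p:updown}) then forces $f(u)=v$: the alternative would give $\dist(y,u)=\dist(y,x)+\dist(x,u)$, contradicting $xu\upuparrows xy$. Hence $\dist(y,u)=\dist(f(y),f(u))=\dist(y,v)$, and since $u\ne v$ the Banakh property yields $2(b-a)=\dist(u,v)=2\dist(y,u)$, i.e.\ $b-a=\dist(y,u)\in\dist[X^2]\cap G=M$. The new distance appears as a distance from $y$, not from $x$, which is precisely why a reflection fixing $x$ could never find it; this is the idea missing from your proposal.
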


\begin{proof} Assume that the $G$-sphere $\Sf(x;G)$ is centrally symmetric.  Theorem~\ref{t:SC} implies that $M\defeq \dist[X^2]\cap G$ is a submonoid of $\IQ_+$ and hence $M-M$ is a group. We claim that the monoid $M$ is $2$-divisible in the group $M-M$. Given any points $a,b\in M$ with $2(b-a)\in M$, we should prove that $b-a\in M$. This is trivially true if $a=0$ or $a=b$. So, we assume that $0<a<b$.

Since $X$ is a Banakh space with $a\in\dist[X^2]$, there exists a point $u\in X$ such that $\dist(x,u)=a$. It follows from $\{a,2(b-a)\}\subseteq M=\dist[X^2]\cap G\subseteq \IQ$ that  $2(b-a)\in \IQ a$. By Theorem~\ref{t:SC}, there exists a point $v\in X$ such that $\dist(u,v)=2(b-a)\in \IQ$ and $\dist(x,v)=\dist(x,u)+\dist(u,v)=a+2(b-a)\in \IQ$. By Theorem~\ref{t:SC}, there exists a point $z\in X$ such that $\dist(v,z)=a$ and $\dist(x,z)=\dist(x,v)+\dist(v,z)=a+2(b-a)+a=2b$.
Since $X$ is a Banakh space and $b\in\dist[X^2]$, the sphere $\Sf(x;b)$ consists of two points $y,y'$ such that $xy\updownarrows xy'$. Since $2\dist(x,y)=2\dist(x,y')=2b=\dist(x,z)$, either $xy\upuparrows xz$ or $xy'\upuparrows xz$. We lose no generality assuming that $xy\upuparrows xz$. Then $\ell_{xy}(2b)=\ell_{xz}(2b)=z$ and hence $\dist(y,z)=b=\dist(x,y)$.

\begin{picture}(200,40)(-80,0)
\put(20,20){\circle*{3}}
\put(18,10){$y'$}
\linethickness{1pt}
\put(70,20){\color{blue}\line(1,0){20}}
\put(90,20){\color{red}\line(1,0){60}}
\put(70,20){\circle*{3}}
\put(68,10){$x$}
\put(90,20){\circle*{3}}
\put(88,10){$u$}
\put(120,20){\circle*{3}}
\put(118,10){$y$}
\put(150,20){\color{blue}\line(1,0){20}}
\put(150,20){\circle*{3}}
\put(148,10){$v$}
\put(170,20){\circle*{3}}
\put(168,10){$z$}
\end{picture}

\begin{claim}\label{cl:xu||xz} $xu\upuparrows xz$.
\end{claim}

\begin{proof} Since $\dist(x,u)=a\in\IQ$ and $\dist(x,z)=2b\in\IQ$, by Theorem~\ref{t:or}(2), $xu\upuparrows xz$ or $xu\updownarrows xz$. If $xu\updownarrows xz$, then $\dist(u,z)=\dist(u,x)+\dist(x,z)=a+2b$, by Theorem~\ref{t:or}(4). On the other hand, $a+2b=\dist(u,z)\le \dist(u,v)+\dist(v,z)=2(b-a)+a=2b-a$, which implies $a=0$ and contradicts our assumption. This contradiction shows that $xu\updownarrows xz$ does not hold and hence $xu\upuparrows xz$.
\end{proof}

\begin{claim}\label{cl:xu||xy} $xu\upuparrows xy$ and $\dist(y,u)<\dist(y,x)+\dist(x,u)$.
\end{claim}

\begin{proof} The choice of $y$ ensures that $xy\upuparrows xz$. By Claim~\ref{cl:xu||xz}, $xz\upuparrows xu$ and hence $xy\upuparrows xu$, by Proposition~\ref{p:updown}(4). Since $u\ne x\ne y$, Theorem~\ref{t:or}(5) ensures that $\dist(y,u)<\dist(y,x)+\dist(x,u)$.
\end{proof}

By analogy with Claim~\ref{cl:xu||xy}, we can prove

\begin{claim} $zv\upuparrows zy$.
\end{claim}

Since the metric space $\Sf(x;G)$ is centrally symmetric, there exists an isometry $f:\Sf(x;G)\to\Sf(x;G)$ such that $f(y)=y$ and $f(x)\ne x$. The Banakh property ensures that $\{x,f(x)\}=\Sf(y;\dist(y,x))=\Sf(y;b)=\{x,z\}$ and hence $f(x)=z$.

\begin{claim}\label{cl:u'v} $f(u)=v$.
\end{claim}

\begin{proof} 
Let $\tilde u\defeq f(u)$. Taking into account that $$2b-a=\dist(x,z)-\dist(x,u)\le \dist(u,z)\le\dist(u,v)+\dist(v,z)=2(b-a)+a=2b-a,$$ we conclude that $\dist(u,z)=2b-a$. Since $f$ is an isometry,   
$\dist(z,\tilde u)=\dist(f(x),f(u))=\dist(x,u)=a<a+2(b-a)=\dist(z,u)$, which implies  $\tilde u\ne u$.

Assuming that $\tilde u\ne v$ and taking into account that $\dist(z,\tilde u)=a=\dist(z,v)$, we conclude that $zv\updownarrows z\tilde u$. Taking into account that $zv\upuparrows zy$, we can apply Proposition~\ref{p:updown}(6) and conclude that $zy\updownarrows z\tilde u$. By Theorem~\ref{t:or}(4), $\dist(y,u)=\dist(y,\tilde u)=\dist(y,z)+\dist(z,\tilde u)=\dist(y,x)+\dist(u,x)$, which contradicts Claim~\ref{cl:xu||xy}. This contradiction shows that $f(u)=\tilde u=v$.
\end{proof}

By Claim~\ref{cl:u'v}, $f(u)=v$ and hence $\dist(y,u)=\dist(f(y),f(u))=\dist(y,v)$. Since $u\ne v$, the Banakh property of $X$ ensures that $2(b-a)=\dist(u,v)=2\dist(u,y)$ and hence $b-a=\dist(u,y)\in\dist[X^2]\cap G=M$. This shows that the monoid $M$ is $2$-divisible in $M-M$. By Theorem~\ref{t:Dzik}, the monoid $M=\dist[X^2]\cap G$ is a half-group.
\end{proof}

In the proof of Theorem~\ref{t:Q} we shall use the following classical result of Menger  \cite{Menger} (see also \cite{Bowers}, \cite{BBKR}).

\begin{theorem}[Menger]\label{t:Menger} A metric space $X$ of cardinality $|X|\ne 4$ is isometric to a subspaces of the real line if and only if every subspace $T\subseteq X$ of cardinality $|T|=3$ is isometric to a subspace of the real line.
\end{theorem}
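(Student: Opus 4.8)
The plan is to prove the nontrivial ``if'' direction, the converse being immediate. First I would dispose of the small cardinalities: a metric space with at most two points embeds isometrically into $\IR$ (send the two points to $0$ and to their mutual distance), the case $|X|=3$ is exactly the hypothesis, and the case $|X|=4$ is the one excluded, so it remains to treat $|X|\ge 5$. Throughout I would use the elementary fact that a three-point metric space $\{x,y,z\}$ embeds isometrically into $\IR$ if and only if it is \emph{linear}, meaning that one of its points lies metrically between the other two, i.e.\ one of the equalities $\dist(x,z)=\dist(x,y)+\dist(y,z)$, $\dist(x,y)=\dist(x,z)+\dist(z,y)$, $\dist(y,z)=\dist(y,x)+\dist(x,z)$ holds. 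Thus the hypothesis reads: every three-point subspace of $X$ is linear.

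The core of the argument is a coordinatization. I would fix two distinct points $p,q\in X$ and define $f\colon X\to\IR$ by $f(x)\defeq\dist(p,x)$ if $p$ does not lie metrically between $x$ and $q$, and $f(x)\defeq-\dist(p,x)$ otherwise. Then $f(p)=0$, $f(q)=\dist(p,q)$, $|f(x)|=\dist(p,x)$ for all $x$, and $f$ is precisely the normalization --- sending $p\mapsto 0$ and $q\mapsto\dist(p,q)$ --- of an isometric embedding of the metric space $\{p,q,x\}$ into $\IR$; in particular $|f(x)-f(p)|=\dist(p,x)$ and $|f(x)-f(q)|=\dist(q,x)$ hold automatically. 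It remains to check that $|f(x)-f(y)|=\dist(x,y)$ for all $x,y\in X$. Fixing $x,y$: if the subspace $\{p,q,x,y\}$ has at most three points this follows from the definition of $f$ and linearity of the relevant triple; and if $\{p,q,x,y\}$ has four points and embeds isometrically into $\IR$, then any such embedding, renormalized to fix $p$ at $0$ and $q$ at $\dist(p,q)$, must agree with $f$ on $x$ and on $y$ (the sign of each coordinate being exactly the betweenness datum used in the definition of $f$), so again $|f(x)-f(y)|=\dist(x,y)$. Hence everything reduces to proving that \emph{every four-point subspace of $X$ embeds isometrically into $\IR$}.

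To prove this I would argue by contradiction, through a two-part finite analysis. Part one is a classification of four-point metric spaces all of whose three-point subspaces are linear: I expect such a space $F=\{a,b,c,d\}$ either embeds isometrically into $\IR$, or is, after relabeling and up to scaling, the ``broken'' space given by $\dist(a,b)=\dist(c,d)=\beta$, $\dist(b,c)=\dist(a,d)=\alpha$, $\dist(a,c)=\dist(b,d)=\alpha+\beta$ for some reals $\alpha,\beta>0$ (for $\alpha=\beta$ this is Menger's classical four-point example, in which $b$ and $d$ are two distinct ``midpoints'' of $\{a,c\}$; placing $a,b,c$ on a line forces $d$ to $-\alpha$, where $\dist(c,d)$ comes out wrong, so it does not embed). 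Part two is a non-extendability lemma: no broken four-point space admits an isometric extension by one further point all of whose three-point subspaces are still linear. Granting these, the hypothesis $|X|\ge 5$ yields a point $e\in X$ outside any given four-point subspace $F$; since all three-point subspaces of $F\cup\{e\}$ are linear, $F$ is not broken, hence embeds into $\IR$ --- which establishes the reduction and hence the theorem. This scheme is uniform in the cardinality of $X$ (finite of size $\ge 5$, or infinite), since for each pair $x,y$ it only inspects the five points $p,q,x,y,e$.

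The main obstacle is Part one together with Part two of the last step --- the complete enumeration of the linear four-point configurations and the verification that the broken ones admit no enlargement. Both are elementary but demand a careful case split, and this is exactly where the hypothesis $|X|\ne 4$ enters: the broken spaces are bona fide four-point metric spaces with all triples linear that do not embed into $\IR$, and the cardinality restriction is precisely what makes them unavailable as subspaces of $X$.
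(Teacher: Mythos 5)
The paper does not prove this statement: it is quoted as a classical theorem of Menger and supported only by the citations to Menger's 1931 paper and to Bowers--Bowers, so there is no in-paper proof to compare yours against. Your reconstruction is the standard argument and is correct in outline: the coordinatization $f(x)=\pm\dist(p,x)$ with sign given by the betweenness of $p$ in $\{x,q\}$ is well defined (the two ``bad'' betweenness relations cannot hold simultaneously for distinct points), it reduces everything to the four-point case, and your classification is right --- fixing a pair $\{a,c\}$ of maximal distance forces $b$ and $d$ strictly between $a$ and $c$, and the linearity of $\{a,b,d\}$ then either yields $\dist(b,d)=|\dist(a,b)-\dist(a,d)|$ (embeddable) or $\dist(b,d)=\dist(a,b)+\dist(a,d)$, which the linearity of $\{b,c,d\}$ forces to equal $\dist(a,c)$, giving exactly your ``broken'' configuration. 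The non-extendability step also checks out (e.g.\ when both $\{a,b,c,e\}$ and $\{a,d,c,e\}$ embed, the common coordinate of $e$ must satisfy $|x_e-\beta|+|x_e-\alpha|=\alpha+\beta$, forcing $x_e\in\{0,\alpha+\beta\}$ and hence $e\in\{a,c\}$; the remaining cases where one of these quadruples is itself broken collapse to an equilateral triple, which is never linear). This is precisely where $|X|\ne 4$ is used, as you say.
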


The main result of this section is the following characterization.

\begin{theorem}\label{t:Q} For every print $x$ of a Banakh space $X$ and every $r\in \dist[X^2]$, the following conditions are equivalent:
\begin{enumerate}
\item The $\IQ r$-sphere $\Sf(x;\IQ r)$ is isometric to a subspace of the real line;
\item The $\IQ r$-sphere $\Sf(x;\IQ r)$ is isometric to a subgroup of the group $\IQ r$;
\item $\dist[\Sf(x;\IQ r)^2]\subseteq \IQ r$;
\item the metric space $\Sf(x;\IQ r)$ is centrally symmetric;
\item $\dist[X^2]\cap \IQ r$ is a half-group in $\IR$.
\end{enumerate}
\end{theorem}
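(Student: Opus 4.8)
The plan is to prove the cycle of implications $(2)\Ra(1)\Ra(3)\Ra(5)\Ra(4)$ and then close with $(4)\Ra(5)$—wait, $(5)$ will already be in hand—so instead I would run $(4)\Ra(5)\Ra(2)$ to complete the equivalence. Let me lay this out concretely. The implication $(2)\Ra(1)$ is immediate, since every subgroup of $\IQ r\subseteq\IR$ is a subspace of the real line. For $(1)\Ra(3)$: if $\Sf(x;\IQ r)$ isometrically embeds into $\IR$, then its distance set is a set of nonnegative reals; but I must show these distances lie in $\IQ r$. The key observation is that for any $y,z\in\Sf(x;\IQ r)$ we have $\dist(x,y),\dist(x,z)\in\IQ r$, and applying the triangle/collinearity structure forced by Proposition~\ref{p:TE} (whose hypothesis $\{\dist(x,y),\dist(x,z),\dist(y,z)\}\subseteq\IQ r$ I do not yet have) is circular—so instead I would argue directly: embed into $\IR$ via an isometry $g$ fixing nothing in particular, note $g$ sends $x$ to some point and $y,z$ to points at rational-multiple-of-$r$ distance from $g(x)$, hence $g(y),g(z)\in g(x)+\IQ r$, so $\dist(y,z)=|g(y)-g(z)|\in\IQ r$. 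This gives $(1)\Ra(3)$ cleanly.

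For $(3)\Ra(5)$: assuming $\dist[\Sf(x;\IQ r)^2]\subseteq\IQ r$, I want $M\defeq\dist[X^2]\cap\IQ r$ to be a half-group. By Theorem~\ref{t:SC} (Segment Construction), $M$ is a submonoid of $\IQ_+ r$; and the hypothesis lets me realize differences of elements of $M$ as genuine distances inside $\Sf(x;\IQ r)$, which by $(3)$ land back in $\IQ r$—combined with Corollary~\ref{c:SC} and the central-symmetry-free part of the argument, I would verify $2$-divisibility of $M$ in $M-M$ and invoke Theorem~\ref{t:Dzik}. Actually the cleaner route is $(3)\Ra(2)$ directly: from $(3)$, Menger's theorem (Theorem~\ref{t:Menger}) applied to $\Sf(x;\IQ r)$—after checking triples are collinear via Proposition~\ref{p:TE}, whose hypothesis is now legitimately satisfied because all pairwise distances lie in $\IQ r$—shows $\Sf(x;\IQ r)$ embeds in $\IR$, and then Theorem~\ref{t:main3} (or a direct argument using that the embedding's image is closed under the Banakh reflections and hence is a subgroup of $\IQ r$) upgrades this to an isometry with a subgroup of $\IQ r$. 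I need $|{\Sf(x;\IQ r)}|\neq 4$ for Menger; if the sphere has exactly four points one argues by hand, or observes the Banakh structure forces it to be $\IZ r/2$ or similar, which is collinear anyway.

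The implication $(4)\Ra(5)$ is exactly Theorem~\ref{t:Dzik2} (Dzikovskyi's theorem), applied with $G=\IQ$: central symmetry of $\Sf(x;\IQ r)$—after rescaling so that $r$ corresponds to $1$—yields that $\dist[X^2]\cap\IQ$ is a half-group, i.e. $M$ is a half-group. So the real new content is $(5)\Ra(4)$ and $(5)\Ra(2)$. For $(5)\Ra(2)$: if $M=\dist[X^2]\cap\IQ r$ is a half-group, then $G\defeq\frac1r(M\cup(-M))$ is a subgroup of $\IQ$ with $G_+=\frac1r M\subseteq\frac1r\dist[X^2]$, so by Theorem~\ref{t:main2} the $G$-sphere $\Sf(x;rG)$ is isometric to $rG$; but $rG$ is exactly the relevant subgroup of $\IQ r$, and one checks $\Sf(x;rG)=\Sf(x;\IQ r)$ because every distance in $\IQ r$ realized in $X$ lies in $M=rG_+$. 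This hands us $(2)$, and $(2)\Ra(4)$ because subgroups of $\IR$ are centrally symmetric (the stated Example). I expect the main obstacle to be the $(3)\Ra(2)$ or $(3)\Ra(5)$ step: correctly invoking Menger requires knowing all triples in $\Sf(x;\IQ r)$ are collinear, which needs Proposition~\ref{p:TE}, and one must be careful that the hypothesis $\{\dist(y,z),\dist(y,w),\dist(z,w)\}\subseteq\IQ r$ genuinely holds for arbitrary triples in $\Sf(x;\IQ r)$—this follows from $(3)$ but only once $(3)$ is assumed, so the ordering of implications matters, and handling the cardinality-$4$ exception in Menger's theorem cleanly will require a short separate argument exploiting the Banakh reflection structure.
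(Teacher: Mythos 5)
Your implication graph ($2\Ra1\Ra3\Ra2$ together with $2\Ra4\Ra5\Ra2$) is logically complete and each step is sound, but the route genuinely differs from the paper's cycle $(1)\Ra(2)\Ra(4)\Ra(5)\Ra(3)\Ra(1)$ in two places. First, your $(1)\Ra(3)$ is a direct two-line computation ($g(y)-g(x)$ and $g(z)-g(x)$ lie in $\IQ r$, hence so does their difference), which is simpler than the paper's $(1)\Ra(2)$, where one must additionally verify group closure of the image using the Banakh two-point spheres; you defer that closure argument to the $(3)\Ra(2)$ step, where it is equally available. Second, and more substantially, your $(5)\Ra(2)$ via rescaling and Theorem~\ref{t:main2} replaces the paper's $(5)\Ra(3)$, which runs through the orientation machinery (Theorem~\ref{t:or}, Proposition~\ref{p:updown}) and Theorem~\ref{t:SC}. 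Your shortcut is legitimate: $\pm M$ divided by $r$ is a subgroup $G$ of $\IQ$ with $G_+\subseteq\rho[X^2]$ for $\rho\defeq\dist/r$, Theorem~\ref{t:main2} makes $\Sf_\rho(x;G)$ isometric to $G$, and $\Sf_\rho(x;G)=\Sf(x;\IQ r)$ precisely because every realized distance in $\IQ r$ lies in $M$. What each approach buys: yours leans on the already-proved global structure theorem for $G$-spheres and avoids re-running the orientation arguments; the paper's keeps the proof self-contained at the level of individual distances and, as a by-product, establishes $(5)\Ra(3)$ directly. (Your first sketch of $(3)\Ra(5)$ via $2$-divisibility and Theorem~\ref{t:Dzik} is muddled, but you correctly abandon it for the Menger route, so nothing is lost.)

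The one loose end is the cardinality-$4$ exception in Menger's Theorem~\ref{t:Menger}, which you flag but leave to ``a short separate argument.'' The paper disposes of it without any case analysis: if $\Sf(x;\IQ r)\ne\{x\}$, then $\dist[X^2]\cap\IQ r$ is a nontrivial submonoid of $\IR_+$ by Theorem~\ref{t:SC}, hence infinite, hence the sphere itself is infinite (e.g.\ it contains a copy of $\IZ s$ for any nonzero $s$ in the monoid, by Theorem~\ref{t:Sf-rZ}), so $|{\Sf(x;\IQ r)}|=4$ never occurs. You should incorporate that observation rather than gesturing at an ad hoc argument, and you should also set aside the degenerate case $r=0$ explicitly before dividing by $r$ in the rescaling steps.
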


\begin{proof} 
We shall prove that $(1)\Ra(2)\Ra(4)\Ra(5)\Ra(3)\Ra(1)$.
\smallskip

$(1)\Ra(2)$ Assume that $\Sf(x;G)$ is isometric to a subspace of the real line.
Then there exists an isometry $f:\Sf(x;G)\to \IR$ such that $f(x)=0$. For every $y\in\Sf(x;G)$, we have $|f(y)|=|f(y)-f(x)|=\dist(y,x)\in G$ and hence $f(y)\in G$. Therefore, $f[\Sf(x;G)]\subseteq G$. On the other hand, for every $y,z\in \Sf(x;G)$, 
we have $\{f(y),f(z)\}\subseteq f[\Sf(x;G)]\subseteq G$ and hence $\dist(y,z)=|f(y)-f(z)|\subseteq G_+$. Since $X$ is a Banakh space, there exist points $u,v\in X$ such that $\{u,v\}=\Sf(x;\dist(y,z))\subseteq \Sf(x;G)$ and $\dist(u,v)=2\dist(y,z)$. 
Then $\{f(u),f(v)\}=f[\Sf(x;\dist(y,z))]=\{f(x)-\dist(y,z),f(x)+\dist(y,z)\}=\{-|f(y)-f(z)|,|f(y)-f(z)|\}=\{f(y)-f(z),f(z)-f(y)\}$ and hence $\{f(x)-f(y),f(x)+f(y)\}\subseteq f[\Sf(x;G)]$, which means that $f[\Sf(x;G)]$ is a subgroup of the group $G$.
\smallskip

$(2)\Ra(4)$ If $\Sf(x;G)$ is isometric to a subgroup $H$ of the group $G$, then $\Sf(x;G)$ is centrally symmetric because the group $H\subseteq R$ is centrally symmetric (for any distinct elements $a,c\in H$, the function $f:H\to H$, $f:h\mapsto 2c-h$, has two properties $f(c)=c$ and $f(a)\ne a$ witnessing that the metric space $H$ is centrally symmetric).
\smallskip

$(4)\Ra(5)$ If the metric space $\Sf(x;G)$ is centrally symmetric, then $\dist[X^2]\cap G$ is a half-group by Theorem~\ref{t:Dzik2}.
\smallskip

$(5)\Ra(3)$ Assume that $\dist[X^2]\cap G$ is a half-group. Given two points $y,z\in\Sf(x;G)$, we should prove that $\dist(y,z)\in\IQ$. This is clear if $y=z$ or $x\in\{y,z\}$. So, we assume that $y\ne z$ and $x\notin\{y,z\}$. We lose no generality assuming that $\dist(x,y)\le\dist(x,z)$. By Theorem~\ref{t:or}(2), $xy\upuparrows xz$ or $xy\updownarrows xz$. If $xy\updownarrows xz$, then $\dist(y,z)=\dist(y,x)+\dist(x,z)\in G+G\subseteq\IQ$, by Theorem~\ref{t:or}(4). 

So, we assume that  $xy\upuparrows xz$. Since $\dist[X^2]\cap G$ is a half-group, $r\defeq \dist(x,z)-\dist(x,y)$ belongs to $\dist[X^2]\cap G$. By Theorem~\ref{t:SC}, there exists a unique point $z'\in X$ such that $\dist(y,z')=r$ and $\dist(x,z')=\dist(x,y)+\dist(y,z')=\dist(x,y)+r=\dist(x,z)$. We claim that $xz'\upuparrows xy$. In the opposite case, $r=\dist(z',y)=\dist(z',x)+\dist(x,y)=r+2\dist(x,y)$ and hence $x=y$, which contradicts our assumption. This contradiction shows that $xz'\upuparrows xy\upuparrows xz$ and hence $xz'\upuparrows xz$, by Proposition~\ref{p:updown}(4). Since $\dist(x,z')=\dist(x,z)$, the relation $xz'\upuparrows xz$ implies $z'=z$, by Theorem~\ref{t:or}(6). Then $\dist(z,y)=\dist(z',y)=r\in \dist[X^2]\cap G\subseteq\IQ$.
\smallskip

$(3)\Ra(1)$. Assume that $\dist[\Sf(x;G)^2]\subseteq\IQ$. If $\Sf(x;G)=\{x\}$, then the $G$-sphere $\Sf(x;G)$ is isometric to the subspace $\{0\}$ of the real line. So, we assume that $\Sf(x;G)\ne \{x\}$ and hence $\dist[X^2]\cap G\ne\{0\}$. Theorem~\ref{t:SC} implies that $\dist[X^2]\cap G$ is a submonoid of $\IR$. Then $\dist[X^2]\cap G$ is an inifinite set and so is the $G$-sphere $\Sf(x;G)$. Now we prove that every $3$-element subset $\{a,b,c\}\subseteq\Sf(x;G)$ is isometric to a subspace of the real line. Since 
$$\{\dist(a,b),\dist(a,c),\dist(b,c)\}\subseteq\dist[\Sf(x;G)^2]\subseteq\IQ,$$
the set $\{a,b,c\}$ is isometric to a subspace of the real line, according to Proposition~\ref{p:TE}. Therefore, $\Sf(x;G)$ is an infinite metric space whose every 3-element subspace is isometric to a subspace of the real line. By Menger's Theorem~\ref{t:Menger}, the metric space $\Sf(x;G)$ is isometric to a subspace of the real line.
\end{proof}

Theorem~\ref{t:Q} implies the following corollary that nicely complements Theorem~\ref{t:main3}.

\begin{corollary}\label{c:Q} Every nonempty Banakh space $X$ with $\dist[X^2]\subseteq\IQ$ is isometric to a subgroup of the group $\IQ$.
\end{corollary}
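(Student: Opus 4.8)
The plan is to read the corollary off of Theorem~\ref{t:Q}. First I would dispose of the trivial case: if $X$ consists of a single point, then $X$ is isometric to the trivial subgroup $\{0\}\subseteq\IQ$, and there is nothing to prove. So from now on assume that $X$ has at least two points, fix an arbitrary point $x\in X$, and choose a distance $r\in\dist[X^2]$ with $r\ne 0$; such an $r$ exists because $X$ is not a singleton. Since $r$ is a nonzero rational number, $\IQ r=\IQ$.

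The next step is to observe that the $\IQ r$-sphere around $x$ is the whole space. Indeed, by the hypothesis $\dist[X^2]\subseteq\IQ=\IQ r$, so for every $y\in X$ we have $\dist(x,y)\in\IQ r$, whence
$$\Sf(x;\IQ r)=\{y\in X:\dist(x,y)\in\IQ r\}=X.$$

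Now I would verify condition~(3) of Theorem~\ref{t:Q} for this point $x$ and this $r$: since $\Sf(x;\IQ r)=X$, we have $\dist[\Sf(x;\IQ r)^2]=\dist[X^2]\subseteq\IQ=\IQ r$. Applying the implication $(3)\Ra(2)$ of Theorem~\ref{t:Q}, we conclude that $\Sf(x;\IQ r)=X$ is isometric to a subgroup of the group $\IQ r=\IQ$, which is exactly the assertion of the corollary. (Alternatively, one could route through condition~(5): it would give that $\dist[X^2]\cap\IQ r=\dist[X^2]$ is a half-group, hence of the form $G_+$ for a subgroup $G\subseteq\IQ$, and then invoke Theorem~\ref{t:main3}; but the direct use of condition~(2) is shorter.)

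I do not expect any serious obstacle here: the whole substance is already packaged in Theorem~\ref{t:Q} (and, behind it, in Dzikovskyi's Theorem~\ref{t:Dzik2} together with Menger's Theorem~\ref{t:Menger}). The only mild points of care are to treat the one-point space separately and to use the fact that a nonzero rational $r$ generates all of $\IQ$, so that $\Sf(x;\IQ r)$ genuinely coincides with $X$ rather than with a proper sub-sphere.
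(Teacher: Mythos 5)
Your proof is correct and follows exactly the route the paper intends: the corollary is stated as an immediate consequence of Theorem~\ref{t:Q}, and the very argument you give (pick $x$, note $\Sf(x;\IQ r)=X$ since all distances lie in $\IQ=\IQ r$, verify condition~(3), conclude via condition~(2)) appears verbatim in the paper's proof of Theorem~\ref{t:Bdst}(2). Your explicit handling of the singleton case and of the fact that a nonzero rational $r$ satisfies $\IQ r=\IQ$ are exactly the right points of care.
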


\section{$\IQ r$-spheres and $\IQ r$-hyperspheres in Banakh spaces}\label{s:Spme}

In general, $\IQ r$-spheres in a Banakh spaces are not isometric to subgroups of $\IQ r$. To describe the metric structure of $\IQ r$-spheres in Banakh spaces, for every point $x$ of a Banakh space $X$, let us consider the set
$$\Sf^\pm(x;\IQ r)\defeq\bigcup_{y\in\Sf(x;\IQ r)}\Sf(y;\IQ r),$$
called the {\em $\IQ r$-hypersphere} around the point $x$ in the Banakh space $X$. 

Theorem~\ref{t:Sf-rZ} implies that for every Banakh space $X$ and every $r\in \dist[X^2]$, the family $\{\Sf(x;\IZ r):x\in X\}$ is a partition of $X$ into pairwise disjoint $\IZ r$-spheres. A similar fact holds for $\IQ r$-hyperspheres in a Banakh space: they form a partition of the Banakh space.  
 We shall deduce this fact  from the following lemma.

\begin{lemma}\label{l:Spme} Let $r$ be a positive real number and $a$ be a point of a Banakh space $X$. For every point $c\in \Sf^\pm(a;\IQ r)$ we have $\Sf(c;\IQ r)\subseteq \Sf^\pm(a;\IQ r)$.
\end{lemma}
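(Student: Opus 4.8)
The goal is to show that $\Sf(c;\IQ r)\subseteq\Sf^\pm(a;\IQ r)$ whenever $c\in\Sf^\pm(a;\IQ r)$. By definition of the hypersphere, $c\in\Sf^\pm(a;\IQ r)$ means there is a point $b\in\Sf(a;\IQ r)$ with $c\in\Sf(b;\IQ r)$, i.e. $\dist(a,b)\in\IQ r$ and $\dist(b,c)\in\IQ r$. Now take any point $d\in\Sf(c;\IQ r)$, so $\dist(c,d)\in\IQ r$; I must produce a point $e\in\Sf(a;\IQ r)$ with $d\in\Sf(e;\IQ r)$, i.e. $\dist(a,e)\in\IQ r$ and $\dist(e,d)\in\IQ r$. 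The key observation is that all four of the points $a,b,c,d$ lie pairwise within $\IQ r$-distance once one checks $\dist(a,c)$, $\dist(a,d)$, $\dist(b,d)\in\IQ r$: indeed Proposition~\ref{p:TE} (applied to the triple $\{a,b,c\}$, whose three mutual distances lie in $\IQ r$) shows $\{a,b,c\}$ embeds isometrically into the real line, so in particular $\dist(a,c)\in\{\dist(a,b)\pm\dist(b,c)\}\subseteq\IQ r$; applying it again to $\{a,c,d\}$ gives $\dist(a,d)\in\IQ r$. Hence $a\in\Sf(a;\IQ r)$ witnesses $d\in\Sf(a;\IQ r)\subseteq\bigcup_{y\in\Sf(a;\IQ r)}\Sf(y;\IQ r)=\Sf^\pm(a;\IQ r)$ — taking $e=a$ works.

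So the plan reduces to: first, unpack the definition of $\Sf^\pm(a;\IQ r)$ to get $b$ with $\dist(a,b),\dist(b,c)\in\IQ r$; second, for arbitrary $d\in\Sf(c;\IQ r)$, invoke Proposition~\ref{p:TE} twice — once on $\{a,b,c\}$ to deduce $\dist(a,c)\in\IQ r$, once on $\{a,c,d\}$ to deduce $\dist(a,d)\in\IQ r$ (both triples have all mutual distances in $r\IQ$ by construction and the triangle-alternative from Proposition~\ref{p:TE} keeps the resulting distance inside the $\IQ$-line $\IR r\cap\IQ r=\IQ r$); third, conclude that $d\in\Sf(a;\IQ r)$, and since $a\in\Sf(a;\IQ r)$ (as $\dist(a,a)=0\in\IQ r$), we get $d\in\Sf(a;\Sf(a;\IQ r))\subseteq\Sf^\pm(a;\IQ r)$. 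Since $d$ was arbitrary, $\Sf(c;\IQ r)\subseteq\Sf^\pm(a;\IQ r)$.

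The only delicate point is the bookkeeping on which distances are rational multiples of $r$ — but this is automatic: membership in $\Sf(\cdot;\IQ r)$ is literally the statement that a distance lies in $\IQ r$, and Proposition~\ref{p:TE} outputs a distance that is a $\pm$-combination of the other two, hence again in $\IQ r$. There is no real obstacle; the lemma is essentially a transitivity-type statement for the relation ``distance in $\IQ r$'', and the slickest route is to notice that the hypothesis $c\in\Sf^\pm(a;\IQ r)$ already forces $\dist(a,c)\in\IQ r$ (so in fact $\Sf^\pm(a;\IQ r)=\Sf(a;\IQ r)$ whenever the latter has the property that $\dist$ restricted to it lies in $\IQ r$), and then $\dist(c,d)\in\IQ r$ gives $\dist(a,d)\in\IQ r$ directly. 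I would write the proof using exactly this two-fold application of Proposition~\ref{p:TE}, with the substitution $e\defeq a$ at the end.
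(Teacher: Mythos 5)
Your proof has a genuine gap, and it is located exactly where you say the argument is ``automatic'': the claim that $c\in \Sf^\pm(a;\IQ r)$ forces $\dist(a,c)\in\IQ r$. You try to extract this from Proposition~\ref{p:TE} applied to $\{a,b,c\}$, but that proposition requires \emph{all three} pairwise distances to lie in $\IQ r$ as a hypothesis; you only know $\dist(a,b)\in\IQ r$ and $\dist(b,c)\in\IQ r$, and the third distance $\dist(a,c)$ is precisely what you are trying to control. What the orientation machinery actually gives from two commensurable distances is Theorem~\ref{t:or}(2): either $ba\updownarrows bc$, in which case $\dist(a,c)=\dist(a,b)+\dist(b,c)\in\IQ r$, or $ba\upuparrows bc$, in which case you only get the strict inequality $\dist(a,c)<\dist(a,b)+\dist(b,c)$ and no membership in $\IQ r$ at all. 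This failure is not hypothetical: by Theorem~\ref{t:Q}, the identity $\dist[\Sf(a;\IQ r)^2]\subseteq\IQ r$ holds \emph{if and only if} $\dist[X^2]\cap\IQ r$ is a half-group, and Theorem~\ref{t:exotic} produces Banakh spaces whose distance monoids $M_r$ are floppy but not half-groups (e.g.\ of the type $\w\setminus\{1\}$); in such spaces two points of $\Sf(b;\IQ r)$ can sit at a distance outside $\IQ r$. Your parenthetical remark that $\Sf^\pm(a;\IQ r)=\Sf(a;\IQ r)$ would, if true, make the entire notion of a hypersphere redundant — the lemma is nontrivial exactly because this equality fails in general.

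Consequently the reduction ``take $e\defeq a$'' cannot work, and a genuinely different argument is needed for the case $c\notin\Sf(a;\IQ r)$ and $cx\upuparrows cb$ (the $\updownarrows$ cases do collapse to membership in $\Sf(b;\IQ r)$ or $\Sf(a;\IQ r)$ as you expect, via Theorem~\ref{t:or}(4)). The paper handles the remaining case by a parallelogram-type construction: using Theorem~\ref{t:SC} it builds a point $y$ with $\dist(b,y)=\dist(c,x)$ and $\dist(c,y)=\dist(c,b)+\dist(b,y)$, builds a second point $y'$ with $\dist(x,y')=\dist(c,b)$ and $\dist(c,y')=\dist(c,y)$, identifies $y=y'$ via Proposition~\ref{p:updown} and Theorem~\ref{t:or}(6), and then shows $y\in\Sf(a;\IQ r)$ and $x\in\Sf(y;\IQ r)$. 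That auxiliary point $y$ — not $a$ itself — is the witness placing $x$ in the hypersphere. You would need to supply something of this kind to close the gap.
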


\begin{proof} If $c\in \Sf(a;\IQ r)$, then $\Sf(c;\IQ r)\subseteq\Sf^\pm(a;\IQ r)$, by the definition of $\Sf^\pm(a;\IQ r)$. So, we assume that $c\notin\Sf(a;\IQ r)$. By the definition of $\Sf^\pm(a;\IQ)\ni c$, there exists $b\in\Sf(a;\IQ r)$ such that $c\in\Sf(b;\IQ r)$. It follows from $c\notin\Sf(a;\IQ)$ that $a\ne b\ne c$. By Theorem~\ref{t:or}(2), $ba\upuparrows bc$ or $ba\updownarrows bc$. In the latter case, $\dist(a,c)=\dist(a,b)+\dist(b,c)\in\IQ r$ by Theorem~\ref{t:or}(4).  Then $c\in\Sf(a;\IQ r)$, which contradicts our assumption. This contradiction shows that $ba\upuparrows bc$. 

Given any $x\in\Sf(c;\IQ r)\setminus\{c\}$, we should prove that $x\in\Sf^\pm(a;\IQ r)$. By Theorem~\ref{t:or}(2), $cx\upuparrows cb$ or $cx\updownarrows cb$. In the latter case, Theorem~\ref{t:or}(4) guarantees that $\dist(x,b)=\dist(x,c)+\dist(c,b)\in\IQ r$  and hence $x\in\Sf(b;\IQ r)\subseteq\Sf^\pm(a;\IQ r)$. So, we assume that $cx\upuparrows cb$.

\begin{picture}(200,40)(-120,0)

\put(120,20){\color{blue}\line(1,0){20}}
\put(60,20){\color{blue}\line(1,0){20}}
\put(20,20){\circle*{3}}
\put(17,10){$a$}
\put(120,20){\circle*{3}}
\put(118,10){$b$}

\put(60,20){\circle*{3}}
\put(57,10){$c$}
\put(80,20){\circle*{3}}
\put(77,10){$x$}
\put(140,20){\circle*{3}}
\put(137,10){$y$}
\put(137,27){$y'$}

\end{picture}

By Theorem~\ref{t:SC}, there exists a point $y\in X$ such that $\dist(b,y)=\dist(c,x)>0$ and $$\dist(c,y)=\dist(c,b)+\dist(b,y)=\dist(c,b)+\dist(c,x)\in\IQ r\setminus\{0\}.$$ By Proposition~\ref{p:sum=>arrows}, $cy\upuparrows cb$ and $bc\updownarrows by$. 

 By Theorem~\ref{t:SC}, there exists a point $y'\in X$ such that $\dist(x,y')=\dist(c,b)>0$ and $$\dist(c,y')=\dist(c,x)+\dist(x,y)=\dist(c,x)+\dist(c,b)=\dist(c,y)\in\IQ r\setminus\{0\}.$$ By Proposition~\ref{p:sum=>arrows}, $cy'\upuparrows cx$. Then $cy'\upuparrows cx\upuparrows cb\upuparrows cy$ and hence $y=y'$, by Proposition~\ref{p:updown}(4) and Theorem~\ref{t:or}(6).
 
By Proposition~\ref{p:updown}(6), $ba\upuparrows bc\updownarrows by$ implies $ba\updownarrows by$ and then Theorem~\ref{t:or}(4) implies $\dist(a,y)=\dist(a,b)+\dist(b,y)\in \IQ r$.  Then $y'=y\in\Sf(a;\IQ r)$ and $x\in \Sf(y';\dist(b,c))\subseteq \Sf(y';\IQ r)=\Sf^\pm(a;\IQ r)$.
\end{proof}

\begin{theorem}\label{t:Spme} For any real number $r$ and points $a,b$ of a Banakh space $X$, either $$\Sf^\pm(a;\IQ r)\cap\Sf^\pm(b;\IQ r)=\emptyset  \quad\mbox{or}\quad \Sf^\pm(a;\IQ r)=\Sf^\pm(b;\IQ r).$$
\end{theorem}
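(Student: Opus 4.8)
The plan is to recast the statement as the fact that the $\IQ r$-hyperspheres are the classes of an equivalence relation. Define a relation on $X$ by letting $a\sim b$ mean $b\in\Sf^\pm(a;\IQ r)$, i.e.\ that there is a point $c\in X$ with $\dist(a,c)\in\IQ r$ and $\dist(c,b)\in\IQ r$. First I would dispose of the degenerate case $r=0$: then $\IQ r=\{0\}$, so $\Sf^\pm(x;\IQ r)=\{x\}$ for every $x$ and the conclusion is immediate. For $r\ne 0$, since $\IQ r=\IQ|r|$, I replace $r$ by $|r|>0$ so that Lemma~\ref{l:Spme} becomes applicable.

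From the definition of $\Sf^\pm$, the relation $\sim$ is at once reflexive (take the witness $c=a$, using $0\in\IQ r$) and symmetric (the defining condition is visibly symmetric in $a$ and $b$, so $b\in\Sf^\pm(a;\IQ r)$ iff $a\in\Sf^\pm(b;\IQ r)$). The key step, and the one place Lemma~\ref{l:Spme} is used, is the monotonicity property: \emph{if $c\in\Sf^\pm(a;\IQ r)$, then $\Sf^\pm(c;\IQ r)\subseteq\Sf^\pm(a;\IQ r)$}. To see this, take $z\in\Sf^\pm(c;\IQ r)$ and choose $w\in\Sf(c;\IQ r)$ with $z\in\Sf(w;\IQ r)$; Lemma~\ref{l:Spme} applied with centre $a$ and point $c$ gives $w\in\Sf(c;\IQ r)\subseteq\Sf^\pm(a;\IQ r)$, and a second application with centre $a$ and point $w$ gives $z\in\Sf(w;\IQ r)\subseteq\Sf^\pm(a;\IQ r)$.

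Combining monotonicity with symmetry yields $\Sf^\pm(a;\IQ r)=\Sf^\pm(c;\IQ r)$ whenever $a\sim c$, and transitivity of $\sim$ then follows immediately (from $a\sim c$ and $c\sim d$ we get $d\in\Sf^\pm(c;\IQ r)\subseteq\Sf^\pm(a;\IQ r)$). Hence $\sim$ is an equivalence relation whose class of $a$ is precisely $\Sf^\pm(a;\IQ r)$, so for any $a,b\in X$ the sets $\Sf^\pm(a;\IQ r)$ and $\Sf^\pm(b;\IQ r)$ are either disjoint or equal, which is the assertion. With Lemma~\ref{l:Spme} in hand there is no genuine obstacle; the only points requiring a little care are the reduction to the case $r>0$ and keeping the centre fixed at $a$ through both invocations of the lemma.
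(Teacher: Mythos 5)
Your proof is correct and follows essentially the same route as the paper: both arguments rest entirely on Lemma~\ref{l:Spme} and amount to chaining it along the witnesses of membership in the hyperspheres, with your version merely packaging the chain as the statement that the $\IQ r$-hyperspheres are the classes of an equivalence relation. Your explicit reduction to $r>0$ (needed because Lemma~\ref{l:Spme} assumes $r$ positive) is a small point of care that the paper's own proof leaves implicit.
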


\begin{proof} Assuming that $\Sf^\pm(a;\IQ r)\cap\Sf^\pm(b;\IQ r)\ne\emptyset$, we shall prove that $\Sf^\pm(a;\IQ r)=\Sf^\pm(b;\IQ r)$. Fix any point $c\in \Sf^\pm(a;\IQ r)\cap\Sf^\pm(b;\IQ)$. To show that $\Sf^\pm(b;\IQ r)\subseteq\Sf^\pm(a;\IQ r)$, take any point $x\in \Sf^\pm(a;\IQ r)$. By the definition of the set $\Sf^\pm (b;\IQ r)$, there exists $y\in\Sf(b;\IQ r)$ such that $x\in\Sf(y;\IQ r)$. Since $c\in \Sf^\pm(b;\IQ r)$, there exists $z\in\Sf(b;\IQ r)$ such that $c\in\Sf(z;\IQ r)$. Applying Lemma~\ref{l:Spme}, we conclude that $c\in\Sf^\pm(a;\IQ r)$, implies $z\in \Sf(c;\IQ r)\subseteq\Sf^\pm(a;\IQ)$, which implies $b\in \Sf(z;\IQ r)\subseteq \Sf^\pm(a;\IQ)$, which implies $y\in\Sf(b;\IQ r)\subseteq\Sf^\pm(a;\IQ)$, which implies $x\in\Sf(y;\IQ r)\subseteq \Sf^\pm(a;\IQ r)$. Therefore, $\Sf^\pm(b;\IQ r)\subseteq\Sf^\pm(a;\IQ r)$. By analogy we can prove that $\Sf^\pm(a;\IQ r)\subseteq \Sf^\pm(b;\IQ r)$.
\end{proof}

The following description of the structure of $\IQ r$-spheres and $\IQ r$-hyperspheres in the main result of this section.

\begin{theorem}\label{t:Spm} Let $a,b$ be two points of a Banakh space $X$, $r\defeq\dist(a,b)$ and $M_r\defeq\dist[X^2]\cap\IQ r$. For the subgroup $M_r-M_r$ of\/ $\IQ r$, there exists a unique bijective function $\ell^\pm_{ab}:M_r-M_r\to \Sf^\pm(a;\IQ r)$ such that $\ell^\pm_{ab}(0)=a$, $\ell^\pm_{ab}(r)=b$, $\ell^\pm_{ab}[\pm M_r]=\Sf(a;\IQ r)$ and for every $s,t\in M_r-M_r$ the following statements hold:
\begin{enumerate}
\item $|s-t|\le \dist(\ell^\pm_{ab}(s),\ell^\pm_{ab}(t))\le \inf\{u+v:u,v\in M_r,\;|t-s|=u-v\}$;
\item $\dist(\ell^\pm_{ab}(s),\ell^\pm_{ab}(t))=|s-t|$ if and only if $|s-t|\in M_r$ if and only if $\dist(\ell^\pm_{ab}(s),\ell^\pm_{ab}(yt))\in \IQ r$.
\end{enumerate}
\end{theorem}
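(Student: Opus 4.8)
The plan is to build $\ell^\pm_{ab}$ by first understanding $\Sf(a;\IQ r)$ via the orientation relations and then extending across the hypersphere. First I would handle the trivial case $r=0$, where $M_r=\{0\}$ and the map is constant. Assuming $r>0$, rescale so that $r=1$ and set $M\defeq M_r$, a submonoid of $\IQ_+$ by Theorem~\ref{t:SC}; then $M-M$ is a subgroup of $\IQ$. The first main step is to parametrize $\Sf(a;\IQ r)$ by $\pm M$: for each $t\in M\setminus\{0\}$, the Banakh property gives $\Sf(a;t)=\{u,v\}$ with $au\updownarrows av$, so exactly one of them, say $u$, satisfies $au\upuparrows ab$ (using Proposition~\ref{p:SC} when $t\in\IQ r$, which holds since $M\subseteq\IQ r$); define $\ell^\pm_{ab}(t)\defeq u$ and $\ell^\pm_{ab}(-t)\defeq v$, with $\ell^\pm_{ab}(0)\defeq a$ and $\ell^\pm_{ab}(r)\defeq b$. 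This is a bijection $\pm M\to\Sf(a;\IQ r)$: surjectivity is clear since every point of $\Sf(a;\IQ r)$ lies on some sphere $\Sf(a;t)$ with $t\in M$, and injectivity follows from Theorem~\ref{t:or}(6) together with the fact that $\ell^\pm_{ab}(t)$ and $\ell^\pm_{ab}(-t)$ are distinct for $t\neq 0$.

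The second step is to extend $\ell^\pm_{ab}$ to all of $M-M$. Every element $w\in M-M$ can be written $w=s-t$ with $s,t\in M$. Using the Segment Construction Theorem~\ref{t:SC} and Corollary~\ref{c:SC}, I would show that there is a well-defined point obtained by going out to $\ell^\pm_{ab}(s)\in\Sf(a;\IQ r)$ and then continuing "in the same direction away from $a$" a further distance $t$: concretely, apply Theorem~\ref{t:SC} to the pair $(a,\ell^\pm_{ab}(s))$ and radius $t\in M\subseteq\IQ\cdot\dist(a,\ell^\pm_{ab}(s))$ to get the unique $z$ with $\dist(\ell^\pm_{ab}(s),z)=t$ and $\dist(a,z)=s+t$, and similarly for the "toward" direction; the representation-independence ($w=s-t=s'-t'$ gives the same $z$) follows from the uniqueness clauses in Theorem~\ref{t:SC} and Theorem~\ref{t:GPS}, after translating everything into the orientation calculus of Proposition~\ref{p:updown} and Theorem~\ref{t:or}. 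That $\ell^\pm_{ab}$ lands in $\Sf^\pm(a;\IQ r)$ and is onto it is exactly Lemma~\ref{l:Spme} and Theorem~\ref{t:Spme}: $\ell^\pm_{ab}(s-t)$ lies in $\Sf(\ell^\pm_{ab}(s);\IQ r)$ with $\ell^\pm_{ab}(s)\in\Sf(a;\IQ r)$, and conversely any point of the hypersphere is reached this way. Injectivity on $M-M$ again reduces to Theorem~\ref{t:or}(6): two elements mapping to the same point have the same distance from $a$ and the same orientation, forcing equality.

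The third step is the metric estimate (1) and the rigidity statement (2). For the lower bound $|s-t|\le\dist(\ell^\pm_{ab}(s),\ell^\pm_{ab}(t))$: writing both points in terms of a common large "anchor" $\ell^\pm_{ab}(N)$ with $N\in M$ large (so that $N-s$ and $N-t$ both lie in $M$), one sees that $\ell^\pm_{ab}(s)$ and $\ell^\pm_{ab}(t)$ both lie on a common discrete line through $\ell^\pm_{ab}(N)$ — this is where Theorem~\ref{t:main1} and the transitivity of $\upuparrows$ enter — so their distance is at least $|s-t|$, exactly as in the triangle-inequality squeeze arguments used throughout Section~\ref{s:SC}. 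For the upper bound: if $|t-s|=u-v$ with $u,v\in M$, then $\dist(\ell^\pm_{ab}(s),\ell^\pm_{ab}(t))\le u+v$ by constructing an intermediate point via Theorem~\ref{t:SC} and applying the triangle inequality; taking the infimum over such $(u,v)$ gives the stated bound. For (2): if $|s-t|\in M$ then Corollary~\ref{c:SC} produces a point at distance exactly $|s-t|$ with the right orientation, and uniqueness (Theorem~\ref{t:or}(6)) identifies it with $\ell^\pm_{ab}(t)$; conversely if the distance equals $|s-t|$ then $|s-t|\in\dist[X^2]\cap\IQ r=M$ immediately, and the equivalence with the $\IQ r$-membership condition is then formal.

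The main obstacle I anticipate is \emph{representation-independence} of the extension to $M-M$ in step two — ensuring that the point $\ell^\pm_{ab}(s-t)$ does not depend on the chosen decomposition $w=s-t$, and simultaneously that orientation behaves additively (that moving out to $\ell^\pm_{ab}(s)$ and then further is compatible with moving out to $\ell^\pm_{ab}(s+k)$ for $k\in M$). This requires a careful bookkeeping of the $\upuparrows$/$\updownarrows$ relations under the Segment Construction operations, exactly the kind of "direction algebra" developed in Propositions~\ref{p:updown}, \ref{p:sum=>arrows} and Theorem~\ref{t:or}; the half-group structure is not needed here (that was the content of the previous section), only the monoid structure of $M$ and the uniqueness theorems. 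Once representation-independence is secured, the remaining verifications are routine triangle-inequality squeezes of the type already rehearsed many times in the paper.
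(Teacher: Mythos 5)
Your plan matches the paper's proof in all essentials: the point $\ell^\pm_{ab}(s-t)$ is defined by going out from $a$ in the direction of $b$ by some $u\in M_r$ and coming back by $v\in M_r$ with $s-t=u-v$, well-definedness of this recipe is the crux, and the bounds and rigidity statements follow from triangle-inequality squeezes at a common anchor point. The representation-independence you single out as the main obstacle is exactly the paper's Lemma~\ref{l:Yt2}, which it proves by splicing two representations into a single anchor at distance $\dist(a,\hat x)+\dist(a,\check x)$ and invoking Theorem~\ref{t:or}(6); the toolkit you cite (Propositions~\ref{p:updown} and \ref{p:sum=>arrows}, Theorems~\ref{t:SC} and \ref{t:or}) is precisely what that argument uses.
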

 
\begin{proof} Given any real number $t\in \IQ r$, consider the set $$Y_t\defeq\big\{y\in X:\exists x\in X\;\big((ax\upuparrows ab)\wedge (xy\upuparrows xa)\wedge(\dist(a,x)-\dist(x,y)=t)\big)\big\}.$$

\begin{lemma} For every $t\in \IQ r$ we have $Y_t\subseteq\Sf^\pm(a;\IQ r)$.
\end{lemma}

\begin{proof} By the definition of the set $Y_t$, for every point $y\in T_t$,  there exists a point $x\in  X$ such that $$(ax\upuparrows ab)\wedge (xy\upuparrows xa)\wedge(\dist(a,x)-\dist(x,y)=t).$$
By Theorem~\ref{t:or}(1), $ax\upuparrows ab$ implies $0\ne \dist(a,x)\in\IQ\dist(a,b)=\IQ r$ and $xy\upuparrows xa$ implies $0\ne\dist(x,y)\in\IQ\dist(x,a)=\IQ r$. Then $x\in\Sf(a;\IQ r)$ and $y\in \Sf(x;\IQ r)\subseteq \Sf^\pm(a;\IQ r)$.
\end{proof}

\begin{lemma}\label{l:Yt2} For every $t\in \IQ r$, the set $Y_t$ contains at most one point.
\end{lemma}

\begin{proof} To derive a contradiction, assume that the set $Y_t$ contains two distinct points $\hat y,\check y\in Y_t$. By the definition of the set $Y_t$, for the points $\hat y,\check y$ there exist points $\hat x,\check x\in X$ such that $a\hat x\upuparrows ab\upuparrows a\check x$, $\hat x\hat y\upuparrows\hat xa$, $\check x\check y\upuparrows \check xa$, and  $\dist(a,\hat x)-\dist(\hat x,\hat y)=t=\dist(a,\check x)-\dist(\check x,\check y)$. By Theorem~\ref{t:or}(1), $$\{\dist(a,\hat x),\dist(\hat x,\hat y),\dist(a,\check x),\dist(\check x,\check y)\}\subseteq\IQ r\setminus\{0\}.$$ 

By Theorem~\ref{t:SC}, there exists a point $x\in X$ such that $\dist(\hat x,x)=\dist(a,\check x)$ and $$\dist(a,x)=\dist(a,\hat x)+\dist(\hat x,x)=\dist(a,\hat x)+\dist(a,\check x)\in\IQ r\setminus\{0\}.$$ By Proposition~\ref{p:sum=>arrows}, $\hat xa\updownarrows \hat x x$, $ax\upuparrows a\hat x\upuparrows ab$ and $xa\upuparrows x\hat x$. By Proposition~\ref{p:updown}(6), $\hat x\hat y\upuparrows\hat x a\updownarrows \hat x x$ implies $\hat x\hat y\updownarrows \hat x x$ and hence $\dist(x,\hat y)=\dist(x,\hat x)+\dist(\hat x,\hat y)$, by Theorem~\ref{t:or}(4). By Proposition~\ref{p:sum=>arrows}, $x\hat y\upuparrows x\hat x$.

By Theorem~\ref{t:SC}, there exists a point $x'\in X$ such that $\dist(\check x,x')=\dist(a,\hat x)$ and $$\dist(a,x')=\dist(a,\check x)+\dist(\check x,x')=\dist(a,\check x)+\dist(a,\hat x)\in\IQ r\setminus\{0\}.$$ Applying Propositions~\ref{p:sum=>arrows}, \ref{p:updown} and Theorem~\ref{t:or}, we can show that $ax'\upuparrows a\check x\upuparrows ab$, $\check x\check y\updownarrows \check xx'$, $\dist(x',\check y)=\dist(x',\check x)+\dist(\check x,\check y)=\dist(a,\hat x)+\dist(\check x,\check y)$, and $x'\check y\upuparrows x'\check x\upuparrows x'a$.

Then $ax\upuparrows a\hat x\upuparrows ab\upuparrows a\check x\upuparrows ax'$ and $$\dist(a,x)=\dist(a,\hat x)+\dist(\hat x,x)=\dist(a,\hat x)+\dist(a,\check x)=\dist(a,\check x)+\dist(\check x,x')=\dist(a,x'),$$ and hence $x=x'$, by Theorem~\ref{t:or}(6).

By the choice of $\hat x$ and $\check x$, we have $\dist(a,\hat x)-\dist(\hat x,\hat y)=t=\dist(a,\check x)-\dist(\check x,\check y)$ and hence $$\dist(a,\check x)+\dist(\hat x,\hat y)=\dist(a,\hat x)+\dist(\check x,\check y).$$
Since $x\hat y\upuparrows x\hat x\upuparrows xa=x'a\upuparrows x'\check y=x\check y$ and $$\dist(x,\hat y)=\dist(x,\hat x)+\dist(\hat x,\hat y)=\dist(a,\check x)+\dist(\hat x,\hat y)=\dist(a,\hat x)+\dist(\check x,\check y)=\dist(x',\check x)+\dist(\check x,\check y)=\dist(x',\check y)=\dist(x,\check y),$$ we can apply Theorem~\ref{t:or}(6) and conclude that $\hat y=\check y$, which contradicts the choice of the points $\hat y$ and $\check y$.
\end{proof}

\begin{lemma}\label{l:Yt1} For every $t\in M_r-M_r$, the set $Y_r$ is a singleton.
\end{lemma}

\begin{proof} Theorem~\ref{t:SC} implies that $M_r$ is a monoid and hence $u+r\in M_r$ for all $u\in M_r$. Since $t\in M_r-M_r$ there exist real numbers $u,v\in M_r$ such that $t=u-v$. We lose no generality assuming that the numbers $u,v$ are positive (in the opposite case, replace the numbers $u,v$ by the positive numbers $u+r,v+r$). By Proposition~\ref{p:SC}, there exist a point $x\in \Sf(a;u)$ such that $ax\upuparrows ab$, and a point $y\in\Sf(x,v)$ such that $xy\upuparrows xa$. Then  $y\in Y_t$, by the definition of the set $Y_t$. By Lemma~\ref{l:Yt2}, $Y_t=\{y\}$ is a singleton.
\end{proof}

Let $\ell_{ab}^\pm:M_r-M_r\to \Sf^\pm(s;\IQ r)$ be the function assigning to every $t\in M_r-M_r$ the unique point of the singleton $Y_t$. It the following claims we show that the function $\ell_{ab}^\pm$ has the properties required in Theorem~\ref{t:Spm}.

\begin{claim}\label{cl:ell0a} $\ell_{ab}^\pm(0)=a$.
\end{claim}

\begin{proof} The points $x\defeq b$ and $y\defeq a$ with $ax\upuparrows ab$, $xy\upuparrows xa$ and $\dist(a,x)-\dist(x,y)=r-r=0$ witness that $a=y\in Y_0$ and hence $\ell_{ab}^\pm(0)=a$.
\end{proof}

\begin{claim} $\ell_{ab}^\pm(r)=b$.
\end{claim}

\begin{proof} By Theorem~\ref{t:SC}, there exists a point $x\in X$ such that $\dist(b,x)=r$ and $\dist(a,x)=\dist(a,b)+\dist(b,x)=2r$. By Proposition~\ref{p:sum=>arrows}, $ax\upuparrows ab$ and $xb\upuparrows xa$. Since $\dist(a,x)-\dist(x,b)=2r-r=r$, the point $b$ belongs to the set $Y_r$ and hence $\ell_{ab}^\pm(r)=b$.
\end{proof}

\begin{claim}\label{cl:10.5} For any $s,t\in M_r-M_r$ and points $y_s\defeq\ell_{ab}^\pm(s)$ and $y_t\defeq\ell_{ab}^\pm(t)$, there exists a point $x\in X$ such that
\begin{enumerate}
\item $ax\upuparrows ab$;
\item $xy_t\upuparrows xa\upuparrows xy_s$;
\item $t=\dist(a,x)-\dist(a,y_t)$ and $s=\dist(a,x)-\dist(x,y_s)$.
\end{enumerate}
\end{claim}

\begin{proof} By the definition of the sets $Y_t\ni y_t$ and $Y_s\ni y_s$, there exist points $x_t,x_s\in X$ such that 
$ax_t\upuparrows ab\upuparrows ax_s$, $x_ty_t\upuparrows x_ta$, $x_sy_s\upuparrows x_sa$, $s=\dist(a,x_s)-\dist(x_s,y_s)$ and $t=\dist(a,x_t)-\dist(x_t,y_t)$.
By Theorem~\ref{t:or}(1), $$\{\dist(a,x_t),\dist(a,x_s),\dist(x_t,y_t),\dist(x_s,y_s)\}\subseteq \IQ{\cdot}\dist(a,b)\setminus\{0\}=\IQ r\setminus\{0\}.$$
By Theorem~\ref{t:SC}, there exists a point $x\in X$ such that $\dist(x_t,x)=\dist(a,x_s)$ and $$\dist(a,x)=\dist(a,x_t)+\dist(x_t,x)=\dist(a,x_t)+\dist(a,x_s)\in\IQ r\setminus\{0\}.$$ 
By Proposition~\ref{p:sum=>arrows}, $ax\upuparrows ax_t\upuparrows ab$ and $xx_t\upuparrows xa$.

By analogy, we can find a point $x'\in X$ such that $\dist(x_s,x')=\dist(a,x_t)$, $$\dist(a,x')=\dist(a,x_s)+\dist(x_s,x')=\dist(a,x_s)+\dist(a,x_t)=\dist(a,x)$$ and $ax'\upuparrows ab\upuparrows ax$. By Proposition~\ref{p:updown}(4) and Theorem~\ref{t:or}(6), $x=x'$. 

We claim that the point $x=x'$ satisfies the conditions (1)--(3) of Claim~\ref{cl:10.5}. The condition (1) holds by the choice of $x$. 

By Proposition~\ref{p:sum=>arrows}, the equality $\dist(a,x)=\dist(a,x_t)+\dist(x_t,x)$ implies $x_ta\updownarrows x_tx$ and $xx_t\upuparrows xa$. By Proposition~\ref{p:updown}(5), $x_ty_t\upuparrows x_ta\updownarrows x_tx$ implies $x_ty_t\updownarrows x_tx$. Applying Theorem~\ref{t:or}(4), we conclude that $\dist(x,y_t)=\dist(x,x_t)+\dist(x_t,y_t)$. By Proposition~\ref{p:sum=>arrows}, $xy_t\upuparrows xx_t\upuparrows xa$.

By analogy we can prove that $\dist(x,y_s)=\dist(x,x_s)+\dist(x_s,y_s)$ and $xy_s\upuparrows xy_t\upuparrows xa$, witnessing that the condition (2) holds.

Finally, observe that
$$t=\dist(a,x_t)-\dist(x_t,y_t)=(\dist(a,x_t)+\dist(x_t,x))-(\dist(x,x_t)+\dist(x_t,y_t))=\dist(a,x)-\dist(x,y_t).$$
By analogy we can show that $s=\dist(a,x)-\dist(x,y_s)$.
\end{proof}



\begin{claim} For every $s,t\in M_r-M_r$, we have
$$\dist(\ell_{ab}^\pm(s),\ell_{ab}^\pm(t))\ge|s-t|.$$
\end{claim} 

\begin{proof}  By Claim~\ref{cl:10.5}, for the points $y_t\defeq\ell_{ab}^\pm(t)$ and $y_s\defeq\ell_{ab}^\pm(s)$, there exists a point $x\in X$ such that $ax\upuparrows ab$, $xy_t\upuparrows xa\upuparrows xy_s$, $t=\dist(a,x)-\dist(x,y_t)$ and $s=\dist(a,x)-\dist(x,y_s)$. By the triangle inequality,
$$\dist(\ell_{ab}^\pm(s),\ell_{ab}^\pm(t))=\dist(y_s,y_t)\ge| \dist(x,y_s)-\dist(x,y_t)|=|(\dist(a,x)-\dist(x,y_t))-(\dist(a,x)-\dist(x,y_s))|=|t-s|.$$
\end{proof}

\begin{claim}\label{cl:ell-eq} For any real numbers $s,t\in M_r-M_r$ the following conditions are equivalent:
\begin{enumerate}
\item $|s-t|\in M_r$;
\item $\dist(\ell_{ab}^\pm(s),\ell_{ab}^\pm(t))=|s-t|$;
\item  $\dist(\ell_{ab}^\pm(s),\ell_{ab}^\pm(t))\in\IQ r$.
\end{enumerate}
\end{claim}

\begin{proof} We lose no generality assuming that $s<t$. By Claim~\ref{cl:10.5}, for the points $y_t\defeq\ell_{ab}^\pm(t)$ and $y_s\defeq\ell_{ab}^\pm(s)$, there exists a point $x\in X$ such that $ax\upuparrows ab$, $xy_t\upuparrows xa\upuparrows xy_s$, $t=\dist(a,x)-\dist(x,y_t)$ and $s=\dist(a,x)-\dist(x,y_s)$. Then $\dist(x,y_t)=\dist(a,x)-t<\dist(a,x)-s=\dist(x,y_s)$. By Theorem~\ref{t:or}(1), the relations $xy_t\upuparrows xa\upuparrows xy_s$ imply
$$\{\dist(a,x),\dist(x,y_t),\dist(x,y_s)\}\subseteq\IQ{\cdot}\dist(a,b)\setminus\{0\}=\IQ r\setminus\{0\}.$$
\smallskip

$(1)\Ra(2)$ If $|s-t|\in M_r$, then $t-s=|s-t|\in M_r$. By Theorem~\ref{t:SC}, there exists a point $y'_s\in X$ such that $\dist(y_t,y_s')=t-s$ and $\dist(x,y'_s)=\dist(x,y_t)+\dist(y_t,y'_s)=\dist(a,x)-t+t-s=\dist(a,x)-s=\dist(x,y_s)$. By Proposition~\ref{p:sum=>arrows}, $xy'_s\upuparrows xy_t\upuparrows xa\upuparrows xy_s$ and by Theorem~\ref{t:or}(6), $y_s=y_s'$, and finally
$\dist(\ell_{ab}^\pm(s),\ell_{ab}^\pm(t))=\dist(y_s,y_t)=\dist(y_s',y_t)=t-s=|s-t|$.
\smallskip

$(2)\Ra(3)$ If $\dist(\ell_{ab}^\pm(s),\ell_{ab}^\pm(t))=|s-t|$, then 
$$ \dist(\ell_{ab}^\pm(s),\ell_{ab}^\pm(t))=t-s\subseteq (M_r-M_r)-(M_r-M_r)\subseteq (\IQ r-\IQ r)-(\IQ r-\IQ r)=\IQ r.$$

$(3)\Ra(1)$ If  $\dist(\ell_{ab}^\pm(s),\ell_{ab}^\pm(t))\in\IQ r$, then $\{\dist(y_t,y_s),\dist(x,y_t),\dist(x,y_s)\}\subseteq \IQ r$. Taking into account that $xy_s\upuparrows xy_t$ and $\dist(x,y_t)<\dist(x,y_s)$ and applying Proposition~\ref{p:TE}, we conclude that $\dist(x,y_s)=\dist(x,y_t)+\dist(y_s,y_t)$ and hence
$$\dist(y_s,y_t)=\dist(x,y_s)-\dist(x,y_t)=(\dist(a,x)-s)-(\dist(a,x)-t)=t-s=|t-s|.$$
\end{proof}

\begin{claim} For every $s,t\in M_r-M_r$, we have
$$\dist(\ell_{ab}^\pm(s),\ell_ab^\pm(t))\le\inf\{u+v:(u,v\in M_r)\;\wedge (|s-t|=u-v)\}.$$
\end{claim}

\begin{proof} Given any $u,v\in M_r$ with $|s-t|=u-v$, it suffices to prove that $\dist(\ell_{ab}^\pm(t)-\ell_{ab}^\pm(s))\le u+v$. We lose no generality assuming that $s<t$. Consider the real number $T=s+u$ and observe that $T-t=s+u-t=u-(t-s)=u-|t-s|=v\in M_r$. By  the triangle inequality and Claim~\ref{cl:ell-eq}, we have
$$\dist(\ell_{ab}^\pm(s),\ell_{ab}^\pm(t))\le\dist(\ell_{ab}^\pm(s),\ell_{ab}^\pm(T))+\dist(\ell_{ab}^\pm(T),\ell_{ab}^\pm(t))=|T-s|+|T-t|=u+v.$$
\end{proof}

\begin{claim} The function $\ell_{ab}^\pm:M_r-M_r\to \Sf^\pm(a;\IQ r)$ is bijective.
\end{claim}

\begin{proof} The injectivity of the function $\ell_{ab}^\pm$ follows from Claim~\ref{cl:10.5}. To see that the function $\ell_{ab}^\pm:M_r-M_r\to\Sf^\pm(a;\IQ r)$ is surjective, take any point $y\in \Sf^\pm(a;\IQ r)$. We have to find a real number $t\in M_r-M_r$ such that $\ell_{ab}^\pm(t)=y$. If $y=a$, then for $t=0$ we have $\ell_{ab}^\pm(t)=\ell_{ab}^\pm(0)=a=y$, by Claim~\ref{cl:ell0a}.

So, we assume that $y\ne a$. Two cases are possible.
\smallskip

1. First assume that $y\in\Sf(a;\IQ r)$. By Theorem~\ref{t:or}(2), either $ay\upuparrows ab$ or $ay\updownarrows ab$.

1.1. In case $ay\upuparrows ab$, we shall show that $y=\ell_{ab}^\pm(t)$ for $t\defeq\dist(a,y)$. By Theorem~\ref{t:SC}, there exists a point $x\in X$ such that $\dist(y,x)=r$ and $\dist(a,x)=\dist(a,y)+\dist(y,z)=t+r$ and hence $t=\dist(a,x)-r=\dist(a,x)-\dist(x,y)$. By Proposition~\ref{p:sum=>arrows}, $ax\upuparrows ay\upuparrows ab$ and $xy\upuparrows xa$. The point $x$  witness that $y\in Y_t$ and hence $y=\ell_{ab}^\pm(t)$.

1.2. Next, assume that $ay\updownarrows ab$.  In this case we shall show that $y=\ell_{ab}^\pm(t)$ for the real number $t\defeq -\dist(a,y)$. Indeed, by Theorem~\ref{t:or}(4), the relation $ay\updownarrows ab$ implies $\dist(y,b)=\dist(y,a)+\dist(a,b)=r-t$. By Proposition~\ref{p:sum=>arrows}, $by\upuparrows ba$. Then for the point $x\defeq b$, we have $\dist(a,x)-\dist(x,y)=\dist(a,b)-\dist(b,y)=r-(r-t)=t$,  $ax=ab$ and $xy=by\upuparrows ba=xa$, witnessing that $y\in Y_t$ and $y=\ell_{ab}^\pm(t)$.
\smallskip

2. Now consider the second case $y\in \Sf^\pm(a;\IQ r)\setminus\Sf(a;\IQ r)$. In this case there exists $z\in \Sf(a;\IQ r)$ such that $y\in \Sf(z;\IQ r)$. It follows from $y\notin \Sf(a;\IQ)$ that $z\ne a$. By Theorem~\ref{t:or}(2), either $az\upuparrows ab$ or $az\updownarrows ab$.

2.1. If $az\upuparrows ab$, then put $x\defeq z$. By Theorem~\ref{t:or}(2), either $xy\upuparrows xa$ or $xy\updownarrows xa$. In the latter case, by Theorem~\ref{t:or}(4), $\dist(a,y)=\dist(a,x)+\dist(x,y)\in \IQ r$, which contradicts $y\notin \Sf(a;\IQ)$. This contradiction shows that $xy\upuparrows xa$ and hence $y\in Y_t$ and $y=\ell_{ab}^\pm(t)$ for $t=\dist(a,x)-\dist(x,y)$.

2.2. Finally assume that $az\updownarrows ab$.  By Theorem~\ref{t:or}(2), either $zy\upuparrows za$ or $zy\updownarrows za$. In the latter case, by Theorem~\ref{t:or}(4), $\dist(a,y)=\dist(a,z)+\dist(z,y)\in \IQ r$, which contradicts $y\notin \Sf(a;\IQ)$. This contradiction shows that $zy\upuparrows za$. 

By Theorem~\ref{t:SC}, there exists a point $x\in X$ such that $\dist(a,x)=\dist(z,y)$ and $\dist(z,x)=\dist(z,a)+\dist(a,x)=\dist(z,a)+\dist(z,y)\in\IQ r\setminus\{0\}$. Proposition~\ref{p:sum=>arrows} implies $az\updownarrows ax$, $zx\upuparrows za$ and $xz\upuparrows xa$. 
By Proposition~\ref{p:updown}(5), $ax\updownarrows az\updownarrows ab$ implies $ax\upuparrows ab$. By Theorem~\ref{t:SC}, there exists a point $x'\in X$ such that $\dist(y,x')=\dist(z,a)$ and $\dist(z,x')=\dist(z,y)+\dist(y,x')=\dist(z,y)+\dist(z,a)=\dist(z,x)$. Proposition~\ref{p:sum=>arrows} implies $zx'\upuparrows zy\upuparrows za\upuparrows zx$ and $x'y\upuparrows x'z$. By Theorem~\ref{t:or}(6), $x=x'$ and hence $xy=x'y\upuparrows x'z=xz\upuparrows xa$. Now we see that $y\in Y_t$ and $y=\ell_{ab}^\pm(t)$ for $t\defeq\dist(a,x)-\dist(x,y)$.
\end{proof}

Finally, we prove the uniqueness of the function $\ell_{ab}^\pm$.
 
\begin{claim} The function $\ell_{ab}^\pm$ is equal to any function $f:M_r-M_r\to X$ such that $f(0)=a$, $f(r)=b$ and 
$\dist(f(u-v),f(u))=v$, $\dist(f(u-v),f(-v))=u$ for every $u,v\in M_r$.
\end{claim}

\begin{proof} By the choice of $f$ and Claim~\ref{cl:ell-eq}, 
$$
\begin{gathered}
\dist(a,f(-r))=\dist(f(r-r),f(-r))=r=\dist(\ell_{ab}^\pm(0),\ell_{ab}^\pm(-r))=\dist(a,\ell_{ab}^\pm(-r))\quad\mbox{and}\\ 
\dist(f(-r),b)=\dist(f(r-2r),f(r))=2r=\dist(\ell_{ab}^\pm(r),\ell_{ab}^\pm(-r))=\dist(b,\ell_{ab}^\pm(-r)),
\end{gathered}
$$and hence $f(-r)=\ell_{ab}^\pm(-r)$, according to Theorem~\ref{t:GPS}.

For every $t\in M_r$ we have  $t+r\in M_r$, by Theorem~\ref{t:Sigma-floppy}. Applying Claim~\ref{cl:ell-eq}, we obtain
$$\dist(a,f(t))=\dist(f(t-t),f(t))=t=\dist(\ell_{ab}^\pm(0),\dist_{ab}^\pm(t))=\dist(a,\ell_{ab}^\pm(t))$$and 
$\dist(\ell_{ab}^\pm(-r),\ell_{ab}^\pm(t))=t+r=\dist(f(t-(t+r)),f(t))=\dist(f(-r),f(t))=\dist(\ell_{ab}^\pm(-r),f(t))$ and hence $f(t)=\ell_{ab}^\pm(t)$, by Theorem~\ref{t:GPS}.

On the other hand, Claim~\ref{cl:ell-eq} implies $$\dist(a,f(-t))=\dist(f(t-t),f(-t))=t=\dist(\ell_{ab}^\pm(0),\ell_{ab}^\pm(-t))=\dist(a,\ell_{ab}^\pm(-t))$$and 
$\dist(\ell_{ab}^\pm(r),\ell_{ab}^\pm(-t))=t+r=\dist(f((t+r)-t),f(-t)=\dist(f(r),f(-t))=\dist(\ell_{ab}^\pm(r),f(-t))$ and hence $f(-t)=\ell_{ab}^\pm(-t)$, by Theorem~\ref{t:GPS}.

Therefore, $f(t)=\ell_{ab}^\pm(t)$ for all $t\in \pm M_r$. Now take any nonzero number $t\in M_r-M_r$ and find real numbers $u,v\in M_r$ such that $t=u-v$. It follows from $t\ne 0$ and $-v\le 0\le u$ that $-v\ne u$ and hence $\ell_{ab}^\pm(-v)\ne\ell_{ab}^\pm(u)$. Since $u,-v\in\pm M_r$, we have
$f(u)=\ell_{ab}^\pm(u)\ne \ell_{ab}^\pm(-v)=f(-v)$. 
The property of $f$ and Claim~\ref{cl:ell-eq} imply
$$
\begin{gathered}
\dist(f(t),f(u))=\dist(f(u-v),f(u))=v=|t-u|=\dist(\ell_{ab}^\pm(t),\ell_{ab}^\pm(u))=\dist(\ell_{ab}^\pm(t),f(u))\quad\mbox{and}\\
\dist(f(t),f(-v))=\dist(f(u-v),f(-v))=u=|t-(-v)|=\dist(\ell_{ab}^\pm(t),\ell_{ab}^\pm(-v))=\dist(\ell_{ab}^\pm(t),f(-v)).
\end{gathered}
$$
Aplying Theorem~\ref{t:GPS}, we conclude that $f(t)=\ell_{ab}^\pm(t)$.
\end{proof}
\end{proof}

\section{Banakh groups}

Many examples of Banakh spaces have the structure of a metric group, which motivates studying Banakh metric groups in more details.

First we recall some algebraic notions related to groups.

A {\em semigroup} is a set $X$ endowed with an associative binary operation $+:X\times X\to X$, $+:(x,y)\mapsto x+y$. The {\em associativity} of the binary operation means that $$\forall x,y,z\in X\;\;(x+y)+z=x+(y+z).$$ 

A semigroup $X$ is called 
\begin{itemize}
\item {\em commutative} if its binary operation is {\em commutative}, i.e., $\forall x,y\in X\;\;(x+y=y+x)$;
\item a {\em monoid} if $X$ possessing a {\em neutral element} $\mathbf 0\in X$ such that $\forall x\in X\;(x+\mathbf 0=\mathbf 0+x)$;
\item a {\em group} if $X$ is a monoid such that for every $x\in X$ there exists an element $y\in X$ such that $x+y=\mathbf 0=y+x$.
\end{itemize}

A metric $\dist:X\times X\to\IR_+$ on a group $X$ is called {\em invariant} if $$\dist(a+x+b,a+y+b)=\dist(x,y)$$for all $a,b,x,y\in X$. 

A {\em norm} on a group $X$ is a function $\|\cdot\|:X\to\IR$, $\|\cdot\|:x\mapsto \|x\|$, satifying three axioms:
\begin{itemize}
\item $\|x+y\|\le\|x\|+\|y\|$,
\item $\|y+x-y\|=\|x\|=\|{-}x\|$,
\item $\|x\|=0\;\Leftrightarrow\;x=\mathbf 0$,
\end{itemize}
for every $x,y\in X$.

A norm $\|\cdot\|:X\to\IR$ on a group $X$ is called {\em $\IZ$-homogeneous} if $$\|nx\|=|n|\cdot\|x\|$$for any $n\in\IZ$ and $x\in X$. The elements $nx$ are defined by the recursive formula: 
$$0x=\mathbf 0\quad\mbox{and}\quad (n+1)x=nx+x,\quad -(n+1)x=-nx-x$$for every integer $n\ge 0$.

For any norm $\|\cdot\|:X\to\IR$ on a group $X$, the function $\dist:X\times X\to \IR$, $\dist:(x,y)\mapsto\|x-y\|$, is an invariant metric on the group $X$. Conversely, for every invariant metric $\dist:X\times X\to\IR$ on a group $X$, the function $\|\cdot\|:X\to\IR$, $\|\cdot\|:x\mapsto \|x\|\defeq\dist(x,\mathbf 0)$, is a norm on $X$. 

By a {\em normed group} we understand a group endowed with a norm. By a {\em Banakh group} we understand a normed group which is a Banakh metric space with respect to the invariant metric generated by the norm. Our main result on Banakh groups is Theorem~\ref{t:BG} saying that every Banakh group is a normed $\IZ$-module. A {\em normed $\IZ$-module} is a commutative group $X$ endowed with a $\IZ$-homogeneous norm $\|\cdot\|:X\to\IR$.

\begin{theorem}\label{t:BG} Every Banakh group $X$ is a normed $\IZ$-module.   
\end{theorem}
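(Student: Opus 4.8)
The plan is to establish separately the two ingredients of the conclusion: commutativity of the group operation of $X$, and $\IZ$-homogeneity of its norm. Commutativity I would obtain at once from Theorem~\ref{t:GPS}. Given $x,y\in X$ with $x\ne y$, invariance of the metric $\dist(u,v)=\|u-v\|$ yields
$$\dist(x+y,x)=\dist(y,\mathbf 0)=\|y\|=\dist(y+x,x),\qquad \dist(x+y,y)=\dist(x,\mathbf 0)=\|x\|=\dist(y+x,y),$$
so the points $x+y$ and $y+x$ are equidistant from the two distinct points $x$ and $y$, whence $x+y=y+x$ by Theorem~\ref{t:GPS}; the case $x=y$ is trivial. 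Henceforth $X$ is treated as an Abelian group, so that $nx-mx=(n-m)x$ for all $m,n\in\IZ$ and $\|{-}w\|=\|w\|$ for all $w\in X$.

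The crux of the argument will be to show that $X$ has no element of order $2$ besides $\mathbf 0$. Suppose $2w=\mathbf 0$ for some $w\ne\mathbf 0$ and put $s\defeq\|w\|>0$. By the Banakh property the sphere $\Sf(\mathbf 0;s)$ consists of exactly two points, say $w$ and $w'$, with $\dist(w,w')=2s$. Since $\|{-}w'\|=\|w'\|=s$, we have $-w'\in\Sf(\mathbf 0;s)=\{w,w'\}$, and $-w'=w$ is impossible (it would give $w'=-w=w$), so $2w'=\mathbf 0$ and hence $w+w'=w-w'$ has norm $\|w-w'\|=\dist(w,w')=2s>0$. On the other hand $\dist(w+w',w)=\|w'\|=s$ and $\dist(w+w',w')=\|w\|=s$, so both $\mathbf 0$ and $w+w'$ lie at distance $s$ from the two distinct points $w$ and $w'$; by Theorem~\ref{t:GPS} this forces $w+w'=\mathbf 0$, contradicting $\|w+w'\|=2s>0$. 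Thus $X$ is $2$-torsion-free, and in particular for every $x\ne\mathbf 0$ the two-point sphere $\Sf(\mathbf 0;\|x\|)$ must equal $\{x,-x\}$, so that $\|2x\|=\dist(x,-x)=2\|x\|$.

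Finally, to prove $\|nx\|=|n|\cdot\|x\|$ (trivial for $x=\mathbf 0$), I would fix $x\ne\mathbf 0$, set $r\defeq\|x\|>0$, and take the isometric embedding $\ell\defeq\ell_{\mathbf 0,x}:\IZ r\to X$ provided by Theorem~\ref{t:main1}, with $\ell(0)=\mathbf 0$ and $\ell(r)=x$. I claim $\ell(nr)=nx$ for every $n\in\IN$, and prove it by induction, the cases $n=0,1$ being built into $\ell$. Given $\ell((n-1)r)=(n-1)x$ and $\ell(nr)=nx$ for some $n\ge 1$: since $\dist((n+1)x,nx)=\|x\|=r\in\dist[X^2]$, the Banakh property makes $\Sf(nx;r)=\Sf(\ell(nr);r)$ a two-point set, which can only be $\{\ell((n-1)r),\ell((n+1)r)\}$ (these points being distinct as $\ell$ is injective), so $(n+1)x\in\{(n-1)x,\ell((n+1)r)\}$. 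The first alternative would give $2x=\mathbf 0$ and hence $x=\mathbf 0$ by $2$-torsion-freeness, which is excluded; therefore $(n+1)x=\ell((n+1)r)$, completing the induction. Hence $\|nx\|=\dist(\ell(0),\ell(nr))=nr=n\|x\|$ for $n\in\IN$, and the case $n<0$ reduces to this via $nx=-((-n)x)$ and $\|{-}w\|=\|w\|$; so $X$ is a normed $\IZ$-module. I expect the exclusion of nonzero $2$-torsion to be the one genuinely delicate point: without it the identification $\ell(nr)=nx$ already fails at $n=1$, since a hypothetical order-$2$ element $x$ would satisfy $2x=\mathbf 0\ne\ell(2r)$, whereas everything else is a routine application of Theorems~\ref{t:GPS} and~\ref{t:main1} and the definition of a Banakh space.
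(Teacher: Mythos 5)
Your proof is correct, and for two of the three steps it takes a genuinely cleaner route than the paper. The overall skeleton is the same — commutativity, then absence of $2$-torsion, then the induction along $\ell_{\mathbf 0,x}$ to get $\IZ$-homogeneity — and your inductive step is essentially the paper's Claim~\ref{cl:BG3}. The difference is in the first two steps: the paper proves commutativity (Claim~\ref{cl:BG1}) and the nonexistence of elements of order $2$ by direct constructions with the Banakh property and chains of triangle inequalities (in effect re-deriving, in each case, a special instance of the uniqueness phenomenon), whereas you observe that both facts are immediate applications of Theorem~\ref{t:GPS}: $x+y$ and $y+x$ are equidistant from the two distinct points $x$ and $y$, and likewise $\mathbf 0$ and $w+w'$ are equidistant from the two distinct points $w$ and $w'$ of the sphere $\Sf(\mathbf 0;\|w\|)$. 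This buys a substantially shorter argument and makes transparent that the only metric input beyond the Banakh axiom is the ``two distances determine a point'' principle; the paper's version is longer but self-contained within its section. All the delicate points in your write-up check out: the invariance of the metric justifies $\dist(x+y,x)=\|y\|=\dist(y+x,x)$ even before commutativity is known, the relevant radii $\|w\|$ and $\|x\|$ do lie in $\dist[X^2]$ so the spheres are genuine two-point sets, and the exclusion of $2$-torsion is exactly what is needed to rule out the alternative $(n+1)x=(n-1)x$ in the induction.
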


\begin{proof}  Let $\|\cdot\|:X\to\IR$ be the norm of the Banakh group $X$ and $$\dist:X\times X\to\IR,\quad \dist(x,y)\defeq\|x-y\|,$$ be the invariant metric induced by the norm $\|\cdot\|$. Let $\mathbf 0$ be the neutral element of the group $X$. 

\begin{claim}\label{cl:BG1} The Banakh group $X$ is commutative.
\end{claim}

\begin{proof}
 Assuming that the group $X$ is not commutative, we can find two elements $x,y\in X$ such that $x+y\ne y+x$. Then also $x+y-x\ne y\ne -x+y+x$. Since $\|x+y-x\|=\|y\|=\|-x+y+x\|$, the Banakh property of $X$ implies that $x+y-x=-x+y+x$ and $\|x+y-x-y\|=\dist(x+y-x,y)=2\|y\|$. The equality $x+y-x=-x+y+x$ implies $2x+y=y+2x$. 
 
Adding $-2x$ to the inequality $x+y\ne y+x$ and using the commutativity $2x+y=y+2x$, we obtain that $-x+y\ne -2x+y+x=y-x$ and hence $-x\ne y-x-y$. Since $\|y-x-y\|=\|{-}x\|$, the Banakh property implies that $\|x+y-x-y\|=\dist(-x,y-x-y)=2\|{-}x\|=2\|x\|$. Then $2\|x\|=\|x+y-x-y\|=2\|y\|$ and hence $\|{-}x\|=\|x\|=\|y\|=\|{-}y\|$. It follows from $x+y\ne y+x$ that $y\notin\{x,-x\}$ and $x\notin\{y,-y\}$. Taking into account that $|\{x,x^{-1},y,y^{-1}\}|\le |\Sf(\mathbf 0,\|x\|)|=2$, we conclude that $-x=x\ne y=-y$. It follows from $y+x-y\ne x\ne y$ and $\|y+x-y\|=\|x\|=\|y\|$ that $y+x-y=y$ and hence $x=y$, which contradicts the choice of $x,y$.
\end{proof}

\begin{claim} The group $X$ contains no elements of order $2$.
\end{claim}

\begin{proof} To derive a contradiction, assume that $x+x=\mathbf 0$ for some element $x\ne \mathbf 0$ in $X$. By the Banakh property, the group $X$ contains an element $y\ne x$ such that $\|y\|=\|x\|$ and $\dist(x,y)=2\|x\|$. It follows from $y\ne x=-x$ and $\|{-}y\|=\|y\|=\|x\|$ that $-y=y$. It follows that $\|x+y\|=\dist(x,-y)=\dist(x,y)=2\|x\|$. By the Banakh property, the group $X$ contain an element $z$ such that $\|z\|=\|x+y\|=2\|x\|$ and $\dist(z,x+y)=4\|x\|$. The triangle inequality implies that 
$\dist(z,x)\le \|z\|+\|x\|=3\|x\|$. On the other hand, $4\|x\|=\dist(z,x+y)\le \dist(z,x)+\dist(x,x+y)\le\dist(z,x)+\|y\|=\dist(z,x)+\|x\|$ implies $3\|x\|\le\dist(z,x)$ and hence $\dist(z,x)=3\|x\|$. By analogy we can show that $\dist(z,y)=3\|x\|$. Since $x\ne y$, the Banakh property guarantees that $2\|x\|=\dist(x,y)=6\|x\|$, which implies $\|x\|=0$ and contradicts the choice of $x\ne\mathbf 0$. This contradiction shows that the group $X$ contains no elements of order two.
\end{proof}

\begin{claim}\label{cl:BG3} For every $x\in X$, there exists an isometric embedding  $\ell:\IZ\|x\|\to X$ such that $\ell(n\|x\|)=nx$ and $\|nx\|=|n|\cdot\|x\|$ for all $n\in\IZ$.
\end{claim}

\begin{proof} By Theorem~\ref{t:main1}, there exists an isometric embedding $\ell:\IZ\|x\|\to X$ such that $\ell(0)=\mathbf 0$ and $\ell(\|x\|)=x$. Then $\Sf(nx;\|x\|)=\{(n-1)x,(n+1)x\}$. We claim that $\ell(n\|x\|)=nx$ for all $n\in\IZ$. For $n\in\{0,1\}$ this follows from the choice of the isometry $\ell$. The inequality $x\ne -x$ and the Banakh property of $X$  ensure that $\Sf(\mathbf 0,\|x\|)=\{-x,x\}$. Taking into account that $\ell:\|x\|\IZ\to X$ is an isometric embedding with $\ell(\|x\|)=x$, we conclude that $\{\ell(-\|x\|),x\}=\{\ell(-\|x\|),\ell(\|x\|)\}\subseteq \Sf(\mathbf 0,\|x\|)=\{-x,x\}$ and hence $\ell(-\|x\|)=-x$.

Assume that for some $n\in\IN$ we have proved that $\ell(k\|x\|)=kx$ for all $k\in\IZ$ with $|k|\le n$. It follows from $x+x\ne\mathbf 0$ that $(n-1)x\ne (n+1)x$ and hence $\Sf(nx,\|x\|)=\{(n-1)x,(n+1)x\}$. Taking into account that $\ell:\|x\|\IZ\to X$ is an isometric embedding with $\ell((n-1)\|x\|)=(n-1)x$ and $\ell(n\|x\|)=nx$, we conclude that 
\begin{multline*}
\{(n-1)x,\ell((n+1)\|x\|)\}=\{\ell((n-1)\|x\|),\ell((n+1)\|x\|)\}\\
\subseteq\Sf(\ell(n\|x\|);\|x\|)=\Sf(nx;\|x\|)=\{(n-1)x,(n+1)x\}
\end{multline*} 
and hence $\ell(n+1)\|x\|)=(n+1)x$. By analogy we can prove that $\ell(-(n+1)\|x\|)=-(n+1)x$. This completes the proof of the inductive step.

The Principle of Mathematical Induction ensures that $\ell(n\|x\|)=nx$ for all $n\in \IZ$. Then for every $n\in\IZ$ we have the equality
$$\|nx\|=\dist(nx,\mathbf 0)=\dist(\ell(n\|x\|),\mathbf 0)=\big|n\|x\|-0\big|=|n|\cdot\|x\|.$$
\end{proof}

Claims~\ref{cl:BG1} and \ref{cl:BG3} imply that $X$ is a normed $\IZ$-module.
\end{proof}

\begin{proposition} A normed $\IZ$-module $X$ is Banakh if and only if $|\Sf(\mathbf 0;r)|\le2$ for every $r\in\IR_+$.
\end{proposition}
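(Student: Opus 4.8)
The ``only if'' implication is immediate from the definition of a Banakh space: every nonempty sphere in a Banakh space has at most two points, and an empty sphere trivially satisfies the cardinality bound. So all the content lies in the ``if'' direction, and here is the plan.

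First I would pass from arbitrary centres to the neutral element $\mathbf 0$. Since the metric $\dist(x,y)=\|x-y\|$ induced by a group norm is invariant, every sphere is a translate, $\Sf(c;r)=c+\Sf(\mathbf 0;r)$, so the hypothesis $|\Sf(\mathbf 0;r)|\le 2$ already delivers $|\Sf(c;r)|\le 2$ for all $c\in X$ and $r\in\IR_+$. It then remains to show, for an arbitrary $c\in X$ and $r\in\dist[X^2]$, that the (at most two) points of $\Sf(c;r)$ lie at distance exactly $2r$. The case $r=0$ is trivial (take $x=y=c$), so suppose $r>0$. Because $r\in\dist[X^2]$, there are $a,b\in X$ with $\|a-b\|=r$; put $v\defeq a-b$. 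Now $\IZ$-homogeneity of the norm gives $\|{-}v\|=\|v\|=r$ and $\|2v\|=2\|v\|=2r>0$, whence $2v\ne\mathbf 0$ and therefore $v\ne -v$. Thus $\{v,-v\}$ is a genuine two-element subset of $\Sf(\mathbf 0;r)$, and the cardinality hypothesis forces $\Sf(\mathbf 0;r)=\{v,-v\}$. Translating back, $\Sf(c;r)=\{c+v,\,c-v\}$ and $\dist(c+v,c-v)=\|2v\|=2r$, which is exactly the Banakh property at $c$ and $r$.

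I do not expect any genuine obstacle in this argument: it is a direct verification. The only place where I use more than the bare group axioms is in concluding $v\ne-v$ and in computing $\dist(c+v,c-v)=2r$, and both of these rest precisely on the $\IZ$-homogeneity of the norm --- which explains why the statement is formulated for normed $\IZ$-modules and would fail for general normed groups.
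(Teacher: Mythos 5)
Your argument is correct and follows essentially the same route as the paper's proof: use translation invariance to reduce to spheres around $\mathbf 0$, observe that $\Sf(\mathbf 0;r)\supseteq\{v,-v\}$ for $v=a-b$ with $\|a-b\|=r$, and invoke $\IZ$-homogeneity to get $\dist(v,-v)=\|2v\|=2r$. The only difference is that you spell out explicitly why $v\ne -v$, which the paper leaves implicit.
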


\begin{proof} The ``only if'' part is trivial. To prove the ``if'' part, assume that  $|\Sf(\mathbf 0;r)|\le2$ for every $r\in\IR_+$. Then for every $a,b,c\in X$, the invariance of the metric $\dist:X\times X\to\IR$, $\dist(x,y)\defeq\|x-y\|$, generated by the norm implies that $\Sf(c,\dist(a,b))=c+\Sf(\mathbf 0;\dist(a,b))=c+\{a-b,b-a\}$ and $\dist(a-b,b-a)=\|2(a-b)\|=2\|a-b\|=2\dist(a,b)$, witnessing that the metric space $(X,\dist)$ is Banakh.
\end{proof}

\begin{corollary}\label{c:nZ-B} A normed $\IZ$-module $X$ is Banakh if and only if for any $x,y\in X$ the equality $\|x\|=\|y\|$ implies $y\in\{x,-x\}$.
\end{corollary}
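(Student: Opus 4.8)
The plan is to deduce this corollary directly from the preceding proposition, which characterizes Banakh normed $\IZ$-modules by the single condition that $|\Sf(\mathbf 0;r)|\le 2$ for every $r\in\IR_+$. Thus it suffices to verify that, for a normed $\IZ$-module $X$, this condition on the sizes of spheres centred at $\mathbf 0$ is equivalent to the requirement that $\|x\|=\|y\|$ imply $y\in\{x,-x\}$ for all $x,y\in X$.

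First I would record two elementary facts. Since the norm is $\IZ$-homogeneous, $\|-x\|=|-1|\cdot\|x\|=\|x\|$, so each sphere $\Sf(\mathbf 0;r)$ is symmetric, i.e.\ closed under $x\mapsto -x$; and for $x\ne\mathbf 0$ we have $\dist(x,-x)=\|2x\|=2\|x\|>0$, hence $x\ne -x$. Consequently, if $r>0$ and $\Sf(\mathbf 0;r)\ne\emptyset$, then choosing any $x\in\Sf(\mathbf 0;r)$ produces a two-element subset $\{x,-x\}\subseteq\Sf(\mathbf 0;r)$. Also $\Sf(\mathbf 0;0)=\{\mathbf 0\}$, since $\|x\|=0$ iff $x=\mathbf 0$.

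For the forward implication I would assume $|\Sf(\mathbf 0;r)|\le 2$ for all $r\in\IR_+$ and take $x,y\in X$ with $\|x\|=\|y\|=:r$. If $r=0$ then $x=y=\mathbf 0$, so $y\in\{x,-x\}$; if $r>0$, then $\{x,-x,y\}\subseteq\Sf(\mathbf 0;r)$ with $x\ne -x$, and a set of size at most $2$ forces $y\in\{x,-x\}$. For the converse I would assume the norm condition and fix $r\in\IR_+$: if $\Sf(\mathbf 0;r)$ is empty or a singleton there is nothing to prove, and otherwise, picking $x\in\Sf(\mathbf 0;r)$, every $y\in\Sf(\mathbf 0;r)$ satisfies $\|y\|=r=\|x\|$, hence $y\in\{x,-x\}$, so $\Sf(\mathbf 0;r)\subseteq\{x,-x\}$ has at most $2$ elements. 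Combining the two equivalences with the preceding proposition yields the corollary.

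I do not expect a genuine obstacle here; the only points requiring care are the handling of the degenerate values ($r=0$ and $\Sf(\mathbf 0;r)=\emptyset$) and the repeated use of the fact that in a $\IZ$-module carrying a $\IZ$-homogeneous norm no nonzero element equals its own inverse.
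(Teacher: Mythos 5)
Your proof is correct and follows exactly the route the paper intends: the corollary is stated immediately after the proposition characterizing Banakh normed $\IZ$-modules by $|\Sf(\mathbf 0;r)|\le 2$, and your argument supplies precisely the missing equivalence between that sphere condition and the norm condition, using $\IZ$-homogeneity to get $x\ne -x$ for $x\ne\mathbf 0$. The handling of the degenerate cases $r=0$ and $\Sf(\mathbf 0;r)=\emptyset$ is also fine.
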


A subset $X$ of a group $G$ is defined to be
\begin{itemize}
\item {\em $p$-divisible} by a prime number $p$ if for every $x\in X$ there exists $y\in X$ such that $py=x$;
\item {\em divisible} if it is $p$-divisible for every prime number $p$.
\end{itemize}

\begin{proposition} A Banakh group $X$ is $p$-divisible by a prime number $p$ if and only if the subset $\dist[X^2]$ is $p$-divisible in $\IR$.
\end{proposition}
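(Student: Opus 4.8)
The plan is to reduce both sides of the equivalence to statements about the set of norms $\|X\|\defeq\{\|x\|:x\in X\}$, using that a Banakh group is a normed $\IZ$-module by Theorem~\ref{t:BG}. The first step is the translation: since the metric $\dist$ on $X$ is invariant, $\dist(x,y)=\|x-y\|$ and hence $\dist[X^2]=\{\|z\|:z\in X\}$ (every $z\in X$ equals $z-\mathbf 0$, and every $\|x-y\|$ is $\|z\|$ with $z\defeq x-y$). Unwinding the definition of $p$-divisibility of a subset of $\IR$ with $M\defeq\dist[X^2]$ and $S\defeq\IR$, the condition ``$\dist[X^2]$ is $p$-divisible in $\IR$'' says exactly that $\tfrac1p\dist[X^2]\subseteq\dist[X^2]$, i.e.\ that $\tfrac1p\|x\|\in\dist[X^2]$ for every $x\in X$.

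For the ``only if'' direction: assume $X$ is $p$-divisible. Given $x\in X$, choose $y\in X$ with $py=x$; the $\IZ$-homogeneity of the norm (part of Theorem~\ref{t:BG}) gives $\|x\|=\|py\|=p\|y\|$, so $\tfrac1p\|x\|=\|y\|\in\dist[X^2]$. By the previous paragraph this is precisely $p$-divisibility of $\dist[X^2]$ in $\IR$.

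For the ``if'' direction, which is the only place where the Banakh property is really used: assume $\dist[X^2]$ is $p$-divisible in $\IR$. Given $x\in X$, the number $\tfrac1p\|x\|$ lies in $\dist[X^2]=\{\|z\|:z\in X\}$, so there is $z\in X$ with $\|z\|=\tfrac1p\|x\|$, whence $\|pz\|=p\|z\|=\|x\|$ by $\IZ$-homogeneity. Now Corollary~\ref{c:nZ-B}, applied to the Banakh normed $\IZ$-module $X$, forces $x\in\{pz,-pz\}=\{py:y\in\{z,-z\}\}$, so $x=py$ for $y\defeq z$ or $y\defeq -z$. Hence $X$ is $p$-divisible, completing the proof.

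I do not expect a genuine obstacle here: once the identification $\dist[X^2]=\{\|z\|:z\in X\}$ is in place, everything follows from $\IZ$-homogeneity of the norm together with the uniqueness statement in Corollary~\ref{c:nZ-B}. The only point requiring a little care is correctly unwinding the two distinct meanings of ``$p$-divisible'' — the group-theoretic one for $X$ and the subset-of-$\IR$ one for $\dist[X^2]$ — and checking the degenerate case $x=\mathbf 0$, which is automatically handled because $\|pz\|=0=\|x\|$ already forces $x=\mathbf 0=pz$.
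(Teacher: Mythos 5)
Your proof is correct and follows essentially the same route as the paper: the ``only if'' direction via the $\IZ$-homogeneity of the norm from Theorem~\ref{t:BG}, and the ``if'' direction by producing $z$ with $\|pz\|=\|x\|$ and invoking the Banakh property (in the form of Corollary~\ref{c:nZ-B}) to force $x\in\{pz,-pz\}$. The paper phrases the last step through the isometric embedding of Theorem~\ref{t:main1} rather than citing Corollary~\ref{c:nZ-B} directly, but the content is identical.
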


\begin{proof} To prove the ``only if'' part, assume that a Banakh group $X$ is $p$-divisible. To see that the subset $\dist[X^2]$ of the group $\IR$ is $p$-divisible, take any real number $r\in\dist[X^2]$ and find an element $x\in X$ such that $\|x\|=r$. By the $p$-divisibility of $X$, there exists an element $y\in X$ such that $x=py$. By Theorem~\ref{t:BG}, the norm of the Banakh group $X$ is $\IZ$-homogeneous and hence $r=\|x\|=\|py\|=p\|y\|$, witnessing that the set $\dist[X^2]$ is $p$-divisible.
\smallskip

To prove the ``if'' part, assume that the set $\dist[X^2]$ is $p$-divisible. To see that the group $X$ is $p$-divisible, take any $x\in X$ and by the $p$-divisibility of the set $\dist[X^2]$, find an element $y\in X$ such that $\|x\|=p\|y\|$. We claim that $py\in\{x,-x\}$. Indeed, by Theorem~\ref{t:main1}, there exists an isometric embedding $\ell:\IZ\|y\|\to X$ such that $\ell(n\|y\|)=ny$ for all $n\in\IZ$. Then $\|py\|=p\|y\|=\|x\|$ and $py\in\{x,-x\}$, by the Banakh property of $G$. If $py=x$, then the equality $x=py$ witnesses that the group $X$ is $p$-divisible. If $py=-x$, then the equality $x=p(-y)$ witnesses that $X$ is $p$-divisible.
\end{proof}

\section{$\ell_2$-independent functions}

The principal result of this section is Lemma~\ref{l:ind} on the existence of an $\ell_2$-indepenent function on the cardinal $\mathfrak c$. This lemma will be used in constructions of a discrete Banakh space in Theorem~\ref{t:H1}.

\begin{definition} A real-valued function $r:\kappa\to \IR$ on a nonzero ordinal $\kappa$ is defined to be {\em $\ell_2$-independent} if $r(0)=1$ and for every functions $f:D[f]\to\IQ\setminus\{0\}$ and $g:D[g]\to\IQ\setminus\{0\}$ defined on finite subsets $D[f]$ and $D[g]$ of $\kappa$,  the following equivalences hold:
$$
\begin{aligned}
f=g&\Leftrightarrow\sum_{\alpha\in D[f]}f(\alpha)r(\alpha)=\sum_{\alpha\in D[g]}g(\alpha)r(\alpha),\\
f=\pm g&\Leftrightarrow\big|\sum_{\alpha\in D[f]}f(\alpha)r(\alpha)\big|^2+\sum_{0\ne \alpha\in D[f]}|f(\alpha)|^2= \big|\sum_{\alpha\in D[g]}g(\alpha)r(\alpha)\big|^2+\sum_{0\ne\alpha\in D[g]}|g(\alpha)|^2.
\end{aligned}
$$
\end{definition}

In the proof of the  existence of an $\ell_2$-independent function on the cardinal $\mathfrak c$, we shall apply the following elementary lemma.

\begin{lemma}\label{l:real} For real numbers $a_1,a_2,a_2,b_1,b_2,b_3$ with $(a_1,b_1)\ne(0,0)$ and $(a_1,a_2)\ne\pm(b_1,b_2)$, the set
$$T\defeq\{t\in\IR:|a_1t+a_2|^2+a_3^2=|b_1t+b_2|^2+b_3^2\}$$ has cardinality $|T|\le 2$.
\end{lemma}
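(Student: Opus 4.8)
The plan is to clear the absolute values, reducing the condition defining $T$ to a single polynomial equation in $t$ of degree at most two, and then to show that this polynomial is not the zero polynomial. Since a nonzero real polynomial of degree at most two has at most two real roots, this immediately gives $|T|\le 2$.

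Concretely, as all the numbers involved are real, $|x|^2=x^2$, so the condition $|a_1t+a_2|^2+a_3^2=|b_1t+b_2|^2+b_3^2$ is equivalent to $P(t)=0$, where
$$P(t)\defeq(a_1^2-b_1^2)\,t^2+2(a_1a_2-b_1b_2)\,t+(a_2^2+a_3^2-b_2^2-b_3^2).$$
Thus $T=\{t\in\IR:P(t)=0\}$, and it remains to rule out $P\equiv 0$.

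Suppose, for the sake of contradiction, that $P$ is the zero polynomial. Then all three of its coefficients vanish; in particular $a_1^2=b_1^2$ and $a_1a_2=b_1b_2$. From $a_1^2=b_1^2$ together with $(a_1,b_1)\ne(0,0)$ we conclude that $a_1$ and $b_1$ are both nonzero and $a_1=\varepsilon b_1$ for some $\varepsilon\in\{-1,1\}$. Substituting this into $a_1a_2=b_1b_2$ and dividing by $b_1\ne0$ yields $\varepsilon a_2=b_2$, hence $a_2=\varepsilon b_2$ (as $\varepsilon^2=1$). Therefore $(a_1,a_2)=\varepsilon(b_1,b_2)\in\{(b_1,b_2),-(b_1,b_2)\}$, contradicting the hypothesis $(a_1,a_2)\ne\pm(b_1,b_2)$. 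Consequently $P\not\equiv0$, so $P$ has at most two real roots and $|T|\le2$.

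I do not expect any genuine obstacle here: the whole argument reduces to the elementary observation that the coefficients of $P$ cannot all vanish, and this is precisely where the two hypotheses are used, the numbers $a_3,b_3$ affecting only the constant coefficient of $P$ and hence playing no role. It is worth noting that the bound is sharp, e.g. for $a_1=a_3=1$ and $a_2=b_1=b_2=b_3=0$ one gets $T=\{-1,1\}$.
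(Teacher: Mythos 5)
Your proof is correct and takes essentially the same approach as the paper: both reduce the defining condition to a single polynomial equation of degree at most two in $t$ and use the hypotheses to rule out the degenerate case where it holds identically. The only cosmetic difference is that you work with the expanded quadratic $P(t)$ and its coefficients, whereas the paper writes the same equation in the factored form $((a_1-b_1)t+(a_2-b_2))((a_1+b_1)t+(a_2+b_2))=b_3^2-a_3^2$ and runs the corresponding case analysis; your coefficient argument is, if anything, slightly more direct.
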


\begin{proof} Observe that for a real number $t$, the equation 
$$|a_1t+a_2|^2+a_3^2=|b_1t+b_2|^2+b_3^2$$
is equivalent to the equation $$((a_1-b_1)t+(a_2-b_2))((a_1+b_1)t+(a_2+b_2))=b_3^2-a_3^2,$$which has more than two solutions if and only if one of the following conditions holds:
\begin{itemize}
\item $a_1-b_1=0=a_1+b_1$ and $a_2^2-b_2^2=b_3^2-a_3^2$,
\item $a_1-b_1=0=a_2-b_2$ and $b_3^2=a_3^2$,
\item $a_1+b_1=0=a_2+b_2$ and $b_3^2=a_3^2$.
\end{itemize}
Those conditions are equivalent to the conditions
\begin{itemize}
\item $(a_1,b_1)=(0,0)$ and $a_2^2+a_3^2=b_2^2+b_3^2$,
\item $(a_1,a_2)=(b_1,b_2)$ and $a_2^2+a_3^2=b_2^2+b_3^2$,
\item $(a_1,a_2)=-(b_1,b_2)$ and $a_2^2+a_3^2=b_2^2+b_3^2$,
\end{itemize}
which are excluded by the assumption of the lemma.
\end{proof}

\begin{lemma}\label{l:ind} There exists an $\ell_2$-indepenent function $r:\mathfrak c\to \IR$.
\end{lemma}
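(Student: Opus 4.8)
The plan is to build $r$ by transfinite recursion of length $\mathfrak c$, forcing the two required equivalences to hold ``one ordinal at a time''. Put $r(0)\defeq1$, and suppose that for some ordinal $0<\alpha<\mathfrak c$ the values $r{\restriction}\alpha$ have already been chosen so that both equivalences from the definition of $\ell_2$-independence hold for all $f,g$ with $D[f],D[g]\subseteq\alpha$; taking $g$ to be the empty function, this already says that the reals $(r(\beta))_{\beta<\alpha}$ are linearly independent over $\IQ$. I want to choose $r(\alpha)$ outside a set $F_\alpha\subseteq\IR$ of cardinality $<\mathfrak c$. Declare a real $t$ to be \emph{forbidden} if for some pair of finite partial functions $f:D[f]\to\IQ\setminus\{0\}$, $g:D[g]\to\IQ\setminus\{0\}$ with $D[f],D[g]\subseteq\alpha{+}1$ and $\alpha\in D[f]\cup D[g]$, setting $r(\alpha)\defeq t$ makes either $\sum_{\beta\in D[f]}f(\beta)r(\beta)=\sum_{\beta\in D[g]}g(\beta)r(\beta)$ hold with $f\ne g$, or $\big|\sum_{\beta\in D[f]}f(\beta)r(\beta)\big|^2+\sum_{0\ne\beta\in D[f]}|f(\beta)|^2=\big|\sum_{\beta\in D[g]}g(\beta)r(\beta)\big|^2+\sum_{0\ne\beta\in D[g]}|g(\beta)|^2$ hold with $f\ne\pm g$. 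Pairs with $\alpha\notin D[f]\cup D[g]$ are harmless by the induction hypothesis, so only pairs with $\alpha\in D[f]\cup D[g]$ matter, and there are at most $\max(|\alpha|,\aleph_0)<\mathfrak c$ of them.

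The crux is to show each such pair forbids only finitely many $t$. For the first (linear) identity, both sides become affine functions of $t=r(\alpha)$, the difference of whose leading coefficients is $f(\alpha)-g(\alpha)$ (reading $f(\alpha)=0$ when $\alpha\notin D[f]$, etc.); as $\alpha\in D[f]\cup D[g]$, this difference can vanish only if $\alpha\in D[f]\cap D[g]$ with $f(\alpha)=g(\alpha)$, in which case the identity no longer mentions $t$, and if it held it would give $f{\restriction}(D[f]\cap\alpha)=g{\restriction}(D[g]\cap\alpha)$ by the induction hypothesis, hence $f=g$, contrary to assumption; so for $f\ne g$ the identity has at most one solution $t$. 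For the second (quadratic) identity, relabel so that $\alpha\in D[f]$ (both the identity and the hypothesis $f\ne\pm g$ are symmetric in $f,g$), and set $a_1\defeq f(\alpha)\ne0$, $a_2\defeq\sum_{\beta\in D[f]\cap\alpha}f(\beta)r(\beta)$, $a_3\defeq\bigl(|f(\alpha)|^2+\sum_{0\ne\beta\in D[f]\cap\alpha}|f(\beta)|^2\bigr)^{1/2}$, $b_1\defeq g(\alpha)$ (or $0$ if $\alpha\notin D[g]$), $b_2\defeq\sum_{\beta\in D[g]\cap\alpha}g(\beta)r(\beta)$, $b_3\defeq\bigl(\sum_{0\ne\beta\in D[g]}|g(\beta)|^2\bigr)^{1/2}$; then the identity becomes exactly $|a_1t+a_2|^2+a_3^2=|b_1t+b_2|^2+b_3^2$. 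Since $a_1\ne0$ we have $(a_1,b_1)\ne(0,0)$, and if $(a_1,a_2)=\pm(b_1,b_2)$ then $b_1=\pm a_1\ne0$, so $\alpha\in D[g]$, $g(\alpha)=\pm f(\alpha)$ and $a_2=\pm b_2$, whence the induction hypothesis forces $f{\restriction}(D[f]\cap\alpha)=\pm g{\restriction}(D[g]\cap\alpha)$ and therefore $f=\pm g$, again a contradiction. So Lemma~\ref{l:real} applies and the pair forbids at most two values of $t$. Hence $|F_\alpha|\le2\cdot\max(|\alpha|,\aleph_0)<\mathfrak c=|\IR|$, and we may pick $r(\alpha)\in\IR\setminus F_\alpha$; by construction both equivalences now hold for all $f,g$ with $D[f],D[g]\subseteq\alpha{+}1$, so the recursion continues.

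It remains to check that $r:\mathfrak c\to\IR$ is $\ell_2$-independent. The ``$\Leftarrow$'' directions are immediate: if $f=\pm g$ then $D[f]=D[g]$, $|f(\beta)|=|g(\beta)|$ for every $\beta$, and $\sum_{\beta\in D[f]}f(\beta)r(\beta)=\pm\sum_{\beta\in D[g]}g(\beta)r(\beta)$. For the ``$\Rightarrow$'' directions, let $f,g$ satisfy one of the two identities. If $D[f]\cup D[g]=\emptyset$ then $f=g$; otherwise put $\alpha\defeq\max(D[f]\cup D[g])$. If $\alpha=0$ the identity concerns only functions on subsets of $\{0\}$, for which $r(0)=1$ makes the claim a one-line verification (the sums are $f(0)$ or $0$, resp.\ their squares). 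If $\alpha>0$, then were $f\ne g$ (resp.\ $f\ne\pm g$), the pair $(f,g)$ would have put $r(\alpha)$ into $F_\alpha$ at stage $\alpha$, contradicting the choice of $r(\alpha)$; hence $f=g$ (resp.\ $f=\pm g$).

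The step I expect to be the main obstacle is verifying that the hypotheses of Lemma~\ref{l:real} are satisfied for every pair with $f\ne\pm g$: this is precisely where the already-secured first equivalence (the $\IQ$-linear independence of the previously chosen values) has to be fed back into the recursion, and it is what dictates the careful ``one ordinal at a time'' bookkeeping, including the convention $f(\alpha)=0$ for $\alpha\notin D[f]$ and the splitting of each sum into its $\beta=\alpha$ part and its $\beta<\alpha$ part.
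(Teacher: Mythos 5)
Your proof is correct and is essentially the paper's own argument: the extension step (splitting each sum into its $\alpha$-part and its $<\alpha$-part, feeding the already-established $\IQ$-linear independence back in to verify the hypotheses $(a_1,b_1)\ne(0,0)$ and $(a_1,a_2)\ne\pm(b_1,b_2)$ of Lemma~\ref{l:real}, and counting that fewer than $\mathfrak c$ reals get forbidden) is exactly what the paper does. The only difference is packaging: you run an explicit transfinite recursion of length $\mathfrak c$, while the paper takes a maximal $\ell_2$-independent function via the Kuratowski--Zorn Lemma and shows its domain must be all of $\mathfrak c$ by the same extension argument.
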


\begin{proof} 
Let $\mathcal R$ be the family of all $\ell_2$-indepenent functions $r:D[r]\to\IR$ defined on nonzero ordinals $D[r]\le\mathfrak c$. The family $\mathcal R$ is endowed with the partial order $\subseteq$ (each function is identified with its graph). Observe that the function $\{\langle 0,1\rangle\}$ is the smallest element of the poset $\mathcal R$. It is easy to see that for every linearly ordered subset $\mathcal L\subseteq\mathcal R$, the union $\bigcup\mathcal L$ is an $\ell_2$-indepenent function such that $r\subseteq \bigcup\mathcal L$ for all $r\in\mathcal L$. By the Kuratowski--Zorn Lemma, the partially ordered set $\mathcal R$ has a maximal element $r$, which is an $\ell_2$-indepenent function $r:\delta\to \IR$, defined on some nonzero ordinal $\delta\le\mathfrak c$. We claim that the ordinal $\delta$ equals the cardinal $\mathfrak c$. To derive a contradiction, assume that $\delta\ne \mathfrak c$ and hence $|\delta|<\mathfrak c$.

For every function $f:D[f]\to\IQ\setminus\{0\}$ defined on a finite subset $D[f]$ of $\mathfrak c$, consider the function $\bar f:\mathfrak c\to\IQ$  defined by 
$$\bar f(\alpha)=\begin{cases}
f(\alpha)&\mbox{if $\alpha\in D[f]$};\\
0&\mbox{otherwise}.
\end{cases}
$$

Let $P$ be the family of all ordered pairs $(f,g)$ consisting of functions $f:D[f]\to\IQ\setminus\{0\}$ and $g:D[g]\to\IQ\setminus\{0\}$ defined on finite subsets $D[f],D[g]$ of the ordinal $\delta+1=D[r]\cup\{\delta\}$ such that  $f\ne g$. 

\begin{claim} For every pair $(f,g)\in P$, the set 
$$T'_{fg}\defeq\{t\in\IR:\bar f(\delta)t+\sum_{\alpha\in D[f]\cap \delta}f(\alpha)r(\alpha)=\bar g(\delta)r+\sum_{\alpha\in D[g]\cap \delta}g(\alpha)r(\alpha)\}$$has cardinality $|T'_{fg}|\le 1$.
\end{claim}

\begin{proof} If $\bar f(\delta)=\bar g(\delta)$, then $f\ne g$ implies $f{\restriction}_{D[f]\cap\delta}\ne g{\restriction}_{D[g]\cap\delta}$. In this case, the $\ell_2$-indepenence of the function $r$ ensures that $\sum_{\alpha\in D[f]\cap\delta}f(\alpha)r(\alpha)\ne\sum_{\alpha\in D[g]\cap\delta}g(\alpha)r(\alpha)$, and hence the set 
$$
\begin{aligned}
T'_{fg}&=\{t\in \IR:\bar f(\delta)t+\sum_{\alpha\in D[f]\cap\delta}f(\alpha)r(\alpha)=\bar g(\delta)t+\sum_{\alpha\in D[g]\cap \delta}g(\alpha)r(\alpha)\}\\
&=\{t\in \IR:\sum_{\alpha\in D[f]\cap\delta}f(\alpha)r(\alpha)=\sum_{\alpha\in D[g]\cap\delta}g(\alpha)r(\alpha)\}
\end{aligned}
$$is empty.

If $\bar f(\delta)\ne\bar g(\delta)$, then the set 
$$
\begin{aligned}
T'_{fg}&=\{t\in \IR:\bar f(\delta)r+\sum_{\alpha\in D[f]\cap \delta}f(\alpha)r_\alpha=\bar g(\delta)r+\sum_{\alpha\in D[g]\cap\delta}g(\alpha)r_\alpha\}\\
&=\{r\in \IR:(\bar f(\delta)-\bar g(\delta))r=\sum_{\alpha\in D[g]\cap\delta}g(\alpha)r_\alpha-\sum_{\alpha\in D[f]\cap\delta}f(\alpha)r_\alpha\}
\end{aligned}
$$
is a singleton.
\end{proof}

Let $P_\pm\defeq\{(f,g)\in P:f\ne -g\}$  and for every pair $(f,g)\in P_\pm$, consider the set $T''_{fg}$ of real numbers $t$ such that 
$$\big|\bar f(\delta)t+\!\!\sum_{\alpha\in D[f]\cap \delta}\!f(\alpha)r(\alpha)\big|^2+\sum_{0\ne\alpha\in D[f]}|f(\alpha)|^2=\big|\bar g(\delta)t+\!\!\sum_{\alpha\in D[f]\cap \delta}\!g(\alpha)r(\alpha)\big|^2+\sum_{0\ne\alpha\in D[f]}|g(\alpha)|^2.$$

\begin{claim} For every $(f,g)\in P_\pm$, the set $T''_{fg}$ has cardinality $|T''_{fg}|\le 2$.
\end{claim}

\begin{proof} If $D[f]\cup D[g]\subseteq \delta$, then the $\ell_2$-indepenence of $r$ ensures that 
$$\Big|\sum_{\alpha\in D[f]}f(\alpha)r(\alpha)\Big|^2+\sum_{0\ne\alpha\in D[f]}f(\alpha)^2\ne 
\Big|\sum_{\alpha\in D[g]}g(\alpha)r(\alpha)\Big|^2+\sum_{0\ne\alpha\in D[g]}g(\alpha)^2,$$
which implies that the set $T''_{(f,g)}$ is empty.

So, assume that $D[f]\cup D[g]\not\subseteq \delta$ and hence $\delta\in D[f]\cup D[g]$. If $\bar f(\delta)=\bar g(\delta)$, then $\delta\in D[f]\cap D[g]$ implies that $\bar f(\delta)=\bar g(\delta)\ne 0$. The inequality $f\ne g$ implies the inequality $f{\restriction}_{D[f]\cap\delta}\ne g{\restriction}_{D[g]\cap\delta}$ and the $\ell_2$-indepenence of $r$ ensures that $\sum_{\alpha\in D[f]\cap\delta}f(\alpha)r(\alpha)\ne \sum_{\alpha\in D[g]\cap\delta}g(\alpha)r(\alpha)$. By Lemma~\ref{l:real},  $|T''_{fg}|\le 2$.

If $\bar f(\delta)=-\bar g(\delta)$, then $\delta\in D[f]\cap D[g]$ implies that $\bar f(\delta)=-\bar g(\delta)\ne 0$. The inequality $f\ne -g$ implies the inequality $f{\restriction}_{D[f]\cap\delta}\ne -g{\restriction}_{D[g]\cap\delta}$ and the $\ell_2$-indepenence of $r$ ensures that $\sum_{\alpha\in D[f]\cap\delta}f(\alpha)r(\alpha)\ne -\sum_{\alpha\in D[g]\cap\delta}g(\alpha)r(\alpha)$. By Lemma~\ref{l:real},  $|T''_{fg}|\le 2$.

It remains to consider the case $\bar f(\delta)\ne \pm \bar g(\delta)$. In this case Lemma~\ref{l:real} implies that $|T''_{fg}|\le 2$.
\end{proof}

It is easy to see that $|P|\le\max\{|\delta|,\w\}<\mathfrak c$. Then the set $$T \defeq\bigcup_{(f,g)\in P}T'_{fg}\cup\bigcup_{(f,g)\in P_\pm}T''_{fg}$$ has cardinality $<\mathfrak c$ and hence there exists a real number $t\notin T$. The choice of $t\notin T$ ensures that the function $\hat r\defeq r\cup\{(\delta,t)\}$ is $\ell_2$-independent and hence $\hat r$ is an element of the poset $\mathcal R$, which is strictly larger than $r$.
But this contradicts the maximality of $r$. This contradiction shows that $\delta=\mathfrak c$, which means that $r$ is an $\ell_2$-idependent function on the cardinal $\mathfrak c$.
\end{proof}

\section{Dense Banakh groups in Hilbert spaces}\label{s:ex}

In this section we construct examples of Banakh groups which are not isometric to subsets of the real line. Those examples are subgroups of the Hilbert spaces $\ell_2(\kappa)$.

Given a cardinal $\kappa$, we denote by $\ell_2(\kappa)$ the Hilbert space consisting of all functions $x:\kappa\to\IR$ such that $\sum_{\alpha\in\kappa}|x(\alpha)|^2<\infty$. The space $\ell_2(\kappa)$ is endowed with the inner product $$\langle x|y\rangle\defeq\sum_{\alpha\in\kappa}x(\alpha)\cdot y(\alpha),$$which induces the norm $\|x\|\defeq\sqrt{\langle x|x\rangle}$. In its turn, the norm indices the complete invariant metric $\dist(x,y)\defeq\|x-y\|$ on $\ell_2(\kappa)$.

A subset $X\subseteq\ell(\kappa)$ is called 
\begin{itemize}
\item a  {\em subgroup} if $X-X\subseteq X\ne\emptyset$;
\item a {\em $\IQ$-linear subspace} if $px+qy\in X$ for any $x,y\in X$ and $p,q\in\IQ$. 
\end{itemize}
For a subset $X\subseteq\ell_2(\kappa)$ its {\em group hull} (resp.  {\em $\IQ$-linear hull\/}) in $\ell_2(\kappa)$ is the smallest subgroup (resp. $\IQ$-linear subspace) of $\ell_2(\kappa)$ that contains $X$ as a subset. 

\begin{theorem}\label{t:H1} For every nonzero cardinal $\kappa\le\mathfrak c$, the Hilbert space $\ell_2(\kappa)$ contains a closed discrete Banakh subgroup $H$ of cardinality $|H|=\max\{\kappa,\w\}$ whose $\IQ$-linear hull is a dense  Banakh subgroup of $\ell_2(\kappa)$.
\end{theorem}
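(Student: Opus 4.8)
The plan is to take $H$ to be the subgroup of $\ell_2(\kappa)$ generated by a family of vectors manufactured from an $\ell_2$-independent function, and then to read off the Banakh property from Corollary~\ref{c:nZ-B}.

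First I would restrict the $\ell_2$-independent function $r:\mathfrak c\to\IR$ supplied by Lemma~\ref{l:ind} to the nonzero cardinal $\kappa\le\mathfrak c$ (note that $0\in\kappa$, so the restriction is still $\ell_2$-independent), obtaining an $\ell_2$-independent function $r:\kappa\to\IR$ with $r(0)=1$. Writing $(e_\alpha)_{\alpha\in\kappa}$ for the standard orthonormal basis of $\ell_2(\kappa)$, I set $v_0\defeq e_0$ and $v_\alpha\defeq r(\alpha)e_0+e_\alpha$ for $\alpha\in\kappa\setminus\{0\}$, let $H$ be the subgroup of $\ell_2(\kappa)$ generated by $\{v_\alpha:\alpha\in\kappa\}$, and let $\tilde H$ be its $\IQ$-linear hull. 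Inspecting coordinates shows that the family $(v_\alpha)_{\alpha\in\kappa}$ is $\IQ$-linearly independent, so every $h\in H$ (resp. $h\in\tilde H$) has a unique expansion $h=\sum_{\alpha\in F}c_\alpha v_\alpha$ with $F$ finite and $c_\alpha\in\IZ\setminus\{0\}$ (resp. $c_\alpha\in\IQ\setminus\{0\}$); using $r(0)=1$ one computes $h(0)=\sum_{\alpha\in F}c_\alpha r(\alpha)$, $h(\beta)=c_\beta$ for $0\ne\beta\in F$, and $h(\beta)=0$ otherwise, so that
$$\|h\|^2=\Big|\sum_{\alpha\in F}c_\alpha r(\alpha)\Big|^2+\sum_{0\ne\alpha\in F}|c_\alpha|^2,$$
which is precisely the expression occurring in the definition of $\ell_2$-independence for the function $\alpha\mapsto c_\alpha$.

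Since $H$ and $\tilde H$ are subgroups of the Hilbert space $\ell_2(\kappa)$, they are normed $\IZ$-modules, and by Corollary~\ref{c:nZ-B} it is enough to check that $\|h\|=\|h'\|$ forces $h'\in\{h,-h\}$. For nonzero $h,h'$ with representing functions $f,g$, the displayed identity together with the $\ell_2$-independence of $r$ yields $f=\pm g$, and since $h\mapsto f$ is a bijection onto the relevant family of functions that sends $-h$ to $-f$, this gives $h=\pm h'$; the degenerate cases are immediate. Hence $H$ and $\tilde H$ are Banakh. For discreteness, if $0\ne h=\sum_{\alpha\in F}n_\alpha v_\alpha\in H$ with $n_\alpha\in\IZ\setminus\{0\}$, then either $F=\{0\}$ and $\|h\|=|n_0|\ge1$, or $F$ meets $\kappa\setminus\{0\}$ and $\|h\|^2\ge\sum_{0\ne\alpha\in F}n_\alpha^2\ge1$; thus $H$ is $1$-separated, hence uniformly discrete, and since $\ell_2(\kappa)$ is complete a Cauchy sequence in $H$ is eventually constant, so $H$ is closed. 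A routine count gives $|H|=\max\{\kappa,\w\}$: the $v_\alpha$ are pairwise distinct so $|H|\ge\kappa$, the subgroup $\IZ v_0$ is countably infinite so $|H|\ge\w$, and $H$ is a union of at most $\max\{\kappa,\w\}$ countable subgroups.

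Finally, to see that $\tilde H$ is dense in $\ell_2(\kappa)$, I would use that the finitely supported vectors are dense: given such an $x$ supported in a finite set $F\ni0$, choose rationals $q_\beta$ approximating $x(\beta)$ for $0\ne\beta\in F$, and then a rational $q_0$ with $q_0+\sum_{0\ne\beta\in F}q_\beta r(\beta)$ approximating $x(0)$, which is possible since $\IQ$ is dense in $\IR$; then $\sum_{\beta\in F}q_\beta v_\beta\in\tilde H$ approximates $x$ to within any prescribed error. The heart of the argument, and the only step that is not bookkeeping or a standard Hilbert-space fact, is the identification of $\|h\|^2$ with the $\ell_2$-independence expression, which is exactly what converts the (already established) conclusion of Lemma~\ref{l:ind} into the Banakh property of $H$ and $\tilde H$ via Corollary~\ref{c:nZ-B}.
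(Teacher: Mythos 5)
Your proposal is correct and follows essentially the same route as the paper: the same vectors $v_\alpha=r(\alpha)e_0+e_\alpha$ built from the $\ell_2$-independent function of Lemma~\ref{l:ind}, the same computation identifying $\|h\|^2$ with the expression in the definition of $\ell_2$-independence, the same appeal to Corollary~\ref{c:nZ-B} for the Banakh property, and the same $1$-separation argument for closedness and discreteness. Your density argument by direct rational approximation of finitely supported vectors is just a more explicit version of the paper's observation that the standard orthonormal basis lies in the $\IR$-linear hull of $L$.
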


\begin{proof} By Lemma~\ref{l:ind}, there exists an $\ell_2$-independent function $r:\mathfrak c\to\IR$. For every ordinal $\alpha\in\mathfrak c$, consider the function $\mathbf e_\alpha:\kappa\to\IR$ defined by
$$\mathbf e_\alpha(x)=\begin{cases}
r(\alpha)&\mbox{if $x=0$};\\
1&\mbox{if $x=\alpha$};\\
0&\mbox{otherwise}.
\end{cases}
$$
Since $r(0)=1$, the vector $\mathbf e_0\in\ell_2(\kappa)$ is well-defined.

Let $L$ be the $\IQ$-linear hull of the set $\{\mathbf e_\alpha\}_{\alpha\in\kappa}$ in $\ell_2(\kappa)$, and $H$ be the group hull of the set $\{\mathbf e_\alpha\}_{\alpha\in\kappa}$ in $\ell_2(\kappa)$. Since the system $\{\mathbf e_\alpha\}_{\alpha\in\kappa}$ is linearly independent (over the field $\IR$), for every $x\in L$ there exists a unique function $f_x:D[f_x]\to\IQ\setminus\{0\}$ defined on a finite subset $D[f_x]$ of $\kappa$ such that $x=\sum_{\alpha\in D[f_x]}f_x(\alpha)\mathbf e_\alpha$. The element $x\in L$ belongs to the group $H$ if and only if $f_x[D_x]\subseteq\IZ$. 

\begin{claim} The groups $L$ and $H$ are Banakh.
\end{claim}

\begin{proof} Since $L$ and $H$ are $\IZ$-modules, by Corollary~\ref{c:nZ-B}, it suffices to show that for any $x,y\in L$, the equality $\|x\|=\|y\|$ implies $x=\pm y$. Observe that 
$$\|x\|^2=\big|\sum_{\alpha\in D[f_x]}f_x(\alpha)r(\alpha)\big|^2+\sum_{0\ne\alpha\in D[f_x]}|f_x(\alpha)|^2$$ for every $x\in L$. Then for two elements $x,y\in L$, the equality $\|x\|=\|y\|$ is equivalent to 
$$\big|\sum_{\alpha\in D[f_x]}f_x(\alpha)r(\alpha)\big|^2+\sum_{0\ne\alpha\in D[f_x]}|f_x(\alpha)|^2=\big|\sum_{\alpha\in D[f_y]}f_y(\alpha)r(\alpha)\big|^2+\sum_{0\ne\alpha\in D[f_y]}|f_y(\alpha)|^2.$$
By the $\ell_2$-independence of the function $r$, the latter equality is equivalent to $f_x=\pm f_y$, which is equivalent to $x=\pm y$.
\end{proof}

\begin{claim} The group $H$ is closed and discrete in $\ell_2(\kappa)$.
\end{claim}

\begin{proof} To see that $H$ is closed and discrete, it suffices to show that every non-zero element $x\in H$ has norm $\|x\|\ge 1$. If $D[f_x]\subseteq \{0\}$, then $\|x\|=|f_x(0)|\cdot\|\mathbf e_\alpha\|=|f(0)|\ge 1$. If $D[f]\not\subseteq \{0\}$, then $$\|x\|^2=|\sum_{\alpha\in D[f_x]}f_x(\alpha)r(\alpha)|^2+\sum_{0\ne \alpha\in D[f_x]}|f_x(\alpha)|^2\ge \sum_{0\ne\alpha\in D[f_x]}|f_x(\alpha)|^2\ge 1.$$
\end{proof}

\begin{claim} The group $L$ is dense in $\ell_2(\kappa)$.
\end{claim}

\begin{proof} For every $\alpha\in\kappa$, consider the function $\mathbf b_\alpha:\kappa\to\{0,1\}$ such that $\mathbf b_\alpha^{-1}(1)=\{\alpha\}$. Observe that $\mathbf b_0=\mathbf e_0$ and $\mathbf b_\alpha=\mathbf e_\alpha-r(\alpha)\mathbf e_0$ for every nonzero ordinal $\alpha\in\kappa$. Therefore, the standard orthonormal basis $\{\mathbf b_\alpha\}_{\alpha\in\kappa}$ of the Hilbert space $\ell_2(\kappa)$ is contained in the $\IR$-linear hull of $L$ of the set $\{\mathbf e_\alpha\}_{\alpha\in\kappa}$. Now the density of the group $L$ in $\ell_2(\kappa)$ follows from the density of the $\IQ$-linear hull of $L$ in its $\IR$-linear hull and the density of the $\IR$-linear hull of the standard orthonormal basis $\{\mathbf b_\alpha\}_{\alpha\in\kappa}$ in $\ell_2(\kappa)$.
\end{proof}
\end{proof}

The following theorem shows that the completeness is essential in Theorem~\ref{t:WB}

\begin{theorem}\label{t:H2} For every nonzero cardinal $\kappa\le\mathfrak c$, the Hilbert space $\ell_2(\kappa)$ contains a dense divisible Banakh subgroup $L$ such that $\dist[L^2]=\IR_+$.
\end{theorem}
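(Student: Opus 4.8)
The plan is to realise $L$ as a dense $\IQ$-linear subspace of $\ell_2(\kappa)$ built by transfinite recursion of length $\mathfrak c$. If $\kappa=1$ then $\ell_2(1)=\IR$ and $L\defeq\IR$ already works, so I assume $\kappa\ge 2$. The organising observation is purely algebraic: any $\IQ$-linear subspace $L\subseteq\ell_2(\kappa)$, equipped with the restricted norm, is a normed $\IZ$-module, it is automatically divisible, and $\dist[L^2]=\{\|x\|:x\in L\}$; moreover, by Corollary~\ref{c:nZ-B}, $L$ is Banakh if and only if $\|x\|=\|y\|$ implies $y\in\{x,-x\}$ for all $x,y\in L$, and the latter is equivalent to the statement that \emph{no two nonzero vectors of $L$ are orthogonal} (if $\|x\|=\|y\|$ and $y\ne\pm x$ then $x+y$ and $x-y$ are nonzero and orthogonal; conversely two nonzero orthogonal vectors $a,b$ satisfy $\|a+b\|=\|a-b\|$ while $a+b\ne\pm(a-b)$). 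Thus it suffices to construct a $\IQ$-linear subspace $L\subseteq\ell_2(\kappa)$ which is dense, satisfies $\{\|x\|:x\in L\}=\IR_+$, and has no two nonzero orthogonal vectors.

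I would build $L$ as $\bigcup_{\alpha<\mathfrak c}L_\alpha$ for a continuous increasing chain of $\IQ$-linear subspaces $L_\alpha$ of $\ell_2(\kappa)$, each of cardinality $<\mathfrak c$ and with no two nonzero orthogonal vectors, starting from $L_0\defeq\IQ w$ for a fixed unit vector $w$. Having fixed enumerations $(t_\alpha)_{\alpha<\mathfrak c}$ of $\IR_+$ and $\big((z_\alpha,\e_\alpha)\big)_{\alpha<\mathfrak c}$ of $\ell_2(\kappa)\times\{2^{-n}:n\in\IN\}$ (both of size $\mathfrak c$), at stage $\alpha+1$ I would enlarge $L_\alpha$, when necessary, so that some vector of $L_{\alpha+1}$ has norm $t_\alpha$ and some vector of $L_{\alpha+1}$ lies within $\e_\alpha$ of $z_\alpha$. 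The resulting $L$ is then dense, its norm set is all of $\IR_+$, it has no two nonzero orthogonal vectors (any two vectors lie in a common $L_\alpha$), and the cardinality bounds are preserved by a routine computation. Everything hinges on the following \emph{extension lemma}: if $V$ is a nonzero $\IQ$-linear subspace of $\ell_2(\kappa)$ with $|V|<\mathfrak c$ and no two nonzero orthogonal vectors, then for every $t>0$ with $t\notin\{\|x\|:x\in V\}$ there is $v$ with $\|v\|=t$ and $V+\IQ v$ again having no two nonzero orthogonal vectors, and similarly for every $z\in\ell_2(\kappa)$ and every $\e>0$ there is such a $v$ with $\|v-z\|<\e$.

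For the extension lemma, note that when $v\notin V$ the sum $V+\IQ v$ is direct and contains two nonzero orthogonal vectors exactly when $pq\|v\|^2+\langle qa+pb\,|\,v\rangle+\langle a\,|\,b\rangle=0$ for some $a,b\in V$ and $p,q\in\IQ$ with $a+pv\ne\mathbf 0\ne b+qv$. For each such quadruple the forbidden set of $v$ is, after removing at most two points, either an affine hyperplane whose normal vector lies in $V\setminus\{\mathbf 0\}$ (and it passes through the origin only when it equals $b^{\perp}$ for some $b\in V\setminus\{\mathbf 0\}$), or a sphere with centre $-\tfrac{qa+pb}{2pq}\in V$, or — in the degenerate case $qa+pb=\mathbf 0$ — a sphere centred at the origin whose radius lies in $\{\|x\|:x\in V\}$; in no case is the forbidden set the whole space, and there are $<\mathfrak c$ such quadruples. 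The key device is then to pick $\xi\in V\setminus\{\mathbf 0\}$: since no two nonzero vectors of $V$ are orthogonal, $\langle\xi\,|\,u\rangle\ne 0$ for every $u\in V\setminus\{\mathbf 0\}$, so $\xi$ fails to be orthogonal to every hyperplane normal and every sphere centre that arises from $V$. Choosing a $2$-dimensional plane $\Pi$ containing $\xi$ — through the origin for the length task, through $z$ for the approximation task — one gets that no forbidden hyperplane contains $\Pi$, no forbidden sphere with nonzero centre contains the circle $\{v\in\Pi:\|v\|=t\}$, and the forbidden origin-centred spheres miss that circle because $t\notin\{\|x\|:x\in V\}$; hence each of the $<\mathfrak c$ forbidden sets meets that circle (respectively a generic line of $\Pi$ through $z$) in at most two points. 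Since the circle, resp.\ the line segment inside the $\e$-ball, has $\mathfrak c$ points, deleting fewer than $\mathfrak c$ of them leaves a usable $v\notin V$.

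The main obstacle is precisely the extension lemma — proving that a Banakh $\IQ$-subspace can always be enlarged in a prescribed direction or to a prescribed length. What makes this delicate is that the constraint ``$v$ is orthogonal to no nonzero vector of $V$'' looks as though it might become incompatible with $\|v\|=t$ once $V$ is large; the resolution is the idea of searching for $v$ inside a plane spanned by a vector already belonging to $V$, where the no-two-nonzero-orthogonal hypothesis on $V$ automatically annihilates every obstruction arising from $V$ — and, pleasantly, this also removes any need for the ``a vector space is not a union of $<\mathfrak c$ proper subspaces'' lemma. I expect the genuinely laborious part to be the careful bookkeeping, in the classification of forbidden sets, of the degenerate quadruples (those with $qa+pb=\mathbf 0$, or with $p$ or $q$ equal to $0$, or with $a+pv$ or $b+qv$ equal to $\mathbf 0$), rather than the transfinite recursion, which is routine.
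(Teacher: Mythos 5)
Your proof is correct and follows essentially the same strategy as the paper's: a transfinite recursion of length $\mathfrak c$ adjoining one rational line $\IQ v$ at a time, with $v$ chosen on a circle (or line) inside a $2$-plane through a nonzero vector $\xi$ of the current subgroup, so that the no-two-nonzero-orthogonal-vectors property (which you correctly identify as equivalent to the Banakh property for $\IQ$-linear subspaces, and which the paper uses in the form $\langle\mathbf e_0|x\rangle\ne 0$) reduces every forbidden hyperplane and sphere to at most two bad points, leaving fewer than $\mathfrak c$ excluded parameters. The only differences are organizational: the paper secures density structurally (the new vector at stage $\alpha+1$ lies in $(\IR\mathbf e_0+\IR\mathbf e_\alpha)\setminus\IR\mathbf e_0$, so the $\IR$-linear hull of $L$ contains the whole orthonormal basis) and covers $\IR_+$ by always taking the $\preceq$-least missing norm plus a counting argument, whereas you interleave explicit approximation tasks with a direct enumeration of $\IR_+$.
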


\begin{proof} If $\kappa=1$, then $\ell_2(\kappa)$ is isomorphic to the real line and $H=\ell_2(\kappa)$ is the required dense divisible Banakh subgroup of $\ell_2(\kappa)$ with $\dist[H^2]=\IR_+$. So, we assume that $\kappa>1$. 

For every ordinal $\alpha\in\kappa$, consider the function $\mathbf b_\alpha:\kappa\to\{0,1\}$ such that $\mathbf b_\alpha^{-1}(1)=\{\alpha\}$. Therefore, $(\mathbf b_\alpha)_{\alpha\in\kappa}$ is the standard orthonormal basis of the Hilbert space $\ell_2(\kappa)$. For every $\alpha\in\mathfrak c$ consider the vector $$\mathbf e_\alpha=\begin{cases}\mathbf b_\alpha&\mbox{if $\alpha\in \kappa$};\\
\mathbf b_1&\mbox{otherwise}
\end{cases}
$$

Write the set $\IR_+$ as $(r_\alpha)_{\alpha\in\mathfrak c}$ for some pairwise distinct real numbers such that $r_0=0$ and $r_1=1$. The enumeration $(r_\alpha)_{\alpha\in\mathfrak c}$ induces the well-order $\preceq$ on $\IR$ defined by $x\preceq y$ if and only if $x=r_\alpha$ and $y=r_\beta$ for some ordinals $\alpha\le\beta<\mathfrak c$. The well-order $\preceq$ has the following property: for every $x\in \IR$, the set ${\downarrow}x\defeq\{y\in\IR:y\preceq x\}$ has cardinality $|{\downarrow}x|<\mathfrak c$.

For a nonempty set $A\subseteq \IR$ denote by $\min_{\preceq} A$ the smallest element of the set $A$ with respect to the well-order $\preceq$.

By transfinite induction we shall construct a transfinite sequence $(X_\alpha)_{\alpha\in\mathfrak c}$ of Banakh  divisible subgroups of $\ell_2(\kappa)$ such that $X_0=\IQ\mathbf e_0$ and for every  ordinal $\beta\in\mathfrak c$ the following conditions are satisfied:
\begin{enumerate}
\item[$(1_\beta)$] if $\beta$ is an infinite limit ordinal, then $X_\beta=\bigcup_{\alpha\in\beta}X_\alpha$;
\item[$(2_\beta)$] If $\beta=\alpha+1$ is a successor ordinal, then $X_{\beta}=X_\alpha+\IQ \mathbf v_{\beta}$ for some vector\newline $\mathbf v_{\beta}\in (\IR\mathbf e_0+\IR\mathbf e_\alpha)\setminus\IR\mathbf e_0$ of norm $\|\mathbf v_\beta\|=\min_{\preceq}(\IR_+\setminus\{\|x\|:x\in X_\alpha\})$;
\item[$(3_\beta)$] $|X_\beta|\le\max\{\w,|\beta|\}<\mathfrak c$.
\end{enumerate}
To start the inductive construction, put $X_0=\IQ\mathbf e_0$. Assume that for some ordinal $\beta<\mathfrak c$ we have constructed an increasing transfinite sequence of Banakh divisible subgroups $(X_\alpha)_{\alpha\in\beta}$ satisfying the inductive conditions. If $\beta$ is a limit ordinal, then put $X_\beta=\bigcup_{\alpha\in\beta}X_\alpha$ and observe that $|X_\beta|\le\sum_{\alpha\in\beta}|X_\alpha|\le\sum_{\alpha\in\beta}\max\{\w,|\alpha|\}\le\max\{\w,|\beta|\}<\mathfrak c$. 

Next, assume that $\beta=\alpha+1$ is a successor ordinal. Since $|X_\alpha|\le\max\{\w,|\alpha|\}<\mathfrak c$, the positive real number $v_\beta\defeq\min_{\preceq}(\IR_+\setminus\{\|x\|:x\in X_\alpha\})$ is well-defined. Denote by $\langle \;|\;\rangle$ the inner product of the Hilbert space $\ell_2(\kappa)$.
 
\begin{claim}\label{cl:ip!=0} For every $x\in X_\alpha\setminus\{0\}$, $\langle \mathbf e_0|x\rangle\ne0$.
\end{claim}

\begin{proof} To derive a contradiction, assume that $\langle \mathbf e_0|x\rangle=0$ for some $x\in X_\alpha\setminus\{0\}$. By the Pithagoras Theorem,  the sphere $\Sf(\mathbf 0,\sqrt{\|\mathbf e_0\|^2+\|x\|^2})$ contains at least four pairwise distinct points: $$\mathbf e_0+x,\mathbf e_0-x,-\mathbf e_0+x,-\mathbf e_0+x,$$ which contradicts the Banakh property of the group $X_\alpha$.
\end{proof}

\begin{claim}\label{cl:trig} For real numbers $a,b,c$, the set 
$$T\defeq\{t\in\IR:\cos(t)a+\sin(t)b=c\}$$ is uncountable if and only if $a=b=c=0$.
\end{claim}

\begin{proof} The ``if'' part is trivial. To prove the ``only if'' part, assume that at least  one of the numbers $a,b,c$ is non-zero.

If $a=b=0$ and $c\ne 0$, then the set $T$ is empty. Next, assume that $a$ or $b$ is not zero. In this case $a^2+b^2\ne\emptyset$ and there exists a real number $\varphi$ such that $\big(\!\sin(\varphi),\cos(\varphi)\big)=\big(\frac a{\sqrt{a^2+b^2}},\frac b{\sqrt{a^2+b^2}}\big)$. Then the set 
$$\begin{aligned}
T&=\{t\in \IR:a\cos(t)+b\sin(t)=c\}\\
&=\{t\in\IR:\sqrt{a^2+b^2}\big(\sin(\varphi)\cos(t)+\cos(\varphi)\sin(t)\big)=c\}\\
&=\{t\in\IR:\sqrt{a^2+b^2}\sin(\varphi+t)=c\}
\end{aligned}
$$is at most countable.
\end{proof}

\begin{claim} For any pairs $(p,x),(q,y)\in \IQ\times X_\alpha$ with $(q,y)\notin\{(p,x),(-p,-x)\}$, the set $$T_{(p,x),(q,y)}\defeq\big\{t\in\IR:
\|pv_\beta\big(\!\cos(t)\mathbf e_0+\sin(t)\mathbf e_\alpha\big)+x\|=\|qv_\beta\big(\!\cos(t)\mathbf e_0+\sin(t)\mathbf e_\alpha\big)+y\|\big\}$$
is at most countable.
\end{claim}

\begin{proof} Observe that a real number $t$ belongs to the set $T_{(p,x),(q,y)}$   if and only if
$$p^2v_\beta^2+\|x\|^2+2pv_\beta\langle \cos(t) \mathbf e_0+\sin(t)\mathbf e_\alpha|x\rangle=q^2v_\beta^2+\|y\|^2+2qv_\beta\langle \cos(t)\mathbf e_0+\sin(t)\mathbf e_\alpha|y\rangle$$ if and only if 
\begin{equation}\label{eq:2}2v_\beta\big(\!\cos(t)\langle \mathbf e_0|px-qy\rangle+\sin(t)\langle \mathbf e_1|px-qy\rangle\big)=v_\beta(q^2-p^2)+\|y\|^2-\|x\|^2.\end{equation}
Since $X_\alpha$ is a $\IQ$-linear subspace of $\ell_2(\kappa)$, $px-qy\in X_\alpha$. If $px\ne qy$, then $\langle \mathbf e_0|px-qy\rangle\ne0$, according to Claim~\ref{cl:ip!=0}.  By Claim~\ref{cl:trig}, the equation (\ref{eq:2}) has at most countably many solution, which implies that the set $T_{(p,x),(q,y)}$ is at most countable.

If $px=qy$, then $(q,y)\notin\{(p,x),(-p,-x)\}$ implies $q\notin\{p,-p\}$ and hence $q^2\ne p^2$. 

If $p\ne 0$, then  
$$v_\beta(q^2-p^2)+\|y\|^2-\|x\|^2=v_\beta(q^2-p^2)+\|y\|^2-\frac{q^2}{p^2}\|y\|^2=
(q^2-p^2)(v_\beta-\frac1{p^2}\|y\|^2)\ne 0,$$
by the choice of $v_\beta\notin \{\|z\|:z\in X_\alpha\}$. In this case, the equation (\ref{eq:2}) has no solutions and  the set $T_{(p,x),(q,y)}$ is empty.

If $q\ne 0$, then  
$$v_\beta(q^2-p^2)+\|y\|^2-\|x\|^2=v_\beta(q^2-p^2)+\frac{p^2}{q^2}\|x\|^2-\|x\|^2=
(q^2-p^2)(v_\beta-\frac1{q^2}\|x\|^2)\ne 0$$
by the choice of $v_\beta\notin \{\|z\|:z\in X_\alpha\}$.  In this case, the equation (\ref{eq:2}) has no solutions and  the set $T_{(p,x),(q,y)}$ is empty.
\end{proof}

Let $P\defeq\{((p,x),(q,y))\in (\IQ\times X_\alpha)^2:(p,x)\notin\{(q,y),(-q,-y)\}$. Since $|P|=|\w\times X_\alpha|<\mathfrak c$, the set
$$T=\bigcup_{((p,x),(q,y))\in P}T_{(p,x),(q,y)}$$has cardinality $|T|\le|P\times \w|<\mathfrak c$, so we can choose a real number $t_\beta\notin T\cup\pi\IZ$. Then for the vector $\mathbf v_\beta=v_\beta(\cos(t_\beta)\mathbf e_0+\sin(t_\beta)\mathbf e_\alpha)$ and any pair $((p,x),(q,y))\in P$ we have $$\|p\mathbf v_\beta+x\|\ne \|q\mathbf v_\beta+y\|,$$ which implies that the $\IQ$-linear subspace $X_\beta\defeq X_\alpha+\IQ\mathbf v_\beta$ of $\ell_2(\kappa)$ is Banakh, according to Corollary~\ref{c:nZ-B}. The choice of $t_\beta\notin\pi\IZ$ guanartees that $\mathbf v_\beta\in(\IR\mathbf e_0+\IR\mathbf e_\beta)\setminus\IR\mathbf e_0$.
It is clear that $|X_\beta|=|\IQ\times X_\alpha|\le\max\{\w,|\alpha|\}\le\max\{|\w|,|\beta|\}$, witnessing that the inductive conditions $(2_\beta),(3_\beta)$ are satisfied. This completes the inductive step.
\smallskip

After completing the inductive construction, consider the $\IQ$-linear subspace  $L=\bigcup_{\alpha\in\mathfrak c}X_\alpha$ of $\ell_2(\kappa)$ and observe that $L$ is a divisible subgroup of $\ell_2(\kappa)$. Applying Corollary~\ref{c:nZ-B}, we can prove that the group $L$ is Banakh. Assuming that $\dist[L^2]\ne\IR_+$, we can find a positive real number $r\notin \{\|x\|:x\in L\}$. Then for every $\alpha\in\mathfrak c$ we have $\|\mathbf v_{\alpha+1}\|=\min_{\preceq}(\IR_+\setminus\{\|x\|:x\in X_\alpha\})\preceq r$ and hence the set ${\downarrow} r=\{x\in \IR_+:x\preceq r\}$ contains the set $\{\|\mathbf v_{\alpha+1}\|:\alpha\in\mathfrak c\}$ of cardinality $\mathfrak c $, which contradicts the choice of the well-order $\preceq$ on $\IR_+$.
This contradiction shows that $\dist[X^2]=\IR_+$.
\smallskip

To see that the subgroup $L$ is dense in $\ell_2(\kappa)$, observe that $\mathbf b_0=\mathbf e_0\in X_0\subseteq L$ and for every nonzero ordinal $\alpha\in\kappa$ we have $\mathbf b_\alpha=\mathbf e_\alpha\in \IR\mathbf v_{\alpha+1}+\IR\mathbf e_0$. Therefore, the orthonormal basis $(\mathbf b_\alpha)_{\alpha\in\kappa}$ of the Hilbert space $\ell_2(\kappa)$ belongs to the $\IR$-linear hull of the set $L$ in $\ell_2(\kappa)$. Since the $\IQ$-linear hull of $L$ coincides with $L$ and is dense in the $\IR$-linear hull of $L$, the $\IQ$-linear subspace $L$ of $\ell_2(\kappa)$ is dense $\ell_2(\kappa)$.
\end{proof} 

\section{The distance sets of Banakh spaces}

For a metric space $X$, the set $\dist[X^2]$ is called the {\em distance set} of $X$.
It is clear that every distance set is a subset of the closed half-line $\bar\IR_+$.

\begin{definition} A set $B\subseteq \bar\IR_+$ is called a {\em Banakh distance set} if $B=\dist[X^2]$ for some Banakh space $X$.
\end{definition}

In this section we will give some partial answers to the following intriguing (and still open) problem.

\begin{problem}\label{prob:Bdst} Find necessary and sufficient conditions for a set to be a Banakh distance set.
\end{problem}

The following definition introduces one of necessary properties of Banakh distance sets.

\begin{definition} A subset $B$ of the real line is called  {\em floppy} if for every real number $r$ with $r=\inf\{a+b:(a,b\in B)\;\wedge\; (r=b-a)\}$, we have $r\in B$.
\end{definition}

It is easy to see that every half-group $H\subseteq\bar\IR_+$ is floppy.

\begin{proposition}\label{p:flop0} A subset $B\subseteq\bar\IR_+$ is floppy if $\inf B\setminus\{0\}>0$.
\end{proposition}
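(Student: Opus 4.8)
The plan is to unwind the definition of floppiness directly. Let $r$ be a real number with $r=\inf\{a+b:(a,b\in B)\wedge(r=b-a)\}$; we must show $r\in B$. First I would set $A\defeq\{a\in B:a+r\in B\}$ and note that $A\neq\emptyset$: otherwise the set over which the infimum is taken is empty, so that infimum equals $+\infty\neq r$, contradicting that $r$ is a genuine real number.

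Next I would rewrite the infimum through $A$. The pairs $(a,b)\in B\times B$ with $b-a=r$ are precisely the pairs $(a,a+r)$ with $a\in A$, and for such a pair $a+b=2a+r$. Hence the index set of the infimum is exactly $\{2a+r:a\in A\}$, so $r=\inf\{2a+r:a\in A\}=r+2\inf A$, and therefore $\inf A=0$.

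Finally I would invoke the hypothesis $\inf(B\setminus\{0\})>0$. Since $A\subseteq B\subseteq\bar\IR_+$, we have $A\setminus\{0\}\subseteq B\setminus\{0\}\subseteq[\inf(B\setminus\{0\}),\infty)$, so every nonzero element of $A$ is bounded below by the positive number $\inf(B\setminus\{0\})$. Thus the equality $\inf A=0$ can hold only if $0\in A$, and by the definition of $A$ this gives $r=0+r\in B$, as required. In particular this also rules out any $r<0$ satisfying the displayed equation, since $0\in A$ would then force the contradiction $r\in B\subseteq\bar\IR_+$.

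I do not expect a substantive obstacle here: the only delicate points are the empty-infimum convention, handled by the assumption that $r$ is a real number, and verifying that the passage from $\{a+b:(a,b\in B)\wedge(r=b-a)\}$ to $\{2a+r:a\in A\}$ is an equality of sets (not merely an inclusion), so that the two infima genuinely coincide. The degenerate cases $B=\emptyset$ and $B\subseteq\{0\}$ are covered automatically by the same argument.
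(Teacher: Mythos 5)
Your proof is correct and follows essentially the same route as the paper: both arguments use the hypothesis $\inf(B\setminus\{0\})>0$ to force the smaller element of a (near-)optimal pair $(a,a+r)$ down to $0$, whence $r=0+r\in B$. Your reformulation via $\inf A=0$ and the paper's choice of a pair with $a+b<r+\varepsilon$ are two phrasings of the same observation.
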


\begin{proof} Let $\e\defeq\inf B\setminus\{0\}>0$. Here we assume that $\inf\emptyset=+\infty$. To show that the set $B$ is floppy, take any real number  $r$ with $r=\inf\{b+a:(b,a\in B)\;\wedge\; (r=b-a)\}$ and choose numbers $a,b\in B$ such that $r=b-a$ and $b+a<r+\e$. It follows that $b=r+a\ge r$ and hence $a<r+\e-b\le \e$. The definition of $\e$ ensures that $a=0$ and hence $r=b-0\in X$, witnessing that the set $B$ is floppy.
\end{proof}

\begin{example}\em By Proposition~\ref{p:flop0}, the sets $\w$ and $\w\setminus\{1\}$ are floppy monoids. The floppy monoid $\w\setminus\{1\}$ is not a half-group.
\end{example} 

For a subset $B$ of $\bar\IR_+$, let $$\ddot B\defeq\{r\in B\setminus\{0\}:\forall x,y\in (B\cap\IQ r)\setminus\{0,r\} \;\;(x+y\ne r)\}.$$

\begin{example} For the monoid $M=\w\setminus\{1\}$, we have $\ddot M=\{2,3\}$.
\end{example}

In the following theorem we collect all known (to the author) properties of Banakh distance sets.

\begin{theorem}\label{t:Bdst} Let $B\subseteq\bar\IR_+$ be a set.
\begin{enumerate} 
\item If $B$ is a half-group, then $B$ is a Banakh distance set.
\item If $B$ is a Banakh distance set and $0\in B\subseteq\IQ r$ for some $r\in B$, then $B$ is a half-group.
\item If $B$ is a Banakh distance set, then for every $r\in B$ the set $M_r\defeq B\cap \IQ r$ is a floppy monoid. 
\item $B$ is a Banakh distance set whenever $|B|\le\w_1$, for every $r\in B$, the set $M_r\defeq B\cap r\IQ$ is a floppy monoid, and for every subset $A\subseteq B$ of cardinality $|A|<|B|$, the set $\ddot B\setminus(\IQ A+\IQ A)$ is dense in $\IR_+$.
\end{enumerate}
\end{theorem}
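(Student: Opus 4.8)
Parts (1)--(3) follow quickly from results already proved. For (1), if $B$ is a half-group then $G\defeq\pm B$ is a subgroup of $\IR$; by Theorem~\ref{t:real-Banakh} (condition (3), since $G-c=G$ for every $c\in G$) the metric subspace $G\subseteq\IR$ is a Banakh space, and $\dist[G^2]=\{|g|:g\in G\}=B$, so $B$ is a Banakh distance set. For (2), write $B=\dist[X^2]$ for a Banakh space $X$; if $r=0$ then $B=\{0\}$ is a half-group, and if $r\ne 0$ then, replacing $\dist$ by $\dist/r$ (which preserves the Banakh property, as in the proof of Theorem~\ref{t:main1}), we obtain a Banakh space whose distance set is contained in $\IQ$ and which, by Corollary~\ref{c:Q}, is isometric to a subgroup $H\subseteq\IQ$; hence $B=rH_+$ and $\pm B=rH$ is a group, i.e.\ $B$ is a half-group. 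For (3), fix $r\in B$ and a Banakh space $X$ with $\dist[X^2]=B$; as in the proof of Theorem~\ref{t:Dzik2}, Theorem~\ref{t:SC} shows that $M_r=\dist[X^2]\cap\IQ r$ is a submonoid of $\IR_+$. To see that $M_r$ is floppy we may assume $r\ne 0$ (for $r=0$ we have $M_0=\{0\}$); let $s$ be a real number with $s=\inf\{u+v:u,v\in M_r,\ s=v-u\}$. Then $s\ge 0$ and this infimum is finite, so $s\in M_r-M_r$. Choose $p,q\in X$ with $\dist(p,q)=r$ and apply Theorem~\ref{t:Spm}. Since $\{u+v:u,v\in M_r,\ |s-0|=u-v\}$ is the same set of reals as $\{u+v:u,v\in M_r,\ s=v-u\}$, part~(1) of Theorem~\ref{t:Spm} gives $s\le\dist(\ell^{\pm}_{pq}(s),\ell^{\pm}_{pq}(0))\le s$, so this distance equals $|s-0|$, and then part~(2) of Theorem~\ref{t:Spm} yields $s\in M_r$. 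Thus $M_r$ is a floppy monoid.

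The substance of the theorem is statement (4). The plan is to construct a Banakh space $X$ with $\dist[X^2]=B$ as an increasing union $X=\bigcup_{\alpha<\kappa}X_\alpha$, where $\kappa\defeq|B|\le\w_1$ and each $X_\alpha$ is a Banakh space with $\dist[X_\alpha^2]\subseteq B$, built by transfinite recursion with a bookkeeping enumeration of ``tasks''. A task requires that a prescribed element $r\in B$ be realized as a distance, or (at later stages) that the $\IQ r$-hypersphere through some previously constructed point be present and complete; the bookkeeping is set up so that every $r\in B$ and every point that ever appears is eventually served. I would start from $X_0=\{*\}$, take unions at limit stages, and at successor stages perform a single extension step described below. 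Then $\dist[X_\kappa^2]\subseteq B$ by construction, while $\dist[X_\kappa^2]\supseteq\bigcup_{r\in B}M_r=B$, because each $r\in B$ is served and, by Theorem~\ref{t:Spm}, the $\IQ r$-hypersphere realizes the distances in $M_r$ together with certain ``gap'' distances (which are to be placed inside $B$).

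The heart of the argument is an extension lemma: given a Banakh space $Y$ with $\dist[Y^2]\subseteq B$, a point $p\in Y$ and an element $r\in B$, there is a Banakh space $Y'\supseteq Y$ with $\dist[Y'^2]\subseteq B$ in which $p$ lies in a complete $\IQ r$-hypersphere and in which every element of $\dist[Y'^2]\setminus\dist[Y^2]$ either lies in $M_r$ or can be \emph{prescribed} to lie in $\ddot B$ and to avoid $\IQ\dist[Y^2]+\IQ\dist[Y^2]$. Concretely, I would model the new hypersphere on the set $M_r-M_r$ with the metric characterized in Theorem~\ref{t:Spm}; this is where the floppiness of $M_r$ from (3) is essential, since it is exactly the property making the upper bound in Theorem~\ref{t:Spm}(1) compatible with the Banakh property, so that this ``$\IQ r$-hypersphere gadget'' really is a Banakh space with distance set $M_r$ together with the gap values. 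One then glues the gadget to $Y$ along $p$, fixing the remaining degrees of freedom --- the gap distances of the new hypersphere that fall outside $\IQ r$, and the distances from the new points to the old points of $Y$ --- one at a time. At each such choice the forbidden set is $\IQ A+\IQ A$ for a set $A\subseteq B$ of distances used so far; since every successor step adds only countably many distances and $\kappa\le\w_1$, one has $|A|<|B|$ throughout the recursion, so the density hypothesis guarantees that $\ddot B\setminus(\IQ A+\IQ A)$ is dense in $\IR_+$ and a legal choice can be made in any prescribed interval. Because all new distances avoid $\IQ\dist[Y^2]+\IQ\dist[Y^2]\supseteq\dist[Y^2]$, no new point of $Y'$ ever lies at an old distance from an old point; hence every sphere of $Y$ is unchanged in $Y'$, and this is precisely what keeps the unions at limit stages Banakh.

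The main obstacle is the extension lemma itself: verifying that the glued space $Y'$ is Banakh and that $\dist[Y'^2]\subseteq B$. Adjoining a whole hypersphere at once creates many new triples of points, and one must show their pairwise distances obey the rigidity of Theorems~\ref{t:GPS}, \ref{t:or} and \ref{t:SC} --- the uniqueness statements (Theorems~\ref{t:GPS}, \ref{t:SC}) pin down the forced distances, and the orientation calculus (Proposition~\ref{p:updown}, Theorem~\ref{t:or}, Proposition~\ref{p:TE}) organizes the case analysis, while the genericity of the chosen distances rules out the degenerate coincidences (such as $ox\upuparrows oy$, which by Theorem~\ref{t:or} forces collinearity) that would otherwise spoil the ``two points at maximal distance'' property of a sphere or produce a distance outside $B$. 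A further delicate point is that one and the same $r$ may have to be served at many different points; Theorem~\ref{t:Spme} (the $\IQ r$-hyperspheres partition a Banakh space) together with the genericity should keep these hyperspheres mutually disjoint and non-interfering. I expect the verification $\dist[Y'^2]\subseteq B$ to be the most demanding step, since it is there that the density of $\ddot B$ and the avoidance of $\IQ A+\IQ A$ must be used to full strength.
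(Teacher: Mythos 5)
Your proofs of parts (1)--(3) are correct and essentially identical to the paper's: (1) is Theorem~\ref{t:real-Banakh}, (2) is Theorem~\ref{t:Q}/Corollary~\ref{c:Q} applied to $X=\Sf(x;\IQ r)$, and (3) is exactly the paper's Theorem~\ref{t:3} (monoid via Theorem~\ref{t:SC}, floppiness via the two estimates in Theorem~\ref{t:Spm}).

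Part (4), however, is where the entire difficulty of the theorem lives, and your argument for it is a plan rather than a proof. You reduce everything to an ``extension lemma'' --- given a Banakh space $Y$ with $\dist[Y^2]\subseteq B$, glue a $\IQ r$-hypersphere gadget modeled on $M_r-M_r$ at a point $p$ and choose the remaining distances generically in $\ddot B\setminus(\IQ A+\IQ A)$ so that the result is again Banakh with distance set inside $B$ --- and then you write ``I would model\ldots'', ``I expect the verification\ldots to be the most demanding step''. That verification is precisely the content of the paper's Theorem~\ref{t:exotic}, whose proof occupies the machinery of floppy graph metrics: Lemma~\ref{l:floppy} (the gadget $\mu_r$ is a floppy graph metric iff $M_r$ is a floppy monoid), Lemma~\ref{l:floppy-union} and Lemma~\ref{l:Lambda} (gluing a family of gadgets along spheres yields a floppy graph metric), Lemma~\ref{l:extend} (a floppy graph metric with countable complement extends to a full metric with all new values chosen injectively from prescribed dense sets), Lemma~\ref{l:Falpha} (the careful partition of $\ddot M$ into dense ``generic'' sets $F_\alpha$), and the ten intertwined inductive conditions of Lemma~\ref{l:step}. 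Without some equivalent of this apparatus, your one-at-a-time choice of cross-distances has no guarantee of satisfying the triangle inequality globally, let alone of preserving the two-point-sphere property.

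There is also a concrete inaccuracy in the sketch: you assert that ``no new point of $Y'$ ever lies at an old distance from an old point; hence every sphere of $Y$ is unchanged in $Y'$.'' This cannot be right as stated, since the whole purpose of the gadget is to add points to deficient spheres $\Sf(x;s)$ for $s\in M_r\subseteq\dist[Y^2]$ --- new points must land at old distances from old points, and do so at every point of the old hypersphere through $p$, not only at $p$. What the construction actually has to guarantee (the paper's conditions (6)--(8) and the restriction of $P_\beta$ to pairs with $|\Sf(x;r)|\le 1$) is that each deficient sphere is completed to exactly two points at distance $2r$ and that no sphere which already has two points ever acquires a third; reconciling these two requirements across all the simultaneously glued gadgets, and showing that the leftover ``gap'' distances of each gadget and all cross-gadget distances can be pushed into $\ddot B\setminus(\IQ A+\IQ A)$, is the missing core of the argument.
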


\begin{proof} 1. If $B$ is a half-group, then $\pm B$ is a subgroup of the real line. By Theorem~\ref{t:real-Banakh}, the group $\pm B$ is a Banakh space. Since  $\dist[(\pm B)^2]=(\pm B)=B$, the set $B$ is a Banakh distance set.
\smallskip

2. Assume that $B$ is a Banakh distance set and $0\in B\subseteq\IQ r$ for some $r\in B$. Let $X$ be a Banakh space such that $\dist[X^2]=B$. Since $0\in B=\dist[X^2]$, the space $X$ is not empty and hence $X$ contains some point $x\in X$. Taking into account that $\dist[X^2]=B\subseteq\IQ r$, we conclude that $\Sf(x;\IQ r)=X$.
 Since $\dist[\Sf(x;\IQ r)^2]=\dist[X^2]=B\subseteq\IQ r$, we can apply Theorem~\ref{t:Q} and conclude that the metric space $X=\Sf(x;\IQ r)$ is isometric to a subgroup $G$ of the group $\IQ r$. Then $B=\dist[X^2]=\dist[G^2]=G_+$ is a half-group.
 \smallskip
 
 3,4. The third and fourth statements follow from Theorems~\ref{t:3} and \ref{t:exotic}, respectively.
 \end{proof}

\begin{theorem}\label{t:3} For every Banakh space $X$ and every real number $r\in\dist[X^2]$, the set $M_r \defeq \dist[X^2]\cap \IQ r$ is a floppy monoid. The monoid $M_r$ is a half-group if and only if $M_r$ is $p$-divisible in $M_r-M_r$ for some prime number $p$.
\end{theorem}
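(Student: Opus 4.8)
The plan is to deduce both assertions of Theorem~\ref{t:3} from results already established: the floppiness of $M_r$ from the metric description of the $\IQ r$-hypersphere in Theorem~\ref{t:Spm}, and the half-group characterization from the dichotomy for submonoids of $\IQ_+$ in Theorem~\ref{t:Dzik}, after a rescaling.

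First I would record that $M_r$ is a submonoid of $\IQ_+ r$. Since $r\in\dist[X^2]$ the space $X$ is nonempty, so $0=\dist(x,x)\in\dist[X^2]$ for $x\in X$ and hence $0\in\dist[X^2]\cap\IQ r=M_r$. If $u,v\in M_r$ with $u>0$, then $v\in\IQ r=\IQ u$, so picking $y\in\Sf(x;u)$ (possible since $u\in\dist[X^2]$ and $X$ is Banakh) and applying Theorem~\ref{t:SC} to the pair $x,y$ and the number $v\in\dist[X^2]\cap\IQ{\cdot}\dist(x,y)$ yields a point $z$ with $\dist(x,z)=\dist(x,y)+\dist(y,z)=u+v$; thus $u+v\in\dist[X^2]\cap\IQ r=M_r$. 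The cases $u=0$ and $r=0$ being trivial, this gives $M_r+M_r\subseteq M_r$ and $M_r\subseteq\IQ_+ r$.

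Next I would prove floppiness. If $r=0$ then $M_r=\{0\}$, which is floppy, so assume $r>0$ and fix $a,b\in X$ with $\dist(a,b)=r$. Let $s\in\IR$ satisfy $s=\inf\{u+v:u,v\in M_r,\ s=v-u\}$. The set on the right is nonempty (otherwise its infimum is $+\infty\ne s$), so $s=v-u\in M_r-M_r$ for suitable $u,v\in M_r$; moreover $s\ge 0$, since $v-u=s<0$ with $u,v\ge 0$ would force $u\ge|s|$ and hence $u+v\ge|s|>s$, contradicting $s=\inf\{\dots\}$. Applying Theorem~\ref{t:Spm}(1) with the two arguments $s,0\in M_r-M_r$, and recalling $\ell^\pm_{ab}(0)=a$, I obtain
$$s=|s-0|\le\dist(\ell^\pm_{ab}(s),a)\le\inf\{u+v:u,v\in M_r,\ |s-0|=u-v\}=s,$$
the last equality being exactly the hypothesis on $s$ (note $\{u+v:s=u-v\}=\{u+v:s=v-u\}$ after swapping $u$ and $v$, and $|s-0|=s$ since $s\ge 0$). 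Hence $\dist(\ell^\pm_{ab}(s),\ell^\pm_{ab}(0))=|s-0|$, and Theorem~\ref{t:Spm}(2) yields $s=|s-0|\in M_r$. So $M_r$ is floppy.

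Finally, for the half-group characterization I would rescale. The case $r=0$ is trivial ($M_r=\{0\}$ is a half-group and is $p$-divisible in $M_r-M_r=\{0\}$ for every prime $p$), so assume $r>0$ and set $M\defeq\tfrac1r M_r$. By the first step $M$ is a submonoid of $\IQ_+$, and the dilation $x\mapsto x/r$ is an automorphism of $(\IR,+)$ carrying $M_r$ to $M$, $M_r-M_r$ to $M-M$, $\tfrac1p M_r$ to $\tfrac1p M$, and $\pm M_r$ to $\pm M$; consequently $M_r$ is a half-group if and only if $M$ is, and $M_r$ is $p$-divisible in $M_r-M_r$ if and only if $M$ is $p$-divisible in $M-M$. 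The equivalence $(1)\Leftrightarrow(3)$ of Theorem~\ref{t:Dzik}, applied to $M\subseteq\IQ_+$, then gives the required statement after translating back through the dilation. I do not expect a genuine obstacle here, since everything substantial lives in the earlier sections; the one point needing a moment's care is that under the hypothesis defining ``floppy'' the two bounds in Theorem~\ref{t:Spm}(1) collapse, so that the equivalence in Theorem~\ref{t:Spm}(2) applies verbatim, the remainder being the elementary sign/zero case distinctions and the routine bookkeeping for the rescaling.
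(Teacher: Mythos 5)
Your proposal is correct and follows essentially the same route as the paper: monoid structure via Theorem~\ref{t:SC}, floppiness by squeezing $\dist(\ell^{\pm}_{ab}(s),\ell^{\pm}_{ab}(0))$ between the two bounds of Theorem~\ref{t:Spm}, and the half-group dichotomy via Theorem~\ref{t:Dzik}. Your explicit rescaling $M\defeq\tfrac1r M_r$ is a welcome extra bit of care (Theorem~\ref{t:Dzik} is stated for submonoids of $\IQ_+$, while $M_r\subseteq\IQ_+r$, a point the paper glosses over), but it does not change the substance of the argument.
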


\begin{proof} Theorem~\ref{t:SC} implies that $M_r$ is a monoid. To see that the monoid $M_r$ is floppy, take any real number $s$ such that $s=I\defeq \inf\{u+v:(u,v\in M_r)\;\wedge\;(r=u-v)\}$. Here we assume that $\inf\emptyset=+\infty$. The equality $s=I$ implies $s\ge 0$ and $s\in M_r-M_r\subseteq\IQ r-\IQ r=\IQ r$. Since $r\in\dist[X^2]$, there exist points $a,b\in X$ with $\dist(a,b)=r$. By Theorem~\ref{t:Spm}, there exists a map $\ell_{a,b}^\pm:M_r-M_r\to \Sf^\pm(a;\IQ r)$ such that  
$$s=|s-0|\le \dist(\ell_{ab}^\pm(s),\ell_{ab}^\pm(0))\le I=s.$$
Then $s=\dist(\ell_{ab}^\pm(s),\ell_{ab}^\pm(0))\in \dist[X^2]\cap\IQ r=M_r$, witnessing that the monoid $M_r$ is floppy.

By Theorem~\ref{t:Dzik}, the monoid $M_r$ is a half-group if and only if $M_r$ is $p$-divisible in $M_r-M_r$ for some prime number $p$.
\end{proof}

The main result of this section is the following (difficult) theorem.

\begin{theorem}\label{t:exotic} Assume that for a subset $M\subseteq\bar\IR_+$ the following conditions are satisfied:
\begin{enumerate}
\item $|M|\le\w_1$;
\item for every $r\in M$ the set $M_r\defeq M\cap \IQ r$ is a floppy monoid;
\item for every subset $B\subseteq M$ of cardinality $|B|<|M|$ the set $\ddot M\setminus(\IQ B+\IQ B)$ is dense in $\IR_+$.
\end{enumerate}
Then there exists a Banakh space $X$ with $\dist[X^2]=M$.
\end{theorem}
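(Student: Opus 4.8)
The plan is to build $X$ as an increasing union of a transfinite chain of Banakh spaces, in analogy with the construction in Theorem~\ref{t:H2}, but now without an ambient Hilbert space — the metric must be defined by hand. Enumerate $M$ (or the relevant countable cofinal data) as $(r_\alpha)_{\alpha<\lambda}$ where $\lambda=|M|\le\w_1$. I would construct a chain $(X_\alpha)_{\alpha<\lambda}$ of Banakh spaces together with isometric inclusions $X_\alpha\hookrightarrow X_\beta$ for $\alpha<\beta$, maintaining the inductive invariant that $\dist[X_\alpha^2]\subseteq M$ and that the "partial" monoids $\dist[X_\alpha^2]\cap\IQ r$ sit correctly inside $M_r$. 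At successor steps $\beta=\alpha+1$ I would enlarge $X_\alpha$ so as to (a) realize the distance $r_\alpha$ if it is not yet realized, and (b) "saturate" each existing $\IQ r$-hypersphere by adjoining a fresh orthogonal direction, using the structure theory of $\IQ r$-hyperspheres from Theorem~\ref{t:Spm}. At limit steps take the union, which is Banakh because the Banakh property is witnessed by finitely many points and hence reflects to some $X_\alpha$. Finally put $X\defeq\bigcup_{\alpha<\lambda}X_\alpha$ and check $\dist[X^2]=M$: the inclusion $\subseteq$ is the invariant; for $\supseteq$, each $r\in M$ gets realized at its stage.

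The two genuinely delicate points are: first, \emph{what one is allowed to adjoin}. When attaching a new point $z$ to $X_\alpha$ at distance $t$ from a chosen base point $a$, one must extend the metric $\dist$ on $X_\alpha\cup\{z\}$ so that (i) the triangle inequality holds, (ii) all new distances land in $M$, and (iii) the Banakh property is preserved. The key constraint comes from Theorem~\ref{t:Spm}(1): the distance from $z$ to another hypersphere point $\ell^\pm_{ab}(s)$ is squeezed between $|s-t|$ and $\inf\{u+v: u,v\in M_r,\ |t-s|=u-v\}$, and one must choose a value in $[|s-t|,\,\inf\{\dots\}]$ that still lies in $M$ — this is exactly where floppiness of $M_r$ (hypothesis (2)) is used, since floppiness guarantees that when the infimum collapses to $|s-t|$ that value is forced into $M_r$, and otherwise there is room. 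For distances between \emph{different} hyperspheres one uses Theorem~\ref{t:or}(4): such distances are forced to be sums and hence land in $M$ by the monoid property. The orthogonality of newly-adjoined directions — the analogue of choosing $t_\beta$ generic in Theorem~\ref{t:H2} — is what keeps the Banakh property: one must ensure no two distinct "formal combinations" acquire equal norm. Here is where hypothesis (3) enters: at stage $\alpha$ the data already used has cardinality $<|M|$, so the "bad" set of distances to avoid (governed by the $\ell_2$-type equations of Lemma~\ref{l:real}, or their metric reformulation) has size $<|M|$, while $\ddot M\setminus(\IQ B+\IQ B)$ is dense in $\IR_+$, so admissible choices of the new norm remain available cofinally. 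The role of $\ddot M$ specifically is to guarantee that a newly adjoined indecomposable distance $r$ does \emph{not} accidentally satisfy $x+y=r$ with $x,y\in\IQ r$ already present, which would create unwanted collinearity and spoil the desired distance set.

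The second delicate point is \emph{verifying the Banakh property is inherited at each successor step}. Concretely, one needs: for every new point $c$ and every $\rho\in\dist[X_\beta^2]$, the sphere $\Sf(c;\rho)$ has exactly two points at distance $2\rho$. One gets the "$\ge 2$" direction by construction (one deliberately adjoins antipodal pairs, exactly as the $\{u,v\}=\Sf(y;r)$ points appear throughout Section~\ref{s:SC}); the "$\le 2$" direction is the work, and it is precisely here that the genericity of the attaching data and the dichotomy "$cx\upuparrows cy$ or $cx\updownarrows cy$" from Theorem~\ref{t:or}(2) do the job: two points on $\Sf(c;\rho)$ that are \emph{not} antipodal would, by Theorem~\ref{t:or}(6) (in the $\upuparrows$ case) coincide, or by Theorem~\ref{t:or}(4) (in the $\updownarrows$ case) be antipodal — provided the relevant distances are rationally commensurable, which they are after restricting to a single $\IQ\rho$-hypersphere, and across hyperspheres distances are large sums that cannot equal $\rho$ for small $\rho$. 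I expect the main obstacle to be organizing the successor step so that \emph{all} these requirements (triangle inequality, $\dist\subseteq M$, antipodes present, no spurious equal norms, no spurious collinearity) can be met simultaneously by a single choice of attaching parameter; this amounts to showing that the set of "forbidden" parameter values is a union of $<|M|$ nowhere-dense-type pieces, so that density of $\ddot M\setminus(\IQ B+\IQ B)$ lets us dodge all of them. The cardinality bound $|M|\le\w_1$ is what makes the transfinite recursion of length $\lambda\le\w_1$ go through while keeping each $|X_\alpha|<\mathfrak c$ (or at least keeping the forbidden sets small enough to dodge).
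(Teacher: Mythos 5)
Your high-level picture is right in several respects: the proof is a transfinite recursion of length $|M|\le\w_1$, the floppiness of each $M_r$ is what forces distances into $M$ when the relevant infimum collapses, hypothesis (3) is used to dodge a small family of ``bad'' coincidences when choosing fresh distances, and the orientation dichotomy of Theorem~\ref{t:or} is what bounds spheres by two points. But the announced inductive invariant --- a chain of \emph{Banakh} spaces $X_\alpha$ with isometric inclusions --- is where the plan breaks, and fixing it is the main idea of the actual proof. If $X_\alpha$ is Banakh and you adjoin any new point $z$, then for every old point $c$ the distance $\dist(c,z)$ becomes an element of $\dist[X_{\alpha+1}^2]$ and so needs an antipode on $\Sf(c;\dist(c,z))$; those antipodes are new points generating new distances to all old points, each needing its own antipode, and so on. This closure does not terminate within a single successor step, and there is no evident way to complete it while keeping all distances in $M$ and all spheres of size exactly two. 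The paper's resolution is to \emph{not} keep the intermediate spaces Banakh: it maintains two objects at each stage, a countable full metric $f_\beta$ whose spheres have \emph{at most} two points at mutual distance $2r$, and a floppy graph metric $g_\beta\supseteq\bigcup_{\alpha<\beta}f_\alpha$ which supplies the \emph{at least} two points only for centers and radii that appeared at earlier stages. The antipode obligations created by $f_\alpha$ are thus deferred by one step and discharged at stage $\alpha+1$ by gluing in copies of the model graph metric $\mu_r\defeq\{\langle xy,|x-y|\rangle: x,y\in M_r-M_r,\ 0<|x-y|\in M_r\}$ (Lemmas~\ref{l:floppy} and \ref{l:floppy-union}); the Banakh property holds only for the union $\bigcup_\alpha f_\alpha$.

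The second thing your sketch leaves unspecified is the mechanism for ``extending the metric by hand,'' and this is not a routine verification: the paper relies on the extension lemma for floppy graph metrics (Lemma~\ref{l:extend}), which fills in all undetermined distances by an \emph{injective} choice from prescribed dense sets, and on a carefully prepared family $(F_\alpha)_{\alpha\in\kappa}$ of pairwise disjoint dense subsets of $\ddot M$ with strong rational-independence properties (Lemma~\ref{l:Falpha}, built from hypothesis (3)). The injectivity and independence of these generic distances is what rules out spurious three-point spheres and spurious collinearity --- your appeal to ``$\ell_2$-type equations of Lemma~\ref{l:real}'' is the wrong tool here, since there is no ambient inner product; and the role of $\ddot M$ is not to protect the newly adjoined distance itself but to make the glued metric $f_\alpha\cup\bigcup p_{x,r}$ floppy (Lemma~\ref{l:Lambda}), so that the extension lemma can be applied again at the next stage. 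Without the deferred-obligation invariant and the floppy-graph-metric machinery, the successor step as you describe it cannot be carried out.
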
 

Theorem~\ref{t:exotic} has long and difficult proof, which is postponed till the end of this section. Now let us derive one implication of this theorem, and pose two  open problems.

\begin{corollary} For every floppy monoid $S\subseteq \IQ_+$, there exists a countable Banakh space $X$ such that $\dist[X^2]\cap \IQ=S$.
\end{corollary}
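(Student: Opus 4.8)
The plan is to deduce the corollary from Theorem~\ref{t:exotic} by building an appropriate countable enlargement of $S$. Concretely, given a floppy monoid $S\subseteq\IQ_+$, I will produce a countable set $M\subseteq\bar\IR_+$ with $M\cap\IQ=S$ satisfying the three hypotheses of Theorem~\ref{t:exotic}; the resulting Banakh space $X$ with $\dist[X^2]=M$ will then automatically be countable, since $M$ contains a positive number (so $X$ has two distinct points $a,b$) and by Theorem~\ref{t:GPS} the map $x\mapsto(\dist(a,x),\dist(b,x))$ embeds $X$ into $M\times M$.

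First I would fix a countable set $D\subseteq\IR_+\setminus\IQ$ that is dense in $\IR_+$ and such that $D\cup\{1\}$ is linearly independent over $\IQ$; such a $D$ is built by a recursion of length $\w$, enumerating a countable base $(I_n)_{n\in\w}$ of $\IR_+$ and choosing $d_n\in I_n$ outside the countable $\IQ$-linear span of $\{1,d_0,\dots,d_{n-1}\}$. Then I set
$$M\defeq S\cup\bigcup_{d\in D}\w d,\qquad \w d\defeq\{nd:n\in\w\}.$$
Since each $d\in D$ is irrational and any two elements of $D$ are $\IQ$-linearly independent, one checks that $M\cap\IQ=S$, that $M\cap\IQ r=S$ for each $r\in S\setminus\{0\}$, that $M\cap\IQ r=\{0\}$ for $r=0$, and that $M\cap\IQ r=\w d$ whenever $r\in\w d\setminus\{0\}$ for some $d\in D$.

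Next I would check the hypotheses of Theorem~\ref{t:exotic}. Condition (1) holds because $M$ is countable. For condition (2), the slices $M_r=M\cap\IQ r$ computed above are either $\{0\}$, or $S$ (a floppy monoid by hypothesis), or some $\w d$, which is a floppy monoid by Proposition~\ref{p:flop0} since $\inf(\w d\setminus\{0\})=d>0$. For condition (3) the key point is that $D\subseteq\ddot M$: for $d\in D$ one has $(M\cap\IQ d)\setminus\{0,d\}=\{nd:n\ge2\}$, and a sum of two such numbers is at least $4d>d$, so $d$ admits no forbidden decomposition. Now let $B\subseteq M$ be finite (which is what $|B|<|M|$ means here), and let $d_1,\dots,d_k\in D$ be the finitely many elements of $D$ occurring in $B$; then $\IQ B+\IQ B$ is contained in the $\IQ$-linear span $V\subseteq\IQ\cdot 1+\IQ d_1+\dots+\IQ d_k$ of $B$, and by independence of $D\cup\{1\}$ no point of $D\setminus\{d_1,\dots,d_k\}$ lies in $V$. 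Hence $\ddot M\setminus(\IQ B+\IQ B)\supseteq D\setminus\{d_1,\dots,d_k\}$, which is cofinite in $D$ and so dense in $\IR_+$.

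Applying Theorem~\ref{t:exotic} to $M$ then gives a Banakh space $X$ with $\dist[X^2]=M$, hence $\dist[X^2]\cap\IQ=M\cap\IQ=S$, and $X$ is countable by the argument in the first paragraph. The main obstacle is verifying condition~(3): one must control $\ddot M$ precisely enough to see that it remains dense after removing any set $\IQ B+\IQ B$, and this is exactly what the strong linear independence of $D$ over $\IQ\cdot 1$ provides; the remaining steps (existence of $D$, the computation of the slices $M\cap\IQ r$, and floppiness) are routine.
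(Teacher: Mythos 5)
Your proof is correct and follows essentially the same route as the paper: both enlarge $S$ by the nonnegative integer multiples of a countable dense $\IQ$-linearly independent set of positive reals and then invoke Theorem~\ref{t:exotic}. Your variant is in fact slightly more careful than the paper's one-line argument: choosing $D\subseteq\IR_+\setminus\IQ$ with $D\cup\{1\}$ linearly independent over $\IQ$ is exactly what guarantees $M\cap\IQ=S$ (adjoining multiples of $1$ itself would pollute the rational part of $M$), and your explicit verification of condition (3) and of the countability of $X$ via Theorem~\ref{t:GPS} supplies details the paper leaves implicit.
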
 

\begin{proof} Choose any countable dense subset $B\subseteq\IR_+$ such that $1\in B$ and for any pairwise distinct elements $b_1,\dots,b_n\in B$ and nonzero rational numbers, we have $\sum_{i=1}^n q_i b_i\ne 0$. Applying Theorem~\ref{t:exotic} to the countable set $M\defeq S\cup \IZ B$, we obtain a countable Banakh space $X$ such that  $\dist[X^2]=M$ and hence $\dist[X^2]\cap \IQ=M\cap \IQ=S$.
\end{proof}

\begin{remark} Theorem~\ref{t:3} shows that the condition (2) of Theorem~\ref{t:exotic} is necessary and the condition (3) of Theorem~\ref{t:exotic} is essential.  On the other hand, the condition $|M|\le\w_1$ can be removed from Theorem~\ref{t:exotic} under the Continuum Hypothesis. This motivates the following problem.
\end{remark}

\begin{problem} Let $M\subseteq\bar\IR_+$ be a set satisfying the conditions \textup{(2)} and \textup{(3)} of Theorem~{\em \ref{t:exotic}}. Is $M$ a Banakh distance set?
\end{problem}

Another open problem is suggested by the condition (3) of Theorem~\ref{t:exotic} and Corollary~\ref{c:Q}.

\begin{problem} Is a Banakh space $X$ isometric to a subgroup of the real line if $\dist[X^2]\subseteq \IQ F$ for some finite set $F\subseteq\IR_+$?
\end{problem} 



The proof of Theorem~\ref{t:exotic} involves the technique of (floppy) graph metrics, developed in \cite{BM}. First we recall the necessary definitions (including the very basic definitions of Set Theory, see \cite{Ban} or \cite{Jech}). 

Following the classical definition of Kuratowski, the {\em ordered pair} $\langle x,y\rangle$ of two sets $x,y$ is the set $\{\{x\},\{x,y\}\}$. A {\em relation} is a set whose elements are ordered pairs of sets. For any relation $R$ and an ordered pair $\langle x,y\rangle\in R$ we have $\{x,y\}\in \langle x,y\rangle$ and hence $\{x,y\}\in\bigcup R$ and $x,y\in\bigcup\bigcup R$.  For a relation $R$, the sets
$$\textstyle\dom[R]\defeq\{x:\exists y\;(\langle x,y\rangle\in R)\}\mbox{ \ and \ }\textstyle\rng[R]\defeq \{y:\exists x\;(\langle x,y\rangle \in R)\}
$$are called the {\em domain} and the {\em range} of $R$, respectively. A {\em function} is a relation $F$ such that for every $x\in\dom[F]$ there exists a unique element $y\in\rng[R]$ such that $\langle x,y\rangle\in F$. This unique element  $y$ is called the value of the function $F$ at $x$ and is denoted by $F(x)$. For a function $F$ and a set $X$, let $F[X]\defeq\{F(x):x\in X\cap\dom[F]\}$ and $F{\restriction}_X\defeq \{\langle x,y\rangle\in F:x\in X\}$ be the {\em restriction} of the function $F$ to the set $X$. Given a function $F$ and sets $X,Y$, we write $F:X\to Y$ and say that $F$ is a function from $X$ to $Y$ if $\dom[F]=X$ and $\rng[F]\subseteq F$.


A {\em doubleton} is a set $D$ of cardinality $|D|=2$. Given two distinct sets $x,y$ we shall denote the doubleton $\{x,y\}$ by $xy$. 

A {\em graph} is a set $E$ whose elements are doubletons, called {\em edges} of the graph $E$. Elements of the set $\bigcup E$ are called {\em vertices} of a graph $E$. A sequence $x_0,\dots,x_n\in\bigcup E$ is called an {\em $E$-chain} if $\{x_{i-1}x_i:0<i\le n\}\subseteq E$. A graph $E$ is {\em connected} if for every vertices $x,y\in\bigcup E$ there exists an $E$-chain $x_0,x_1,\dots,x_n$ such that $x_0=x$ and $x_n=y$.
\smallskip

 A {\em graph pseudometric} is any function $d:E_d\to\bar\IR_+\defeq\{x\in\IR:x\ge 0\}$ defined on a connected graph $E_d=\dom[d]$ and satisfying the {\em polygonal inequality} $$d(x_0x_n)\le\sum_{i=1}^nd(x_{i-1}x_i)$$ for every $E_d$-chain $x_0,\dots,x_n$ with $x_0x_n\in E_d$. For a graph pseudometric $d$, we denote by $V_d\defeq\bigcup E_d$ the set of vertices of the graph $E_d$. A graph pseudometric $d:E_d\to\bar\IR_+$ is called a {\em graph metric} if $d(e)>0$ for every edge $e\in E_d$.
 
A graph (pseudo)metric $d$ is a {\em full} ({\em pseudo}){\em metric}  if $E_d=[V_d]^2\defeq\{A\subseteq X:|A|=2\}$, i.e., the domain of $d$ is the complete graph on the set $V_d=\bigcup E_d$.

Two graph pseudometrics $p,d$ are {\em isomorphic} if there exists a bijective function $f:V_p\to V_d$ such that $d=\{\langle f[e],r\rangle:\langle e,r\rangle\in p\}$. It is easy to see that isomorphic graph pseudometrics $p,d$ have equal ranges, i.e., $\rng[p]=\rng[d]$. 

Given a graph pseudometric $p$, a point $x\in V_p$, a real number $r$, and a set $D\subseteq \IR$, let $$\Sf_p(x;r)\defeq\{y\in V_p:\langle xy,r\rangle\in p\}\quad \mbox{and}\quad\Sf_p(x;D)\defeq\bigcup_{r\in D}\Sf_p(x;r).$$

Every function $d:E_d\to\bar\IR_+$ on a connected graph $E_d$ determines the {\em shortest-path pseudometric}
$\hat d:V_d\times V_d\to\bar\IR_+$,
$$\hat d(x,y)\defeq\inf\Big\{\sum_{i=1}^nd(x_{i-1}x):\mbox{$x_0,\dots,x_n$ is an $E$-chain with $x_0=x$ and $x_n=y$}\Big\}$$on the set of vertices $V_d\defeq\bigcup E_d$ of the graph $E_d$. The function $d$ is a graph pseudometric if and only if $d(xy)=\hat d(x,y)$ for every $x,y\in V_d$ with $xy\in E_d$.

Given a function $d:E_d\to\bar \IR_+$ on a connected graph $E_d$, consider the function
$$\check d:V_d\times V_d\to\bar\IR_+,\quad \check d(x,y)\defeq\inf\{\max\{0,d(ab)-\hat d(a,u)-\hat d(b,v)\}:ab\in E_d\}.$$For every graph pseudometric $d$, the polygonal inequality for $d$ implies that $\check d\le \hat d$. By \cite{P}, a graph pseudometric $d$ extends to a unique full pseudometric $f$ with $V_f=V_d$ if and only if $\hat d=\check d$. 

A graph pseudmetric $d:E_d\to\bar\IR_+$ is called {\em floppy} if $\check d(x,y)<\hat d(x,y)$ for every $x,y\in V_d$ with $xy\notin E_d$.

The following lemma is proved in \cite{BM} and is a crucial tool in the proof of Theorem~\ref{t:exotic}.

\begin{lemma}\label{l:extend} Let $d:E_d\to\IR_+$ is a floppy graph metric and $(F_{e})_{e\in[V_d]^2\setminus E_d}$ be an indexed family of dense sets in $\IR_+$. If the set $[V_d]^2\setminus E_d$ is at most countable, then there exists an injective function $r\in \prod_{e\in[V_d]^2\setminus E_d}F_e$ such that $d\cup r$ is a  full metric. 
\end{lemma}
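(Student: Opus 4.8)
The plan is to build the function $r$ by a recursion of length at most $\w$, adding the values one non-edge at a time while keeping every intermediate object a floppy graph metric, so that the admissible interval of values stays open (hence nonempty) and meets the dense set $F_e$ at each step. Since $N\defeq[V_d]^2\setminus E_d$ is at most countable, I enumerate it as $(e_k)_{k<n}$ for some $n\le\w$, writing $e_k=x_ky_k$. I shall construct an increasing chain of floppy graph metrics $d=d_0\subseteq d_1\subseteq\cdots$ with $E_{d_k}=E_d\cup\{e_0,\dots,e_{k-1}\}$, together with values $t_k\defeq d_{k+1}(e_k)\in F_{e_k}$, and then put $r\defeq\bigcup_{k<n}\{\langle e_k,t_k\rangle\}$ and $f\defeq d\cup r$.

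The engine of the recursion is the following single-edge extension principle: \emph{if $p$ is a floppy graph metric and $xy\in[V_p]^2\setminus E_p$, then for every $t$ in the open interval $(\check p(x,y),\hat p(x,y))$ the function $p'\defeq p\cup\{\langle xy,t\rangle\}$ is again a floppy graph metric.} That this interval is nonempty is precisely the floppiness of $p$ at $xy$. For the graph-metric part one checks that $t<\hat p(x,y)$ forces $\hat{p'}(x,y)=t$, so the new edge realises its own length, while $t>\check p(x,y)$ guarantees, for every old edge $ab\in E_p$, that any path from $a$ to $b$ through the new edge has length $\hat p(a,x)+t+\hat p(y,b)\ge p(ab)$; hence $\hat{p'}(a,b)=\hat p(a,b)=p(ab)$ and $p'$ is a graph metric. (Here $\check p(x,y)$ is read as the least value forced on $p'(xy)$ by the polygonal inequality, namely the supremum over $ab\in E_p$ of $\max\{0,p(ab)-\hat p(a,x)-\hat p(b,y)\}$, which is the quantity for which \cite{P} gives $\check p\le\hat p$, with equality iff $p$ extends uniquely to a full pseudometric.)

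Granted the extension principle, the recursion is routine. At stage $k$ the object $d_k$ is a floppy graph metric and $e_k$ is one of its non-edges, so the interval $(\check{d_k}(x_k,y_k),\hat{d_k}(x_k,y_k))$ is a nonempty open subinterval of $(0,\infty)$. Since $F_{e_k}$ is dense in $\IR_+$, its intersection with this interval is dense in the interval and therefore infinite, so I may pick $t_k\in F_{e_k}$ inside the interval and distinct from the finitely many earlier values $t_0,\dots,t_{k-1}$; this choice secures the injectivity of $r$. By the extension principle $d_{k+1}\defeq d_k\cup\{\langle e_k,t_k\rangle\}$ is again a floppy graph metric, so the recursion continues through all $k<n$.

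Finally, $f=\bigcup_{k<n}d_k$ is a function defined on the complete graph $[V_d]^2$, with $r(e)\in F_e$ for every $e\in N$ and $r$ injective. To see that $f$ is a full metric it suffices to verify the triangle inequality on each triple $\{u,v,w\}$: at the stage $k$ at which the last of the three edges $uv,vw,uw$ receives its value, all three lie in $E_{d_k}$, and since $d_k$ is a graph metric the polygonal inequality applied to the chain $u,v,w$ yields $f(uw)=d_k(uw)\le d_k(uv)+d_k(vw)=f(uv)+f(vw)$; the values never change afterwards, so the inequality holds in $f$. Hence $f=d\cup r$ is a full metric, as required. The main obstacle is the extension principle, and specifically its floppiness clause: while a reverse-triangle argument shows that the contribution of the new edge $xy$ to $\check{p'}(u,v)$ is \emph{strictly} below $\hat{p'}(u,v)$ for every remaining non-edge $uv$ (the direct edge is the unique $x$--$y$ geodesic, so no geodesic detours through $u,v$), the contributions of the infinitely many old edges are only seen to be $\le\hat{p'}(u,v)$ termwise; producing a \emph{uniform} strict gap that survives the shortening of distances caused by the new edge is the quantitative core carried out in \cite{BM}.
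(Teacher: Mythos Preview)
The paper does not prove this lemma at all; it merely records that the result is established in the companion preprint \cite{BM}. So there is no in-paper proof to compare against, and your write-up already goes further than the paper does.

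Your outline is the natural one and is presumably the skeleton of the argument in \cite{BM}: enumerate the countably many missing edges, fill them in one at a time with a value in the open interval $(\check d_k(x_k,y_k),\hat d_k(x_k,y_k))$ supplied by floppiness, choose the value in $F_{e_k}$ and distinct from earlier values, and observe that the union is a full metric because every triangle is already present at some finite stage where the polygonal inequality holds. Your verification that a single-edge extension within the open interval remains a \emph{graph metric} is correct, as is the endgame argument.

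You are also right to isolate the floppiness clause of the extension principle as the crux, and honest to defer it to \cite{BM}. One caution: your formulation ``for \emph{every} $t$ in the open interval $p'$ is again floppy'' may be stronger than what \cite{BM} actually proves or than what you need. Even granting that each individual edge's contribution to $\check{p'}(u,v)$ is strictly below $\hat{p'}(u,v)$, the supremum over infinitely many old edges could in principle close the gap; the ``game'' in the title of \cite{BM} hints that a more delicate choice of $t$ (e.g.\ avoiding a countable bad set at each step) may be involved. That weaker statement would still suffice for your recursion, since each $F_{e_k}$ is dense and the bad set is countable.
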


The following lemma is proved in \cite[Example 1.2]{BM}.

\begin{lemma}\label{l:floppy-union} Let $p$ be a graph pseudometric and $\F$ be a family of graph pseudometrics satisfying the following conditions:
\begin{enumerate}
\item for every $f\in\F$, the set $V_f\cap V_p$ is not empty;
\item for every $f\in \F$ and $x,y\in V_f\cap V_p$ we have $\hat f(x,y)=\hat p(x,y)$;
\item for every distinct $f,g\in\F$, we have $V_f\cap V_g\subseteq V_p$.
\end{enumerate}
Then the union $d\defeq p\cup\bigcup\F$ is a graph pseudometric. The graph pseudometric $d$ is floppy if  $p$ is a full pseudometric and every $f\in\F$ is a floppy graph pseudometric such that for every $x\in V_f\setminus V_p$ and $y\in V_p\setminus V_f$, the real numbers
$$
\begin{aligned}
&\inf\{\hat f(a,x)+\hat f(x,b)-\hat f(a,b):a,b\in V_f\cap V_p\}\mbox{ \ and \ }\\
&\inf\{p(ay)+p(yb)-p(ab):a,b\in V_p\cap V_f\}
\end{aligned}
$$are positive.
\end{lemma}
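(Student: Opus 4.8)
The plan is to prove the two assertions separately: first that $d\defeq p\cup\bigcup\F$ is a graph pseudometric, and then that it is floppy under the extra hypotheses. For the first stage I would begin by checking that $d$ is a well-defined function on the graph $E_d\defeq E_p\cup\bigcup_{f\in\F}E_f$. If an edge $uv$ lies in the domains of two constituents, its endpoints lie in the corresponding intersection of vertex sets, hence in $V_p$ by condition (3) (or because $uv\in E_p$); then condition (2), together with the equalities $p(uv)=\hat p(u,v)$ and $f(uv)=\hat f(u,v)$ valid on edges, forces the prescribed values to agree. Connectedness of $E_d$ follows from condition (1). The combinatorial heart of this stage is the \emph{gateway principle}: every $E_d$-edge incident to a vertex of $V_f\setminus V_p$ lies in $E_f$, so any $E_d$-chain leaving $V_f$ must pass through $V_f\cap V_p$. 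Splitting an arbitrary $E_d$-chain at its successive visits to $V_p$ and replacing each spoke excursion by the corresponding $\hat p$-distance via condition (2), I would prove the restriction identities $\hat d(u,v)=\hat p(u,v)$ for $u,v\in V_p$ and $\hat d(u,v)=\hat f(u,v)$ for $u,v\in V_f$. Since $d$ equals $\hat p$ or $\hat f$ on each edge, these identities yield $d(e)=\hat d(e)$ for all $e\in E_d$, i.e. $d$ is a graph pseudometric.

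For floppiness I would first record two structural consequences of the construction. Every edge of $E_d$ has both endpoints in a single piece, since each edge lies in $E_p$ or in one $E_f$; and the gateway principle gives explicit cross-distance formulas such as $\hat d(u,v)=\min_{c\in V_f\cap V_p}\big(\hat f(u,c)+\hat p(c,v)\big)$ for $u\in V_f\setminus V_p$, $v\in V_p$, with analogous two-gateway expressions between distinct spokes. Writing $\Delta(a,b)\defeq\hat d(a,x)+\hat d(x,y)+\hat d(y,b)-\hat d(a,b)\ge 0$, the extension theory recalled before the lemma gives $\check d(x,y)=\big[\hat d(x,y)-\inf_{ab\in E_d}\Delta(a,b)\big]_+$, so for a non-edge $xy$ with $\hat d(x,y)>0$ one has $\check d(x,y)<\hat d(x,y)$ exactly when $\inf_{ab\in E_d}\Delta(a,b)>0$. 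The whole task therefore reduces to bounding $\Delta(a,b)$ below by a positive constant, uniformly over the (piece-internal) edges $ab$. I abbreviate the two hypothesized positive quantities by $\e_f(x)\defeq\inf\{\hat f(a,x)+\hat f(x,b)-\hat f(a,b):a,b\in V_f\cap V_p\}$ and $\eta_f(y)\defeq\inf\{p(ay)+p(yb)-p(ab):a,b\in V_p\cap V_f\}$.

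Since $p$ is full, two distinct points of $V_p$ already span an edge, so a non-edge $xy$ has at most one endpoint in $V_p$, and I would split into three cases. If $x,y$ lie in one spoke $f$, then for $ab\in E_f$ I get $\Delta(a,b)=\hat f(a,x)+\hat f(x,y)+\hat f(y,b)-\hat f(a,b)\ge\hat f(x,y)-\check f(x,y)>0$ by floppiness of $f$; for a hub edge or an edge in another spoke I route $\hat d(a,x)$ and $\hat d(y,b)$ through gateways $c_a,c_b\in V_f\cap V_p$, bound $\hat d(a,b)$ above by the path through $c_a,c_b$, cancel the hub and spoke segments by the triangle inequality, and apply $\hat f(c_a,x)+\hat f(x,c_b)-\hat f(c_a,c_b)\ge\e_f(x)$ to obtain $\Delta(a,b)\ge\e_f(x)$. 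If $x\in V_f\setminus V_p$ and $y\in V_p\setminus V_f$, the hub and other-spoke edges are handled by the same $x$-defect estimate, while edges $ab\in E_f$ are handled symmetrically through the $y$-defect $\hat p(c_a,y)+\hat p(y,c_b)-\hat p(c_a,c_b)\ge\eta_f(y)$. If $x,y$ lie in distinct spokes $f,g$, then for every edge type either the $x$-defect (via the $f$-gateways) or the $y$-defect (via the $g$-gateways) applies. In each subcase the hub and spoke chains collapse by repeated triangle inequalities, leaving $\Delta(a,b)\ge\min\{\e_f(x),\eta_f(y),\hat f(x,y)-\check f(x,y)\}>0$.

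The main obstacle, and the reason the two positivity hypotheses are exactly the right ones, is that hub edges and other-spoke edges threaten to force a strictly larger lower bound on $d(x,y)$ than $\check f(x,y)$ does. Indeed, reducing such an edge to a gateway pair produces a $\check{}$-type expression $\hat f(c_a,c_b)-\hat f(c_a,x)-\hat f(c_b,y)$ evaluated at the generally non-adjacent gateways $c_a,c_b$, and the supremum of this expression over arbitrary vertex pairs strictly exceeds its supremum over edges (moving the two arguments apart along a geodesic only increases it); thus floppiness of $f$ by itself does \emph{not} survive the amalgamation. What rescues the estimate is precisely that condition on $\e_f(x)$ puts $x$, and the condition on $\eta_f(y)$ puts $y$, off every gateway-to-gateway geodesic with a uniform gap, so that the gateway reduction $\Delta(a,b)\ge\big[\hat f(c_a,x)+\hat f(x,c_b)-\hat f(c_a,c_b)\big]+(\text{nonnegative})\ge\e_f(x)$ stays bounded away from $0$. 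I would close by dispatching the degenerate configurations — an empty $V_p\setminus V_f$, a non-edge at $\hat d$-distance $0$, and coincident optimal gateways $c_a=c_b$ (where the estimate reduces to $2\hat f(x,c_a)\ge\e_f(x)$) — which completes the proof that $d$ is floppy.
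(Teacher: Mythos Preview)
The paper does not actually prove this lemma: it is quoted verbatim from \cite[Example 1.2]{BM} with no argument given, so there is no in-paper proof to compare against. Your proposal must therefore stand on its own, and it does: the argument you outline is correct and complete in all essentials.

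Your gateway principle (every $E_d$-edge touching $V_f\setminus V_p$ lies in $E_f$) and the restriction identities $\hat d{\restriction}_{V_p^2}=\hat p$ and $\hat d{\restriction}_{V_f^2}=\hat f$ are exactly the structural facts one needs; the order in which you establish them (first $\hat p$, then $\hat f$ using the already-known $\hat d=\hat p$ on $V_f\cap V_p$) avoids the circularity that a careless treatment would run into. Your reformulation $\check d(x,y)=[\hat d(x,y)-\inf_{ab}\Delta(a,b)]_+$ is correct (the paper's $\inf$ in the displayed definition of $\check d$ is a typo for $\sup$, as the subsequent inequality $\check d\le\hat d$ makes clear), and the three-case split on the location of $x,y$ is exhaustive since $p$ is full. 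In each case the gateway substitution followed by repeated triangle inequalities in $\hat p$ and $\hat f$ collapses $\Delta(a,b)$ down to one of the hypothesized positive defects, exactly as you describe.

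Two small points of presentation. First, your summary bound $\Delta(a,b)\ge\min\{\e_f(x),\eta_f(y),\hat f(x,y)-\check f(x,y)\}$ is case-dependent: in case~3, edges $ab\in E_f$ are controlled by $\e_g(y)$, not by $\eta_f(y)$ (which is defined only for $y\in V_p\setminus V_f$). Second, your dismissal of the degenerate case $\hat d(x,y)=0$ is right but deserves one sentence: taking $a=b=c$ at a nearly optimal gateway gives $\e_f(x)\le 2\hat f(x,c)\to 0$, so the positivity hypothesis on $\e_f(x)$ already excludes this. Neither point is a gap.
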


We recall that a subset $M\subseteq\bar\IR_+$ is floppy if for every real number $r$ with $r=\inf\{a+b:a,b\in M\wedge r=b-a\}$ we have $r\in M$.

\begin{lemma}\label{l:floppy} For every monoid $M\subseteq\bar\IR_+$, the function  
$$\mu\defeq\{\langle xy,|x-y|\rangle:x,y\in M-M\;\wedge\;0<|x-y|\in M\}$$
is a graph metric with $\mu[E_\mu]=M\setminus\{0\}$. Moreover, for every $x,y\in M-M$,$$\check \mu(x,y)=|x-y|\le \hat \mu(x,y)=\inf\{u+v:u,v\in M\;\wedge\;|x-y|=u-v\}.$$
Consequently, the graph metric $\mu$ is floppy if and only if the monoid $M$ is floppy.
\end{lemma}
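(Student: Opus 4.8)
The plan is to verify, in this order, (a) that $\mu$ is a connected graph metric on the vertex set $M-M$ with $\mu[E_\mu]=M\setminus\{0\}$, (b) the two displayed formulas for $\hat\mu$ and $\check\mu$, and (c) the floppiness equivalence, which will fall out of (b) once the two meanings of ``floppy'' are compared. (If $M=\{0\}$ then $\mu=\emptyset$ and all assertions are vacuous, so assume $M\ne\{0\}$.) The key structural remark is translation invariance: writing $E_\mu\defeq\dom[\mu]$, membership $\{a,b\}\in E_\mu$ and the value $\mu(\{a,b\})=|a-b|$ depend only on $a-b\in M-M$, and $M\subseteq M-M$, which is a subgroup of $\IR$. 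It is convenient to abbreviate $\nu(t)\defeq\inf\{u+v:u,v\in M\ \wedge\ |t|=u-v\}$ for $t\in M-M$, with $\nu(t)\defeq+\infty$ otherwise; I will use throughout that $\nu(t)\ge|t|$ (since $u+v\ge u-v$ when $v\ge0$) and that $\nu(t)=|t|$ whenever $|t|\in M$ (take $v=0$). Since $\hat\mu,\check\mu$ and the two target functions are all symmetric and vanish on the diagonal (there they equal $\nu(0)=0$), it suffices throughout to treat distinct $x,y$, say $x>y$.

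For (a) and the formula for $\hat\mu$: given distinct $x>y$ in $M-M$, choose $u_0,v_0\in M$ with $x-y=u_0-v_0$ (possible as $x-y\in M-M$); then $u_0>0$ and $x,\,x+v_0,\,y$ is an $E_\mu$--chain of length $v_0+u_0$ (its links have lengths $v_0,u_0\in M$, with the degenerate case $v_0=0$ meaning $\{x,y\}$ is already an edge). This shows $E_\mu$ is connected and gives $\hat\mu(x,y)\le u_0+v_0$ for every such representation, hence $\hat\mu(x,y)\le\nu(|x-y|)$. Conversely, along any $E_\mu$--chain $x_0,\dots,x_n$ from $x$ to $y$ one has $x-y=\sum_i(x_{i-1}-x_i)=P-N$, where $P$ (resp.\ $N$) is the sum of those $\mu(\{x_{i-1},x_i\})$ with $x_{i-1}>x_i$ (resp.\ $x_{i-1}<x_i$); since $M$ is a monoid, $P,N\in M$, and $P+N$ is exactly the length of the chain, so that length is $\ge\nu(|x-y|)$. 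Hence $\hat\mu(x,y)=\nu(|x-y|)=\inf\{u+v:u,v\in M,\ |x-y|=u-v\}\ge|x-y|$. In particular $\hat\mu(a,b)=|a-b|=\mu(\{a,b\})$ for every edge (using $|a-b|\in M$), so $\mu$ obeys the polygonal inequality and, having positive values, is a graph metric; moreover $\mu[E_\mu]=M\setminus\{0\}$, since every value $|a-b|$ lies there while $m=\mu(\{0,m\})$ for each $m\in M\setminus\{0\}$.

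For (b), the identity $\check\mu(x,y)=|x-y|$ is the heart of the lemma. Recall that $\check\mu(x,y)$ is the supremum over edges $\{a,b\}\in E_\mu$ of $\max\{0,\mu(\{a,b\})-\hat\mu(a,x)-\hat\mu(b,y)\}$. For ``$\le$'': for an edge $\{a,b\}$ we have $\mu(\{a,b\})=|a-b|$, while $\hat\mu(a,x)=\nu(a-x)\ge|a-x|$ and $\hat\mu(b,y)=\nu(b-y)\ge|b-y|$, so the triangle inequality in $\IR$ yields $\mu(\{a,b\})-\hat\mu(a,x)-\hat\mu(b,y)\le|a-b|-|a-x|-|b-y|\le|x-y|$. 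For ``$\ge$'' (relevant only when $x>y$): the edge $\{x+v_0,y\}$ constructed above has $\mu$--value $u_0$ and contributes $u_0-\hat\mu(x+v_0,x)-0=u_0-\nu(v_0)=u_0-v_0=|x-y|$ to the supremum. Therefore $\check\mu(x,y)=|x-y|\le\nu(|x-y|)=\hat\mu(x,y)$, which is precisely the displayed chain of (in)equalities.

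Finally, (c): a non-edge of $\mu$ is a doubleton $\{x,y\}\subseteq M-M$ with $|x-y|\notin M$, and for such a pair (b) gives $\check\mu(x,y)=|x-y|$ and $\hat\mu(x,y)=\nu(|x-y|)$. As $\{x,y\}$ ranges over all non-edges, $t\defeq|x-y|$ ranges over $\big((M-M)\cap(0,\infty)\big)\setminus M$; since $0\in M$ and $\nu\equiv+\infty$ off $M-M$, the condition ``$\mu$ is floppy'' (i.e.\ ``$t<\nu(t)$ for every such $t$'') is equivalent to ``$t<\nu(t)$ for every $t\in(0,\infty)\setminus M$'', which — because $\{a+b:a,b\in M,\ t=b-a\}$ is exactly the set defining $\nu(t)$ — is precisely the statement that $M$ is floppy. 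The main obstacle in this argument is the equality $\check\mu=|\cdot|$, and inside it the inequality $\check\mu\le|\cdot|$: it works only because an edge value $|a-b|$ already coincides with its own $\nu$--value (as $|a-b|\in M$), which is what collapses the estimate to the triangle inequality in $\IR$; everything else is bookkeeping with the monoid and group structure and with the two chains $x,x+v_0,y$.
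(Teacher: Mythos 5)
Your proof is correct and follows essentially the same route as the paper's: the same two-step chain $x,\;x+v_0,\;y$ for the upper bound on $\hat\mu$, the same decomposition of an arbitrary chain into positive and negative steps (via the monoid property) for the lower bound, the same single edge $\{x+v_0,y\}$ together with the triangle inequality in $\IR$ for the identity $\check\mu(x,y)=|x-y|$, and the same translation between the two floppiness conditions. Your reading of $\check\mu$ as a supremum over edges is the one the paper itself uses in its argument (the $\inf$ in its displayed definition is evidently a typo), so there is no discrepancy there.
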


\begin{proof} Fix any $x,y\in M-M$. We lose no generality assuming that $y<x$ and hence $|x-y|=x-y$.

\begin{claim}\label{cl:floppy1} $|x-y|\le \hat \mu(x,y)=I\defeq \inf\{u+v:u,v\in M\;\wedge\;|x-y|=u-v\}$.
\end{claim}

\begin{proof} Fix any real numbers $u,v\in M$ with $x-y=|x-y|=u-v$. Then for the $E_\mu$-chain $x=y+u-v,y+u,y$, we obtain the inequality $$\hat\mu(x,y)\le \mu(x,y+u)+\mu(y+u,y)=v+u,$$witnessing that $\hat\mu(x,y)\le I$.

Assuming that $\hat\mu(x,y)<I$, we can  find an $E_\mu$-chain $x_0,x_1,\dots,x_n\in M-M$ such that $x_0=x$, $x_n=y$, $\{|x_{i-1}-x_i|:0<i\le n\}\subseteq M\setminus\{0\}$ and $\sum_{i=1}^n\mu(x_{i-1},x_i)=\sum_{i=1}^n|x_{i-1}-x_i|<I$. Let $\Omega_+=\{i\in \{1,\dots,n\}:x_{i-1}-x_i>0\}$ and $\Omega_-=\{i\in \{1,\dots,n\}:x_{i-1}-x_i<0\}$. Then $\{x_{i-1}-x_i:i\in \Omega_+\}=\{|x_{i-1}-x_i|:i\in \Omega_+\}\subseteq M$ and $\{x_i-x_{i-1}:i\in \Omega_-\}=\{|x_{i-1}-x_i|:i\in\Omega_-\}\subseteq M$. Since $M$ is a monoid, the real numbers $u\defeq\sum_{i\in\Omega_+}(x_{i-1}-x_i)$ and $v\defeq\sum_{i\in\Omega_-}(x_i-x_{i-1})$ belong to $M$. Observe that 
$$u-v=\sum_{i\in \Omega_+}(x_{i-1}-x_i)-\sum_{i\in\Omega_-}(x_i-x_{i-1})=\sum_{i=1}^n(x_{i-1}-x_i)=x_0-x_n=x-y=|x-y|$$and hence 
$$I\le u+v=\sum_{i\in \Omega_+}(x_{i-1}-x_i)+\sum_{i\in\Omega_-}(x_i-x_{i-1})=\sum_{i=1}^n|x_{i-1}-x_i|<I,$$which is a contradiction showing that $\hat \mu(x,y)=I$.

The definition of $I=\inf\{u+v:u,v\in M\;\wedge\;|x-y|=u-v\}$ implies that $|x-y|\le I=\hat \mu(x,y)$. 
\end{proof}

By Claim~\ref{cl:floppy1}, for any points $x,y\in M-M$ with $xy\in E_\mu$, we have $\mu(xy)=|x-y|\le I=\hat\mu(x,y)$, which means that the function $\mu$ is a graph metric. The definition of $\mu$ ensures that $\mu[E_\mu]=M\setminus\{0\}$. 

\begin{claim}\label{cl:floppy2} $\check \mu(x,y)=|x-y|$.
\end{claim}

\begin{proof}  It follows from $x,y\in M-M$ that $|x-y|=x-y\in (M-M)-(M-M)=(M+M)-(M+M)=M-M$ and hence $|x-y|=x-y=u-v$ for some $u,v\in M$. Then for the points $y+u\in M-M$ and $y\in M-M$, we have $\{y+u,y\}\in E_\mu$ and $$\check \mu(x,y)\ge \mu(y+u,y)-\hat \mu(y+u,x)-\hat\mu(y,y)=u-|y+u-x|-0=u-v=x-y=|x-y|.$$ Assuming that $\check\mu(x,y)>|x-y|$, we can find points $a,b\in M-M$ such that $ab\in E_\mu$ and $\mu(ab)-\hat \mu(a,x)-\hat \mu(b,y)>|x-y|$. Then $|a-x|+|b-y|\le \hat \mu(a,x)+\hat \mu(b,y)<\mu(ab)-|x-y|=|a-b|-|x-y|$, which implies a contradiction
$$|x-y|+|a-x|+|b-y|<|a-b|\le |a-x|+|x-y|+|y-b|$$
completing the proof of the equality $\check \mu(x,y)=|x-y|$.
\end{proof}

 It remains to prove that the graph metric $\mu$ is floppy if and only if the monoid $M$ is floppy.
 
 Assume that the graph metric $\mu$ is floppy. Then for every doubleton $xy\in [M-M]^2\setminus E_\mu$ we have $\check \mu(x,y)<\hat \mu(x,y)$. To see that the set $M$ is floppy, we should prove that every real number $r\in M-M$ with $r=\inf\{u+v:u,v\in M\wedge r=u-v\}$ belongs to $M$. Observe that $r\ge 0$. If $r=0$, then $r=0\in M$ since $M$ is a monoid. So, assume that $r>0$. Then for the doubleton $\{0,r\}\in[M-M]^2$, Claims~\ref{cl:floppy2} and \ref{cl:floppy1} and the choice of $r$ ensure that $$\check \mu(0,r)=|0-r|=r=\inf\{u+v:u,v\in M\;\wedge\;|0-r|=u-v\}=\hat\mu(0,r).$$ Taking into account that the graph metric $\mu$ is floppy, we conclude that $\{0,r\}\in E_\mu$ and hence $r=|r-0|\in M$, witnessing that the monoid $M$ is floppy.
\smallskip

Next, assume that the monoid $M$ is floppy. To see that the graph metric $\mu$ is floppy, take any doubleton $xy\in[M-M]^2\setminus E_\mu$. Then $|x-y|\in (M-M)\setminus M$. Since the monoid $M$ is floppy, $|x-y|<\inf\{u+v:u,v\in M\;\wedge\;|x-y|=u-v\}$. Claims~\ref{cl:floppy1} and \ref{cl:floppy2} imply the strict inequality $$\check \mu(x,y)=|x-y|<\inf\{u+v:u,v\in M\;\wedge\;|x-y|=u-v\}=\hat\mu(x,y),$$ witnessing that the graph metric $\mu$ is floppy.
\end{proof}

We recall that for a subset $M\subseteq\bar\IR_+$,
$$\ddot M\defeq\{r\in M\setminus\{0\}:\forall x,y\in (M\cap \IQ r)\setminus\{0\}\;(x+y\ne r)\}.$$

\begin{lemma}\label{l:Lambda} Let $M$ be a subset of $\bar\IR_+$ and $r\in \ddot M$ be a number such that $M_r\defeq M\cap\IQ r$ is a floppy monoid. Let $\mu_r\defeq\{\langle xy,z\rangle:(x,y\in M_r-M_r)\;\wedge\;(0<|x-y|=z\in M_r)\}.$ Then for every $a,b\in M_r-M_r$ with $|a-b|=r$ and every $c\in (M_r-M_r)\setminus\{a,b\}$ we have
$$\min\{2\hat \mu_r(c,a),2\hat\mu_r(c,b),\hat \mu_r(c,a)+\hat\mu_r(c,b)-\hat\mu_r(a,b)\}>0.$$
\end{lemma}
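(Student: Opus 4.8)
The plan is to split the minimum into its three arguments, the first two being immediate and the third being where the hypothesis $r\in\ddot M$ is used. Throughout I apply Lemma~\ref{l:floppy} to the floppy monoid $M_r$: it identifies $\mu_r$ with the graph metric of that lemma, so $\mu_r$ is a graph metric on the connected graph with vertex set $V_{\mu_r}=M_r-M_r$ (note that $a,b,c\in M_r-M_r$ by hypothesis), and for all $x,y\in M_r-M_r$ it gives
$$|x-y|=\check\mu_r(x,y)\le\hat\mu_r(x,y)=\inf\{u+v:u,v\in M_r\ \wedge\ |x-y|=u-v\}.$$
First, since $c\notin\{a,b\}$, the inequality $\hat\mu_r(c,a)\ge|c-a|>0$ shows $2\hat\mu_r(c,a)>0$, and likewise $2\hat\mu_r(c,b)>0$. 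Second, since $r\in\ddot M\subseteq M$ and $r\in\IQ r$, we have $r\in M_r$, hence $\hat\mu_r(a,b)=\inf\{u+v:u,v\in M_r,\ r=u-v\}=r$: the pair $u=r,v=0$ gives $u+v=r$, while $v\ge0$ always forces $u+v\ge u-v=r$.

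It remains to prove the third, genuinely substantive inequality $\hat\mu_r(c,a)+\hat\mu_r(c,b)>r$. The triangle inequality for $\hat\mu_r$ already gives ``$\ge$'', so I argue by contradiction and suppose $\hat\mu_r(c,a)+\hat\mu_r(c,b)=r$. Comparing this with $\hat\mu_r(c,a)\ge|c-a|$, $\hat\mu_r(c,b)\ge|c-b|$ and $|c-a|+|c-b|\ge|a-b|=r$ forces all of these to be equalities; in particular $|c-a|+|c-b|=|a-b|$, which together with $c\notin\{a,b\}$ means that $c$ lies strictly between $a$ and $b$ on the real line. As the assertion is symmetric in $a$ and $b$, I may assume $a<b$, so that $b=a+r$ and $a<c<b$; put $x\defeq c-a$ and $y\defeq b-c$, so that $x,y>0$ and $x+y=r$.

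The final step converts $x$ and $y$ into elements of $M_r$ via floppiness. Since $M_r$ is a monoid, $M_r-M_r$ is a group, so $x=c-a\in M_r-M_r$; and the forced equality $\hat\mu_r(c,a)=|c-a|=x$ reads, through Lemma~\ref{l:floppy}, as $x=\inf\{u+v:u,v\in M_r,\ x=u-v\}$ (the set over which the infimum is taken is nonempty because $x\in M_r-M_r$). This is exactly the defining condition of floppiness of $M_r$ applied to the number $x$, so $x\in M_r$; symmetrically $y\in M_r$. Thus $x,y\in M_r\setminus\{0\}=(M\cap\IQ r)\setminus\{0\}$ with $x+y=r$ (and neither $x$ nor $y$ equals $r$, since both are positive), which contradicts $r\in\ddot M$. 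This contradiction completes the proof.

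The only step requiring care — the crux of the argument — is this last one: recognizing that the forced equality $\hat\mu_r(c,a)=|c-a|$ is precisely the hypothesis of floppiness for the number $c-a$, and that the two numbers $c-a$ and $b-c$ obtained this way are positive elements of $M\cap\IQ r$ summing to $r$, which is exactly what membership of $r$ in $\ddot M$ forbids. Everything else is routine manipulation of the triangle inequality together with the formulas for $\hat\mu_r$ and $\check\mu_r$ supplied by Lemma~\ref{l:floppy}.
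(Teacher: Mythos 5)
Your proof is correct and follows essentially the same route as the paper: positivity of the first two terms because $\hat\mu_r\ge\check\mu_r=|\cdot-\cdot|$ is a metric, the triangle inequality for the third, and then a contradiction argument in which the forced equalities $\hat\mu_r(c,a)=|c-a|$ and $\hat\mu_r(c,b)=|c-b|$ yield $|c-a|,|c-b|\in M_r\setminus\{0\}$ summing to $r$, contradicting $r\in\ddot M$. The only cosmetic difference is that you extract $|c-a|\in M_r$ directly from the floppiness of the monoid $M_r$, whereas the paper routes through the floppiness of the graph metric $\mu_r$ (i.e.\ $\check\mu_r=\hat\mu_r$ forcing $ac\in E_{\mu_r}$); by Lemma~\ref{l:floppy} these are equivalent.
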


\begin{proof} By Lemma~\ref{l:floppy}, $\mu_r$ is a floppy graph metric and $\hat \mu_r$ is a metric on the set $M_r-M_r\subseteq\IR$, which implies that $\min\{\hat\mu_r (c,a),\hat\mu_r(c,b)\}>0$. The triangle inequality for the  metric $\hat \mu_r$ ensures that $\hat\mu_r(c,a)+\hat\mu_r(c,b)-\hat \mu_r(a,b)\ge 0$. Assuming that $\hat\mu_r(c,a)+\hat\mu_r(c,b)-\hat \mu_r(a,b)=0$ and applying Lemma~\ref{l:floppy}, we conclude that 
$$r=|a-b|=\mu_r(a,b)=\hat\mu_r(a,b)=\hat\mu_r(c,a)+\hat\mu_r(c,b)\ge |c-a|+|c-b|\ge|a-b|=r$$and hence $r=|c-a|+|c-b|$, $\hat \mu_r(c,a)=|c-a|=\check\mu_r(c,a)$ and $\hat \mu_r(c,b)=|c-b|=\check\mu_r(c,a)$, which implies $\{ac,cb\}\subseteq E_{\mu_r}$ and $\{|c-a|,|c-b|\}\subseteq M_r$ as $\mu_r$ is a floppy graph metric. But the equality $r=|c-a|+|c-b|$ with $\{|c-a|,|c-b|\}\subseteq M_r\setminus\{0\}$ contradicts the choice of $r\in \ddot M$.
\end{proof}

Now we are able to present the {\em\bf proof of Theorem~\ref{t:exotic}}. Given a subset  $M\subseteq\bar\IR_+$ with the properties
\begin{itemize}
\item[(a)] $|M|\le\w_1$;
\item[(b)] for every $r\in M$ the set $M_r\defeq M\cap \IQ r$ is a floppy monoid, and
\item[(c)] for every subset $B\subseteq M$ of cardinality $|B|<|M|$ the set $\ddot M\setminus(\IQ B+\IQ B)$ is dense in $\IR_+$, 
\end{itemize}
we will construct a Banakh space $X$ with $\dist[X^2]=M$. 

The conditions (a) and (c) imply that  $\kappa\defeq |M|\in\{\w,\w_1\}$. Write the set $M\setminus\{0\}$ as the union $M\setminus\{0\}=\bigcup_{\alpha\in\kappa}B_\alpha$ of an increasing sequence of subsets of cardinality $|B_\alpha|<\kappa$ such that $B_0=\{r_0\}$ for some positive real numer $r_0$ and $B_\beta=\bigcup_{\alpha<\beta}B_\alpha$ for every limit nonzero ordinal $\beta<\kappa$.

\begin{lemma}\label{l:Falpha} There exists a family $(F_\alpha)_{\alpha\in \kappa}$ of pairwise disjoint subsets of $\ddot M$ such that for every $\alpha\in\kappa$ the following conditions hold:
\begin{enumerate}
\item the set $F_\alpha$ is dense in $\IR_+$;
\item for every $x\in F_\alpha$ and $y,z\in (F_\alpha\setminus \{x\})\cup \IQ B_\alpha\cup\bigcup_{\gamma<\alpha}\IQ F_\gamma$ we have $x\notin \IQ y+\IQ z$;
\item for every real number $r$, the set $F_\alpha\cap \IQ r\setminus\{0\}$ contains at most one element.
\end{enumerate}
\end{lemma}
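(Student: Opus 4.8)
\medskip
\noindent\textbf{Proof plan.}
The plan is to construct the sets $F_\alpha$ by transfinite recursion on $\alpha<\kappa$, each a countable subset of $\ddot M$, picking their points one at a time and using condition~(3) of Theorem~\ref{t:exotic} to keep the supply of admissible points dense. First I would observe that condition~(3) of the present lemma and the pairwise disjointness of the family are free consequences of condition~(2): if $x\in F_\alpha\cap F_\gamma$ with $\gamma<\alpha$, then, taking $y=z=x\in\IQ F_\gamma$, the element $x$ would lie in $\IQ y+\IQ z$; likewise two distinct elements of $F_\alpha$ that are rational multiples of one another, or an element of $F_\alpha\cap\IQ B_\alpha$, would contradict~(2). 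So it suffices to build, for each $\alpha<\kappa$, a dense subset $F_\alpha$ of $\IR_+$ contained in $\ddot M$, satisfying~(2), and disjoint from $\bigcup_{\gamma<\alpha}F_\gamma$.

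Given $(F_\gamma)_{\gamma<\alpha}$, put $R_\alpha\defeq B_\alpha\cup\bigcup_{\gamma<\alpha}F_\gamma$, a countable subset of $M$; note that $|R_\alpha|<|M|$ when $|M|=\w_1$, whereas $R_\alpha$ may have the same cardinality $\w$ as $M$ when $|M|=\w$. Fix an enumeration $(I_n)_{n\in\w}$ of a countable base of $\IR_+$ by nonempty open intervals and, in the case $|M|=\w$, an enumeration $R_\alpha=\{w_k:k\in\w\}$. Now choose $x_n\in F_\alpha$ recursively: at stage $n$ let $E_n$ be the set consisting of $x_0,\dots,x_{n-1}$, together with all of $R_\alpha$ when $|M|=\w_1$, or with $B_\alpha$ and $w_0,\dots,w_{n-1}$ when $|M|=\w$. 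In either case $E_n\subseteq M$ and $|E_n|<|M|$, so by condition~(3) of Theorem~\ref{t:exotic} the set $\ddot M\setminus(\IQ E_n+\IQ E_n)$ is dense in $\IR_+$; pick $x_n$ in its (nonempty) intersection with $I_n$. Then $F_\alpha\defeq\{x_n:n\in\w\}$ is a dense subset of $\IR_+$, contained in $\ddot M$, and disjoint from $\IQ(\bigcup_{\gamma<\alpha}F_\gamma)$, hence from $\bigcup_{\gamma<\alpha}F_\gamma$.

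To check~(2), suppose for contradiction that $x_m=py+qz$ with $p,q\in\IQ$ and $y,z\in(F_\alpha\setminus\{x_m\})\cup\IQ B_\alpha\cup\bigcup_{\gamma<\alpha}\IQ F_\gamma$; each of $y,z$ is a rational multiple of some $x_i\in F_\alpha$ or of some element of $R_\alpha$. Among the finitely many indices $i$ for which $x_i$ occurs in this equation with a nonzero coefficient (including $m$), let $N$ be the one whose element $x_N$ was chosen at the largest stage, and solve the equation for $x_N$. This exhibits $x_N\in\IQ E_N+\IQ E_N$: the other elements of $F_\alpha$ occurring in the equation were chosen at earlier stages and so lie in $E_N$; and the finitely many elements of $R_\alpha$ and of $B_\alpha$ occurring in it lie in $E_N$ — automatically when $|M|=\w_1$, since then $R_\alpha\subseteq E_n$ for every $n$, and when $|M|=\w$ because the enumeration of $R_\alpha$ and the order in which the points of the various $F_\alpha$ are chosen can be arranged so that the relevant elements of $R_\alpha$ and of $B_\alpha$ have been absorbed into $E_N$ by stage $N$. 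Either way $x_N\in\IQ E_N+\IQ E_N$, contradicting the choice of $x_N$; this proves~(2).

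The step I expect to be the main obstacle is precisely the bookkeeping invoked in the last paragraph. When $|M|=\w_1$ every reference set occurring in the recursion is countable, hence of size $<|M|$, and one may simply forbid all of it at every stage; but when $|M|=\w$ the sets $\IQ B_\alpha$ and $\bigcup_{\gamma<\alpha}\IQ F_\gamma$ are genuinely infinite, condition~(3) of Theorem~\ref{t:exotic} only lets one forbid $\IQ B+\IQ B$ for finite $B\subseteq M$, and (because the $F_\gamma$ are dense) these infinite sets are themselves dense, so they cannot be avoided wholesale. One therefore has to thread the whole construction — the decomposition $(B_\alpha)_\alpha$, the enumerations, and the stages at which the points of the $F_\alpha$ are chosen — so that for every potential violation $x_m=py+qz$ of~(2) all of its finitely many participants have been placed into $E_N$ by the time the last-chosen participant $x_N$ is selected. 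The ``solve for the last-chosen element'' device above handles the participants that themselves belong to some $F_\alpha$; securing the same for the participants coming from the infinite sets $\IQ B_\alpha$ and $\bigcup_{\gamma<\alpha}\IQ F_\gamma$ is the real point of the argument, and may call for maintaining the auxiliary invariant that $\ddot M$ stays dense in $\IR_+$ after deletion of any set of the form $\IQ B+\IQ(\bigcup_{\gamma\le\alpha}F_\gamma)$ with $B\subseteq M$ finite.
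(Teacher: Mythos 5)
Your construction works as written only in the case $|M|=\w_1$; in the case $|M|=\w$ there is a genuine gap, and it is exactly the one you flag at the end without resolving. Because you build the sets sequentially (all of $F_\gamma$ is completed before any point of $F_\alpha$ is chosen), a potential violation $x_m=py+qz$ of condition (2) may involve an element $w\in F_\gamma$, $\gamma<\alpha$, that occurs as $w_k$ with $k\ge N$ in your enumeration of $R_\alpha$; then $w\notin E_N$, so solving for $x_N$ does not place $x_N$ in $\IQ E_N+\IQ E_N$, and you cannot instead solve for $w$, because $w$ was chosen before $x_N$ (indeed before any point of $F_\alpha$) and so $x_N$ was not available to be forbidden at $w$'s stage. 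Forbidding all of $\IQ F_\gamma+\IQ F_\gamma$ at once is not an option either: $|F_\gamma|=\w=|M|$, so hypothesis (3) of Theorem~\ref{t:exotic} says nothing about $\ddot M\setminus(\IQ F_\gamma+\IQ F_\gamma)$, and since $F_\gamma$ is dense this set need not be dense (it could even be empty). The ``auxiliary invariant'' you propose — density of $\ddot M$ after deleting $\IQ B+\IQ\bigl(\bigcup_{\gamma\le\alpha}F_\gamma\bigr)$ — is therefore not something the hypotheses let you maintain.

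The repair is to abandon the recursion on $\alpha$ and interleave everything into a single transfinite sequence, which is what the paper does: fix a bijection $\xi:\kappa\to\kappa\times\mathcal U$ (where $\mathcal U$ is a countable base of $\IR_+$), choose points $r_\beta$ for $\beta\in\kappa$ one at a time with $r_\beta\in U_{\xi_2(\beta)}\cap\ddot M$ avoiding $\IQ u+\IQ v$ for all $u,v$ in the set of previously chosen $r_\gamma$'s together with the relevant $B$'s (a set of cardinality $<|M|$ in both cases, so hypothesis (3) applies at every stage), and then set $F_\alpha\defeq\{r_\beta:\xi_1(\beta)=\alpha\}$. With this global ordering, every participant in a violation has a stage, the latest one has all the others in its forbidden set, and your ``solve for the last-chosen element'' device goes through verbatim. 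Your reductions of the disjointness and of condition (3) of the lemma to condition (2) are fine, and your argument for the case $|M|=\w_1$ is correct; only the organization of the recursion in the countable case needs to change.
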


\begin{proof} Let $\mathcal U$ be a countable base of the topology of $\IR_+$ that consists of nonempty open subsets of $\IR_+$. Let $\xi:\kappa\to \kappa\times\U$ be any bijective function. There exist functions $\xi_1:\kappa\to\kappa$ and $\xi_2:\kappa\to\mathcal U$ such that $\xi(\alpha)=\langle\xi_1(\alpha),\xi_2(\alpha)\rangle$ for every $\alpha\in\kappa$. For every $\alpha\in\kappa$, let $U_\alpha\defeq\xi_2(\alpha)\in\mathcal U$. Then $\{U_\alpha\}_{\alpha\in\kappa}$ is an enumeration of the set $\mathcal U$ such that for every $U\in\U$ the set $\{\alpha\in\kappa:U_\alpha=U\}$ has cardinality $\kappa$.

Construct inductively  a sequence of real numbers $(r_\alpha)_{\alpha\in\kappa}$ such that for every $\beta\in\kappa$,
$$\textstyle r_\beta\in U_\beta\cap \ddot M\setminus\bigcup\{\IQ r+\IQ s:r,s\in \{r_\alpha:\alpha<\beta\}\cup\bigcup_{\alpha\le \beta}B_{\xi_1(\beta)}\}.$$
The property (c) of the set $M$ ensures that for every ordinal $\beta\in\kappa$, the choice of the number $r_\beta$ is always possible.

For every $\alpha\in\kappa$, consider the set $$F_\alpha\defeq\{r_\beta:\beta\in\xi_1^{-1}(\alpha)\}\subseteq \ddot M.$$ The following claim establishes the condition (1) of Lemma~\ref{l:Falpha}.

\begin{claim} For every ordinal $\alpha\in\kappa$, the set $F_\alpha$ in dense in $\IR_+$.
\end{claim}

\begin{proof} Given any basic open set $U\in\U$, find a unique ordinal $\beta\in\kappa$ such that $\xi(\beta)=\langle \alpha,U\rangle$. Then $\beta\in\xi_1^{-1}(\alpha)$ and $r_\beta\in F_\alpha\cap U_\beta=F_\alpha\cap U$, witnessing that the set $F_\alpha$ is dense in $\IR_+$.
\end{proof}

\begin{claim}\label{cl:xy} For every ordinal $\alpha\in\kappa$ and real numbers  $x\in F_\alpha$ and $y\in (F_\alpha\setminus\{x\})\cup \IQ B_\alpha\cup\bigcup_{\gamma<\alpha}\IQ F_\gamma$, we have $x\notin \IQ y$. 
\end{claim}

\begin{proof} If $y=0$, then $x\notin \IQ y=\{0\}$ as $x\in F_\alpha\subseteq\ddot M\subseteq M\setminus\{0\}$. So, we assume that $y\ne 0$. 
Let  $\beta\in\xi_1^{-1}(\alpha)$ be a unique ordinal such that $x=r_\beta$.  Depending on the location of the point $y$ in the set  $(F_\alpha\setminus\{x\})\cup \IQ B_\alpha\cup\bigcup_{\gamma<\alpha}\IQ F_\gamma$, we consider two cases.
\smallskip

1. First we assume that $y\in (F_\alpha\setminus \{x\})\cup\bigcup_{\gamma<\alpha}\IQ F_\gamma$. In this case we can find an ordinal $\gamma\in\kappa\setminus\{\beta\}$ such that $y\in\IQ r_\gamma$. Then $\IQ y=\IQ r_\gamma$.

If $\beta>\gamma$, then the choice of $r_\beta\notin \IQ r_\gamma$ implies $x=r_\beta\notin\IQ r_\gamma=\IQ y$. 

If $\gamma>\beta$, then $r_\gamma\notin \IQ r_\beta$ and hence $x=r_\beta\notin \IQ r_\gamma=\IQ y$. 
\smallskip

2. Next, assume that $y\in \IQ  B_\alpha=\IQ B_{\xi_1(\beta)}$. In this case $\IQ y\subseteq \IQ B_{\xi_1(\beta)}$. The choice of $r_\beta\notin \IQ B_{\xi_1(\beta)}$  ensures that $x=r_\beta\notin \IQ y$.
\end{proof}

The following claim proves the condition (2) of Lemma~\ref{l:Falpha}. 

\begin{claim} For every ordinal $\alpha\in\kappa$ and real numbers  $x\in F_\alpha$ and $y,z\in (F_\alpha\setminus\{x\})\cup \IQ B_\alpha\cup\bigcup_{\gamma<\alpha}\IQ F_\gamma$, we have $x\notin \IQ y+\IQ z$. 
\end{claim}

\begin{proof}  By Claim~\ref{cl:xy}, $x\notin \IQ y\cup\IQ z$. Then $x\not\in \IQ y+\IQ z$  if $y=0$ or $z=0$. So, we can (and will) assume  that $y\ne 0\ne z$. 
Let  $\beta\in\xi_1^{-1}(\alpha)$ be a unique ordinal such that $x=r_\beta$.  Depending on the location of the points $y,z$ in the set  $(F_\alpha\setminus\{x\})\cup \IQ B_\alpha\cup\bigcup_{\gamma<\alpha}\IQ F_\gamma$, we consider four cases.
\smallskip

1. First we assume that $y,z\in (F_\alpha\setminus \{x\})\cup\bigcup_{\gamma<\alpha}\IQ F_\gamma$. In this case we can find ordinals $\gamma,\delta\in\kappa\setminus\{\beta\}$ such that $y\in\IQ r_\gamma$ and $z\in\IQ r_\delta$. Then $\IQ y\subseteq\IQ r_\gamma$ and $\IQ z \subseteq \IQ r_\delta$.

If $\beta>\max\{\gamma,\delta\}$, then the choice of $r_\beta\notin \IQ r_\gamma+\IQ r_\delta$ implies $x=r_\beta\notin\IQ y+\IQ z$. 

If $\gamma>\max\{\beta,\delta\}$, then $r_\gamma\notin \IQ r_\beta+\IQ r_\delta$. Assuming that $x\in \IQ y+\IQ z$ and taking into account that $x\notin \IQ z$, we conclude that $y\in \IQ x+\IQ z$ and $r_\gamma\in \IQ y\subseteq\IQ r_\beta+\IQ 
r_\delta$, which contradicts the choice of $r_\gamma$.

If $\delta>\max\{\beta,\gamma\}$, then we can prove that $x\notin \IQ y+\IQ z$ by analogy with the case $\gamma>\max\{\beta,\delta\}$.

If $\gamma=\delta>\beta$, then $r_\gamma\notin \IQ r_\beta$ implies $r_\beta\notin \IQ r_\gamma=\IQ r_\gamma+\IQ r_\delta=\IQ y+\IQ z$. 
\smallskip

2. Next, assume that $y\in (F_\alpha\setminus \{x\})\cup\bigcup_{\gamma<\alpha}\IQ F_\gamma$ and $z\in \IQ  B_\alpha=\IQ B_{\xi_1(\beta)}$. Find an ordinal $\gamma\in\kappa\setminus\{\beta\}$ such that $y\in \IQ r_\gamma$.

If $\beta>\gamma$, then the choice of $r_\beta\notin \IQ r_\gamma+\IQ B_{\xi_1(\beta)}$ ensures that $x=r_\beta\notin \IQ y+\IQ z$.

If $\gamma>\beta$, then $r_\gamma\notin \IQ r_\beta+\IQ B_{\xi_1(\beta)}$, by the choice of $r_\gamma$. Assuming that $x\in \IQ y+\IQ z$ and taking into account that $x\notin\IQ z$, we conclude that $y\in \IQ x+\IQ z$ and hence $r_\gamma\in \IQ y\subseteq\IQ x+\IQ z\subseteq\IQ r_\beta+\IQ B_{\xi_1(\beta)}$, which contradicts the choice of $\gamma$. 
\smallskip

3. The case $y\in \IQ  B_\alpha$ and $y\in (F_\alpha\setminus \{x\})\cup\bigcup_{\gamma<\alpha}\IQ F_\gamma$ can be considered by analogy with the case 2.

4. If $y,z\in \IQ  B_\alpha$, then the choice of $r_\beta\notin \IQ B_{\xi_1(\beta)}+\IQ B_{\xi_1(\beta)}=\IQ B_\alpha+\IQ B_\alpha$ implies that $x=r_\beta\notin \IQ y+\IQ z$. 
\end{proof}

To prove the last statement of Lemma~\ref{l:Falpha}, take any real number $r$. Assuming that the set $F_\alpha\cap\IQ r\setminus\{0\}$, contains two distinct points $x,y$, we conclude that $x\in\IQ y$, which contradicts Claim~\ref{cl:xy}.
\end{proof}

For every $r\in\IR_+$ let $M_r^\circ\defeq M_r\setminus\{0\}=M\cap\IQ r\setminus\{0\}$.

\begin{lemma}\label{l:step} There exist sequences $(f_\alpha)_{\alpha\in\kappa}$ and $(g_\alpha)_{\alpha\in\kappa}$ of graph metrics such that for every ordinal $\beta\in\kappa$, the following conditions are satisfied:
\begin{enumerate}
 \item The sets $f_\beta, g_\beta$ are countable.
 \item $g_\beta$ is a floppy graph metric such that $\bigcup_{\alpha<\beta} f_\alpha\subseteq g_\beta$.
\item $f_\beta$ is a full metric such that $g_\beta\subseteq f_\beta$ and $V_{f_\beta}=V_{g_\beta}$.
\item $f_\beta[E_{f_\beta}\setminus E_{g_\beta}]\subseteq F_\beta$ and the function $f_\beta{\restriction}_{E_{f_\beta}\setminus E_{g_\beta}}$ is injective.
\item $g_\beta[E_{g_\beta}]\subseteq  \bigcup _{r\in B_\beta}M^\circ_r\cup \bigcup_{\gamma\in\beta}\bigcup_{r\in F_\gamma}M^\circ_r$.
\item For every $x\in \bigcup_{\alpha<\beta}V_{f_\alpha}$ and $r\in \bigcup_{\alpha<\beta}\bigcup_{s\in B_\alpha}M^\circ_s$ we have $|\Sf_{g_\beta}(x;r)|\ge 2$.
\item  $\forall r\in \IR_+\;\forall x,y\in V_{f_\beta}\;\;\big(\Sf_{f_\beta}(x;M_r)=\{x,y\}\Leftrightarrow \Sf_{f_\beta}(y;M_r)=\{y,x\}\big)$.
\item For every $x\in V_{f_\beta}$ and $r\in\IR_+$ we have $|\Sf_{f_\beta}(x;r)|\le 2$ and $f_\beta[[\Sf_{f_\beta}(x;r)]^2]\subseteq\{2r\}$.

\item If $\beta$ is a limit ordinal, then $g_\beta=f_\beta=\bigcup_{\alpha\in\beta}f_\alpha=\bigcup_{\alpha\in\beta}g_\alpha$.
\item If $\beta=\alpha+1$ is a successor ordinal, then $g_{\beta}=f_\alpha\cup\bigcup_{\langle x,r\rangle \in P_\beta}p_{x,r}$ where\\ $P_{\beta}\defeq\{\langle x,r\rangle:x\in V_{f_\alpha}\times\big(f_\alpha[E_{f_{\alpha}}]\cup\bigcup_{s\in B_\beta}M_s^\circ\big):|\Sf_{f_\alpha}(x;r)|=|\Sf_{f_\alpha}(x;M_r^\circ)|\le 1\}$ and $(p_{x,r})_{\langle x,r\rangle\in P_\beta}$ is a sequence of floppy graph metrics such that for every $\langle x,r\rangle,\langle y,s\rangle\in P_\beta$ the following conditions are satisfied:
\begin{itemize}
\item[(i)] the graph metric $p_{x,r}$ is isomorphic to the graph metric\\ $\mu_r\defeq\{\langle xy,|x-y|\rangle:(x,y\in M_r-M_r)\;\wedge\; (0<|x-y|\in M_r)\}$;
\item[(ii)] $V_{p_{x,r}}\cap V_{f_\alpha}=\Sf_{f_\alpha}(x;M_r)$;
\item[(iii)] if $M_r=M_s$ and $\Sf_{f_\alpha}(x;M_r)=\Sf_{f_\alpha}(y;M_s)$, then $p_{x,r}=p_{y,s}$;
\item[(iv)] if $p_{x,r}\ne p_{y,s}$, then $V_{p_{x,r}}\cap V_{p_{y,s}}=\Sf_{f_\alpha}(x;M_r)\cap\Sf_{f_\alpha}(y;M_s)$.
\end{itemize}
\end{enumerate}
\end{lemma}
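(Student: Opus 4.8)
The proof is by transfinite recursion on $\beta$: one first defines $g_\beta$ and then extends it to $f_\beta$, and the entire task is to see that conditions (1)--(10) are preserved; the recursion splits into the cases $\beta=0$, $\beta$ a successor, and $\beta$ a limit. For $\beta=0$ I would set $g_0\defeq\mu_{r_0}$, the floppy graph metric attached by Lemma~\ref{l:floppy} to the floppy monoid $M_{r_0}=B_0$, so that $V_{g_0}=M_{r_0}-M_{r_0}$ is a countable subgroup of $\IQ r_0$; then, since $[V_{g_0}]^2\setminus E_{g_0}$ is countable and $F_0$ is dense in $\IR_+$ by Lemma~\ref{l:Falpha}(1), Lemma~\ref{l:extend} yields an injective selector into $F_0$ whose union with $g_0$ is a full metric $f_0$. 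Conditions (1)--(5) are immediate and (6) is vacuous; for (7)--(8) one uses $F_0\cap\IQ r_0=\emptyset$ (Lemma~\ref{l:Falpha}(2)) and that $F_0$ meets every line $\IQ t$ in at most one point (Lemma~\ref{l:Falpha}(3)), so that for $s\in M_{r_0}^\circ$ one has $\Sf_{f_0}(u;s)=\{u-s,u+s\}$, a doubleton at mutual distance $2s$, while every other sphere of $f_0$ has at most one point.

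For a limit $\beta$ I would take $g_\beta=f_\beta\defeq\bigcup_{\alpha<\beta}f_\alpha=\bigcup_{\alpha<\beta}g_\alpha$. As $\beta<\kappa\le\w_1$, this is a countable increasing union of countable full metrics on increasing vertex sets, hence again a countable full metric; a full metric is vacuously floppy, so (2) holds, while (3)--(8) pass to the limit from the corresponding conditions at the stages $\alpha<\beta$ — for (8) using that an increasing union of $\le 2$-point sets still has $\le 2$ points, for (5) using $F_\gamma\subseteq\bigcup_{r\in F_\gamma}M_r^\circ$, and for (6) that each relevant pair was already completed at some earlier successor stage and stays completed. Condition (9) holds by definition and (10) is vacuous at a limit.

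The substance is the successor step $\beta=\alpha+1$. First form $P_\beta$ as in (10): the pairs $\langle x,r\rangle$ with $x\in V_{f_\alpha}$, $r\in f_\alpha[E_{f_\alpha}]\cup\bigcup_{s\in B_\beta}M_s^\circ$, whose line through $x$ in direction $\IQ r$ is still incomplete, i.e.\ $|\Sf_{f_\alpha}(x;r)|=|\Sf_{f_\alpha}(x;M_r^\circ)|\le 1$. For each such pair I would glue in a copy $p_{x,r}$ of the graph metric $\mu_r$ of Lemma~\ref{l:floppy}, attached along $x$ together with whatever (at most one) point of $\Sf_{f_\alpha}(x;M_r)$ is already present, with all remaining vertices fresh; running through $P_\beta$ along a fixed enumeration makes the choices coherent, realizing clauses (i)--(iv) of (10) — copies are literally equal exactly when (iii) forces it, and two distinct copies, and a copy and $f_\alpha$, meet only in the prescribed sets. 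That $g_\beta\defeq f_\alpha\cup\bigcup_{\langle x,r\rangle\in P_\beta}p_{x,r}$ is a floppy graph metric is precisely Lemma~\ref{l:floppy-union} applied with $p=f_\alpha$ and $\F=\{p_{x,r}\}$: its hypotheses (1)--(3) are supplied by (i)--(iv) (hypothesis (2) being trivial since each $V_{p_{x,r}}\cap V_{f_\alpha}$ has at most two elements), and its two positivity requirements are supplied, respectively, by Lemma~\ref{l:Lambda} (with $a=0$, $b=r$, whose three minimands are exactly the three infima one must bound when the copy is glued along two points), and by the strict triangle inequality in $f_\alpha$, which holds because, by Lemma~\ref{l:Falpha}(2), no distance already present at stage $\alpha$ is a sum of two others in the configuration at hand. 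Finally I would apply Lemma~\ref{l:extend} once more, with every dense set taken to be $F_\beta$, to extend $g_\beta$ injectively to a full metric $f_\beta$ with $f_\beta[E_{f_\beta}\setminus E_{g_\beta}]\subseteq F_\beta$.

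I expect the main obstacle to be the propagation of (4)--(8) through the successor step, and among these the Banakh-type condition (8) that every sphere has at most two points at mutual distance twice the radius. On the gluing side: $P_\beta$ only completes lines that had at most one point, distinct copies meet in at most one point, and inside a single copy of $\mu_r$ the $s$-sphere for $s\in M_r$ is the doubleton $\{w-s,w+s\}$, so no sphere of a rational radius acquires a third point and no glued copy creates a bad doubleton. On the extension side: the injectivity of the $F_\beta$-valued part, together with $F_\beta\cap\IQ t$ being at most a singleton and containing no additive relations (Lemma~\ref{l:Falpha}(2),(3)), prevents an $F_\beta$-value from equalling, or decomposing through, previously used distances, so each such value contributes at most one point to any sphere. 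A second delicate point is condition (6): one must check that every not-yet-complete sphere of a radius in $\bigcup_{s\in B_\beta}M_s^\circ$ around an old point is driven into $P_\beta$ (so that after $\kappa$ steps all required spheres are doubletons), which hinges on the observation that a non-$\ddot M$ direction, once one of its proper summands has been enumerated, gets its whole line filled to more than two points, so that the two-point gluings one actually performs are always in directions $r\in\ddot M$ — exactly the regime in which Lemma~\ref{l:Lambda} applies. Sorting out this interplay between the enumerations $(B_\alpha)$, the sets $F_\alpha$, and the clause $r\in\ddot M$ is what makes the successor step long.
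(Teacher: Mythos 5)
Your proposal follows essentially the same route as the paper: base case $g_0=\mu_{r_0}$ extended by Lemma~\ref{l:extend} into $F_0$, unions at limits, and at successors the set $P_\beta$ of incomplete spheres, gluing copies of $\mu_r$ via Lemma~\ref{l:floppy-union} with the positivity hypotheses supplied by Lemma~\ref{l:Lambda} (in the two-point-attachment case, where the attaching edge necessarily comes from $E_{f_\alpha}\setminus E_{g_\alpha}$ and hence has length in $F_\alpha\subseteq\ddot M$) and by the absence of additive relations from Lemma~\ref{l:Falpha}(2), followed by another application of Lemma~\ref{l:extend} into $F_\beta$. The key difficulties you single out — propagating condition (8) through both the gluing and the extension, and showing that two-point gluings only occur in $\ddot M$-directions — are exactly the ones the paper's proof spends its claims on, so the approach and its load-bearing points match.
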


\begin{proof} For every real number $r$, consider the set $M_r\defeq M\cap\IQ r$ and the graph metric $\mu_r\defeq\{\langle xy,|x-y|\rangle:(x,y\in M_r-M_r)\;\wedge\; (0<|x-y|\in M_r)\}$. By the property (b) of the set $M$, the monoid $M_r$ is floppy and hence the graph metric $\mu_r$ is floppy, according to Lemma~\ref{l:floppy}.

We start the inductive construction letting $g_0=\mu_{r_0}$ where $r_0$ is the unique element of the set $B_0$. Then $V_{g_0}=M_{r_0}-M_{r_0}\ne\emptyset$ and $g_0[E_{g_0}]=\mu_{r_0}[E_{\mu_{r_0}}]=M^\circ_{r_0}$. By Lemma~\ref{l:extend}, there exists a full metric $f_0$ such that $g_0\subseteq f_0$, $V_{f_0}=V_{g_0}$, $f_0[E_{f_0}\setminus E_{g_0}]\subseteq F_0$ and the function $f_0{\restriction}_{E_{f_0}\setminus E_{g_0}}$ is injective. Therefore, the graph metrics $g_0$ and $f_0$ satisfy the inductive conditions (1)--(6). The condition (7) for the full metric $f_0$ is established in the following claim.

\begin{claim} $\forall r\in\IR_+\;\forall x,y\in V_{f_0}\;\;(\Sf_{f_0}(x;M_r)=\{x,y\}\Leftrightarrow \Sf_{f_0}(y;M_r)=\{x,y\})$.
\end{claim}

\begin{proof} Fix any $r\in\IR_+$ and $x,y\in V_{f_0}=V_{g_0}=M_{r_0}-M_{r_0}$. The equivalence  $\Sf_{f_0}(x;M_r)=\{x,y\}\Leftrightarrow \Sf_{f_0}(y;M_r)=\{x,y\}$ is trivial if $x=y$. So, we assume that $x\ne y$. 

Suppose that $\Sf_{f_0}(x;M_r)=\{x,y\}$. If $xy\in E_{g_0}$, then $\{x\}\cup \Sf_{g_0}(x;g_0(xy))\subseteq \Sf_{g_0}(x;M_r)\subseteq \Sf_{f_0}(x;M_r)$ implies  $|\Sf_{f_0}(x;M_r)|\ge 1+|\Sf_{g_0}(x;g_0(xy))|\ge 3$ and hence $\Sf_{f_0}(x;M_r)\ne\{x,y\}$, which contradicts our assumption. This contradiction shows that $xy\notin E_{g_0}$ and hence $f_0(xy)\in f_0[E_{f_0}\setminus E_{g_0}]\subseteq F_0$. The condition (2) of Lemma~\ref{l:Falpha} ensures that $F_0\cap M_{r_0}\subseteq F_0\cap \IQ r_0=\emptyset$ and hence $f_0(xy)\in M_r\setminus M_{r_0}$ and $M_r\cap M_{r_0}=\{0\}$.  It is clear that $\{y,x\}\subseteq \Sf_{f_0}(y;M_r)$. Assuming that $\Sf_{f_0}(y;M_r)\ne\{x,y\}$, we can find a point $z\in \Sf_{f_0}(y;M_r)\setminus\{x,y\}$. It follows from $f_0(yz)\in M^\circ_r$ and $M^\circ_r\cap M_{r_0}=\emptyset$ that $yz\notin E_{g_0}$ and hence $f_0(yz)\in F_0\setminus\{f_0(yx)\}$ by the injectivity of $f_0{\restriction}_{E_{f_0}\setminus E_{g_0}}$.  The condition (2) of Lemma~\ref{l:Falpha} ensures that $M_r=M_{f_0(yz)}\ne M_{f_0(xy)}=M_r$, which is a contradiction witnessing that $\Sf_{f_0}(y;M_r)=\{x,y\}$. By analogy we can prove that $\Sf_{f_0}(y;M_r)=\{x,y\}$ implies $\Sf_{f_0}(x;M_r)=\{x,y\}$.
\end{proof}

In the following claim we show that the full metric $f_0$ satisfies the inductive condition (8).

\begin{claim}  For every $x\in V_{f_0}$ and $r\in\IR_+$ we have $|\Sf_{f_0}(x;r)|\le 2$ and $f_0[[\Sf_{f_0}(x;r)]^2]\subseteq\{2r\}$.
\end{claim}

\begin{proof} If $r\in M_{r_0}$, then $|\Sf_{g_0}(c;r)|=|\Sf_{\mu_{r_0}}(c;r)|=2$ by the definition of the graph metric $\mu_{r_0}=g_0$. The condition (2) of Lemma~\ref{l:Falpha} ensures that $F_0\cap M_{r_0}\subseteq F_0\cap \IQ r_0=\emptyset$. Then $f_0[E_{f_0}\setminus E_{g_0}]\subseteq F_0\setminus M_{r_0}\subseteq F_0\setminus\{r\}$ implies that $\Sf_{f_0}(x;r)=\Sf_{g_0}(x;r)$ and hence $|\Sf_{f_0}(x;r)|=|\Sf_{g_0}(x;r)|=2$ and $f_0[[\Sf_{f_0}(x;r)]^2]=g_0[[S_{g_0}(x;r)]^2]=\{2r\}$. 
If $r\notin M_{r_0}$, then for every $y\in \Sf_{f_0}(x;r)$ the doubleton $\{x,y\}$ does not belong to $E_{g_0}$ and the injectivity of the restriction $f_0{\restriction}_{E_{f_0}\setminus E_{g_0}}$ ensures that $|\Sf_{f_0}(x;r)|\le 1$. In this case $f_0[[\Sf_{f_0}(x;r)]^2]=\emptyset\subseteq\{2r\}$.
\end{proof}
Therefore, the functions $f_0$ and $g_0$ satisfy the inductive conditions (1)--(10).
\smallskip

Assume that for some nonzero ordinal $\beta<\kappa$ we have constructed sequences $(f_\alpha)_{\alpha\in\beta}$ and $(g_\alpha)_{\alpha\in \beta}$  such that the conditions (1)--(10) are satisfied.

If $\beta$ is a limit ordinal, then let $f_\alpha\defeq \bigcup_{\alpha\in\beta}f_\alpha$ and $g_\beta\defeq\bigcup_{\alpha\in\beta}g_\alpha$. The inductive conditions (2), (3) and the equality $B_\beta=\bigcup_{\alpha<\beta}B_\alpha$ ensure that $f_\alpha=g_\alpha$ is a well-defined full metric satisfying the inductive conditions (1)--(10).

Next, assume that $\beta$ is a successor ordinal and find a unique ordinal $\alpha$ such that $\beta=\alpha+1$. Consider the set
 $$P_{\beta}\defeq\{\langle x,r\rangle:x\in V_{f_\alpha}\times\big(f_\alpha[E_{f_\alpha}]\cup\bigcup_{s\in B_\beta}M_s^\circ\big):|\Sf_{f_\alpha}(x;r)|=|\Sf_{f_\alpha}(x;M^\circ_r)|\le 1\}.$$ 
  Choose an indexed family of graph metrics $(p_{x,r})_{\langle x,r\rangle\in P_\beta}$ satisfying the inductive conditions (10i)--(10iv). Lemmas~\ref{l:floppy-union} and \ref{l:floppy} imply that $g_\beta\defeq f_\alpha\cup\bigcup_{\langle x,r\rangle\in P_\beta}p_{x,r}$ is a graph metric.
 The countability of the sets $f_\alpha$ and $B_\alpha$ implies the countability of the sets $P_\beta$ and $g_\beta$.
 
\begin{claim}\label{cl:g-floppy} The graph metric $g_\beta$ is floppy.
\end{claim}

\begin{proof} By Lemma~\ref{l:floppy-union}, it suffices to check that for every $\langle x,r\rangle\in P_\beta$ and vertices $v\in V_{p_{x,r}}\setminus V_{f_\alpha}$ and $u\in V_{f_\alpha}\setminus V_{p_{x,r}}$ the real numbers
$$
\begin{aligned}
&\Lambda(v)\defeq\inf\{\hat p_{x,r}(a,v)+\hat p_{x,r}(y,b)-\hat p_{x,r}(a,b):a,b\in V_{p_{x,r}}\cap V_{f_\alpha}\}\mbox{ \ and}\\
&\Lambda(u)\defeq\inf\{f_\alpha(au)+f_\alpha(ub)-f_\alpha(ab):a,b\in V_{f_\alpha}\cap V_{p_{x,r}}\}
\end{aligned}
$$
are positive.  The inclusion $\langle x,r\rangle\in P_\beta$ implies that $|\Sf_{f_\alpha}(x;r)|=|\Sf_{f_\alpha}(x;M^\circ_r)|\le 1$ and the condition (10ii) implies that $V_{p_{x,r}}\cap V_{f_\alpha}=\Sf_{f_\alpha}(x;M_r)=\{x\}\cup\Sf_{f_\alpha}(x;M_r^\circ)$ and hence $1\le|V_{p_{x,r}}\cap V_{f_\alpha}|\le 2$. If $|V_{p_{x,r}}\cap V_{f_\alpha}|=1$, then $\Lambda(v)=2\hat p_{x,r}(x,v)>0$ and $\Lambda(u)=2 f_\alpha(xu)>0$ because $\hat p_{x,r}$ is a metric, $f_\alpha$ is a full metrics and $u\ne x\ne v$.

It remains to consider the case when $V_{p_{x,r}}\cap V_{f_\alpha}=\Sf_{f_\alpha}(x;M_r)=\{0\}\cup\Sf_{f_\alpha}(x;r)=\{x,y\}$ where $y$ is the unique point of the sphere $\Sf_{f_\alpha}(x;r)$. Let $\gamma\le \alpha$ be the smallest ordinal such that $xy\in E_{f_\gamma}$. Observe that $\Sf_{f_\gamma}(x;r)=\Sf_{f_\gamma}(x;M^\circ_r)=\{y\}$ and hence $\langle x,r\rangle \in P_{\gamma+1}$.  Assuming that $\gamma<\alpha$ and applying the inductive condition (10i), we conclude that $|\Sf_{f_\alpha}(x;r)|\ge |\Sf_{g_{\gamma+1}}(x;r)|\ge |\Sf_{p_{x,r}}(x;r)|\ge 2$ and hence $|\Sf_{f_\alpha}(x;M^\circ _r)|\ge |\Sf_{f_\alpha}(x;r)|\ge 2$, which contradicts the inclusion $\langle x,r\rangle\in P_\beta$. This contradiction shows that $\gamma=\alpha$.

\begin{claim}\label{cl:notEg} $xy\notin E_{g_\alpha}$.
\end{claim}

\begin{proof} To derive a contradiction, assume that $xy\in E_{g_\alpha}$.
If $\alpha=0$, then $xy\in E_{g_0}$ and $g_0\subseteq f_\alpha$ imply $r=f_0(xy)=g_0(xy)\in M_{r_0}$ and $\Sf_{g_0}(x;r)\subseteq \Sf_{f_\alpha}(x;r)=\{y\}$, which contradicts the choice of the graph metric $g_0=\mu_{r_0}$ whose nonempty spheres contain exactly two points. 
This contradiction shows that $\alpha>0$. The minimality of $\gamma$ and the inductive condition (9) ensure that the nonzero ordinal $\gamma=\alpha$ is successor and hence $\alpha=\gamma=\delta+1$ for some ordinal $\delta$. The minimality of $\gamma$ ensures that $xy\notin E_{f_\delta}$. Then $xy\in E_{g_{\alpha}}\setminus E_{f_\delta}=\bigcup_{\langle z,s\rangle\in P_\alpha}E_{p_{z,s}}$ and hence $xy\in E_{p_{z,s}}$ for some $\langle z,s\rangle\in P_\alpha$. It follows from $p_{z,s}\subseteq g_\alpha\subseteq f_\alpha$ and $xy\in E_{p_{z,s}}$ that $r=f_\alpha(xy)=p_{z,s}(xy)\in M^\circ_s$ and hence $M^\circ_s=M^\circ_r$. By the inductive condition (10i), the graph metric $p_{z,s}$ is isomorphic to the graph metric $\mu_s$, which implies $\w\le |\Sf_{p_{z,s}}(x;M^\circ_s)|=|\Sf_{p_{z,s}}(x;M^\circ_r)|\le  |\Sf_{f_\alpha}(x;M^\circ_r)|$ and contradicts $|\Sf_{f_\alpha}(x;M^\circ_r)|=1$. This contradiction completes the proof of Claim~\ref{cl:notEg}.
\end{proof}

Claim~\ref{cl:notEg} implies that $xy\in E_{f_\alpha}\setminus E_{g_\alpha}$ and hence $r=f_\alpha(xy)\in F_\alpha\subseteq \ddot M$, by the inductive condition (4) and the choice of $F_\alpha\subseteq\ddot M$. Since the graph metric $p_{x,r}$ is isomorphic to the graph metric $\mu_r$, Lemma~\ref{l:Lambda} implies that $\Lambda(v)>0$ for every $v\in V_{p_{x,r}}\setminus V_{f_\alpha}=V_{p_{x,r}}\setminus\{x,y\}$.

\begin{claim} For every $u\in V_{f_\alpha}\setminus V_{p_{x,r}}=V_{f_\alpha}\setminus\{x,y\}$ we have
$$\Lambda(u)=\min\{2f_\alpha(ux),2f_\alpha(uy),f_\alpha(xu)+f_\alpha(uy)-f_\alpha(xy)\}>0.$$
\end{claim}

\begin{proof} Fix any point $u\in V_{f_\alpha}\setminus\{x,y\}$. Since $f_\alpha$ is a graph metric, $\min\{f_\alpha(ux),f_\alpha(uy)\}>0$. It remains to prove that $f_\alpha(xu)+f_\alpha(uy)-f_\alpha(xy)>0$. The triangle inequality for the full metric $f_\alpha$ implies $f_\alpha(xu)+f_\alpha(uy)-f_\alpha(xy)\ge 0$. 
By the inductive conditions (4) and (5), 
\begin{multline*}
\{f_{\alpha}(xu),f_\alpha(uy)\}\subseteq (F_\alpha\setminus \{f_\alpha(xy)\})\cup g_\alpha[E_{g_\alpha}]\\
\subseteq (F_\alpha\setminus\{r\})\cup\bigcup_{s\in B_\alpha}M^\circ_s\cup\bigcup_{\gamma<\alpha}\bigcup_{s\in F_\gamma}M^\circ_s\subseteq (F_\alpha\setminus\{r\})\cup \IQ B_\alpha\cup\bigcup_{\gamma<\alpha}\IQ F_\gamma.
\end{multline*}
 The condition (2) of Lemma~\ref{l:Falpha} ensures that $r=f_\alpha(xy)\ne f_\alpha(xu)+f_\alpha(uy)$ and hence $f_\alpha(xu)+f_\alpha(yu)-f_\alpha(xy)>0$.
\end{proof}
By Lemma~\ref{l:floppy-union}, the graph metric $g_\beta$ is floppy, which completes the proof of Claim~\ref{cl:g-floppy}.
\end{proof}
Applying Lemma~\ref{l:extend}, extend to floppy graph metric $g_\beta$ to a full metric $f_\beta$ that satisfies the inductive conditions (3) and (4). The countablity of the set $g_\beta$ implies the countability of the set $E_{g_\beta}$. The connecteness of the graph $E_{g_\beta}$ implies the countability of the set $V_{g_\beta}=V_{f_\beta}$ and the countability of the set $f_\beta$. Therefore, the graph metrics $g_\beta$ and $f_\beta$ satisfy the inductive conditions (1)--(4).

 In the following claim we prove that the graph metric $g_\beta$ satisfies the inductive condition (5).

\begin{claim}\label{cl:15.15} $g_\beta[E_{g_\beta}]\subseteq  \bigcup _{r\in B_\beta}M^\circ_r\cup \bigcup_{\gamma\in\beta}\bigcup_{r\in F_\gamma}M^\circ_r$.
\end{claim}

\begin{proof} 
%
The inductive conditions (10), (4) and (5) ensure that
$$
\begin{aligned}
g_\beta[E_{g_\beta}]&=f_{\alpha}[E_{f_\alpha}]\cup\bigcup_{\langle x,r\rangle\in P_\beta}p_{x,r}[E_{p_{x,r}}]\subseteq f_\alpha[E_{f_\alpha}\setminus E_{g_\alpha}] \cup g_\alpha[E_{g_\alpha}]\cup\bigcup_{r\in f_\alpha[E_{f_\alpha}]\cup B_\beta}M^\circ_r\\
&\subseteq F_\alpha\cup g_\alpha[E_{g_\alpha}]\cup\bigcup_{r\in F_\alpha\cup g_\alpha[E_{g_\alpha}]\cup B_\beta}M^\circ_r\subseteq \bigcup_{r\in F_\alpha\cup B_\beta}M_r^\circ\cup g_\alpha[E_{g_\alpha}]\cup\bigcup_{r\in g_\alpha[E_{g_\alpha}]}M_r^\circ\\
&\subseteq \bigcup_{r\in F_\alpha\cup B_\beta}M^\circ_r\cup\Big( \bigcup_{r\in B_\alpha}M^\circ_r\cup \bigcup_{\gamma<\alpha}\bigcup_{r\in F_\gamma}M^\circ_r\Big)\cup \bigcup_{r\in g_\alpha[E_{g_\alpha}]}M_r^\circ\\&=\bigcup_{r\in B_\beta}M^\circ_r\cup\bigcup_{\gamma<\beta}\bigcup_{r\in F_\gamma}M^\circ_r.
\end{aligned}
$$
\end{proof}

In the following claim we prove that the function $g_\beta$ satisfies the inductive condition (6).

\begin{claim}\label{cl:15.14b} For all $x\in V_{f_\alpha}$ and $r\in \bigcup_{s\in B_\beta}M_s^\circ$ we have $|\Sf_{g_\beta}(x,r)|\ge 2$.
\end{claim}

\begin{proof} If $\Sf_{f_\alpha}(x;M_r^\circ)=\emptyset$, then the pair $\langle x,r\rangle$ belongs to the set $P_\beta$. If $\Sf_{f_\alpha}(x;M_r^\circ)=\{y\}$ for some $y\in V_{f_\alpha}$, then for the real number $t\defeq f_\alpha(xy)\in M_r^\circ$ we have $\Sf_{f_\alpha}(x;t)=\Sf_{f_\alpha}(x;M_t^\circ)=\{y\}$ and $\langle x,t\rangle\in P_\beta$. In both cases we can find a real number $t\in M_r^\circ$ such that $\langle x,t\rangle\in P_\beta$. In this case the inductive condition (10i) implies $\Sf_{p_{x,t}}(x;r)\subseteq \Sf_{g_\beta}(x;r)$ and hence $2=|\Sf_{p_{x,t}}(x;r)|\le|\Sf_{g_\beta}(x;r)|$ because the graph metric $p_{x,t}$ is isomorphic to the graph metric $\mu_t$ and $r\in M_t$.

It remains to consider the case of $|\Sf_{f_\alpha}(x;M_r^\circ)|\ge 2$. Let $\gamma\le\alpha$ be the smallest ordinal such that $|\Sf_{f_\gamma}(x;M_r^\circ)|\ge 2$.

If $\gamma=0$, then $\Sf_{f_0}(x;M_r^\circ)\ne\emptyset$ implies $x\in V_{f_0}=V_{g_0}$. If $r\in M_{r_0}=g_0[E_{g_0}]$, then $|\Sf_{g_0}(x;r)|=|\Sf_{\mu_{r_0}}(x;r)|=2$ and we are done. So, we assume that $r\notin M_{r_0}$ and hence $M_r\cap g_0[E_{g_0}]=M_r\cap M_{r_0}^\circ=\emptyset$. Since $|\Sf_{f_0}(x;M_r^\circ)|\ge 2$, there exist distinct points $x,y\in \Sf_{f_0}(x;M_r^\circ)$. Then $\{f_0(xz),f_0(yz)\}\subseteq M_r^\circ \setminus g_0[E_{g_0}]$ and hence $\{xz,yz\}\subseteq E_{f_0}\setminus E_{g_0}$. The inductive condition (4) implies that $f_0(xz),f_0(yz)$ are two distinct points of the set $F_0\cap M_s^\circ$, which contradicts Lemma~\ref{l:Falpha}(3). This contradiction shows that $\gamma\ne0$.

The minimality of $\gamma>0$ and the inductive condition (9) imply that $\gamma$ is a succesor ordinal and hence $\gamma=\delta+1$ for some ordinal $\delta$. The minimality of $\gamma$ ensures that $|\Sf_{f_\delta}(x;M_r^\circ)|\le 1$. 

If $\Sf_{f_\delta}(x;M_r^\circ)$ contains some point $y$, then for the number $t\defeq f_\delta(xy)\in f_\delta[E_{f_\delta}]$ we have $|\Sf_{f_\delta}(x;t)|=|\Sf_{f_\delta}(x;M_t^\circ)|=1$ and hence $\langle x,t\rangle\in P_{\delta+1}=P_\gamma$.
In this case the inductive condition (10i) ensures that $\Sf_{p_{x,t}}(x;r)\subseteq \Sf_{g_{\gamma}}(x;r)\subseteq \Sf_{g_\beta}(x;r)$ and hence $|\Sf_{g_\beta}(x;r)|\ge|\Sf_{p_{x,t}}(x;r)|\ge 2$ as the graph metric $p_{x,t}$ is isomorphic to the graph metric $\mu_t$ and $r\in M_t^\circ$. 

Next, assume $\Sf_{f_\delta}(x;M_r^\circ)=\emptyset$.  Since $|\Sf_{f_\gamma}(x;M_r^\circ)|\ge 2$, there exist distinct points $y,z\in \Sf_{f_\gamma}(x;M_r^\circ)$. Then $\{xy,xz\}\subseteq E_{f_\gamma}\setminus E_{f_\delta}$. Assuming that $xy\in E_{g_\gamma}$, we conclude that $xy\in E_{g_\gamma}\setminus E_{f_\delta}\subseteq \bigcup_{\langle u,s\rangle \in P_\gamma}E_{p_{u,s}}$ and hence $xy\in E_{p_{u,s}}$ for some pair $\langle u,s\rangle \in P_\gamma$. Then $f_\gamma(xy)=p_{u,s}(xy)\in M_r^\circ\cap M_s^\circ$ and hence $r\in M_s^\circ$. Since the graph metric $p_{u,s}$ is isomorphic to the graph metric $\mu_s$ and $r\in M_s^\circ$, the sphere $\Sf_{p_{u,s}}(x;s)$ has cardinality $2$ and $|\Sf_{g_\beta}(x;r)|\ge|\Sf_{g_\gamma}(x;r)|\ge|\Sf_{p_{u,s}}(x;r)|=2$. 
By analogy we can show that $|\Sf_{f_\beta}(x,r)|\ge 2$ if $xy\in E_{g_\gamma}$. 

It remains to consider the case $xy,xz\notin E_{g_\gamma}$. In this case $\{xy,xz\}\subseteq E_{f_\gamma}\setminus E_{g_\gamma}$ and by the inductive condition (4), $\{f_{\gamma}(xy),f_\gamma(xz)\}$ are two distinct points of the set $F_\gamma\cap M_r^\circ$, which contradicts Lemma~\ref{l:Falpha}(3).
\end{proof}

In the following claim we prove that the full metric $f_\beta$ satisfies the inductive condition (7).

\begin{claim}   $\forall r\in \IR_+\;\forall x,y\in V_{f_\beta}\;\;\big(\Sf_{f_\beta}(x;M_r)=\{x,y\}\Leftrightarrow \Sf_{f_\beta}(y;M_r)=\{y,x\}\big)$.
\end{claim}

\begin{proof} Take any $r\in\IR_+$ and $x,y\in V_{f_\beta}$. Assuming that $\Sf_{f_\beta}(x;M_r)=\{x,y\}$, we shall prove that $\Sf_{f_\beta}(y;M_r)=\{y,x\}$. If $x=y$, then $\Sf_{f_\beta}(y;M_r)=\Sf_{f_\beta}(x;M_r)=\{y,x\}$ and we are done. So, assume that $x\ne y$.


Let $s\defeq f_\beta(xy)\in M_r^\circ$ and observe that $M_s^\circ=M_r^\circ$.  If $xy\in E_{f_\alpha}$, then $\{x,y\}\subseteq \Sf_{f_\alpha}(x;M_r)\subseteq\Sf_{f_\beta}(x;M_r)=\{x,y\}$ implies $\Sf_{f_\alpha}(x;M_r)=\{x,y\}$. Taking into account that $s=f_\beta(xy)=f_\alpha(xy)\in f_\alpha[E_{f_\alpha}]$ and $\Sf_{f_\alpha}(x;s)=\Sf_{f_\alpha}(x;M^\circ_s)=\Sf_{f_\alpha}(x;M_r^\circ)=\{y\}$, we conclude that $\langle x,s\rangle\in P_\beta$ and the set $$\Sf(f_\beta;M_r)=\Sf_{f_\beta}(x;M_s)\supseteq \Sf_{g_\beta}(x;M_s)\supseteq \Sf_{p_{x,s}}(x;M_s)$$is infinite, which contradicts the equality $\Sf_{f_\beta}(x;M_r)=\{x,y\}$. This contradiction shows that $xy\notin E_{f_\alpha}$. 

If $xy\in E_{g_\beta}$, then $xy\in E_{g_\beta}\setminus E_{f_\alpha}\subseteq\bigcup_{\langle z,t\rangle\in P_\beta}E_{p_{z,t}}$ and hence $xy\in E_{p_{z,t}}$ for some $\langle z,t\rangle\in P_\beta$. Then $s=f_\beta(xy)=p_{z,t}(xy)\in p_{z,t}[E_{p_{z,t}}]=\mu_t[E_{\mu_t}]=M_t^\circ$ and $M_r^\circ =M^\circ_s=M_t^\circ$.
Taking into account that the graph metric $p_{z,t}$ is isomorphic to the graph metric $\mu_t$ and $\Sf_{\mu_t}(c;M_t^\circ)$ is infinite for every $c\in V_{\mu_t}$, we conclude that the sphere
$$\Sf_{f_\beta}(x;M_r)\supseteq\Sf_{g_\beta}(x;M_t)\supseteq \Sf_{p_{z,t}}(x;M_t)$$
is infinite, which contradicts or assumption. This contradiction shows that $xy\notin E_{g_\beta}$ and then $s=f_\beta(xy)\in f_\beta[E_{f_\beta}\setminus E_{g_\beta}]\subseteq F_\beta$ by the inductive assumption (4). The condition (2) of Lemma~\ref{l:Falpha} ensures that $F_\beta$ is disjoint with the set $\IQ B_\beta\cup\bigcup_{\gamma<\beta}\IQ F_\gamma$ and hence $\IQ s\cap(\IQ B_\beta\cup\bigcup_{\gamma<\beta}\IQ F_\gamma)=\{0\}$. On the other hand, Claim~\ref{cl:15.15} guarantees that
$$g_\beta[E_{g_\beta}]\subseteq \bigcup _{t\in B_\beta}M^\circ_t\cup\bigcup_{\gamma<\beta}\bigcup_{t\in F_\gamma}M_t^\circ$$and hence $\IQ r\cap g_\beta[E_{g_\beta}]=\IQ s\cap g_\beta[E_{g_{\beta}}]=\emptyset$. 

It is clear that $\{y,x\}\subseteq\{y\}\cup \Sf_{f_\beta}(y;s)\subseteq \Sf_{f_\beta}(y;M_r)$.  Assuming that $\Sf_{\beta}(y; M_r)\ne\{y,x\}$, we can find a point $z\in\Sf_\beta(y;M_r)\setminus\{y,z\}$. It follows from $M_r\cap g_\beta[E_{g_\beta}]\subseteq \IQ r\cap g_\beta[E_{g_\beta}]=\emptyset$ that $\{yz,yx\}\subseteq E_{f_\beta}\setminus E_{g_\beta}$. By the inductive condition (4), $f_\beta(yx)$ and $f_\beta(yz)$ are two distinct points of the set $F_\beta\cap M_r^\circ$, which contradicts Lemma~\ref{l:Falpha}(3). This contradiction shows that $\Sf_{f_\beta}(y;M_r)=\{x,y\}$.

By analogy we can prove that $\Sf_{f_\beta}(y;M_r)=\{y,x\}$ implies $\Sf_{f_\beta}(x;M_r)=\{x,y\}$.
\end{proof}

In the following claim we prove that the full metric $f_\beta$ satisfies the inductive condition (8).

\begin{claim} For every $x\in V_{f_\beta}$ and $r\in \IR_+$ we have $|\Sf_{f_\beta}(x;r)|\le 2$ and $f_\beta[[\Sf_{f_\beta}(x;r)]^2]\subseteq\{2r\}$.
\end{claim}

\begin{proof} 
If $|\Sf_{f_\beta}(x;r)|\le 1$, then $[\Sf_{f_\beta}(x;r)]^2=\emptyset$ and $f_\beta[[\Sf_{f_\beta}(x;r)]^2]=\emptyset\subseteq\{2r\}$. So, we assume that 
$|\Sf_{f_\beta}(x;r)|\ge 2$. Then $r\in f_\beta[E_{f_\beta}]=f_\beta[E_{f_\beta}\setminus E_{g_\beta}]\cup g_\beta[E_{g_\beta}]$.  The condition (2) of Lemma~\ref{l:Falpha} and Claim~\ref{cl:15.15} imply that 
$$F_\beta\cap g_\beta[E_{g_\beta}]\subseteq F_\beta\cap \Big(\bigcup_{r\in B_\beta}M^\circ_r\cup\bigcup_{\gamma\in\beta}\bigcup_{r\in F_\gamma}M^\circ_r\Big)=\emptyset.$$
If $r\notin g_\beta[E_{g_\beta}]$, then $r\in f_\beta[E_{f_\beta}\setminus E_{g_\beta}]\subseteq F_\beta\setminus g_\beta[E_{g_\beta}]$ and $|\Sf_{f_\beta}(x;r)|\le 1$, by the inductive condition (4). This is a contradiction showing that $r\in g_\beta[E_\beta]\setminus F_\beta$. It follows from $f_\beta[E_{f_\beta}\setminus E_{g_\beta}]\subseteq F_\beta$  and the inductive condition (10)  that $$\Sf_{f_\beta}(x;r)=\Sf_{g_\beta}(x;r)=\Sf_{f_\alpha}(x;r)\cup\bigcup_{\langle z,t\rangle\in P_\beta}\Sf_{p_{z,t}}(x;r)=\Sf_{f_\alpha}(x;r)\cup\bigcup_{\langle z,t\rangle\in P'_\beta}\Sf_{p_{z,t}}(x;r),$$
where $$P_\beta'\defeq\{\langle z,t\rangle\in P_\beta:(x\in V_{p_{z,t}})\;\wedge\;( r\in M^\circ_t)\}.$$ For every $\langle z,t\rangle\in P_\beta'$ we have $M^\circ_t=M^\circ_r$.

If $P_\beta'=\emptyset$ and $x\notin V_{f_\alpha}$, then the sphere $\Sf_{f_\beta}(x;r)$ is empty and hence $f_\beta[[\Sf_{f_\beta}(x;r)]^2]=\emptyset\subseteq\{2r\}$.

If $P'_\beta=\emptyset$ and $x\in V_{f_\alpha}$, then $\Sf_{f_\beta}(x;r)=\Sf_{f_\alpha}(x;r)$. In this case the inductive assumption (8) ensures that
$$|\Sf_{f_\beta}(x;r)|=|\Sf_{f_\alpha}(x;r)|\le 2\quad\mbox{and}\quad f_\beta[[\Sf_{f_\beta}(x;r)]^2]=f_\alpha[[\Sf_{f_\alpha}(x;r)]^2]\subseteq\{2r\}.$$ 

It remains to consider the case of $P'_\beta\ne\emptyset$. In this case we shall prove that the set $\{p_{\langle z,t\rangle}:\langle z,t\rangle\in P_\beta'\}$ is a singleton.

To derive a contradiction, assume that $\langle y,s\rangle,\langle z,t\rangle$ two pairs in $P_\beta'$ such that $p_{y,s}\ne p_{z,t}$. Since $M_s^\circ=M_r^\circ=M_t^\circ$, the inductive conditions (10iii) and (10iv) ensure that $\Sf_{f_\alpha}(y;M_r)\ne \Sf_{f_\alpha}(z;M_r)$, $y\ne z$, and $x\in V_{p_{y,s}}\cap V_{p_{z,t}}= \Sf_{f_\alpha}(y;M_s)\cap\Sf_{f_\alpha}(z;M_t)$. The choice of $\langle y,s\rangle,\langle z,t\rangle\in P_\beta$ guarantees that $|\Sf_{f_\alpha}(y;s)|=|\Sf_{f_\alpha}(y;M_s^\circ)|\le 1$ and  $|\Sf_{f_\alpha}(z;t)|=|\Sf_{f_\alpha}(z;M_t^\circ)|\le 1$.

If $x\ne y$, then $x\in \Sf_{f_\alpha}(y;M_s)$ and $|\Sf_{f_\alpha}(y;M^\circ_s)|\le 1$ implies $\Sf_{f_\alpha}(y;M_r)=\Sf_{f_\alpha}(y;M_s)=\{y,x\}$. By the inductive condition (7), $\Sf_{f_\alpha}(x;M_r)=\{x,y\}$. If $z=x$, then $\Sf_{f_\alpha}(y;M_s)=\{x,y\}=\Sf_{f_\alpha}(x;M_r)=\Sf_{f_\alpha}(z;M_t)$, which is a contradiction showing that $z\ne x$ and hence $\Sf_{f_\alpha}(z;M_r)=\{z,x\}$. By the inductive condition (7), $\{x,y\}=\Sf_{f_\alpha}(x;M_r)=\{x,z\}$ and hence $y=z$, which is a final contradiction showing that the set $\{p_{z,t}:\langle z,t\rangle\in P_\beta'\}$ is a singleton. 

Fix any pair $\langle z,t\rangle\in P'_\beta$ and observe that
$$\Sf_{f_\alpha}(x;r)=\Sf_{f_\alpha}(x;r)\cup\Sf_{p_{z,t}}(x;r).$$

\begin{claim}\label{cl:last} $\Sf_{f_\alpha}(x;r)\subseteq \Sf_{p_{z,t}}(x;t)$.
\end{claim}

\begin{proof} The claim holds trivially if $\Sf_{f_\alpha}(x;r)=\emptyset$. So, we assume that $\Sf_{f_\alpha}(x;r)\ne\emptyset$. The choice of $\langle z,t\rangle\in P_\beta'$ and the inductive condition (10ii) guarantee that $x\in V_{f_\alpha}\cap V_{p_{z,t}}=\Sf_{f_\alpha}(z;M_t)$ and $|\Sf_{f_\alpha}(z;M_t)|=|\{x\}\cup\Sf_{f_\alpha}(z;M_t^\circ)|\le 2$. Also $\langle z,t\rangle\in P_\beta'$ implies $r\in M^\circ_t$. If $\Sf_{f_\alpha}(z;M_t^\circ)=\emptyset$, then $x\in \Sf_{f_\alpha}(z;M_t)=\{z\}$ and $\emptyset\ne \Sf_{f_\alpha}(x;r)\subseteq\Sf(z;M^\circ_t)=\emptyset$, which is a contradiction showing that $\Sf_{f_\alpha}(z;M^\circ_t)$ is a singleton.

If $z\ne x$, then $\Sf_{f_\alpha}(z;M_t)=\{z,x\}$. The inductive condition (7) guarantees that $\Sf_{f_\alpha}(x;M_t)=\{x,z\}$. Then $\emptyset\ne\Sf_{f_\alpha}(x;r)=\Sf_{f_\alpha}(x,M^\circ_t)=\{z\}$. The inductive condition (10ii), ensures that $\{x,z\}=\Sf_{f_\alpha}(z;M_t)=V_{p_{z,t}}\cap V_{f_\alpha}$. On the other hand, the inclusion $r\in M_t$ ensures that $r=f_\alpha(xz)=p_{z,t}(xz)$ and hence $\Sf_{f_\alpha}(x;r)=\{z\}\subseteq \Sf_{p_{z,t}}(x;r)$.

It remains to consider the case $z=x$. In this case $r\in M_t$ implies 
$\emptyset\ne \Sf_{f_\alpha}(x;r)\subseteq\Sf_{f_\alpha}(x;M_t^\circ)=\Sf_{f_\alpha}(z;M_t^\circ)$. The choice of $\langle z,t\rangle\in P_\beta$ ensures that the nonempty set $\Sf_{f_\alpha}(z;M_t^\circ)$ is a singleton and hence $\Sf_{f_\alpha}(x;r)=\Sf_{f_\alpha}(x;M_t^\circ)=\{y\}$ for some $y\in V_{f_\alpha}$. Then $\{x,y\}=\{z,y\}=\Sf_{f_\alpha}(z;M_t)=V_{p_{z,t}}\cap V_{f_\alpha}$, by the inductive condition (10ii). On the other hand, the inclusion $r\in M_t$ ensures that $r=f_\alpha(xy)=p_{z,t}(xy)$ and hence $\Sf_{f_\alpha}(x;r)=\{y\}\subseteq \Sf_{p_{z,t}}(x;r)$. This completes the proof of Claim~\ref{cl:last}.
\end{proof}

By Claim~\ref{cl:last},
$$\Sf_{f_\beta}(x;r)=\Sf_{f_\alpha}(x;r)\cup\Sf_{p_{z,t}}(x;r)=\Sf_{p_{z,t}}(x;r).$$
Taking into account that the graph metric $p_{z,t}$ is isomorphic to the graph metric $\mu_t$, we conclude that
$|\Sf_{f_\beta}(x;r)|=|\Sf_{p_{z,t}}(x;r)|\le 2$ and $f_\beta[[\Sf_{f_\beta}(x;r)]^2]=f_\alpha[[\Sf_{p_{z,t}}(x;r)]^2]\subseteq\{2r\}$.
\end{proof}
This completes the proof of Lemma~\ref{l:step}. 
\end{proof}
The inductive conditions (2) and (3) of Lemma~\ref{l:step} imply that $f\defeq\bigcup_{\alpha\in\kappa}f_\alpha=\bigcup_{\alpha\in\kappa}g_\alpha$ is a well-defined full metric.

Consider the metric space $X\defeq V_{f}$ endowed with the metric $\dist:X\times X\to\IR$ defined by $$\dist(x,y)=\begin{cases}f(xy)&\mbox{if $x\ne y$};\\
0&\mbox{if $x=y$}.
\end{cases}
$$

It remains to prove that $(X,\dist)$ is a Banakh space and $\dist[X^2]=M$. To see that $\dist[X^2]\subseteq M$, observe that 
$$\dist[X^2]=\{0\}\cup f[E_f]=\{0\}\cup\bigcup_{\beta\in\kappa}\big(g_\beta[E_{g_\beta}]\big)
\subseteq \{0\}\cup\bigcup_{\beta\in\kappa}\Big(\bigcup_{r\in B_\beta}M^\circ_r\cup\bigcup_{\gamma\in\beta}\bigcup_{r\in F_\gamma}M^\circ_r\Big)\subseteq \bigcup_{r\in M}M_r=M,
$$by the inductive conditions (5) and (9) of Lemma~\ref{l:step}.

To see that $M\subseteq\dist[X^2]$, take any $x\in X$ and $r\in M\setminus \{0\}$ and find an ordinal $\alpha\in\kappa$ such that $x\in V_{f_\alpha}$ and $r\in B_\alpha$. The inductive condition (6) ensures $|\Sf_f(x;r)|\ge|\Sf_{g_{\alpha+1}}(x;r)|\ge 2$ and hence $r\in f[E_f]\subseteq\dist[X^2]$.

To see that $(X,\dist)$ is a Banakh space, take any point $c\in X$ and real number $r\in \dist[X^2]=M$. We have to prove that $\Sf(c;r)=\{x,y\}$ for some points $x,y\in X$ with $\dist(x,y)=2r$. If $r=0$, then the points $x=c=y$ have the desired property. So, assume that $r>0$. Since $c\in V_f=\bigcup_{\alpha\in\kappa}V_{f_\alpha}$ and $M\setminus\{0\}=\bigcup_{\alpha\in\kappa}B_\alpha$, there exists an ordinal $\alpha\in\kappa$ such that $c\in V_{f_\alpha}$ and $r\in B_\alpha$.    The inductive condition (6) ensures $|\Sf_{g_{\alpha+1}}(c;r)|\ge 2$ and hence the sphere $\Sf_{g_{\alpha+1}}(c;r)$ contains two distinct points $x,y$. The inductive condition (8) ensures that for every ordinal $\beta$ with $\alpha<\beta<\kappa$, we have $|\Sf_{f_\beta}(c;r)|\le 2$ and $f_\beta[[\Sf_{f_\beta}(c;r)]^2]\subseteq \{2r\}$. This implies that $\Sf_{f_\beta}(c;r)=\{x,y\}$ and $f_{\beta}(xy)=2r$. Consequently, $$\Sf(c;r)=\Sf_f(c;r)=\bigcup_{\beta=\alpha+1}^\kappa\Sf_{f_\beta}(c;r)=\{x,y\}$$and $\dist(x,y)=f(xy)=f_{\alpha+1}(xy)=2r$, witnessing that $(X,\dist)$ is a Banakh space. \hfill $\square$


\section{Acknowledgements}

The authors expresses his sincere thanks to 
\begin{itemize}
\item Will Brian who resolved author's question \cite{TB} at \cite{WB} thus suggesting an idea of the proof of Theorems~\ref{t:main1}  and \ref{t:WB};
\item Pavlo Dzikovskyi for valuable discussions on Banakh spaces and suggesting the proofs of Lemma~\ref{l:Dzik1} and Theorem~\ref{t:Dzik2};
\item Pietro Mayer for his answer \cite{PM} to the question \cite{TB}, which suggested a crucial idea in constructing exotic Banakh groups in Theorems~\ref{t:H1} and \ref{t:H2}, and also for his (now deleted) answer to the question \cite{TB2} and suggesting a crucial idea that allowed to elaborate the technique of game extensions of floppy graph metrics \cite{BM}, which was exploited in the proof of Theorem~\ref{t:exotic};
\item the {\tt MathOverflow} user {\tt user42355} for his comment under the answer of Pietro Mayer \cite{PM}; the comment contained a crucial idea realized in the proof  of Lemma~\ref{l:1}.
\end{itemize}

\end{document}